\newtheorem{theorem}{Theorem}[section]
\newtheorem{proposition}[theorem]{Proposition}
\newtheorem{lemma}[theorem]{Lemma}
\theoremstyle{definition}
\newtheorem{definition}[theorem]{Definition}
\theoremstyle{remark}
\newtheorem{remark}{Remark}[section]
\newtheorem{example}{Example}[section]
\newtheorem{convention}{Convention}[section]
\newtheorem*{convention*}{Convention}
\providecommand{\keywords}[1]
{
  \small	
  \textbf{\textit{Keywords: }} #1
}
\newcommand{\pl}{\mathtt{pl}}
\newcommand{\Pl}{\mathsf{Pl}}
\newcommand{\Prop}{\mathtt{Prop}}
\newcommand{\Sf}{\mathtt{Sf}}
\newcommand{\LIT}{\mathsf{Lit}}
\newcommand{\Prob}{\mathsf{Pr}}
\newcommand{\Bell}{\mathsf{Bel}}
\newcommand{\bel}{\mathtt{bel}}
\newcommand{\Bel}{\mathtt{Bel}}
\newcommand{\belmod}{\mathsf{B}}
\newcommand{\mainL}{{\cal{L}}}
\renewcommand{\P}{\mathcal{P}}
\newcommand{\BD}{\mathsf{BD}}
\newcommand{\ETL}{\mathsf{ETL}}
\newcommand{\N}{\mathsf{N}}
\newcommand{\DS}{\mathsf{DS}}
\newcommand{\LBD}{\mathscr{L}_\mathsf{BD}}
\newcommand{\LTBD}{\mathcal{L}_\mathsf{BD}}
\newcommand{\Langluk}{\mathscr{L}_{\Luk^2}}
\newcommand{\Langluka}{\mathscr{L}_{\Luk}}
\newcommand{\LanglukN}{\mathscr{L}_{\N\Luk}}
\newcommand{\LatBD}{\mathcal{L}_\BD}
\newcommand{\LbBD}{\LBD^*}
\newcommand{\pr}{\mathtt{p}}
\newcommand{\mass}{\mathtt{m}}
\newcommand{\f}{\varphi}
\newcommand{\p}{\psi}
\newcommand{\fDNF}{\mathsf{fDNF}}
\newcommand{\iDNF}{\mathsf{iDNF}}
\newcommand{\DNF}{\mathsf{DNF}}
\newcommand{\CNF}{\mathsf{CNF}}
\newcommand{\NNF}{\mathsf{NNF}}
\newcommand{\Luk}{{\mathchoice{\mbox{\rm\L}}{\mbox{\rm\L}}{\mbox{\rm\scriptsize\L}}{\mbox{\rm\tiny\L}}}}
\newcommand{\LsquaretopDelta}{\Luk^2_{(1,0)}(\rightarrow,\triangle)}
\newcommand{\triangletop}{{\triangle^{\top}}}
\begin{document}
\title{Reasoning with belief functions over Belnap--Dunn logic}
\author{Marta B\'{\i}lkov\'{a} \and Sabine Frittella \and Daniil Kozhemiachenko \and Ondrej Majer \and Sajad Nazari\thanks{The research of Marta B\'ilkov\'a was supported by the grant 22-01137S of the Czech Science Foundation. The research of Sabine Frittella, Daniil Kozhemiachenko, and Sajad Nazari was funded by the grant ANR JCJC 2019, project PRELAP (ANR-19-CE48-0006). This research is part of the MOSAIC project financed by the European Union's Marie Sk\l{}odowska-Curie grant No.~101007627.}}
\date{}
\maketitle
\begin{abstract}
We design an expansion of Belnap--Dunn logic with belief and plausibility functions that allows non-trivial reasoning with contradictory and incomplete probabilistic information. We also formalise reasoning with non-standard probabilities and belief functions in two ways. First, using a~calculus of linear inequalities, akin to the one presented in~\cite{FaginHalpernMegiddo1990}. Second, as a~two-layered modal logic wherein reasoning with evidence (the outer layer) utilises paraconsistent expansions of \L{}ukasiewicz logic. The second approach is inspired by~\cite{BaldiCintulaNoguera2020}. We prove completeness for both kinds of calculi and show their equivalence by establishing faithful translations in both directions.

\keywords{belief functions; Belnap--Dunn logic; two-layered modal logics; paraconsistent logics; \L{}ukasiewicz logic}
\end{abstract}
\tableofcontents
 \section{Introduction}
Every day we have to make decisions based on various pieces of information. The information at our disposal might be unequivocal (e.g., one sees the rain outside, therefore one knows it is raining), but it might also be incomplete or contradictory. Indeed, we \emph{have no information} whether (to employ the most overused example) the decimal representation of $\pi$ contains two thousand $9$'s in a~row. On the other hand, we have \emph{both evidence for and against} the efficacy of mirror therapy in phantom pain treatment.

In the logical context, the logics that can non-trivially reason with contradictory statements are called \emph{paraconsistent} and the ones that allow incomplete information by rejecting the law of excluded middle go under the moniker \emph{paracomplete}. For our purpose, we require a~logic that is both paraconsistent and paracomplete. Ideally, this logic should explicitly differentiate between all four types of information an agent can have regarding a~statement $\phi$: that $\phi$ is only told to be true; that $\phi$ is only told to be false; that one was not told whether $\phi$ is true or false; and that one was told both that $\phi$ is true and that it is false. We choose First-Degree Entailment, or Belnap--Dunn logic ($\BD$), as our base logic because it was introduced specifically for reasoning about incomplete or inconsistent pieces of information, and it is nowadays an established formalism for that usage. In~\cite{Belnap2019}, $\BD$ was formulated as a~four-valued logic\footnote{Note, however, that in the earlier work, e.g., in~\cite{AndersonBelnap1962}, $\BD$ was presented as the first-degree fragment of the relevant logic $\mathbf{E}$, whence the other name.} with truth table semantics where each value from $\{t,b,n,f\}$ represents the information a~computer (cf.~\cite{AndersonBelnapDunn1992} for the interpretation) or a reasoning agent might have regarding a~statement.
\begin{itemize}
\item $t$ stands for ‘just told True’.
\item $f$ stands for ‘just told False’.
\item $b$ (or ‘both’) stands for ‘told both True and False’.
\item $n$ (or ‘neither’) stands for ‘told neither True nor False’.
\end{itemize}

However, the information one has is often not only just incomplete or inconsistent, but also bears a~degree of uncertainty. This is why one needs a~probability theory that accounts for contradictory and incomplete information.

There are several approaches to the paraconsistent theories of probability and uncertainty. In~\cite{BesnardLang1994}, the reasoning with possibility and necessity functions is formalised using da Costa's logic $\mathsf{C}_1$ from~\cite{DaCosta1974}. In~\cite{Bueno-SolerCarnielli2016}, a probability theory based on the logic of formal inconsistency ($\mathsf{LFI}$) which is an expansion of $\BD$ with an implication $\rightarrow$ and a \emph{consistency operator} $\circ$ is developed.

To the best of our knowledge, the earliest formalisation of probability theory in terms of $\BD$ was provided in~\cite{Mares1997}. Another formalisation is given in~\cite{Dunn2010}. In this paper, we, however, will use the probability axioms as they were given in~\cite{KleinMajerRad2021}. This is for several reasons.

First, the conditional statements do not correspond to event descriptions (and thus, the presence of an implication not reducible to $\neg$, $\wedge$, and $\vee$ is not required), whence it suffices to use $\BD$ for this purpose. This is why, $\mathsf{LFI}$ is too expressive for our purposes. Moreover, the law of excluded middle is valid in $\mathsf{C}_1$ which means that we cannot reason about incomplete information. On the other hand, $\BD$ is both paraconsistent and paracomplete.

Second, the probability of $\phi\wedge\chi$ in~\cite{Dunn2010} can be computed directly from the probabilities of $\phi$ and $\chi$. Furthermore, the probability is interpreted not as a ‘real probability’ of an event but rather as an agent's degree of certainty in the event. While this is a reasonable assumption in the classical case, one can argue (cf.~\cite{Dubois2008} for further details) that if the available information is contradictory or incomplete, the agent's certainty is, in fact, not compositional. Finally, the probabilistic axioms in~\cite{Mares1997} and~\cite{KleinMajerRad2021} are very close, with their sole distinction being that Mares postulates (axiom $\Pr1$) that the probability of the whole sample set is equal to $1$ (and, accordingly, the probability of the empty set is $0$).

Note, however, that there are no $\BD$-valid formulas, nor the formulas that are always false. Thus, $\Pr1$ does not have an immediate analogue in the language of $\BD$. This is why, we assume the probability measures defined in terms of $\BD$ to be \emph{non-normalised} by default. This is also related to the idea first proposed in~\cite{Smets1988,Smets1992} where the positive mass of the empty set was used to account for the contradictory evidence. On the other hand, the normalisation \emph{is justified} once we add the truth and falsity constants in $\BD$ (cf.~Definition~\ref{def:BDBD*}).



Many generalisations of  classical probability theory, such as inner and outer measures~\cite{Halmos1950}, belief and plausibility functions~\cite{Shafer1976}, upper and lower probabilities~\cite{Dempster1967-UpperLowerProbability}, and possibility and necessity measures~\cite{Zadeh1978,DuboisLangPrade1994} have been developed to account for the fact that an agent is not necessarily capable of assigning probabilities to all events. In fact, one may even reasonably argue that this is even less the case when one wishes to reason with contradictory evidence in a~non-trivial way. Thus, a~need for a~more general uncertainty measure arises in the same way it does in the classical probability theory.

Belief functions and possibility measures (as well as their duals --- plausibility functions and necessity measures) have a significant advantage over other generalisations of probability measures mentioned in the paragraph above since they can be directly obtained from a mass function on the sample space. In this paper, we will be using belief functions for two reasons. First, some scenarios involving contradictory evidence (cf.~Examples~\ref{example:doctorsclassical} and~\ref{example:doctorsBD}) cannot be formalised using possibility measures but can be formalised in terms of belief functions. Second, all probability measures are particular cases of belief functions which does not hold regarding the possibility measures. Thus, we can use belief functions when the agent's uncertainty is, in fact, a probability assignment.


This paper has a two-fold objective. First, we generalise belief functions over $\BD$ logic, and, algebraically, over De Morgan algebras (recall from~\cite{Font1997} that $\BD$ is the logic of De Morgan algebras). This part of our work is inspired by~\cite{Zhou2013} which provides treatment of belief functions on distributive lattices. Our goal is to expand that approach to incorporate De Morgan negation~$\neg$. Second, we  provide the calculi that formalise the reasoning with \emph{both} non-standard probabilities and belief functions. We also discuss reasoning with plausibility functions. While in the case when the background logic is classical the notions of belief and plausibility are dual and they might be interpreted as a lower and upper bound for the ‘true’ probability of the  event in question, the framework of $\BD$ logics allows for a wider range of interpretations.

This second part of our goal can be reached in two ways. The first one is by defining a~calculus that allows for the reasoning with the statements concerning probabilities or beliefs directly. This is the way it is done in~\cite{FaginHalpernMegiddo1990}. The calculi there contain three types of axioms and rules: the ones that govern the arithmetical part, i.e., the reasoning about inequalities; the ones that axiomatise probabilities; and the rules and axioms of the logic wherein the reasoning itself occurs, i.e., classical propositional logic. The proposed calculus has the advantage of being quite intuitive and easy to use, however, its axiomatisation is infinite. To address this issue, one can undertake the second approach and utilise a~two-layered modal logic in a~similar manner to~\cite{BaldiCintulaNoguera2020}. A~calculus will then consist of three parts: the rules and axioms of the logic of events or ‘inner logic’; the ‘outer logic’ that formalises reasoning with evidence; and finally, the modalities that transform events into probabilistic evidence. While these two approaches may seem different at the first glance, it is shown in~\cite{BaldiCintulaNoguera2020} that they are actually equivalent for the classical probabilities when the outer-layer logic is taken to be \L{}ukasiewicz logic. Our goal is to provide both these perspectives on reasoning with non-standard probabilities and belief functions. We will also show that they are equivalent in the same way that the two formalisations of reasoning with classical probabilities are.

\paragraph{Structure of the paper} In Section~\ref{sec:preliminaries}, we introduce necessary preliminaries on lattices, Belnap--Dunn logic, belief functions, and their mass functions. In Section~\ref{sec:uncertainty}, we present non-standard probabilities, then we discuss Dempster--Shafer theory of evidence and its application on De Morgan algebras, finally we introduce $\DS$ models on which we interpret belief and plausibility on $\BD$ logic. In Section~\ref{sec:logics}, we devise two types of two-layered logics that formalise reasoning with non-standard probabilities and belief functions. First, we construct two-layered modal logic based upon $\BD$ and paraconsistent expansions of the \L{}ukasiewicz logic where modalities are interpreted as probabilities or belief functions. In the second approach, we use linear inequalities on the outer layer. We prove completeness of both systems and establish faithful translations between them. Appendices~\ref{app:belief:plausibility:mass} and~\ref{app:proof:sec3} present the proofs of Section~\ref{sssec:belief:plausibility:mass} and~\ref{sec:uncertainty}.
\section{Preliminaries}
\label{sec:preliminaries}
\subsection{Lattices and logics}
Let us introduce several terminological and notational conventions concerning lattices and logics.
\begin{convention}[Lattices]
A \emph{bounded lattice} is a~tuple $\mainL=\langle L,\vee,\wedge,\top,\bot\rangle$, such that $\langle L,\vee,\wedge\rangle$ is a~lattice and $x\vee\top=\top$ and $x\wedge\bot=\bot$\footnote{Obviously, every finite lattice has the least and greatest elements, but we reserve the term ‘bounded lattice’ for the case when \emph{the lattice signature} contains $\top$ and $\bot$.} hold for every $x\in\mathcal{L}$. For bounded lattices, we define $\bigvee\varnothing\coloneqq\bot$ and $\bigwedge\varnothing\coloneqq\top$. For finite unbounded lattices $\mathcal{L}$, we define $\bigvee \varnothing\coloneqq\bigwedge\limits_{l\in\mathcal{L}}l$ and $\bigwedge\varnothing\coloneqq\bigvee\limits_{l\in\mathcal{L}}l$. A lattice $\mainL$ is:
\begin{itemize}
\item \emph{distributive} if $(x\vee y)\wedge z=(x\wedge z)\vee(y\wedge z)$ holds for all $x,y,z\in\mainL$;
\item \emph{complemented} if $\mathcal{L}$ is bounded and every $x\in\mainL$ has a~\emph{complement}: i.e., for every $x\in\mainL$, there exists $x'\in\mainL$, such that $x\wedge x'=\bot$ and $x\vee x'=\top$;
\item \emph{(bounded) De Morgan algebra} if it is (bounded) distributive, and equipped with an additional unary operation $\neg$ such that $\neg\neg x=x$ and $\neg(x\wedge y)=\neg x\vee\neg y$ for any $x,y\in\mathcal{L}$;
\item \emph{Boolean algebra} if it is a De Morgan algebra s.t.\ $\sim$ is its negation and ${\sim}a$ is the only complement of $a$.
\end{itemize}
Throughout the article, we denote proper De Morgan negations with $\neg$ and Boolean negations with ${\sim}$.
\end{convention}

Notice that, in bounded De Morgan algebras, it holds that $\neg\top=\bot$ and $\neg\bot=\top$. The law of excluded middle, $\neg x \vee x =\top$, and the principle of explosion, $x\wedge\neg x=\bot$, however, do not always hold.


\begin{convention}[Logics and Lindenbaum algebras]
A \emph{logic} is a~tuple $\mathsf{L}=\langle\mathscr{L},\vdash\rangle$ with $\mathscr{L}$ being a~language over $\{\circ_1,\ldots,\circ_n\}$ and $\vdash\subseteq\mathcal{P}(\mathscr{L})\times\mathscr{L}$ is \ structural, reflexive and transitive relation s.t.\ $\Gamma\vdash\phi$ entails $\Gamma'\vdash\phi$ for $\Gamma\subseteq\Gamma'$.

A~\emph{Lindenbaum algebra of $\mathsf{L}$} ($\mathcal{L}_\mathsf{L}$) is a~tuple $\langle\mathscr{L}/_{\dashv\vdash},\bullet_1,\ldots,\bullet_n\rangle$ where for each $i\in\{1,\ldots,n\}$ and each $\phi,\phi'\in\mathscr{L}$, it holds that $[\phi\circ_i\phi']=[\phi]\bullet_i[\phi']$ with $[\phi]$ being the equivalence class of $\phi$ under~$\dashv\vdash$.
\end{convention}
\subsection{Belnap-Dunn logic}\label{ssec:FDE}
In this section, we are presenting Belnap--Dunn logic ($\BD$) --- a~propositional logic over $\{\neg,\wedge,\vee\}$ and its conservative extension with constants $\top$ and $\bot$ --- $\BD^*$. 
\begin{definition}[$\BD$ and $\BD^*$: languages and calculi]\label{def:BDBD*}
We fix a~finite set $\Prop$ of propositional variables and define complex formulas via the following grammar:
\[\LbBD\ni\phi\coloneqq p\in\Prop\mid\top\mid\bot\mid\neg\phi\mid\phi\wedge\phi\mid\phi\vee\phi.\]
We will further use $\LBD$ to designate the constant-free fragment of $\LbBD$. We also define $\LIT=\Prop\cup\{\neg p:p\in\Prop\}$ and denote
\begin{align*}
\mathsf{Var}(\phi)&=\{p\in\Prop:p\text{ occurs in }\phi\}, 
&\LIT(\phi)&=\{l\in\LIT:l\text{ occurs in }\phi\}.
\end{align*}
$\BD$ can be axiomatised using the following axioms from~\cite{Prenosil2018PhD}:
\begin{align*}
\phi\wedge\chi\vdash\phi&&\phi\wedge\chi\vdash\chi&&\phi\vdash\phi\vee\chi&&
\chi\vdash\phi\vee\chi\\
\phi\vee\psi\vdash\neg\neg\phi\vee\psi&&\phi,\chi\vdash\phi\wedge\chi&&\neg\neg\phi\vee\psi\vdash\phi\vee\psi&&\phi\vee\phi\vdash\phi
\end{align*}
\begin{align*}
\phi\vee(\chi\vee\psi)\vdash(\phi\vee\chi)\vee\psi&&\phi\wedge(\chi\vee\psi)\vdash(\phi\wedge\chi)\vee(\phi\wedge\psi)&&(\phi\wedge\chi)\vee(\phi\wedge\psi)\vdash\phi\vee(\chi\wedge\psi)
\end{align*}
\begin{align*}
\neg(\phi\wedge\chi)\vee\psi\vdash(\neg\phi\vee\neg\chi)\vee\psi&&(\neg\phi\vee\neg\chi) \vee\psi \vdash\neg(\phi\wedge\chi)\vee\psi\\
\neg(\phi\vee\chi)\vee\psi\vdash(\neg\phi\wedge\neg\chi) \vee\psi&&(\neg\phi\wedge\neg\chi) \vee\psi \vdash\neg(\phi\vee\psi)\vee\psi
\end{align*}
$\BD^*$ can be axiomatised by adding the following axioms:
\begin{align*}
\varnothing\vdash\top&&\neg\top\vee\phi\vdash\phi&&\varnothing\vdash\neg\bot&&\bot\vee\phi\vdash\phi
\end{align*}

We will say that $\varphi$ and $\psi$ are \emph{equivalent}, denoted $\varphi\dashv\vdash_\BD\psi$, iff both $\varphi\vdash\psi$ and $\psi\vdash\varphi$ are derivable.
\end{definition}

There are several ways to provide semantics for $\BD$ (cf., e.g.~\cite{OmoriWansing2017}).
In addition to the already mentioned truth table semantics, one can treat $\BD$ as the logic of De Morgan algebras. Indeed, it is clear from~\cite[Proposition~2.5]{Font1997} that $\LTBD=\langle\LBD/_{\dashv\vdash_\BD},\wedge,\vee,\neg\rangle$ is the Lindenbaum algebra of $\BD$ and that $\LTBD$ is actually a~De Morgan algebra. Hence, its $\neg$-less reduct $\LTBD^+=\langle\LBD/_{\dashv\vdash_\BD},\wedge,\vee\rangle$ is a~distributive lattice. The Lindenbaum algebra of $\BD^*$ is the bounded De Morgan algebra $\LTBD=\langle\LBD/_{\dashv\vdash_{\BD^*}},\wedge,\vee,\neg\rangle$ and is denoted  $\LTBD^*$.

In this paper, we will opt for frame semantics with two valuations\footnote{The semantics that we present below is a straightforward generalisation of Dunn's ‘relational semantics’ from~\cite{Dunn1976} to the case of multiple states in the frame.}: $v^+$ and $v^-$ which are intuitively interpreted as support of truth and support of falsity. This approach will allow us to treat $\BD$ probabilistically and is also in line with its original motivation (one can think of each state as a~source that gives us information).

In this context, $w\vDash^+\phi$ can be interpreted as ‘source $w$ states that $\phi$ is true’. Note that this does not exclude the possibility of $w$ telling that $\phi$ is false as well. Neither not stating that $\phi$ is false implies that $w$ says that $\phi$ is true. One can think of $w$ as being a~database that may (or may not) have information about $\phi$. The database may for instance contain both ‘Tom's birthday is on February, 29th’ and ‘Tom's birthday is on March, 1st’ and also no information at all about whether Tom likes apple pies. If we add constants, $\bot$ represents absurdity or incoherence, a~piece of information that the agent rejects without considering it. It is important to note that a~contradiction is not an absurdity or incoherence: it is perfectly possible that a~source provides inconsistent data. Dually, $\top$ is a~piece of trivial information: the one that is accepted to be true without questions and does not provide any information. Alternatively, one can simply state that $\bot$ is the De Morgan negation of $\top$. Again, an instance of a~classical tautology, say $p\vee\neg p$, is not trivial in this framework for it is possible that a~source says nothing on $p$, nor on its negation.
\begin{definition}[$\BD$ and $\BD^*$: frame semantics]\label{def:BDframesemantics}
A Belnap--Dunn model is a~tuple $\mathfrak{M}=\langle W,v^+,v^-\rangle$ with $W\neq\varnothing$ and $v^+,v^-:\Prop\rightarrow {\mathcal{P}}(W)$. We define notions of $w\vDash^+\phi$ and $w\vDash^-\phi$ for $w\in W$ and $\phi\in\LbBD$ (positive and negative support of $\phi$ at $w$) as follows.
\begin{align*}
w\vDash^+\top&\quad w\nvDash^-\top&
w\nvDash^+\bot&\quad w\vDash^-\bot\\
w\vDash^+p&\text{ iff }w\in v^+(p)&w\vDash^-p&\text{ iff }w\in v^-(p)\\
w\vDash^+\neg\phi&\text{ iff }w\vDash^-\phi&w\vDash^-\neg\phi&\text{ iff }w\vDash^+\phi\\
w\vDash^+\phi\wedge\phi'&\text{ iff }w\vDash^+\phi\text{ and }w\vDash^+\phi'&w\vDash^-\phi\wedge\phi'&\text{ iff }w\vDash^-\phi\text{ or }w\vDash^-\phi'\\
w\vDash^+\phi\vee\phi'&\text{ iff }w\vDash^+\phi\text{ or }w\vDash^+\phi'&w\vDash^-\phi\vee\phi'&\text{ iff }w\vDash^-\phi\text{ and }w\vDash^-\phi'
\end{align*}

We denote the positive and negative interpretations of a~formula as follows:
\begin{align*}
|\phi|^+\coloneqq\{w\in W\mid w\vDash^+\phi\}&&|\phi|^-\coloneqq\{w\in W\mid w\vDash^-\phi\}.
\end{align*}

We say that a~sequent $\phi\vdash\chi$ is \emph{satisfied on $\mathfrak{M}=\langle W,v^+,v^-\rangle$} (denoted, $\mathfrak{M}\models[\phi\vdash\chi]$) iff for any $w\in W$, it holds that:
\begin{itemize}
\item if $w\vDash^+\phi$, then $w\vDash^+\chi$ as well;
\item if $w\vDash^-\chi$, then $w\vDash^-\phi$ as well.
\end{itemize}

A sequent $\phi\vdash\chi$ is \emph{valid} iff it is satisfied on every model. In this case, we will say that $\phi$~\emph{entails} $\chi$. For the sake of readability, we avoid subscripts, but we will always specify which logic ($\BD$ or $\BD^*$) we are considering.
\end{definition}
\begin{convention}
In the remainder of the paper, we will not distinguish between a~formula and its equivalence class in the Lindenbaum algebra. Therefore, we will write $|\phi| ^+$ both for the positive interpretation of the formula and for the set of states that satisfy all the formulas in the equivalence class of $\phi$. We will always specify whether $\phi$ refers to the formula or to its equivalence class.
\end{convention}

In what follows, we present a~special version of disjunctive normal forms, $X$-full DNFs (with $X$ being a~set of literals). Intuitively, an $X$-full DNF of $\phi$ lists all possible clauses over $X$ that entail $\phi$ in $\BD$. This gives a~straightforward connection to frame semantics since each state validates some finite set of literals. Furthermore, $X$-full DNFs are unique up to permutations of literals and clauses which enables their use as canonical counterparts of a~given formula. We will utilise both these traits in the completeness proofs of our calculi (cf.~Theorems~\ref{theorem:wFDEcompleteness} and~\ref{theorem:bFDEcompleteness}).

\begin{definition}[Clauses and normal forms]\label{def:normalforms}
A \emph{conjunctive (resp., disjunctive) clause} is a~conjunction (resp., disjunction) of literals and (or) constants. We define the following normal forms of the formulas in languages $\LBD$ and $\LbBD$.
\begin{itemize}
\item $\phi$ is in \emph{negation normal form} ($\NNF$) iff it does not contain any of the following subformulas: $\neg\neg\psi$, $\neg(\psi\vee\psi')$, $\neg(\psi\wedge\psi')$, $\neg\top$, $\neg\bot$;
\item $\phi$ is in \emph{disjunctive normal form} ($\DNF$) iff it is a~disjunction of conjunctive clauses;
\item $\phi$ is in \emph{conjunctive normal form} ($\CNF$) iff it is a~conjunction of disjunctive clauses.
\end{itemize}


\end{definition}



\begin{definition}[$X$-full disjunctive normal form]
\label{def:XfDNF:BD}
Let $\phi\in\LBD$ and let further $X\supseteq\LIT(\phi)$ be finite and $\neg$-closed, i.e. $$\forall p\in\mathsf{Var}:p\in X\Leftrightarrow\neg p\in X$$

A \emph{$\bigwedge$-$X$-clause} $\mathsf{cl}$ is a~non-empty subset of $X$. An \emph{$X$-full disjunctive normal form of $\phi$} ($\fDNF_X(\phi)$ or $\fDNF(\phi)$ if there is no risk of confusion) is defined as the disjunction of all $\bigwedge$-$X$-clauses entailing $\phi$ over $\mathsf{BD}$.
\[\fDNF_X(\phi)\coloneqq\bigvee\limits_{\mathsf{cl}\ \vdash_{\mathsf{BD}}\ \phi}\mathsf{cl}\]

To define $X$-full disjunctive normal form for formulas $\phi\in\LbBD$, we need  \emph{$\bigwedge$-$X$-clause} to be a~non-empty subset of $X$, $\bot$, or $\top$.
The $X$-full disjunctive normal form of $\phi$ is denoted $\fDNF^*_X(\phi)$ or $\fDNF^*(\phi)$ 
\end{definition}

The next example shows how to transform a~formula into its fDNF.
\begin{example}\label{ex:XfDNFtransformation}
Let $X=\{p,\neg p,q,\neg q\}$. Consider $p\wedge q$. Clearly, it is already in DNF. We now need to add the remaining clauses:
\begin{align*}
\fDNF_X(p\wedge q)&=(p\wedge q)\vee(p\wedge\neg p\wedge q)\vee(p\wedge q\wedge\neg q)\vee(p\wedge\neg p\wedge q\wedge\neg q)
\end{align*}
Observe that $\bigwedge X$ is always present in the $\fDNF_X$.
\end{example}
\begin{definition}[Irredundant disjunctive normal forms: $\iDNF$]\label{def:iDNF}
A conjunctive clause is \emph{irredundant} if it contains each literal at most once. A~formula $\varphi$ is in \emph{irredundant disjunctive normal} form if it is a~disjunction of irredundant conjunctive clauses, and moreover, if $\phi=\bigvee\limits_{i\in I}\phi_i$, then, for every $i,j\in I$ such that $i\neq j$, $\LIT(\phi_i)\not\subseteq\LIT(\phi_j)$.
\end{definition}
Intuitively, no clause of an $\iDNF$ implies another one. For example, $(p\wedge q)\vee(p\wedge\neg q)$ is in $\iDNF$ but $(p \wedge q) \vee p$ is not.
\subsection{Monotone functions on posets and their M\"obius transforms}
\label{sssec:belief:plausibility:mass}
In this article, we discuss the interpretation of belief and plausibility functions over De Morgan algebras. These notions were first introduced on Boolean algebras and generalised to distributive lattices. In this section, we show that the expected relations between belief, plausibility and mass functions hold on De Morgan algebras.

First, we briefly present the link between functions on posets and their M\"{o}bius transforms and introduce the notion of mass function. Then, we define (general) belief functions and (general) plausibility functions, and present some useful properties for the remainder of the paper. The proofs of this section are in Appendix~\ref{app:belief:plausibility:mass}. These results are folklore, however, in order to help the reader we provide a~sketch of proofs when we cannot provide a~reference to a~detailed proof.
%


\paragraph{M\"{o}bius transform}
It is well-known (cf.~\cite[Proposition 3.7.1]{Stanley2011}) that if $f$ is an arbitrary real-valued function on a~poset ${\cal P}=\langle P,\leq\rangle$, then there exists a~unique function $g$ on ${\cal P}$, called the \emph{Möbius transform} of $f$ such that: 
\begin{equation}\label{eq:mobiustransform}
 f(x)=\sum\limits_{y\leq x}g(y)\quad\text{iff}\quad g(x)=\sum\limits_{y\leq x}\mu(y,x)\cdot f(y)
\end{equation}
where $\mu:{\cal P}\times\mathcal{P}\to\mathbb{R}$ is the \emph{Möbius function} defined recursively as follows: 
\begin{align}
\mu(y,x)&=
\begin{cases}
1 & \text{ if }y=x,\\
-\sum\limits_{y\leq t<x}\mu(y,t)&\text{ if }y<x,\\
0 & \text{ if }y>x.
\end{cases}
\label{eq:mobiusfunction}
\end{align}




\paragraph{Belief functions and their mass functions} Let us recall the definitions of belief functions, plausibility functions and mass functions. We slightly generalize the definitions that were initially proposed in the context of Boolean algebras in order to encompass the case of De Morgan algebras. We need to do this because existing definitions and results consider belief functions on bounded lattices. In the remainder of the article, however, we study belief functions within the framework of $\BD$ which is usually considered without constants $\bot$ and $\top$. Therefore, its associated Lindenbaum algebras are unbounded De Morgan algebras.

\begin{definition}[(General) belief functions]\label{def:generalbelieffunction}
Let $\mathcal{L}$ be a~lattice. A~function $\bel:\mathcal{L}\rightarrow[0,1]$ is called a~\emph{general belief function} if the following conditions hold:
\begin{itemize}
\item $\bel$ is monotone with respect to $\mathcal{L}$, that is, for 
$x,y\in\mainL$, if $x\leq_{\mathcal{L}} y$, then $\mathtt{bel}(x)\leq\mathtt{bel}(y)$,
\item 
for every $k \geq 1$ and every $a_1,\ldots,a_k\in\mathcal{L}$, it holds that 
\begin{equation}
\label{eq:bel:k:inequality}
\mathtt{bel}\left(\bigvee\limits_{1\leq i\leq k}a_i\right)\geq\sum\limits_{\scriptsize{\begin{matrix}J\subseteq\{1,\ldots,k\}\\J\neq\varnothing\end{matrix}}}(-1)^{|J|+1}\cdot\mathtt{bel}\left(\bigwedge\limits_{j\in J}a_j\right).
\end{equation}
\end{itemize}
A general belief  function $\mathtt{bel}$ on a~bounded lattice $\mathcal{L}$ is called \emph{belief function} if $\mathtt{bel}(\bot)=0$ and $\mathtt{bel}(\top)=1$.
\end{definition}
\begin{definition}[(General) plausibility functions]\label{def:generalplausibilityfunction}
Let $\mathcal{L}$ be a lattice. $\pl:\mathcal{L}\rightarrow[0,1]$ is called a~\emph{general plausibility function} if the following conditions hold:
\begin{itemize}
\item $\pl$ is monotone with respect to $\mathcal{L}$, 
\item for every $k \geq 1$ and every $a_1,\ldots,a_k\in\mathcal{L}$, it holds that
\begin{equation}
\label{eq:pl:k:inequality}
\mathtt{pl}\left(\bigwedge\limits_{1\leq i\leq k}a_i\right)\leq\sum\limits_{\scriptsize{\begin{matrix}J\subseteq\{1,\ldots,k\}\\J\neq\varnothing\end{matrix}}}(-1)^{|J|+1}\cdot\mathtt{pl}\left(\bigvee\limits_{j\in J}a_j\right).
\end{equation}
\end{itemize}
Let $\mathcal{L}$ be a bounded lattice and $\mathtt{pl}$ a general plausibility function on $\mathcal{L}$. $\mathtt{pl}$ is called \emph{plausibility function} if $\mathtt{pl}(\bot)=0$ and $\mathtt{pl}(\top)=1$.
\end{definition}

\begin{definition}[(General) mass function]
\label{def:general:mass:function}
Let $S\neq\varnothing$. A~\emph{general mass function} on $S$ is a~function $\mass:S\rightarrow[0,1]$ such that $\sum\limits_{x\in S}\mass(x)\leq 1$. A~\emph{mass function} on $S$ is a~function $\mass:S\rightarrow[0,1]$ such that $\sum\limits_{x\in S}\mass(x)=1$.
\end{definition}


\begin{theorem}\label{theo:totallymonotone:charactrisation}
Let $\mathcal{L}$ be a~finite lattice and  $\bel:\mainL\rightarrow [0,1]$ be a~monotone function. Let further, $\mass_\bel : \mainL\rightarrow [0,1]$ be the M\"obius transform of $\bel$. Then, $\bel$ is a general belief function iff $\mass_\bel$ is a general mass function. 

If $\mathcal{L}$ is a~finite bounded lattice, then, $\bel$ is a  belief function iff $\mass_\bel$ is a mass function.

In addition, if $\bel$ is a (general) belief function, we have for every $x\in\mathcal{L}$, 
\begin{equation}
\bel(x)=\sum\limits_{y \leq x}\mass_\bel(y).
\end{equation}
We call $\mass_\bel$ the (general) mass function associated to $\bel$.
\end{theorem}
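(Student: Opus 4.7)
I would organise the proof around one central equivalence plus two auxiliary observations. The third assertion, $\bel(x)=\sum_{y\leq x}\mass_\bel(y)$, is simply one direction of the Möbius inversion formula~(\ref{eq:mobiustransform}) and requires no separate argument. The main work is to establish that the $k$-inequality~(\ref{eq:bel:k:inequality}) for $\bel$ is equivalent to $\mass_\bel(y)\geq 0$ for every $y\in\mainL$, after which the two normalisation clauses follow immediately.

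For the $(\Leftarrow)$ direction of this equivalence, assume $\mass_\bel\geq 0$, fix $a_1,\ldots,a_k\in\mainL$, and expand every $\bel$ in~(\ref{eq:bel:k:inequality}) via $\bel(z)=\sum_{y\leq z}\mass_\bel(y)$. Since $y\leq\bigwedge_{j\in J}a_j$ iff $y\leq a_j$ for all $j\in J$, swapping the order of summation rewrites the right-hand side as $\sum_y\mass_\bel(y)\cdot\sum_{\varnothing\neq J\subseteq I_y}(-1)^{|J|+1}$, where $I_y=\{i\mid y\leq a_i\}$. The inner alternating sum equals $1$ whenever $I_y\neq\varnothing$ and $0$ otherwise, and $\{y\mid I_y\neq\varnothing\}\subseteq\{y\mid y\leq\bigvee_i a_i\}$, so the slack of~(\ref{eq:bel:k:inequality}) collapses to $\sum_{y\leq\bigvee_i a_i,\,I_y=\varnothing}\mass_\bel(y)\geq 0$.

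The $(\Rightarrow)$ direction is the main obstacle, and I would handle it by a case analysis on how $x$ sits in the cover relation of $\mainL$. If $x$ is minimal, Möbius inversion at $x$ immediately gives $\mass_\bel(x)=\bel(x)\geq 0$. If $x$ covers exactly one element $c$, a cover-chain argument in the finite poset $\mainL$ yields $\{y\mid y<x\}=\{y\mid y\leq c\}$, so $\mass_\bel(x)=\bel(x)-\sum_{y<x}\mass_\bel(y)=\bel(x)-\bel(c)\geq 0$ by the monotonicity of $\bel$. The delicate case is when $x$ covers $c_1,\ldots,c_k$ with $k\geq 2$: first, $\bigvee_i c_i=x$ in any finite lattice, because distinct covers of $x$ are pairwise incomparable, so if $\bigvee_i c_i$ were strictly below $x$ it would lie under some cover $c_\ell$, forcing $c_i\leq c_\ell$ for every $i\neq\ell$ and contradicting incomparability. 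Applying the $k$-inequality to $c_1,\ldots,c_k$ and expanding via Möbius inversion exactly as in the previous paragraph, the slack collapses to $\sum_{y\leq x,\,I_y=\varnothing}\mass_\bel(y)$; since every $y<x$ satisfies $y\leq c_i$ for some $i$, only $y=x$ survives and the slack equals $\mass_\bel(x)$, which is therefore $\geq 0$.

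The normalisation claims are then immediate. The finite lattice $\mainL$ has a top $\top_{\mainL}$ (either the designated constant in the bounded case, or $\bigvee_{l\in\mainL}l$ otherwise), and Möbius inversion at $\top_{\mainL}$ gives $\sum_{y\in\mainL}\mass_\bel(y)=\bel(\top_{\mainL})\in[0,1]$, yielding the general mass condition. In the bounded case $\mu(\bot,\bot)=1$ gives $\mass_\bel(\bot)=\bel(\bot)$, so $\bel(\bot)=0$ and $\bel(\top)=1$ translate to $\mass_\bel(\bot)=0$ and $\sum_y\mass_\bel(y)=1$ respectively, which together with the main equivalence yield the belief/mass correspondence.
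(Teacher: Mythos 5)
Your proposal is correct, and it reaches the conclusion by a somewhat different route than the paper. For the direction ``$\bel$ is a general belief function $\Rightarrow$ $\mass_\bel\geq 0$'', the paper applies the $k$-inequality to the \emph{entire} set $A=\{b\mid b<a\}$, introduces an auxiliary additive set function $\mu(X)=\sum_{x\in X}\mass_\bel(x)$ on $\mathcal{P}(\mainL)$, and invokes the classical inclusion--exclusion lemma (\cite[Lemma 2.6]{Zhou2013}) on the decomposition $A=\bigcup_{b<a}\downarrow b$ to identify $\sum_{b<a}\mass_\bel(b)$ with the alternating sum of $\bel$-values; it then needs monotonicity to bridge $\bel(a)\geq\bel\bigl(\bigvee_{b<a}b\bigr)$, since that join may be strictly below $a$. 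You instead apply the inequality only to the lower covers $c_1,\ldots,c_k$ of $x$ and prove the lattice-theoretic fact that $\bigvee_i c_i=x$ when $k\geq 2$, so the slack of the inequality collapses to exactly $\mass_\bel(x)$ and monotonicity is needed only in the degenerate single-cover case. Your version is more self-contained and makes the role of each hypothesis more transparent; the paper's version avoids the cover analysis by outsourcing the combinatorics to Zhou's lemma. You also spell out the converse direction (the double-counting computation showing the slack equals $\sum_{y\leq\bigvee_i a_i,\ I_y=\varnothing}\mass_\bel(y)$), which the paper's appendix leaves essentially to the citation of \cite[Theorem 2.8]{Zhou2013}; that computation is the standard one and is correct as you state it. The normalisation clauses are handled identically in both arguments, via Möbius inversion at the top element. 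One small caveat, which is a wrinkle of the theorem statement rather than of your proof: Definition~\ref{def:general:mass:function} does not require $\mass(\bot)=0$ of a mass function, so the bounded-case ``iff'' implicitly uses the convention (which you make explicit) that $\mass_\bel(\bot)=\bel(\bot)=0$ is part of the correspondence.
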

This theorem follows from \cite[Theorem~2.8]{Zhou2013} that states that $f$ is weakly totally monotone iff $g$ is non-negative. It is immediate to prove that $g$ is indeed a (general) mass function.

\begin{lemma}[Mass function associated to a (general) plausibility function] 
\label{lem:pl:associated:mass}
Let $\mathcal{L}$ be a De Morgan algebra, and $\pl  :  \mathcal{L} \rightarrow [0,1]$  a general plausibility function. Then, the function $\bel_\pl  :  \mathcal{L} \rightarrow [0,1]$ such that $\bel_\pl(x)=1-\pl(\neg x)$ is a general belief function. We denote $\mass_\pl$ the mass function associated to $\bel_\pl$ and we call $\mass_\pl$ the \emph{mass function associated to $\pl$}. Then
\begin{equation}
\pl(x)=1-\sum_{y\leq\neg x} \mass_{{\pl}}(y).
\end{equation}
Moreover, if $\pl$ is a plausibility function, then $\bel_\pl$ is a belief function.
\end{lemma}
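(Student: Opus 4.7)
The proof proceeds in four short steps, each essentially a calculation using De Morgan duality together with Theorem~\ref{theo:totallymonotone:charactrisation}.

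\textbf{Step 1 (monotonicity of $\bel_\pl$).} If $x\leq y$ in $\mathcal{L}$, then by the De Morgan laws $\neg y\leq\neg x$, hence $\pl(\neg y)\leq\pl(\neg x)$ by monotonicity of $\pl$; subtracting from $1$ flips the inequality and yields $\bel_\pl(x)\leq\bel_\pl(y)$.

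\textbf{Step 2 ($k$-inequality for $\bel_\pl$).} The plan is to apply inequality~\eqref{eq:pl:k:inequality} to $\neg a_1,\ldots,\neg a_k$. Using $\neg\bigwedge_i a_i=\bigvee_i\neg a_i$ and $\neg\bigvee_{j\in J}a_j=\bigwedge_{j\in J}\neg a_j$, rewrite
\[
\pl\Bigl(\bigwedge_{i}\neg a_i\Bigr)\leq\sum_{\varnothing\neq J\subseteq\{1,\ldots,k\}}(-1)^{|J|+1}\pl\Bigl(\neg\bigwedge_{j\in J}a_j\Bigr),
\]
and then substitute $\pl(\neg z)=1-\bel_\pl(z)$ on both sides. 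The left-hand side becomes $1-\bel_\pl\bigl(\bigvee_i a_i\bigr)$. On the right-hand side, split the sum into the constant terms $(-1)^{|J|+1}\cdot 1$ and the $\bel_\pl$-terms. The plan's linchpin is the identity
\[
\sum_{\varnothing\neq J\subseteq\{1,\ldots,k\}}(-1)^{|J|+1}=1,
\]
which follows from $(1-1)^k=0$; this cancels the constant $1$'s and leaves precisely inequality~\eqref{eq:bel:k:inequality} for $\bel_\pl$.

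\textbf{Step 3 (formula for $\pl$).} Having established that $\bel_\pl$ is a general belief function, apply Theorem~\ref{theo:totallymonotone:charactrisation} to obtain its Möbius transform $\mass_\pl$, which satisfies $\bel_\pl(z)=\sum_{y\leq z}\mass_\pl(y)$ for every $z\in\mathcal{L}$. Specialising to $z=\neg x$ and using involutivity $\neg\neg x=x$, one has $\bel_\pl(\neg x)=1-\pl(x)$, whence
\[
\pl(x)=1-\bel_\pl(\neg x)=1-\sum_{y\leq\neg x}\mass_\pl(y).
\]

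\textbf{Step 4 (normalisation).} If $\mathcal{L}$ is bounded and $\pl$ is a plausibility function, then $\neg\top=\bot$ and $\neg\bot=\top$ give directly $\bel_\pl(\bot)=1-\pl(\top)=0$ and $\bel_\pl(\top)=1-\pl(\bot)=1$.

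The only nontrivial step is Step~2; the potential obstacle there is merely the bookkeeping of the inclusion-exclusion signs, which is handled uniformly by the binomial identity above. The remaining steps are a direct application of De Morgan involutivity and Theorem~\ref{theo:totallymonotone:charactrisation}.
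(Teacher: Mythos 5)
Your proposal is correct and follows essentially the same route as the paper's own proof: monotonicity via the order-reversal of $\neg$, the $k$-inequality obtained by instantiating \eqref{eq:pl:k:inequality} at $\neg a_1,\ldots,\neg a_k$ and cancelling the constants with the identity $\sum_{\varnothing\neq J\subseteq\{1,\ldots,k\}}(-1)^{|J|+1}=1$, and the mass-function formula via Theorem~\ref{theo:totallymonotone:charactrisation} and involutivity of $\neg$. No substantive differences to report.
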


\begin{lemma}
\label{lem:bel:pl:1-bel}
Let $\mathcal{L}$ be a~De Morgan algebra and $\bel : \mathcal{L} \rightarrow [0,1]$  a~general belief function. Then, the function $\pl : \mathcal{L} \rightarrow [0,1]$ such that $\pl(x)=1-\bel(\neg x)$ is a~general plausibility function. If $\bel$ is a~belief function, then $\pl$ is a~plausibility function.
\end{lemma}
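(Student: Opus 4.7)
The plan is to verify directly that $\pl(x) = 1 - \bel(\neg x)$ satisfies the three requirements for a general plausibility function: monotonicity, the $k$-inequality from Definition~\ref{def:generalplausibilityfunction}, and (in the bounded case) the boundary conditions. Each will follow mechanically by applying De Morgan's laws $\neg\bigvee a_i = \bigwedge \neg a_i$ and $\neg\bigwedge a_i = \bigvee \neg a_i$, together with the hypothesis on $\bel$ and the involutivity $\neg\neg x = x$.

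For monotonicity, I would note that if $x \leq y$ in $\mainL$, then $\neg y \leq \neg x$ (a standard consequence of the De Morgan laws and involutivity), so monotonicity of $\bel$ yields $\bel(\neg y) \leq \bel(\neg x)$, whence $\pl(x) = 1 - \bel(\neg x) \leq 1 - \bel(\neg y) = \pl(y)$. For the boundary case, if $\bel(\bot)=0$ and $\bel(\top)=1$, then since $\neg\top = \bot$ and $\neg\bot = \top$ in any bounded De Morgan algebra, we immediately get $\pl(\top) = 1 - \bel(\bot) = 1$ and $\pl(\bot) = 1 - \bel(\top) = 0$.

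The main computation, and the one I expect to be the only slightly delicate step, is the $k$-inequality. Starting from inequality~\eqref{eq:bel:k:inequality} applied to the elements $\neg a_1,\dots,\neg a_k$, one gets
\begin{equation*}
\bel\!\left(\bigvee_{1\leq i\leq k}\neg a_i\right) \;\geq\; \sum_{\varnothing \neq J \subseteq \{1,\ldots,k\}} (-1)^{|J|+1}\,\bel\!\left(\bigwedge_{j\in J}\neg a_j\right).
\end{equation*}
Applying De Morgan on both sides rewrites this as
\begin{equation*}
1 - \pl\!\left(\bigwedge_{1\leq i\leq k} a_i\right) \;\geq\; \sum_{\varnothing \neq J \subseteq \{1,\ldots,k\}} (-1)^{|J|+1}\left(1 - \pl\!\left(\bigvee_{j\in J}a_j\right)\right).
\end{equation*}
The key arithmetical step is that the sum of signs $\sum_{\varnothing \neq J \subseteq \{1,\ldots,k\}}(-1)^{|J|+1}$ telescopes to $1$ via the binomial identity $\sum_{j=1}^{k}\binom{k}{j}(-1)^{j+1} = 1 - (1-1)^k = 1$. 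Substituting this and rearranging gives precisely inequality~\eqref{eq:pl:k:inequality}. The final sentence about the bounded case then follows from the boundary computation already carried out.
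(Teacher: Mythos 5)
Your proposal is correct and follows essentially the same route as the paper's proof: instantiate the belief inequality at $\neg a_1,\dots,\neg a_k$, push the negation through with the De Morgan laws, substitute $\pl(x)=1-\bel(\neg x)$, and cancel the constants using $\sum_{\varnothing\neq J\subseteq\{1,\dots,k\}}(-1)^{|J|+1}=1$. The only difference is that you spell out the monotonicity and boundary checks, which the paper delegates to the analogous Lemma~\ref{lem:pl:associated:mass}.
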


The following lemma will be useful for the proof of theorem~\ref{th:complBelAxioms}.
\begin{lemma}
\label{lem:Zhou:3.7}
Let $\mathcal{L}$ be a~finite distributive lattice, and $\mathcal{B}_\mathcal{L}$  the Boolean algebra generated by $\mathcal{L}$. Any (general) belief function $\bel$ on $\mathcal{L}$ can be extended to a~belief function $\bel'$ on $\mathcal{B}_\mathcal{L}$ in the sense that, for any $x\in \mathcal{L}$, $\bel'(x) = \bel(x)$.
\end{lemma}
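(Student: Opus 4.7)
The plan is to lift $\bel$ through the belief--mass correspondence established by Theorem~\ref{theo:totallymonotone:charactrisation}: pass to the mass function associated to $\bel$, extend it by zero to the larger lattice $\mathcal{B}_\mathcal{L}$, and read off $\bel'$ as its cumulative sum. This reduces the whole construction to a purely combinatorial extension on the level of mass functions, where both non-negativity and the bound $\sum\mass_\bel\le 1$ are preserved for free.

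Concretely, I would first apply Theorem~\ref{theo:totallymonotone:charactrisation} to obtain the associated general mass function $\mass_\bel\colon\mathcal{L}\to[0,1]$, i.e.\ the M\"obius transform of $\bel$ computed inside the poset $\mathcal{L}$. Next, define $\mass'\colon\mathcal{B}_\mathcal{L}\to[0,1]$ by $\mass'(y)=\mass_\bel(y)$ for $y\in\mathcal{L}$ and $\mass'(y)=0$ otherwise. Then $\mass'(y)\ge 0$ and
\[\sum_{y\in\mathcal{B}_\mathcal{L}}\mass'(y)\;=\;\sum_{y\in\mathcal{L}}\mass_\bel(y)\;\leq\;1,\]
so $\mass'$ is a general mass function on $\mathcal{B}_\mathcal{L}$. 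Now set
\[\bel'(x)\;\coloneqq\;\sum_{y\leq_{\mathcal{B}_\mathcal{L}} x}\mass'(y)\quad\text{for every }x\in\mathcal{B}_\mathcal{L}.\]
By the uniqueness of the M\"obius inversion in equation~\eqref{eq:mobiustransform}, $\mass'$ is precisely the M\"obius transform of $\bel'$ in $\mathcal{B}_\mathcal{L}$, so Theorem~\ref{theo:totallymonotone:charactrisation} applied to the finite lattice $\mathcal{B}_\mathcal{L}$ (in the direction ``general mass function $\Rightarrow$ general belief function'') yields that $\bel'$ is a general belief function. If $\bel$ was in fact a belief function, then $\mathcal{L}$ is bounded with $\sum_{y\in\mathcal{L}}\mass_\bel(y)=1$, which forces $\bel'(\top_{\mathcal{B}_\mathcal{L}})=1$ and $\bel'(\bot_{\mathcal{B}_\mathcal{L}})=0$, so $\bel'$ is a bona fide belief function on $\mathcal{B}_\mathcal{L}$.

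It remains to verify $\bel'(x)=\bel(x)$ for $x\in\mathcal{L}$. Since the canonical embedding $\mathcal{L}\hookrightarrow\mathcal{B}_\mathcal{L}$ preserves the lattice order, for any $x,y\in\mathcal{L}$ one has $y\leq_{\mathcal{B}_\mathcal{L}} x$ iff $y\leq_{\mathcal{L}} x$; consequently
\[\bel'(x)\;=\;\sum_{y\leq_{\mathcal{B}_\mathcal{L}} x}\mass'(y)\;=\;\sum_{\substack{y\in\mathcal{L}\\ y\leq_{\mathcal{L}} x}}\mass_\bel(y)\;=\;\bel(x),\]
using Theorem~\ref{theo:totallymonotone:charactrisation} again for the last equality. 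The main obstacle is hidden in the previous step: the $k$-monotonicity inequality~\eqref{eq:bel:k:inequality} for $\bel'$ must survive the passage to the larger lattice $\mathcal{B}_\mathcal{L}$, whose new meets and joins can involve Boolean complements that are absent in $\mathcal{L}$. The virtue of routing through mass functions is precisely that this inequality need not be checked by hand --- the non-negativity of $\mass'$, which is immediate from the construction, is all that is required for Theorem~\ref{theo:totallymonotone:charactrisation} to do the work.
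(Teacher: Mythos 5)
Your proposal is correct in its essentials but takes a genuinely different route from the paper. The paper treats the extension step as a black box: for a normalized belief function it simply cites Zhou's Lemma~3.7, and for a merely general belief function it first adjoins a \emph{fresh} top and bottom to $\mathcal{L}$, sets $\bel^*(\bot)=0$ and $\bel^*(\top)=1$, and then applies that lemma to $\mathcal{L}^*$. You instead unpack the construction explicitly: pass to the M\"obius transform $\mass_\bel$, extend it by zero along the order embedding $\mathcal{L}\hookrightarrow\mathcal{B}_\mathcal{L}$, and re-accumulate. This is self-contained modulo Theorem~\ref{theo:totallymonotone:charactrisation}, whose ``mass $\Rightarrow$ belief'' direction does exactly the work you need (non-negativity of $\mass'$ gives monotonicity of $\bel'$ for free, and the $k$-monotonicity inequality on the larger lattice reduces to an inclusion of down-sets), and the agreement $\bel'(x)=\bel(x)$ on $\mathcal{L}$ follows because the embedding reflects the order, as you note. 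The paper's route buys brevity; yours buys transparency, since it makes visible exactly why the new Boolean meets, joins and complements cause no trouble.

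One caveat. As literally stated, the lemma promises a \emph{normalized} belief function $\bel'$ even when $\bel$ is only a general one, and your construction does not deliver that: $\bel'(\top_{\mathcal{B}_\mathcal{L}})=\sum_{y\in\mathcal{L}}\mass_\bel(y)$ may be strictly below $1$, and $\bel'(\bot_{\mathcal{B}_\mathcal{L}})$ equals $\mass_\bel$ of the least element of $\mathcal{L}$ (which, under the standard Birkhoff embedding, is sent to $\bot_{\mathcal{B}_\mathcal{L}}$ and need not carry zero mass). This cannot be repaired by redistributing mass without disturbing the values on $\mathcal{L}$; the paper's detour through $\mathcal{L}^*$ exists precisely to park a fresh $\bot$ below $\mathcal{L}$ and a fresh $\top$ above it so that the normalization lands outside $\mathcal{L}$. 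So either adopt that step, or read the conclusion as ``general belief function'' in the unbounded case — which is in fact all that the application in Theorem~\ref{th:complBelAxioms} requires, since there the lemma is only ever fed an already-normalized belief function, i.e.\ the bounded case that your argument does handle correctly.
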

\begin{proof}
If $\bel$ is a~belief function, then we use \cite[Lemma 3.7]{Zhou2013}. Assume that $\bel$ is a~general belief function on a~finite distributive lattice $\mathcal{L}=\langle L, \vee,\wedge\rangle$. We consider $\bel^*$ the extension of $\bel$ to the distributive lattice $\mathcal{L}^*$ obtained by adding a~top and a~bottom element to $\mathcal{L}$. We define  $\bel^*(\bot)=0$ and $\bel^*(\top)=1$. This new lattice is again a~finite distributive lattice. By applying \cite[Lemma 3.7]{Zhou2013} to $\bel^*$, we obtain a~belief function $\bel'$ on $\mathcal{B}_{\mathcal{L}^*}$ such that $\bel'(x) = \bel(x)$, for every $x\in \mathcal{L}$.
\end{proof}

\section{Representations of uncertainty}\label{sec:uncertainty}
We are now ready to deal with the generalisation of uncertainty measures in the case of $\BD$. The remainder of the section is structured as follows. In Section~\ref{ssec:probabilities}, we define probabilistic $\BD$ models and recall definitions of classical and non-standard probabilities introduced within the framework of classical and Belnap Dunn logics. In Section~\ref{ssec:evidential reasoning}, we discuss the interpretation of belief and mass functions in the context of evidence-based reasoning and present Dempster's combination rule. We show a~well-known example in which it gives counterintuitive results within the framework of classical logic, and we discuss the added value of reasoning with belief functions within the framework of Belnap Dunn logic. In Section~\ref{ssec:two:dimension:belief}, we introduce $\DS$ models to interpret belief and plausibility on formulas of $\BD$ logic. Finally, we present different interpretations of belief and plausibility that lead to different generalisations of the classical definition.
Appendix~\ref{app:proof:sec3} contains the proofs of Section~\ref{sec:uncertainty}.
\subsection{Non-standard probabilities}
\label{ssec:probabilities}

Probability is the most traditional measure of uncertainty. It is usually introduced as a~measure on a~Boolean algebra, but it can also be defined as a~function on formulas of classical logic satisfying the following axioms:
\begin{itemize}
\item $\pr(\top) = 1$ (normalisation);
\item if $\varphi \vdash_{CL} \psi$  then $\pr(\varphi) \leq \mathtt{p}(\psi)$ (monotonicity);
\item $\pr(\f\lor \p) = \pr(\f) + \pr(\p)$ for $\f\land \p\equiv\bot$ (additivity).
\end{itemize}
This definition is equivalent to introducing probability on the Lindenbaum algebra of the classical propositional logic using Kolmogorov's axioms. 

There are various attempts in the literature to define probabilities on structures more general than Boolean algebras. The main purpose of introducing probability measure over $\BD$ in~\cite{KleinMajerRad2021} was to enrich the framework of Belnap--Dunn logic designed to be able to capture incomplete and/or inconsistent information with an uncertainty measure. The framework is based on the notion of a~probabilistic $\BD$ model, which is a~standard $\BD$ model equipped with a~(classical) probability measure on the set of states. 

\begin{definition}[Probabilistic $\BD$ models]
A probabilistic Belnap--Dunn model is a~tuple $\mathfrak{M}=\langle W,\mu, v^+,v^-\rangle$, such that $\langle W, v^+,v^-\rangle$ is a~$\BD$ model and $\mu: \P(W) \to [0, 1]$ is a~classical probability measure. 
\end{definition}

Probabilistic models allow for lifting the (classical) probability measure on a~set of states to probability on formulas of $\BD$ logic via their extensions: $\pr^+_\mu(\f) = \mu(|\f|^+)$, $\pr^-_\mu(\f) = \mu(|\f|^-)$. As $\pr^+_\mu$ and $\pr^-_\mu$ are related: $ \pr^-_\mu(\f) = \mu(|\f|^-) = \mu(|\neg\f|^+) = \pr_\mu(\neg\f)$, it is sufficient to work only with $\pr^+_\mu$, whence, the index can be omitted. It is shown in~\cite{KleinMajerRad2021} that the function $ \pr_\mu$ satisfies properties (i)--(iii) below. Moreover, for each function $\pr$ on the formulas of $\BD$ logic satisfying (i)--(iii) there is a~probabilistic model $\langle W,\mu, v^+,v^-\rangle$ such that $\pr(\f) = \mu(|\f|) $. This allows us to take (i)--(iii) to be an axiomatisation of probability functions over Belnap--Dunn logic, which are in \cite{KleinMajerRad2021} called non-standard probabilities.
\begin{definition} [Non-standard probability]
\label{DEF:NSprob}
A map $\pr:\LBD \rightarrow \mathbb{R}$ is a~\emph{non-standard probability} if it satisfies the following conditions:
\begin{enumerate}[label=(\roman*)]
    \item $ 0\leq \pr(x) \leq 1$ (normalisation)\label{ax:NSprob:0-1};
    \item if $\varphi \vdash_{\BD} \psi$, then $\pr(\varphi) \leq \mathtt{p}(\psi)$ (monotonicity);
    \item $\pr(\f\lor \p) = \pr(\f) + \pr(\p) - \pr(\f\wedge \p)$ \text{ (inclusion/exclusion)}. \label{ax:NSprob:import-export}
\end{enumerate}
\end{definition}

These axioms are weaker than Kolomogorovian ones and the resulting framework  behaves non-classically: probabilities of $\f$ and its negation do not sum up to $1$ any more and the probability of $\phi\wedge\neg\phi$ might be greater than $0$: probabilistic information might be incomplete and inconsistent analogously to the background system of $\BD$ logic. It also  provides us with a~continuous reading of the standard Belnap--Dunn square (Figure~\ref{fig:BDsquare}): we can see each point in the  square  as an ordered couple representing positive and negative probabilistic support assigned to a~particular proposition (Figure~\ref{fig:NSProb}). Some parts of the square suggest a~natural intuitive interpretation. The vertical line corresponds to the ‘classical’ case in the sense that the positive and negative probabilities of a~proposition sum up to $1$. The left triangle represents the area of incomplete information, while the right triangle the area of inconsistent information. The horizontal line encodes the situation when there is an equal amount of positive and negative support of the proposition. 
\begin{figure}[h]
\begin{minipage}[h]{0.45\linewidth}
\begin{center}
\begin{tikzpicture}[>=stealth,relative]
\node (U1) at (0,-1.2) {$f$};
\node (U2) at (-1.2,0) {$n$};
\node (U3) at (1.2,0) {$b$};
\node (U4) at (0,1.2) {$t$};
	
\path[-,draw] (U1) to (U2);
\path[-,draw] (U1) to (U3);
\path[-,draw] (U2) to (U4);
\path[-,draw] (U3) to (U4);
\path[->,draw] (U1) to (U4);
\path[->,draw] (U2) to (U3);
\end{tikzpicture}
\caption{Belnap--Dunn square}
\label{fig:BDsquare}
\end{center}
\end{minipage}
\hfill
\begin{minipage}[h]{0.45\linewidth}
\begin{center}
\begin{tikzpicture}[-,>=stealth,shorten >=0.5pt,auto,node distance=1.2cm,thin,
	main node/.style={circle,draw,font=\sffamily\normalsize},]
\node[main node][label=left:{$(0,0)$}] (1) {};
\node[main node][label={$(1,0)$}] (2) [above right of=1] {};
\node[main node][label=below:{$(0,1)$}] (3) [below right of=1] {};
\node[main node][label=right:{$(1,1)$}] (4) [above right of=3] {};
	
\path[every node/.style={font=\sffamily\small}]
(1) edge (2)
edge (3)
(2) edge (4)
(3) edge (4);
\path[dotted]
(2) edge (3)
(1) edge (4);
\end{tikzpicture}
\end{center}
\caption{Continuous version of Belnap--Dunn square}
\label{fig:NSProb}
\end{minipage}
\end{figure}

In the article, we do not differentiate between probabilities on $\BD$ formulas over a~given set of atomic formulas $\Prop$ and probabilities on the associated Lindenbaum algebra, that is, on the free De Morgan algebra generated by $\Prop$. The following Lemma ensures that these two notions indeed coincide.
\begin{lemma}\label{lem:correspondece:lattice:language}
 There is a~ one-one correspondence between the functions on $\LBD$ satisfying   the properties (i)--(iii) of Definition~\ref{DEF:NSprob} and the functions on the Lindenbaum algebra $\LatBD$ with the same properties. 
\end{lemma}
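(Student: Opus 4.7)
The plan is to observe that axiom (ii) (monotonicity with respect to $\vdash_\BD$) forces any non-standard probability on $\LBD$ to be constant on $\dashv\vdash_\BD$-equivalence classes, and to check that the induced map on $\LatBD$ satisfies the lattice-theoretic analogues of (i)--(iii), and conversely that pulling back any such map on the Lindenbaum algebra recovers a non-standard probability on formulas. The inverse assignments are then visibly mutual inverses.

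More concretely, first I would define the forward map. Given $\pr:\LBD\to\mathbb{R}$ satisfying (i)--(iii), define $\hat{\pr}:\LatBD\to\mathbb{R}$ by $\hat{\pr}([\varphi])\coloneqq\pr(\varphi)$. For well-definedness, note that if $\varphi\dashv\vdash_\BD\psi$ then by (ii) applied in both directions $\pr(\varphi)\leq\pr(\psi)$ and $\pr(\psi)\leq\pr(\varphi)$, so $\pr(\varphi)=\pr(\psi)$. Axiom (i) transfers trivially. For the lattice-theoretic version of (ii), recall that the Lindenbaum order is precisely $[\varphi]\leq[\psi]\Leftrightarrow\varphi\vdash_\BD\psi$, so monotonicity on $\LatBD$ is immediate from monotonicity on $\LBD$. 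For (iii), use that $[\varphi]\vee[\psi]=[\varphi\vee\psi]$ and $[\varphi]\wedge[\psi]=[\varphi\wedge\psi]$ by the construction of the Lindenbaum algebra, together with (iii) for $\pr$.

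Next I would define the backward map. Given $\hat{\pr}:\LatBD\to\mathbb{R}$ satisfying the lattice analogues of (i)--(iii), set $\pr(\varphi)\coloneqq\hat{\pr}([\varphi])$. Then (i) is clear; for (ii), if $\varphi\vdash_\BD\psi$ then $[\varphi]\leq[\psi]$ in $\LatBD$ so $\hat{\pr}([\varphi])\leq\hat{\pr}([\psi])$; for (iii), use the same lattice identities as above to move between $\pr(\varphi\vee\psi)$ and $\hat{\pr}([\varphi]\vee[\psi])$, and similarly for the meet.

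Finally, both compositions of these assignments are the identity by construction: starting from $\pr$, the induced $\hat{\pr}$ evaluates at $[\varphi]$ to $\pr(\varphi)$, and pulling back yields $\pr$ again; the other direction is symmetric. There is no serious obstacle here; the only nontrivial point is the well-definedness on equivalence classes, which is exactly where axiom (ii) does the work. The remaining verifications are routine applications of the Lindenbaum construction identities $[\varphi]\vee[\psi]=[\varphi\vee\psi]$ and $[\varphi]\wedge[\psi]=[\varphi\wedge\psi]$.
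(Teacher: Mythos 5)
Your proposal is correct and follows essentially the same route as the paper's own proof: define the induced map on equivalence classes, use monotonicity (ii) in both directions to get well-definedness, and transfer (i)--(iii) via the Lindenbaum identities $[\varphi]\vee[\psi]=[\varphi\vee\psi]$ and $[\varphi]\wedge[\psi]=[\varphi\wedge\psi]$. Your explicit remark that the two assignments are mutual inverses is a small addition the paper leaves implicit, but the argument is otherwise the same.
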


The following theorem~\cite[Theorem 4]{KleinMajerRad2021} shows that the axioms of non-standard probability are complete with respect to probabilistic $\BD$ models.

\begin{theorem}[Completeness of non-standard probabilities]\label{th:completeness_probablities} Let $\Prop$ be a~finite set of variables, and $\pr$  a~function satisfying the axioms in Definition~\ref{DEF:NSprob}. There is a~probabilistic  model $\mathfrak{M}=\langle W,\mu, v^+,v^-\rangle$, such that $\pr = \pr_\mu$ in the sense that $\pr(\f) = \mu(|\f|^+)$.
\end{theorem}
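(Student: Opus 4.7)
The plan is to construct a probabilistic $\BD$ model directly from $\pr$, with states indexing all possible literal-patterns on $\Prop$. Let $X=\LIT(\Prop)$ and take $W=\{w_S:S\subseteq X\}$, declaring $v^+(p)=\{w_S:p\in S\}$ and $v^-(p)=\{w_S:\neg p\in S\}$. A routine induction on $\phi\in\LBD$ establishes that, for $S\neq\varnothing$, $w_S\vDash^+\phi$ iff $\mathsf{cl}_S\vdash_\BD\phi$ (and $w_\varnothing$ positively supports no $\LBD$-formula), so the sets $|\phi|^+$ enumerate precisely the $\bigwedge$-$X$-clauses appearing in $\fDNF_X(\phi)$.

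The probability measure $\mu$ is then obtained from $\pr$ by M\"obius inversion on the Boolean lattice $(2^X,\subseteq)$. For every $S\subseteq X$ set
\[m_S\coloneqq\sum_{T\supseteq S}(-1)^{|T\setminus S|}\pr^*(\mathsf{cl}_T),\]
adopting the convention $\pr^*(\mathsf{cl}_\varnothing)=1$ and $\pr^*(\mathsf{cl}_T)=\pr(\mathsf{cl}_T)$ for $T\neq\varnothing$, and put $\mu(\{w_S\})=m_S$. For $S\neq\varnothing$, the substitution $T=S\cup R$ together with the collapse $\mathsf{cl}_{S\cup R}=\mathsf{cl}_S\wedge\bigwedge_{l\in R}l$ and the $n$-ary inclusion--exclusion identity for $\pr$ (derived from axiom~(iii) by induction on $n$) simplify the alternating sum to
\[m_S=\pr(\mathsf{cl}_S)-\pr\!\left(\mathsf{cl}_S\wedge\bigvee_{l\in X\setminus S}l\right),\]
which is non-negative by monotonicity. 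The same computation at $S=\varnothing$ yields $m_\varnothing=1-\pr(\bigvee_{l\in X}l)\geq 0$ by axiom~(i); and a standard binomial calculation confirms $\sum_{S\subseteq X}m_S=1$, so $\mu$ is a genuine probability measure on $\P(W)$.

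It remains to verify $\pr(\phi)=\mu(|\phi|^+)$ for every $\phi\in\LBD$. For $\phi=\mathsf{cl}_T$ with $T\neq\varnothing$ this is immediate from M\"obius inversion: $\mu(|\mathsf{cl}_T|^+)=\sum_{S\supseteq T}m_S=\pr(\mathsf{cl}_T)$. For arbitrary $\phi$, rewrite $\phi\dashv\vdash\fDNF_X(\phi)=\bigvee_i\mathsf{cl}_{T_i}$ and apply $n$-ary inclusion--exclusion twice: once to $\pr(\fDNF_X(\phi))$, and once to $\mu\!\left(\bigcup_i|\mathsf{cl}_{T_i}|^+\right)$. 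In both expansions every meet collapses via $\bigwedge_{i\in I}\mathsf{cl}_{T_i}=\mathsf{cl}_{\bigcup_{i\in I}T_i}$, so the two alternating sums consist of the same $\pr(\mathsf{cl}_{\bigcup_{i\in I}T_i})$ terms and therefore coincide. The one delicate point --- non-negativity of the M\"obius transform --- is resolved cleanly by the telescoping above into a single monotonicity inequality, so no $k$-monotonicity beyond pairwise inclusion--exclusion is required.
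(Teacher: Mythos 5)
Your proof is correct, and it takes a more explicit, self-contained route than the paper's. The paper's main argument identifies positive extensions with uppersets of $\langle\P(\LIT),\subseteq\rangle$, transports $\pr$ to a probability function on that distributive lattice, and then invokes Zhou's extension lemma (a probability function on a finite distributive lattice extends uniquely to the generated Boolean algebra) to obtain $\mu$. You instead construct the measure directly by M\"obius inversion on $\langle\P(X),\subseteq\rangle$; your formula for $m_S$ is precisely the closed-form solution of the top-down recursion $\mu(\{s\})=\pr(\bigwedge_i l_i)-\sum_{s\subset s'}\mu(\{s'\})$ that the paper only sketches as the alternative proof from Klein--Majer--Rad, and the step that sketch leaves implicit --- non-negativity of the resulting masses --- is exactly what you settle by telescoping to $m_S=\pr(\mathsf{cl}_S)-\pr\bigl(\mathsf{cl}_S\wedge\bigvee_{l\in X\setminus S}l\bigr)$ and appealing to monotonicity. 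The paper's route buys brevity and uniqueness of the extension for free; yours buys independence from Zhou's lemma and an explicit formula for $\mu$. Two details to keep visible: the $n$-ary inclusion--exclusion is not an axiom but must be derived from the binary axiom (iii) using distributivity of the Lindenbaum algebra and monotonicity (you note this, and it is the content of the paper's Lemma~\ref{lemma:zhou:inc_exc}); and for $S=X$ the telescoping display degenerates because the disjunction is empty, but there the M\"obius sum is the single term $\pr(\mathsf{cl}_X)\geq 0$, so nothing breaks.
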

The construction of a~canonical model used in the proof of the previous theorem uses the fact we previously mentioned  that each formula of $\BD$ logic can be uniquely represented in \emph{irredundant disjunctive normal form} (iDNF). There is a~straightforward correspondence between $\BD$ formulas in iDNF and sets of sets of literals:\footnote{This correspondence is not one-to-one, as some of the sets correspond to a~formula in DNF, but not in iDNF.
}
$$\f = \bigvee\limits_{i} \bigwedge\limits_{j} l^i_j \to \{\{l^1_1, \dots , l^1_{n_1}\}, \dots, \{l^m_1, \dots ,  l^m_{n_m}\}\},\ l^i_j \in\  \LIT$$

In other words, each formula corresponds to a~disjunction of a~family of sets of literals interpreted conjunctively. 

\begin{definition}[Canonical model]\label{DEF:CanModel}
The canonical $\BD$ model is a~tuple $\mathscr{M}_c = \langle\P(\LIT), v^+_c, v^-_c\rangle$, where the valuations $v^+_c, v^-_c: \Prop \to \P(S_{c}) = \P(\P(\LIT))$ are defined as  $v^+_c(p) = \{s\mid p\in s\}$, $v^-_c(p) = \{s\mid \neg p\in s\}$.
\end{definition}
\begin{proof}[Proof of Theorem~\ref{th:completeness_probablities}]
Consider the canonical model $\mathscr{M}_c = \langle \P(\LIT),  v^+_c, v^-_c\rangle$ introduced above. Notice that, for every $p\in\Prop$, $v^+_c(p)$ and $v^-_c(p)$ are uppersets\footnote{Indeed, for every $s,s'\in \P(\LIT)$, if $s\models^+ p$ and $s \subseteq s'$, then $s'\models^+ p$.} in the poset $\langle\P(\LIT),\subseteq\rangle$. The positive extension of a~formula $\phi$ in iDNF, $\phi = \bigvee_{i=1}^n \gamma_i $ for some conjunctions of literals $\gamma_i$ is the set 
$|\phi|^+ = \{s\mid s\models\phi\} = \{s\mid s\supseteq \LIT(\gamma_i)\text{ for some } i, 1\leq i \leq n \}$. 
Thus, extensions of formulas are uppersets in the poset $\langle\P(\LIT),\subseteq\rangle$, the sets of literals $\gamma_i$ generate the upperset $|\varphi|^+$ and in fact, they are the minimal set of its generators.
This correspondence is one-to-one: each extension is an upperset, and each upperset (other than $\varnothing$ and $\P(\LIT)$)\footnote{Notice that $\varnothing$ and $\P(\LIT)$ are the extensions of $\bot$ and $\top$.} is a~positive extension (of the formula given in iDNF using the finite antichain of the generators of the upperset).  Moreover, the mapping $\f\mapsto|\f|^+$ is an isomorphism of both structures understood as distributive lattices ($|\f\lor \p|^+ = |\f|^+\cup |\p|^+$ and $|\f\land \p|^+ = |\f|^+\cap |\p|^+$), hence a~non-standard probability function $\pr$ on formulas defines a~non-standard probability function $\pr'$ on uppersets (other than $\varnothing$ and $\P(\LIT)$) as $\pr'(|\f|^+) = \pr(\f)$. We can extend $\pr'$ to $\varnothing$ and $\P(\LIT)$ in the obvious way: $\pr'(\varnothing)=0$ and $\pr'(\P(\LIT))=1$ and we obtain what is in \cite{Zhou2013} called probability function on a~distributive lattice.
Then we use Lemma 3.5 in \cite{Zhou2013}, which says that a~probability function on a~distributive lattice $\mathcal{L}$  can be uniquely extended to a~probability function on the Boolean algebra $B_\mathcal{L}$ generated by the lattice $\mathcal{L}$.

Let us note that in \cite{KleinMajerRad2021} an alternative proof is provided. It uses the fact that the required probability measure $\mu$ on the canonical model is generated by its values on singletons $\{s\}, s\in \P(\LIT)$. As the sets of literals are ordered by inclusion with the maximal element being the set corresponding to the conjunction of all the literals, we can define the measure on singletons inductively. We start with the conjunctive clause $\gamma_{max} = \bigwedge\limits_{l \in \LIT}l, \LIT = \{1, \dots, n\}$ and assign $\mu(\{{l \mid l \in \LIT}\}) = \pr(\gamma_{max})$. In the induction step for $s = \{  l_1, \dots ,l_k \}, k \leq n$, we define  $ \mu(\{s\}) = \pr\left(\bigwedge\limits_{i = 1\dots k}l_i\right) - \sum\limits_{s \subset s'} \mu(\{s'\})$.
\end{proof}
\subsection[Evidential reasoning]{Evidential reasoning via mass functions on algebras}
\label{ssec:evidential reasoning}

The classical treatment of probability has two distinctive traits. First, the probability is assumed to be ‘compositional’, in the sense the probability measure of any given event is uniquely determined by the probabilities of elementary events. Second, and related to the first, is that probabilities of all events are assumed to be known (or at least, knowable). Formally, these assumptions lead to sample spaces being Boolean algebras.

It may be reasonably argued that these assumptions are too optimistic and do not correspond to the situations one encounters in practice. Indeed, given the probability assignments of some elementary events $a_1$, \ldots, $a_m$, one may not be able to infer probabilities of their combinations if said assignments were obtained by different methods (i.e., the data were heterogeneous). On a~related note,  it is not necessarily the case that the probability of all elementary events is known even if one somehow obtained an assignment for a~complex event composed of those.

Taking that into account, one can generalize the classical approach to probability in two (compatible) ways. First, given a~Boolean algebra $\mathcal{B}$, one can define the probability assignment on its proper subalgebra $\mathcal{B}'$. The values of the events in $\mathcal{B}\setminus\mathcal{B}'$ can be then approximated via more general uncertainty measures, e.g., belief and plausibility functions or inner and outer measures. The other approach is to represent the sample space not as a~Boolean algebra but in the form of another, more general structure.

\paragraph{Belief and plausibility functions}
Belief functions were introduced in~\cite{Shafer1976} as a~generalisation of probabilities for the case where the exact compositional uncertainty measures are not given to the entire sample space of events. Originally, they were defined on Boolean algebras, however, later work~\cite{Barthelemy2000,Grabisch2009,Zhou2013,FritellaManoorkarPalmigianoTzimoulisWijnberg2020} saw them further expanded on arbitrary and distributive lattices. 
In this section, we will use a~combination of the two approaches given above and consider belief (and their dual counterparts, plausibility functions) on De Morgan algebras of which Boolean algebras are a~particular case and which themselves are a~special case of distributive lattices equipped with negation.\\
In the standard approach both belief and plausibility use in fact the same information represented by the mass function, but deal with it in a~different way. While we can see belief as the amount of information which directly supports the statement in question, plausibility represents the amount of information which does not contradict the statement. As Halpern~\cite[P.38]{Halpern2017} says: ‘$\pl_m(U)$ can be thought of as the sum of the probabilities of the evidence that is compatible with the actual world being in $U$’. This idea is captured in the definition of plausibility via mass function: $\pl(A) = \sum\limits_{A\cap B \neq \varnothing} \mass(B)$. Alternatively, we can understand plausibility as a~measure of the information, which does not support the negation of the hypothesis: $\pl(A) =1-\bel( A^c) = \sum\limits_{B \not\subseteq A{^c}} \mass(B)$. We can also see belief and plausibility as approximations, as a~lower and an upper bound for the ‘true’ probability: $\bel(A)\leq \pr(A)\leq \pl(A)$. Although in the classical case all these readings coincide, in the case of $\BD$ logic they do not, which gives us several possibilities of defining belief/plausibility pairs.
\paragraph{Dempster's combination rule on powerset algebras}
Dempster--Shafer theory \cite{Dempster1968,Shafer1976} 
is a~formal framework for decision-making under uncertainty in situations in which some propositions cannot be assigned probabilities. The core proposal of Dempster--Shafer theory is that, in such cases, the missing value can be replaced by a~range of values, the lower and upper bounds of which are assigned by belief and plausibility functions. 
In fact, the correspondence between belief functions and mass functions is used to formalise probabilistic reasoning based on pieces of evidence. A~mass function is assigned to each piece of evidence to encode the information contained in the evidence. For instance, if an expert states that they are 70\% certain that $p \vee q$ is true and that they do not give more information. One would assign the following mass function to this piece of evidence: $\mass(p\vee q)=0.7$ and $\mass(\top)=0.3$. Here, the remaining mass is assigned to $\top$, because it represents the non-informative statement. In the classical case, mass functions, belief functions and plausibility functions are connected via the following interpretation.
While mass function represents the amount of evidence committed exactly to a~particular statement, we can see belief as collecting information which directly supports the statement in question, while  plausibility represents the amount of information which does not contradict the statement. 
Belief (resp., plausibility) is given by the sum of masses of the propositions implying (resp., not contradicting) it.
One can already observe that belief and  plausibility are connected via the negation and the notion of contradiction. Therefore, shifting from  classical logic to $\BD$ logic will impact the definition of plausibility and the connection between belief and plausibility. In fact, this will open the door to many alternative definitions of plausibility.

Since, a~priori, a~mass function is assigned to each piece of evidence, the natural next step is to define a~way to combine the information obtained from each piece of evidence. In what follows, we discuss Dempster's combination rule and its interpretation on powerset algebras. Then, we motivate interpreting it on De Morgan algebras to model more accurately and in a~more informative manner situations in which one handles contradictory evidence.
\begin{definition}[Dempster's combination rule over a~powerset algebra] Let $\mass_1$ and $\mass_2$ be two mass functions on a~powerset algebra $\mathcal{P}(S)$. Dempster's combination rule computes their aggregation $\mass_{1\oplus 2}$ as follows.
\begin{align}
\label{eq:combination:rule:cl}
\mass_{1 \oplus 2} : \P(S) & \rightarrow [0,1]
\\
X & \mapsto 
\left\{
 \begin{aligned}
 \; &0 & \mbox{if } X=\varnothing \\
 \; &\frac{\sum \{ \mass_1(X_1) \cdot \mass_2(X_2) \mid X_1 \cap X_2 = X \} }{\sum \{ \mass_1(X_1) \cdot \mass_2(X_2) \mid X_1 \cap X_2 \neq \varnothing \} } 
 & \mbox{otherwise.}
 \end{aligned}
\right.
\notag
\end{align}
\end{definition}

\begin{example}[Two disagreeing experts. Classical reasoning]\label{example:doctorsclassical} A~patient is sick, and two doctors are asked their opinions about which disease the patient has. Three diseases are being considered: $S=\{a,b,c\}$. It is assumed that the experts are infallible, and that the patient can have one and only one of the considered diseases. Therefore, the events $a$, $b$ and $c$ are incompatible and exhaustive.
Expert 1 thinks that the patient has disease $a$ with certainty $0.9$, disease $b$ with certainty $0.1$, and that it is impossible that they have disease $c$, therefore assigning probability $0$ to that option.
Expert 2 thinks that the patient has disease $c$ with certainty $0.9$, disease $b$ with certainty $0.1$, and that it is impossible that they have disease $a$, therefore assigning probability $0$ to that option.

The opinion of expert 1 is described by the mass function $\mass_1  :  \P(\{a,b,c\}) \rightarrow [0,1]$ such that $\mass_1(\{a\})=0.9$, $\mass_1(\{b\})=0.1$, and $\mass_1(x)=0$ otherwise.
The opinion of expert 2 is described by the mass function $\mass_2  :  \P(\{a,b,c\}) \rightarrow [0,1]$ such that $\mass_2(\{b\})=0.1$, $\mass_2(\{c\})=0.9$, and $\mass_2(x)=0$ otherwise.

The aggregated mass function $\mass_{1\oplus 2}$, using Dempster's combination rule, is as follows: 
$$
\mass_{1\oplus 2}(x) = \left\{
 \begin{array}{ll}
 1 & \mbox{if } x=\{b\} \\
 0 & \mbox{otherwise.}
 \end{array}
\right.
$$
We get $\mass_{1\oplus 2}(\{a\})=0$ because for any two elements $x,y \in \P(\{a,b,c\})$ such that $x \cap y = \{a\}$ we have $\mass_1(x) \cdot \mass_2(y)=0$. This result comes from the fact that Dempster's combination rule simply gets rid of contradiction.

This conclusion  makes sense given the hypothesis. Indeed, one can, for instance, consider a situation where Doctor 1 (resp., Doctor 2) is an expert in disease $c$ (resp., $a$), then when they say $c$ (resp., $a$) is impossible, it must be the case. In that situation, the only reasonable conclusion is that the patient has disease $b$. This is discussed in more detail in~\cite{DuboisPrade1985}. However, in many situations, things are not that clear, and experts are not 100\% reliable, that is, the hypothesis of Dempster's rule is not true and this rule does not give the expected conclusions. Indeed, it could be more reasonable to conclude that there is 50\%-50\% that the patient has disease $a$ and/or disease $c$ and that it is very unlikely that it is disease $b$, because both experts agree on that fact.

Notice that if one decides to assign a~very small mass (e.g.\ $10^{-4}$) instead of $0$ for $\mass_1(\{c\})$ and $\mass_2(\{a\})$, one gets 
the following mass functions 
\begin{align*}
 \mass_1 : \P(S) &\rightarrow [0,1]
 &
 \mass_2 : \P(S) &\rightarrow [0,1]
 \\
 x & \mapsto \left\{
 \begin{array}{ll}
 0.89995 & \mbox{if } x=\{a\} \\
 0.09995 & \mbox{if } x=\{b\} \\
 0.0001 & \mbox{if } x=\{c\} \\
 0 & \mbox{otherwise.}
 \end{array}
 \right.
 &
 x & \mapsto \left\{
 \begin{array}{ll}
 0.0001 & \mbox{if } x=\{a\} \\
 0.09995 & \mbox{if } x=\{b\} \\
 0.89995 & \mbox{if } x=\{c\} \\
 0 & \mbox{otherwise.}
 \end{array}
 \right.
\end{align*}
and the following aggregated mass function:\footnote{Note that those are rounded numbers.}
$$
\mass_{1\oplus 2}(x) = \left\{
 \begin{array}{ll}
 0.00885 & \mbox{if } x=\{a\} \\
 0.9823 & \mbox{if } x=\{b\} \\
 0.00885 & \mbox{if } x=\{c\} \\
 0 & \mbox{otherwise.}
 \end{array}
\right.
$$
That is one still concludes that disease $b$ is way more likely than disease $a$ or $c$. Indeed, one gets $\bel_{1\oplus 2} (b) = 0.9823$ and $\bel_{1\oplus 2}(\{a,c\})=\mass_{1\oplus 2}(\{a\}) + \mass_{1\oplus 2}(\{c\})=0.0177$.
\end{example}

The example above shows how Dempster's combination rule can give counterintuitive results when applied to situations that do not comply with Dempster's hypothesis. Several modifications of this rule have been proposed and studied in the literature to aggregate evidence both from not fully reliable sources and from sources strongly contradicting each other. {\em Discounting} or {\em tradeoff} method to deal with conflict is described in~\cite{Shafer1976}. When the sources have a conflict between them, the analyst discounts sources based on their reliability before using Dempster's combination rule.

Another combination rule is proposed in~\cite{Yager1987}. It is similar to the  Dempster's but the mass attached to conflicting evidence is assigned to the whole frame of discernment. That is, having conflicting evidence is considered equivalent to having no information.

The non-normalized version of Dempster's rule allows for the mass of the empty set to be non-zero. The mass of the empty set can be interpreted as the amount of contradiction between the two sources. In the open-world context, belief functions are not necessarily normalized~\cite{Smets1988,Smets1992} and the mass of the empty set can be interpreted as evidence indicating an unexpected outcome.

Another option~\cite{DuboisPrade1988} is that if two sources attach mass to disjoint sets $A$ and $B$, then in the combination the mass $\mass(A)\cdot\mass(B)$ is attached to the set $A \cup B$. Intuitively this corresponds to the idea that if sources are contradictory, then the analyst concludes that at least one of them is correct. In what follows we argue that reasoning within the framework of $\BD$, that is, on De Morgan algebras, allows for a more detailed description of the available evidence --- especially regarding the contradictory pieces of evidence --- and to treat more diverse situations with the close-world assumption and without needing to evaluate the expertise and reliability of the sources.
\paragraph{Dempster's combination rule on De Morgan algebras} Recall that $\BD$ logic is the logic of De Morgan algebras, therefore, in that framework, saying that a~formula $\phi$ is true means that ‘we have information supporting that fact that the statement $\phi$ is true’. Therefore, as we have already mentioned in the introduction, there are no two formulas $\phi,\chi\in\LBD$ s.t.\ $\phi\wedge\chi\vdash\psi$ is $\BD$-valid \emph{for every $\psi\in\LBD$}. Indeed, one can have pieces of information supporting contradictory statements. Therefore, if we consider De Morgan algebras, we get the following adaptation of Dempster's combination rule.
\begin{definition}[Dempster's combination rule over a~De Morgan algebra]
\label{def:combination:rule:BD}
Let $\mathcal{L}$ be a~De Morgan algebra (without the constants $\bot$ and $\top$ in the language).
Let $\mass_1$ and $\mass_2$ be two general mass functions on $\mathcal{L}$. Dempster's combination rule computes their aggregation $\mass_{1\oplus 2}$ as follows.
\begin{align}
\label{eq:combinatin:rule:BD}
\mass_{1 \oplus 2} : \mathcal{L} & \rightarrow [0,1]
\\
x & \mapsto \sum \{ \mass_1(x_1) \cdot \mass_2(x_2) \mid x_1 \wedge x_2 = x \}.
\notag
\end{align}

Let $\mathcal{L}$ be a~bounded De Morgan algebra (that is, with the constants $\bot$ and $\top$ in the language).
Let $\mass_1$ and $\mass_2$ be two mass functions on $\mathcal{L}$. Dempster's combination rule computes their aggregation $\mass_{1\oplus 2}$ as follows.
\begin{align}
\label{eq:combinatin:rule:BD:bot}
\mass_{1 \oplus 2} : \mathcal{L} & \rightarrow [0,1]
\\
x & \mapsto 
\left\{
 \begin{aligned}
 \; &0 & \mbox{if } x=\bot \\
 \; &\frac{\sum \{ \mass_1(x_1) \cdot \mass_2(x_2) \mid x_1 \wedge x_2 = x \} }{\sum \{ \mass_1(x_1) \cdot \mass_2(x_2) \mid x_1 \wedge x_2 \neq \bot \} } 
 & \mbox{otherwise.}
 \end{aligned}
\right.
\notag
\end{align}
\end{definition}

Notice that in equation \eqref{eq:combinatin:rule:BD}, there is no normalisation term. Indeed, here $$\sum_{x\in \mathcal{L}}\mass_{1 \oplus 2}(x) = \sum \{ \mass_1(x_1)\cdot\mass_{2}(x_2) \mid x_1,x_2 \in \mathcal{L}\}$$ In addition, we have
\begin{align*}
 \sum_{x\in \mathcal{L}}\mass_{1 \oplus 2}(x) 
 & = \sum_{x\in \mathcal{L}}\left(\sum\limits_{\scriptsize{\begin{matrix}x_1,x_2\in \mathcal{L}\\x_1\wedge x_2=x\end{matrix}}}\mass_1(x_1)\cdot \mass_2(x_2)\right)\\
 & = \sum\limits_{\scriptsize{\begin{matrix}x_1,x_2\in \mathcal{L}\\x_1\wedge x_2=x\end{matrix}}}\mass_1(x_1)\cdot \mass_2(x_2)
 \\
 & = \sum_{x_1\in \mathcal{L}} \sum_{x_2\in\mathcal{L}} \mass_1(x_1)\cdot\mass_{2}(x_2)
 \\
 & = \sum_{x_1\in \mathcal{L}} \mass_1(x_1) \cdot \sum_{x_2\in\mathcal{L}} \mass_{2}(x_2).
\end{align*}
Therefore, $\mass_{1 \oplus 2}$ is a~general mass function, because $0 \leq \sum\limits_{x_1\in\mathcal{L}} \mass_{1}(x_1)\leq 1$ and $0 \leq \sum\limits_{x_2\in\mathcal{L}} \mass_{2}(x_2)\leq 1$.

\begin{lemma}
In the case of Dempster's combination rule for 
bounded De Morgan algebras, if we consider the free De Morgan algebra generated by a~finite set of variables $\Prop$ and constants $\{\bot,\top\}$, then we have that for every $x\in \mathcal{L}$,
 \begin{align}
 \mass_{1 \oplus 2} (x)&= \sum\limits_{\scriptsize{\begin{matrix}x_1,x_2\in \mathcal{L}\\x_1\wedge x_2=x\end{matrix}}}\mass_1(x_1)\cdot \mass_2(x_2)
 \end{align}
\end{lemma}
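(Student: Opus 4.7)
The crux of the lemma is the following algebraic fact about the free bounded De Morgan algebra $\mathcal{L}$ generated by $\Prop\cup\{\bot,\top\}$:
\begin{equation*}
(\star)\qquad x_1 \wedge x_2 = \bot \iff x_1 = \bot \text{ or } x_2 = \bot.
\end{equation*}
Once $(\star)$ is in hand, the lemma follows from a short arithmetic collapse of equation~\eqref{eq:combinatin:rule:BD:bot}, using the fact that mass functions arising from belief functions on a bounded lattice satisfy $\mass(\bot)=0$ (they are Möbius transforms of $\bel$ with $\bel(\bot)=0$; cf.\ Theorem~\ref{theo:totallymonotone:charactrisation}).

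To prove $(\star)$, ``$\Leftarrow$'' is immediate from $\bot\wedge y=\bot$. For the converse I argue contrapositively: if $x_1,x_2\neq\bot$ in $\mathcal{L}$, then $x_1\wedge x_2\neq\bot$. Take the canonical model $\mathscr{M}_c$ of Definition~\ref{DEF:CanModel} and single out its top state $s^\star=\LIT$. A simultaneous induction on $\varphi\in\LbBD$ establishes
\begin{align*}
s^\star\vDash^+\varphi &\iff \varphi\neq\bot \text{ in } \mathcal{L},\\
s^\star\vDash^-\varphi &\iff \varphi\neq\top \text{ in } \mathcal{L}.
\end{align*}
Base cases (literals and the two constants) are direct; the negation step uses $\neg\varphi=\bot \Leftrightarrow \varphi=\top$. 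In the $\wedge$-step, ``$\Rightarrow$'' follows immediately from the semantic clause, while for ``$\Leftarrow$'' one observes that $\psi_1\wedge\psi_2\neq\bot$ forces $\psi_i\neq\bot$ for both $i$ (this uses only $\bot\wedge\chi=\bot$, so no circularity with the claim being proved), whence the induction hypothesis supplies $s^\star\vDash^+\psi_i$. The $\vee$-step is symmetric. Consequently, $x_1,x_2\neq\bot$ gives $s^\star\vDash^+ x_1\wedge x_2$, so $x_1\wedge x_2\neq\bot$.

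Granted $(\star)$, the pairs $(x_1,x_2)$ with $x_1\wedge x_2\neq\bot$ are exactly those with $x_1\neq\bot$ and $x_2\neq\bot$. Since $\mass_1(\bot)=\mass_2(\bot)=0$, the denominator in~\eqref{eq:combinatin:rule:BD:bot} computes as
\begin{align*}
\sum_{x_1\wedge x_2\neq\bot}\mass_1(x_1)\mass_2(x_2) = \Bigl(\sum_{x_1\in\mathcal{L}}\mass_1(x_1)\Bigr)\Bigl(\sum_{x_2\in\mathcal{L}}\mass_2(x_2)\Bigr) = 1,
\end{align*}
so no renormalisation is needed and \eqref{eq:combinatin:rule:BD:bot} reduces to the unnormalised numerator for every $x\neq\bot$. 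For $x=\bot$, $(\star)$ together with $\mass_i(\bot)=0$ yields $\sum_{x_1\wedge x_2=\bot}\mass_1(x_1)\mass_2(x_2)=0$, matching the value $\mass_{1\oplus 2}(\bot)=0$ forced by the definition.

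The only genuine content sits in $(\star)$: in a Boolean algebra it is false ($p\wedge\neg p=\bot$ while $p,\neg p\neq\bot$), so $(\star)$ records the failure of the principle of explosion in De Morgan algebras. Making this precise via the ``greatest state'' $\LIT$ of the canonical model is the main obstacle; once it is established, the remaining arithmetic is routine.
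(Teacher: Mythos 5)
Your proof is correct and follows essentially the same route as the paper: both hinge on the observation that in the free bounded De Morgan algebra $x_1\wedge x_2=\bot$ forces $x_1=\bot$ or $x_2=\bot$, after which the normalising denominator in \eqref{eq:combinatin:rule:BD:bot} collapses to $1$ because $\mass_1(\bot)=\mass_2(\bot)=0$, and the $x=\bot$ case is immediate. The only difference is that the paper merely asserts this key fact with a ``notice that,'' whereas you actually prove it via the all-literals state of the canonical model --- a worthwhile addition, but not a different argument.
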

\begin{proof}
In equation \eqref{eq:combinatin:rule:BD:bot}, notice that $x_1 \wedge x_2 = \bot$ iff either $x_1=\bot$ or $x_2=\bot$. This implies that
\begin{align*}
 \sum\limits_{x_1 \wedge x_2 \neq \bot}\mass_1(x_1) \cdot \mass_2(x_2)& = \sum_{x_1\in \mathcal{L}\smallsetminus{\bot}}
 \sum_{x_2\in \mathcal{L}\smallsetminus{\bot}} \mass_1(x_1)\cdot \mass_2(x_2)
 \\
 & = \sum_{x_1\in \mathcal{L}}
 \sum_{x_2\in \mathcal{L}} \mass_1(x_1)\cdot \mass_2(x_2)
 \tag{because $\mass_1(\bot)=\mass_2(\bot)=0$}
 \\
 & = \sum_{x_1\in \mathcal{L}} \mass_1(x_1)\cdot 
 \sum_{x_2\in \mathcal{L}}  \mass_2(x_2)
 \\
 & = \sum_{x_1\in \mathcal{L}} \mass_1(x_1)
 \tag{because $\sum_{x_2\in \mathcal{L}}  \mass_2(x_2)=1$}
 \\
 & = 1.
 \tag{because $\sum_{x_1\in \mathcal{L}} \mass_1(x_1)=1$}
\end{align*}
Therefore, if $x \neq \bot$, we have $\mass_{1 \oplus 2} (x) = \sum\limits_{x_1 \wedge x_2 = x}\mass_1(x_1) \cdot \mass_2(x_2)$. If $x=\bot$, then $\sum\limits_{x_1 \wedge x_2 = x}\mass_1(x_1) \cdot \mass_2(x_2)=0$, because either $x_1=\bot$ or $x_2=\bot$.
\end{proof}
\begin{remark}
In Section~\ref{ssec:two:dimension:belief}, we introduce $\DS$ models (Definition~\ref{def:DS:models}) on which we define belief functions over $\BD$ logic. Notice that we define $\bel^+$ and $\bel^-$, that generalise the non-standard probabilities $\pr^+$ and $\pr^-$, on the Lindenbaum algebra associated to the model. Therefore, we consider belief functions over free De Morgan algebras.

Note furthermore, that the scenario in Example~\ref{example:doctorsclassical} (as well as its $\BD$ modification in the following Example~\ref{example:doctorsBD}) cannot be treated via a possibility or necessity measure since the mass functions are not concordant. But a possibility (or necessity measure) can be generated only by a concordant\footnote{Mass function $\mass$ is \emph{concordant} on a lattice $\mathcal{L}$ iff $\mass(x),\mass(x')>0$ entails that $x$ and $x'$ are comparable w.r.t.~$\leq_\mathcal{L}$.} mass function~\cite[Theorem~2.7.4]{Halpern2017}.
\end{remark}
\begin{example}[Two disagreeing experts. Reasoning with $\BD$ logic]\label{example:doctorsBD}
We consider the previous case study, but we consider the general mass functions over the free De Morgan algebra $\mathcal{A}_{a,b,c}$ generated by $\{a,b,c\}$:
\begin{align*}
 \mass_1 : \mathcal{A}_{a,b,c} &\rightarrow [0,1]
 &
 \mass_2 : \mathcal{A}_{a,b,c} &\rightarrow [0,1]
 \\
 x & \mapsto \left\{
 \begin{array}{ll}
 0.9 & \mbox{if } x=a \\
 0.1 & \mbox{if } x=b \\
 0 & \mbox{otherwise.}
 \end{array}
 \right.
 & 
 x & \mapsto \left\{
 \begin{array}{ll}
 0.9 & \mbox{if } x=c \\
 0.1 & \mbox{if } x=b \\
 0 & \mbox{otherwise.}
 \end{array}
 \right.
\end{align*}
We get the following aggregated mass function
$$
\mass_{1\oplus 2}(x) = \left\{
 \begin{array}{ll}
 0.81 & \mbox{if } x=a \wedge c \\
 0.09 & \mbox{if } x=a \wedge b \text{ or } x= b\wedge c\\
 0.01 & \mbox{if } x=b \\
 0 & \mbox{otherwise.}
 \end{array}
\right.
$$
Here, one still has $\mass_{1\oplus 2}(a)=\mass_{1\oplus 2}(c)=0$, but $\mass_{1\oplus 2}(b)=0.01$ is now small. In addition 
$$\bel_{1\oplus 2}(a) = \mass_{1\oplus 2}(a) + \mass_{1\oplus 2}(a\wedge b) + \mass_{1\oplus 2}(a \wedge c) = 0.9$$ 
is $4.7$ times larger than 
$$\bel_{1\oplus 2}(b) = \mass_{1\oplus 2}(b) + \mass_{1\oplus 2}(a\wedge b) + \mass_{1\oplus 2}(b \wedge c) =0.19.$$
Here, the mass function $\mass_{1\oplus 2}$ tells us that the evidence strongly supports the fact that the patient has disease $a$ and $c$, and that the evidence is less conclusive concerning disease $b$.

One could object that in the classical case, it is assumed that it is impossible for the patient to have two diseases, in which case, one might want to formalise the example with the following mass functions:
\begin{align*}
 \mass_1 : \mathcal{A}_{a,b,c} &\rightarrow [0,1]
 &
 \mass_2 : \mathcal{A}_{a,b,c} &\rightarrow [0,1]
\\
x & \mapsto \left\{
\begin{array}{ll}
0.9 & \mbox{if } x=a\wedge\neg b \wedge \neg c \\
0.1 & \mbox{if } x=\neg a~\wedge b \wedge \neg c \\
0 & \mbox{otherwise.}
\end{array}
\right.
& 
x & \mapsto \left\{
\begin{array}{ll}
0.9 & \mbox{if } x=\neg a~\wedge \neg b \wedge c \\
0.1 & \mbox{if } x=\neg a~\wedge b \wedge \neg c \\
0 & \mbox{otherwise.}
\end{array}
\right.
\end{align*}
This gives us the following aggregated mass function:
$$
\mass_{1\oplus 2}(x) = \left\{
\begin{array}{ll}
0.81 & \mbox{if } x=a \wedge \neg a~\wedge \neg b \wedge c \wedge \neg c\\
0.09 & \mbox{if } x=a \wedge \neg a~\wedge b \wedge \neg b \wedge \neg c \text{ or } x= \neg a~\wedge b \wedge \neg b \wedge c \wedge \neg c\\
0.01 & \mbox{if } x=\neg a~\wedge b \wedge \neg c \\
0 & \mbox{otherwise.}
\end{array}
\right.
$$
Here, the mass function highlights that (1) experts agree the patient does not have disease $b$ and (2) the agent has contradictory information which might lead to the conclusion that further investigation is necessary. Indeed, if one asks two equally qualified experts about their opinions and if they contradict each other, it is only natural to consult more experts. In the same time, we still have
$\bel_{1 \oplus 2}(a)=\bel_{1 \oplus 2}(c)=0.9$ and $\bel_{1 \oplus 2}(b)=0.19$.
In addition, we can also describe in detail the contradictory information available: $\bel_{1\oplus 2}(a \wedge \neg a)=\bel_{1\oplus 2}(a \wedge \neg a)=0.9$ and $\bel_{1\oplus 2}(b \wedge \neg b)=0.18$.

This framework also has the advantage to allow us to formalise a~situation in which both experts did not consider the same set of eventualities. Assume that expert 1 simply did not consider disease $c$ as an option (because they forgot, because they are not aware of it, because they could not test for it...) and expert 2 did not consider disease $a$ as an option. Then the initial mass functions become:
\begin{align*}
 \mass_1 : \mathcal{A}_{a,b,c} &\rightarrow [0,1]
 &
 \mass_2 : \mathcal{A}_{a,b,c} &\rightarrow [0,1]
 \\
 x & \mapsto \left\{
 \begin{array}{ll}
 0.9 & \mbox{if } x=a\wedge\neg b \\
 0.1 & \mbox{if } x=\neg a~\wedge b \\
 0 & \mbox{otherwise.}
 \end{array}
 \right.
 & 
 x & \mapsto \left\{
 \begin{array}{ll}
 0.9 & \mbox{if } x= \neg b \wedge c \\
 0.1 & \mbox{if } x= b \wedge \neg c \\
 0 & \mbox{otherwise.}
 \end{array}
 \right.
\end{align*}
and we get the aggregated mass function
$$
\mass_{1\oplus 2}(x) = \left\{
 \begin{array}{ll}
 0.81 & \mbox{if } x=a \wedge \neg b \wedge c \\
 0.09 & \mbox{if } x=a \wedge b \wedge \neg b \wedge \neg c \text{ or } x= \neg a~\wedge b \wedge \neg b \wedge c \\
 0.01 & \mbox{if } x=\neg a~\wedge b \wedge \neg c \\
 0 & \mbox{otherwise.}
 \end{array}
\right.
$$
Here, the conclusion is that the patient is likely to have diseases $a$ and $c$.
\end{example}

In Section~\ref{ssec:two:dimension:belief}, we present several interpretations of general belief and plausibility functions introduced in~Definitions~\ref{def:generalbelieffunction} and~\ref{def:generalplausibilityfunction} in the $\BD$ framework. We choose general belief (plausibility) functions instead of the usual ones because $\BD$ does not have valid formulas. Furthermore, even though, given a finite set $X$ of literals, their conjunction entails every other $\LBD$-formula and thus has \emph{the lowest} belief or plausibility, it is still not incoherent or absurd and thus, one cannot a~priori assume that $\bel\left(\left|\bigwedge\limits_{l\in X}l\right|^+\right)=0$ since it is possible that this is exactly what the sources are telling the agent.

Dubois and Prade's rule~\cite{DuboisPrade1988} is a~‘hybrid’ rule intermediate between the conjunctive and disjunctive sums, in which the product $\mass_1(B)\cdot \mass_2 (C)$ is assigned to $B \cap C$ whenever $B \cap C \neq \varnothing $, and to $B \cup C$ otherwise. This rule is not associative, but it usually provides a~good summary of partially conflicting items of evidence. This rule deals with contradictory evidence by stating that at least one of the two options supported by the two pieces of evidence must be true, while Dempster's combination rule disregards the two pieces of evidence. In our generalisation of Dempster's combination rule (Definition~\ref{def:combination:rule:BD}), we end up keeping track of the contradictions and where they come from. However, notice that if we work on an arbitrary bounded De Morgan algebra, we could still have cases where $x \wedge y = \bot$ even though neither $x=\bot$ nor $y=\bot$. This would happen if, even though the agent is tolerant to contradiction, they consider that it is impossible to get information that $x$ and $y$ are both true.  Therefore, in this situation, it could make sense to use the combination rule proposed by Dubois and Prade, but on a~De Morgan algebra rather than on a~powerset algebra:
\begin{align*}
\mass_{1 \oplus 2} (x)&=  \sum\limits_{x_1 \wedge x_2 = x}\!\!\!\!\!\mass_1(x_1)\cdot \mass_2(x_2)+\sum\limits_{\scriptsize{\begin{matrix}x_1 \wedge x_2 = \bot\\x_1 \vee x_2 = x\end{matrix}}}\!\!\!\!\!\mass_1(x_1) \cdot \mass_2(x_2).
\end{align*}

\subsection[Two-dimensional $\bel$ and $\pl$]{Two-dimensional reading of belief and plausibility}
\label{ssec:two:dimension:belief}
In this section, we introduce $\DS$ models on which we define belief and plausibility of formulas. Then we discuss different interpretations of the notions of belief and plausibility within the two-dimensional treatment of evidence of  $\BD$ logic.
\begin{definition}[$\DS$ models and their associated belief functions] 
\label{def:DS:models}
Let $\LTBD$ be the Lindenbaum algebra for Belnap--Dunn logic over the set of propositional letters $\Prop$.
A \emph{$\DS$ model} is a~tuple $\mathscr{M}=
(S,\P(S), \bel, v^+, v^-)$ such that 
$(S,v^+, v^-)$ is a~$\BD$ model and $\bel$ is a~belief function on $\P (S)$.
We denote $\bel_\mathscr{M}^+ : \LTBD \rightarrow [0,1]$ and $\bel_\mathscr{M}^- : \LTBD^{op} \rightarrow [0,1]$ the maps such that, for every $\varphi \in \LTBD$,
\begin{align}
 \bel_\mathscr{M}^+(\varphi) = \bel(|\varphi|^+) \qquad & \text{and } \qquad
 \bel_\mathscr{M}^-(\varphi) = \bel(|\varphi|^-) = \bel(|\neg\varphi|^+). 
\end{align}

We drop the subscript whenever there is no ambiguity on the model $\mathscr{M}$ we are considering.
\end{definition}
In the classical case (i.e., on a~Boolean algebra $\mathcal{B}$), one can define plausibility and belief function in terms of one another or via the mass function associated to the belief in several ways. 
This is why, there is no need to define \emph{both} plausibility and belief on \emph{classical} Dempster--Shafer structures. 
Indeed, the plausibility of $a\in\mathcal{B}$ can be construed as the lack of belief in its negation:
\begin{align}
\pl(a)=1-\bel({\sim}a)\label{eq:classicalplausibilityneg}
\end{align}
or, equivalently, as the sum of the mass of every statement compatible with $a$:
\begin{align}
\pl(a)=\sum\limits_{a\wedge b\neq\bot}\mass(b).
\label{eq:classicalplausibilitysum}
\end{align}
Equations \eqref{eq:classicalplausibilityneg} and \eqref{eq:classicalplausibilitysum} are, however, not equivalent on unbounded and free De Morgan algebras. Consider a~free De Morgan algebra $\mathcal{A}$ and a~(general) mass function $\mass$ on $\mathcal{A}$. If $\mathcal{A}$ is unbounded, 
\eqref{eq:classicalplausibilitysum} cannot be defined. If $\mathcal{A}$ is bounded, it gives $\pl(a)=1$ for every $a \in \mathcal{A}$. Indeed, if $\mathcal{A}$ is free, then  $a\wedge b=\bot$ iff $a=\bot$ or $b=\bot$. But $\mass(\bot)=0$, whence $\pl(a)=1$ for any $a\neq\bot$. On the other hand, \eqref{eq:classicalplausibilityneg} does not necessarily equal to $1$ on every $a\neq\bot$ should one substitute Boolean negation ${\sim}$ for the De Morgan $\neg$.
\begin{definition}[$\DS_\pl$ models and their associated plausibility functions] 
\label{def:DS:models:pl}
Let $\LTBD$ be the Lindenbaum algebra for Belnap--Dunn logic over the set of propositional letters $\Prop$.
A \emph{$\DS_\pl$ model} is a~tuple $\mathscr{M}=
(S,\P(S), \bel, \pl, v^+, v^-)$ such that 
$(S,\P(S), \bel, v^+, v^-)$ is a~$\DS$ model, $\pl$ is a~plausibility function on $\P(S)$.
We denote $ \pl_\mathscr{M}^+ : \LTBD \rightarrow [0,1]$ and $ \pl_\mathscr{M}^- : \LTBD^{op} \rightarrow [0,1]$ the maps such that, for every $\varphi \in \LTBD$,
\begin{align}
 \pl_\mathscr{M}^+ (\varphi) = \pl(|\varphi|^+) 
 \qquad & \text{and } \qquad \pl_\mathscr{M}^-(\varphi) = \pl(|\varphi|^-)=\pl(|\neg\varphi|^+). 
\end{align}
We drop the subscript whenever there is no ambiguity on the model $\mathscr{M}$ we are considering.
\end{definition}

Notice that like in the case of non-standard probabilities (see Lemma~\ref{lem:correspondece:lattice:language}), it is equivalent to define belief and plausibility on the Lindenbaum algebra or on the set of formulas. 

\begin{lemma}
\label{lem:bel+:pl+}
Let $\mathscr{M}=
(S,\P(S), \bel, \pl, v^+, v^-)$ be a~$\DS_\pl$ model. $\bel^+$ (resp., $\pl^+$) is a~general belief (resp., plausibility) function on the Lindenbaum algebra. $\bel^-$ (resp., $\pl^-$) is a~general belief (resp., plausibility) function on the dual of the Lindenbaum algebra $\LTBD^{op}$.
\end{lemma}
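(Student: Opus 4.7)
The plan is to observe that each of the four maps is a pullback of the (classical) belief or plausibility function $\bel,\pl$ on the powerset Boolean algebra $\mathcal{P}(S)$ along one of the two extension maps $\varphi\mapsto |\varphi|^+$ or $\varphi\mapsto|\varphi|^-$. So the strategy is to (i) record the homomorphism properties of these extension maps with respect to $\vee$, $\wedge$ and the entailment order, and (ii) transport the defining axioms (monotonicity plus the $k$-inequality of Definitions~\ref{def:generalbelieffunction} and \ref{def:generalplausibilityfunction}) across those maps.

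I would first collect the following facts from the frame semantics of Definition~\ref{def:BDframesemantics}: for all $\varphi,\psi\in\LBD$,
\begin{align*}
|\varphi\vee\psi|^+&=|\varphi|^+\cup|\psi|^+, & |\varphi\wedge\psi|^+&=|\varphi|^+\cap|\psi|^+,\\
|\varphi\vee\psi|^-&=|\varphi|^-\cap|\psi|^-, & |\varphi\wedge\psi|^-&=|\varphi|^-\cup|\psi|^-,
\end{align*}
and, by the two clauses of sequent satisfaction, $\varphi\vdash_{\BD}\psi$ implies $|\varphi|^+\subseteq|\psi|^+$ and $|\psi|^-\subseteq|\varphi|^-$. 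This is exactly the statement that $|\cdot|^+:\LTBD\to\mathcal{P}(S)$ is a monotone lattice homomorphism and that $|\cdot|^-:\LTBD^{op}\to\mathcal{P}(S)$ is a monotone lattice homomorphism (the $\wedge/\vee$ swap being absorbed into the opposite order). These maps are well defined on equivalence classes by the usual Lindenbaum construction.

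For $\bel^+$, monotonicity in $\LTBD$ is then immediate: $\varphi\leq\psi$ gives $|\varphi|^+\subseteq|\psi|^+$, so $\bel(|\varphi|^+)\leq\bel(|\psi|^+)$ by monotonicity of $\bel$ on $\mathcal{P}(S)$. For the $k$-inequality, given $a_1,\dots,a_k\in\LTBD$, the identities above turn
\[
\bel^+\left(\bigvee_i a_i\right)\geq\sum_{J\neq\varnothing}(-1)^{|J|+1}\bel^+\left(\bigwedge_{j\in J}a_j\right)
\]
into the corresponding inequality for the sets $|a_i|^+\in\mathcal{P}(S)$, which holds because $\bel$ is a belief function on the Boolean algebra $\mathcal{P}(S)$. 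The argument for $\pl^+$ is symmetric, using the dual inequality \eqref{eq:pl:k:inequality} for $\pl$ on $\mathcal{P}(S)$.

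For $\bel^-$ on $\LTBD^{op}$, the join is $\wedge$ and the meet is $\vee$, so the $k$-inequality to verify is
\[
\bel^-\left(\bigwedge_i a_i\right)\geq\sum_{J\neq\varnothing}(-1)^{|J|+1}\bel^-\left(\bigvee_{j\in J}a_j\right),
\]
which, after translating via the two bottom identities $|\bigwedge_i a_i|^-=\bigcup_i|a_i|^-$ and $|\bigvee_{j\in J}a_j|^-=\bigcap_{j\in J}|a_j|^-$, again reduces to the $k$-inequality for $\bel$ on $\mathcal{P}(S)$ applied to the sets $|a_i|^-$. Monotonicity is handled the same way, noting that the opposite order on $\LTBD^{op}$ exactly matches the reversal of $\vdash_{\BD}$ on the negative extension. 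The case of $\pl^-$ is analogous. I do not expect any serious obstacle here; the only point that needs care is the swap of $\vee$ and $\wedge$ on the negative side, which is precisely what motivates passing to $\LTBD^{op}$ in the statement of the lemma.
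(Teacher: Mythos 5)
Your proposal is correct and follows essentially the same route as the paper's proof: pull back the monotonicity and $k$-inequality of $\bel$ (resp.\ $\pl$) on $\mathcal{P}(S)$ along the extension maps $|\cdot|^+$ and $|\cdot|^-$, using that these maps send $\vee,\wedge$ to $\cup,\cap$ (swapped on the negative side) and respect the entailment order. The paper only writes out the $\bel^+$ case and declares the rest ``similar'' or ``dual''; your explicit treatment of $\bel^-$ on $\LTBD^{op}$ is exactly the intended dualisation.
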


The previous lemma shows that each $\DS$ model generates a~function on the Lindenbaum, and by extension on the set of formulas of $\BD$ logic, that satisfies the axioms of (general) belief functions from Definition~\ref{def:generalbelieffunction}. The following theorem shows that the converse holds as well: for every (general) belief function on $\BD$ formulas, and by extension on the Lindenbaum algebra, we can define a~canonical $\DS$ model equipped with a~belief function such that both functions correspond.
 
\begin{theorem}[Completeness of belief axioms] \label{th:complBelAxioms}
Let $\bel$ be a~function on  $\BD$ formulas satisfying the  axioms of  (general) belief function (see Definition~\ref{def:generalbelieffunction}). Then there is a~canonical model $\mathscr{M}_c$ and a~ belief function $\bel'$ on the powerset of states of $\mathscr{M}_c$ such that $\bel(\f) = \bel' (|\f|^+)$.
\end{theorem}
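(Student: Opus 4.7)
The idea is to mimic the construction used in the proof of Theorem~\ref{th:completeness_probablities} for non-standard probabilities, replacing Zhou's Lemma~3.5 by its analogue for (general) belief functions, namely our Lemma~\ref{lem:Zhou:3.7}. First I would take $\mathscr{M}_c=\langle\P(\LIT),v_c^+,v_c^-\rangle$ to be the canonical $\BD$ model from Definition~\ref{DEF:CanModel}. As recorded in the proof of Theorem~\ref{th:completeness_probablities}, the map $\varphi\mapsto|\varphi|^+$ sends $\LBD$ (or equivalently $\LatBD$) isomorphically, as distributive lattices, onto the collection of uppersets of the finite poset $\langle\P(\LIT),\subseteq\rangle$ distinct from $\varnothing$ and $\P(\LIT)$; each upperset is generated by the finite antichain of minimal sets of literals coming from the iDNF of the corresponding formula, and $|\varphi\vee\psi|^+=|\varphi|^+\cup|\psi|^+$, $|\varphi\wedge\psi|^+=|\varphi|^+\cap|\psi|^+$.

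Next I would transfer $\bel$ through this isomorphism: define $\bel^\flat$ on the image by $\bel^\flat(|\varphi|^+)\coloneqq\bel(\varphi)$. Using that the map is a lattice isomorphism and that $\varphi\vdash_\BD\psi$ iff $|\varphi|^+\subseteq|\psi|^+$, monotonicity of $\bel$ and the $k$-inequality \eqref{eq:bel:k:inequality} transfer verbatim to $\bel^\flat$, so $\bel^\flat$ is a general belief function on the distributive lattice $\mathcal{U}\smallsetminus\{\varnothing,\P(\LIT)\}$, where $\mathcal{U}$ denotes the lattice of all uppersets of $\langle\P(\LIT),\subseteq\rangle$. Now invoke Lemma~\ref{lem:Zhou:3.7}: first extend $\bel^\flat$ to the bounded distributive lattice $\mathcal{U}$ by setting $\bel^\flat(\varnothing)=0$ and $\bel^\flat(\P(\LIT))=1$, as in the proof of that lemma, and then extend it further to a genuine belief function $\bel'$ on the Boolean algebra generated by $\mathcal{U}$.

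Because $\P(\LIT)$ is finite, the Boolean algebra generated by the upset lattice of $\langle\P(\LIT),\subseteq\rangle$ is the full powerset $\P(\P(\LIT))$ (every singleton $\{s\}$ is the intersection of the principal upset $\{t:s\subseteq t\}$ with the complement of the union of the principal upsets $\{t:s'\subseteq t\}$ for $s\subsetneq s'$). Hence $\bel'$ is a belief function on the powerset of the state space of $\mathscr{M}_c$, and by construction, for every $\varphi\in\LBD$, $\bel'(|\varphi|^+)=\bel^\flat(|\varphi|^+)=\bel(\varphi)$, which is the desired conclusion.

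The only nontrivial point is the extension step in the middle paragraph: one has to check that assigning $0$ and $1$ to $\varnothing$ and $\P(\LIT)$ preserves monotonicity and the $k$-inequality. Monotonicity is immediate since $\bel^\flat$ already takes values in $[0,1]$; the $k$-inequality involving $\varnothing$ or $\P(\LIT)$ reduces, after distributing meets and joins, to inequalities already satisfied by $\bel^\flat$ on $\mathcal{U}\smallsetminus\{\varnothing,\P(\LIT)\}$. With this in place, Lemma~\ref{lem:Zhou:3.7} does the last lift, and the theorem follows. The main potential obstacle is purely bookkeeping: namely, making sure that the passage through $\mathcal{U}\smallsetminus\{\varnothing,\P(\LIT)\}$ (forced on us because $\LBD$ lacks constants) does not break either the isomorphism with $\LatBD$ or the applicability of Lemma~\ref{lem:Zhou:3.7}, but both are already addressed in the cited results.
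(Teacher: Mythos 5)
Your proposal is correct and follows essentially the same route as the paper's proof: canonical model, transfer of $\bel$ along the isomorphism $\varphi\mapsto|\varphi|^+$ onto the uppersets of $\langle\P(\LIT),\subseteq\rangle$, extension to $\varnothing$ and $\P(\LIT)$ by $0$ and $1$, and a final lift via Lemma~\ref{lem:Zhou:3.7}. You merely spell out two details the paper leaves implicit --- that the bound-assignment preserves the belief-function axioms, and that the Boolean algebra generated by the upset lattice is all of $\P(\P(\LIT))$ --- both of which are correct.
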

\begin{proof}
The proof goes along the same lines as the one of Theorem~\ref{th:completeness_probablities}. 
We start with the canonical model  $\mathscr{M}_c = \langle \P(\LIT),  v^+_c, v^-_c\rangle$ from Definition~\ref{DEF:CanModel}. The general  belief function $\bel$ on $\BD$ formulas can be equivalently represented as a~general belief function $\bel$ on the Lindenbaum algebra $\LTBD$. Using this general belief function $\bel$ on the Lindenbaum algebra $\LTBD$, we define the belief function $\bel^*$ on the uppersets  of the poset $(\P(\LIT),\subseteq)$ (we assign $\bel^*(\varnothing)=0$ and $\bel^*(\P(\LIT))=1$). Then, we finish the proof by applying Lemma~\ref{lem:Zhou:3.7}. Notice that unlike in the case of probabilities this lemma does not guarantee the uniqueness of the extension of $\bel^*$. 
\end{proof}

The proof of the completeness of the plausibility axioms (Theorem \ref{th:complPlAxioms}) relies on the following remark that allows us to define a~De Morgan negation on the powerset of the domain of the canonical model that coincides with the $\BD$ negation on the extensions of formulas.
\begin{remark}[De Morgan negation on $\mathcal{P}(\mathcal{P}(\LIT))$]
\label{rk:demorgan:negation:PPLIT}
Let $w\subseteq\LIT$. We build the set $\LIT^\mathbf{4}(w)$ as follows
\begin{align}
\forall p\in\Prop:
\begin{cases}
\mathbf{T}(p)\in\LIT^\mathbf{4}(w)&\text{ iff }p\in w,\neg p\notin w\\
\mathbf{B}(p)\in\LIT^\mathbf{4}(w)&\text{ iff }p,\neg p\in w\\
\mathbf{N}(p)\in\LIT^\mathbf{4}(w)&\text{ iff }p,\neg p\notin w\\
\mathbf{F}(p)\in\LIT^\mathbf{4}(w)&\text{ iff }p\notin w,\neg p\in w
\end{cases}
\label{eq:LIT4w}
\end{align}
E.g., if $\Prop=\{p,q,r\}$ and $w=\{\neg p,q,\neg q\}$, then $\LIT^\mathbf{4}(w)=\{\mathbf{F}(p),\mathbf{B}(q),\mathbf{N}(r)\}$. We  call $\mathbf{X}p$'s ‘$\mathbf{4}$-literals’.

It is clear that, for any $A\in\mathcal{P}(\mathcal{P}(\LIT))$, there is a~unique (up to permutations) disjunctive normal form $\mathsf{Fm}(X)$ whose conjunctive clauses contain $\mathbf{4}$-literals and that every $\BD$ formula $\phi$ is represented by exactly one $A\in\mathcal{P}(\mathcal{P}(\LIT))$ which we denote $\mathsf{S}(\phi)$.

We now need to define a~proper De Morgan negation on $\langle\mathcal{P}(\mathcal{P}(\LIT)),\subseteq\rangle$ that extends the $\BD$ negation on $\LBD$ formulas. We take $A\subseteq\mathcal{P}(\LIT)$ and then $\mathsf{Fm}(A)$. Now we transform $\neg\mathsf{Fm}(A)$ into its disjunctive normal form using the following additional rules
\begin{align*}
\neg\mathbf{X}(p)&\rightsquigarrow[\neg_\mathbf{4}\mathbf{X}](p)&\mathbf{X}(p)\wedge\mathbf{Y}(p)&\rightsquigarrow[\mathbf{X}\wedge_\mathbf{4}\mathbf{Y}](p)&\mathbf{X}(p)\vee\mathbf{Y}(p)&\rightsquigarrow[\mathbf{X}\vee_\mathbf{4}\mathbf{Y}](p)
\end{align*}
with $\neg_\mathbf{4}$, $\vee_\mathbf{4}$, and $\wedge_\mathbf{4}$ following the truth-table definitions of negation, disjunction, and conjunction in $\BD$. Notice that if $A=\mathsf{S}(\phi)$ for some $\phi\in\LBD$, then $\neg A=\mathsf{S}(\neg\phi)$.
\end{remark}

\begin{theorem}[Completeness of plausibility axioms] \label{th:complPlAxioms}
Let $\pl$ be a~function on  $\BD$ formulas satisfying the  axioms of (general) plausibility function (see Definition~\ref{def:generalplausibilityfunction}). Then, there is a~canonical model $\mathscr{M}_c$ and a~ plausibility function $\pl'$ on the powerset of states of $\mathscr{M}_c$ such that $\pl(\f) = \pl' (|\f|^+)$.
\end{theorem}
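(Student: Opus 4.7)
The plan is to reduce to the already-established completeness of belief (Theorem~\ref{th:complBelAxioms}) by passing through the $\neg$-duality between plausibility and belief, first on the formula side and then on the set side. Given $\pl$, I would set $\bel(\phi) := 1 - \pl(\neg\phi)$ for each $\phi\in\LBD$. By Lemma~\ref{lem:pl:associated:mass}, applied to $\pl$ viewed as a general plausibility function on the Lindenbaum algebra $\LTBD$ (the identification being legitimate by the obvious plausibility-analogue of Lemma~\ref{lem:correspondece:lattice:language}), $\bel$ is a general belief function on $\LBD$. Theorem~\ref{th:complBelAxioms} then supplies the canonical model $\mathscr{M}_c = \langle \mathcal{P}(\LIT), v_c^+, v_c^-\rangle$ together with a belief function $\bel'$ on $\mathcal{P}(\mathcal{P}(\LIT))$ such that $\bel(\phi) = \bel'(|\phi|^+)$ for every $\phi\in\LBD$.

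Next, I would equip $\mathcal{P}(\mathcal{P}(\LIT))$ with the De Morgan negation $\neg$ introduced in Remark~\ref{rk:demorgan:negation:PPLIT}, turning it into a bounded De Morgan algebra with $\bot=\varnothing$ and $\top=\mathcal{P}(\LIT)$, and define
\begin{equation*}
\pl'(X) := 1 - \bel'(\neg X)\qquad\text{for }X\in\mathcal{P}(\mathcal{P}(\LIT)).
\end{equation*}
By Lemma~\ref{lem:bel:pl:1-bel}, $\pl'$ is then a plausibility function on this powerset (the normalisation $\pl'(\varnothing)=0$, $\pl'(\mathcal{P}(\LIT))=1$ coming from $\neg\varnothing=\mathcal{P}(\LIT)$ and $\bel'(\mathcal{P}(\LIT))=1$, $\bel'(\varnothing)=0$). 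The required correspondence then follows from the identity $\neg|\phi|^+ = |\neg\phi|^+$, which is the concluding clause of Remark~\ref{rk:demorgan:negation:PPLIT} together with the identification $\mathsf{S}(\phi)=|\phi|^+$: indeed
\begin{align*}
\pl'(|\phi|^+)
&= 1 - \bel'(\neg|\phi|^+)
= 1 - \bel'(|\neg\phi|^+)
= 1 - \bel(\neg\phi)\\
&= 1 - \bigl(1 - \pl(\neg\neg\phi)\bigr)
= \pl(\phi),
\end{align*}
where the last equality uses monotonicity of $\pl$ together with $\neg\neg\phi \dashv\vdash_\BD \phi$.

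The main obstacle I expect is the verification absorbed into the second paragraph: one must check that the set-level $\neg$ from Remark~\ref{rk:demorgan:negation:PPLIT} really endows $\mathcal{P}(\mathcal{P}(\LIT))$ with a De Morgan algebra structure (involutivity and both De Morgan laws) and that it restricts to the formula-level $\neg$ along $\phi \mapsto |\phi|^+$. This hinges on the uniqueness of the $\mathbf{4}$-literal disjunctive normal form and on the fact that the meta-operations $\neg_\mathbf{4},\wedge_\mathbf{4},\vee_\mathbf{4}$ are defined precisely by the $\BD$ truth tables; once these compatibilities are in place, everything else is bookkeeping on top of the belief-side completeness.
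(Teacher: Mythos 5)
Your proposal is correct, and it follows the same underlying strategy as the paper's proof --- dualise $\pl$ into a belief function, extend on the powerset side, and dualise back using the De Morgan negation of Remark~\ref{rk:demorgan:negation:PPLIT}, finishing with the same double-negation computation $\pl'(|\f|^+)=1-\bel'(|\neg\f|^+)=\pl(\f)$. The one organisational difference is where the extension happens: you dualise already at the level of formulas ($\bel(\f)\coloneqq 1-\pl(\neg\f)$, via Lemma~\ref{lem:pl:associated:mass}) and then invoke Theorem~\ref{th:complBelAxioms} as a black box to obtain $\bel'$ on $\P(\P(\LIT))$, whereas the paper first transports $\pl$ to the lattice $\P^\uparrow(\LIT)$ of uppersets, dualises there to get $\bel^*$, and extends \emph{by hand} --- setting the M\"obius mass $\mass^*$ to zero outside $\P^\uparrow(\LIT)$ --- rather than citing the belief completeness theorem. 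Your factoring is cleaner and avoids repeating the extension argument; the paper's version makes the extension explicit and makes it evident that $\bel'$ agrees with $\bel^*$ on uppersets (which in your version is supplied by the conclusion of Theorem~\ref{th:complBelAxioms} applied to $\neg\f$). The obstacle you flag --- that the set-level $\neg$ of Remark~\ref{rk:demorgan:negation:PPLIT} is an involutive De Morgan negation on $\P(\P(\LIT))$ restricting to $\neg$ on extensions of formulas, and that a plausibility function for that De Morgan algebra is automatically one for the lattice reduct and hence for the Boolean powerset algebra --- is exactly the point the paper also leaves at the level of the remark, so nothing is missing relative to the paper's own standard of detail.
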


\begin{proof}
We start with the canonical model  $\mathscr{M}_c = \langle \P(\LIT),  v^+_c, v^-_c\rangle$ from Definition~\ref{DEF:CanModel}. Let $\P^\uparrow(\LIT)$ denote the distributive lattice generated by the upsets of $(\P(\LIT),\subseteq)$.
The general  plausibility function $\pl$ on $\BD$ formulas can be equivalently represented as a~general plausibility function $\pl$ on the Lindenbaum algebra $\LTBD$.
Using this general plausibility function $\pl$ on the Lindenbaum algebra $\LTBD$, we define the plausibility function $\pl^*$ on $\P^\uparrow(\LIT)$ (we assign $\pl^*(\varnothing)=0$ and $\pl^*(\P(\LIT))=1$). 
Since the lattice $\P^\uparrow(\LIT)$ is isomorphic to the lattice reduct of $\LTBD^*$ (the Lindenbaum algebra for $\BD^*$), we can define a~De Morgan negation $\neg_{\P}$ on it that coincides with the negation of $\LTBD^*$. Therefore, $(\P^\uparrow(\LIT),\cup,\cap,\neg_\P,\varnothing,\P(\LIT))$ is a~finite bounded De Morgan algebra. 

Now, consider the function $\bel^*: (\P^\uparrow(\LIT),\subseteq) \rightarrow [0,1]$  such that
$\bel^*(S)=1-\pl^*(\neg_\P S)$ for every $S\in \P^\uparrow(\LIT)$. From Lemma \ref{lem:pl:associated:mass},
we know that $\bel^*$ is a~belief function on $(\P^\uparrow(\LIT),\subseteq)$.
Let $\mass^* : (\P^\uparrow(\LIT),\subseteq) \rightarrow [0,1]$ be the mass function of $\bel^*$. We can extend it to $\P(\P(\LIT))$ as follows:
\begin{align*}
    \mass': \P(\P(\LIT)) &\rightarrow [0,1]\\
    S & \mapsto \begin{cases}
\mass^*(S) & \mbox{if } S\in \P^\uparrow(\LIT),\\
0 & \mbox{otherwise.}
\end{cases}
\end{align*}
The function $\mass'$ is clearly a~mass function, therefore it defines a~belief function $\bel' : \P(\P(\LIT))\rightarrow [0,1]$ on the distributive lattice $(\P(\P(\LIT)),\cup,\cap,\varnothing, \P(\LIT))$. 
Using Remark \ref{rk:demorgan:negation:PPLIT}, we can extend the De Morgan negation $\neg_\P$ to the distributive lattice $(\P(\P(\LIT)),\cup,\cap,\varnothing, \P(\LIT))$. Therefore, $\bel'$ defines a~belief function on the finite De Morgan algebra $(\P(\P(\LIT)),\cup,\cap,\neg_\P,\varnothing, \P(\LIT))$. From Lemma
\ref{lem:bel:pl:1-bel}, we know that $\pl' : \P(\P(\LIT))\rightarrow [0,1]$ such that $\pl'(S)=1-\bel'(\neg_\P S)$ is a~plausibility function on $(\P(\P(\LIT)),\cup,\cap,\neg_\P,\varnothing, \P(\LIT))$. Therefore, $\pl'$ is also a~plausibility function on its lattice reduct $(\P(\P(\LIT)),\cup,\cap,\varnothing, \P(\LIT))$ and on the underlying Boolean algebra $(\P(\P(\LIT)),\cup,\cap,(\cdot)^c,\varnothing, \P(\LIT))$. Notice that, for every  $\phi\in\LBD$, we have $\pl'(|\phi|^+)=1-\bel'(\neg_\P |\phi|^+)=1-\bel^*(\neg_\P |\phi|^+)=\pl^*(|\phi|^+)=\pl(\phi)$.
\end{proof}

We have introduced $\BD$ models equipped with belief and plausibility functions. Here, we propose different ways to combine belief and plausibility with a~two-dimensional interpretation in order to introduce modalities in Section~\ref{sec:logics}.

\subsubsection{Belnapian belief}
\label{sssec:bel-pl-modalities:(bel+,bel-)}
We consider the following two-dimensional reading of belief. We look at belief as a~generalisation of non-standard probabilities, where the import-export axiom (see axiom (iii) of Definition~\ref{DEF:NSprob}) is weakened to the property of being weakly totally monotone (see Definition~\ref{def:generalbelieffunction}).
If we consider a~probabilistic $\BD$ model $\mathfrak{M}=\langle W,\mu, v^+,v^-\rangle$, then 
the  two-dimensional value of the probability of a~formula $\f$ is $(\mu(|\varphi|^+),\mu(|\varphi|^-))$  and it is interpreted as follows. Positive probability $\mu(|\varphi|^+)$ is the degree to which evidence supports truth of $\f$, while negative probability $\mu(|\varphi|^-)$ is the degree to which evidence supports its falsity (which is the same as positive probability of $\neg\f$).

Following these lines, we define a~Belnapian belief $(\bel(|\f|^+), \bel(|\f|^-))$ based on a~$\DS$ model $\mathscr{M}$. Analogously to the case of non-standard probabilities the value $\bel(|\f|^+)$ represents the degree to which the evidence supports $\f$ and $\bel(|\f|^-)$ represents the degree to which the evidence supports its negation. A~natural way to introduce plausibility of a~formula $\f$ is to use the classical definition via the belief of the negation of $\f$. This definition is correct, because we know from Lemma~\ref{lem:bel:pl:1-bel} that  since $\bel^+$ is a~general belief function on $\LTBD$, then the map $\pl^+$ defined as  $\pl^+(\f)=1-\bel^+(\neg\f)$ is a~general plausibility function on $\LTBD$. Similarly, $\pl^-(\f)=1-\bel^-(\neg\f)$ is a~general plausibility function on $\LTBD^{op}$. Observe that in the case of strong contradictory belief in some proposition, it can happen that plausibility is strictly smaller than belief, contrary to the intuition understanding them as an upper and lower bound.

Formally, we can work with belnapian plausibility and take the pair $(\pl^+(\varphi),\pl^-(\varphi))$ as the primary notion while  belief would be a~derived one, but this choice is less appealing from the point of view of an interpretation. 
In Example~\ref{ex:expressivity_luksquare}, we will show that within this framework we can introduce a~logic  that allows us to characterise evidence in terms of how classical, incomplete or contradictory it is regarding a~specific topic $\varphi$.
Indeed, in our framework, one can separate pieces of evidence into three categories:
classical (where the evidence for and evidence against add up exactly to~$1$),
 incomplete (the evidence for and evidence against add up to a~number smaller than $1$), or contradictory (the evidence for and evidence against add up to a~number greater than $1$).
Each of these gives us different signals.
In particular, classical information might be intuitively interpreted as indicative of us being on the right track in the investigation. I.e., if the information on $p$ is classical, then the investigation into $p$ can be deemed \emph{satisfactory}. In this vein, incomplete information can be interpreted as us needing to \emph{investigate $p$ further}, while contradictory information on $p$ shows us that our previous investigation was faulty, whence we need to \emph{re-investigate once again}.

\subsubsection{Combining belief and plausibility}
\label{sssec:bel-pl-modalities:(bel+,pl-)}
The independence of positive and negative support which is the key idea of the Belnap-Dunn approach gives us more freedom in combining belief and plausibility than in the classical approach. While in the previous case, both positive and negative supports were represented by the same uncertainty measure, now we discuss the possibility of combining them. The idea behind the classical belief--plausibility relation is that we can see the plausibility of a~proposition as a~lack of support for its negation. This motivates our second choice for the representation of negative support of a~proposition: it is not a~(straightforward) support of its negation as in the previous case, but rather a~lack of support of the proposition itself.

Formally, we consider a~$\DS_\pl$ model containing both belief and plausibility (in general they might be computed from different mass functions, so they are not mutually definable) and define the positive and negative support pair as $(\bel^+(\varphi), \pl^-(\varphi)) = (\bel^+(\varphi), \pl^+(\neg \varphi))$.
\subsubsection{Belief and plausibility as lower and upper bounds}
\label{sssec:bel-pl-modalities:(bel+,bel-):bel<pl}
In this section, we focus on the interpretation of  belief and plausibility as a~lower and an upper approximation of the probability of a~formula. To do so, we have to consider $\DS_\pl$ models $\mathscr{M}=(S,\P(S), \bel, \pl, v^+, v^-)$ with both belief and plausibility introduced independently, and study the implications of the following property: 
\begin{align}
\label{eq:prop:bel<pl}
 \bel(Y)\leq \pl(Y)\text{ for every }Y\subseteq\P(S)
\end{align}
Within the framework of $\BD$ logic, this immediately implies that one cannot define $\pl$ via $\bel$ without imposing strong constraints on the valuation of the $\BD$ model. Therefore, here we are studying the meaning of having a~general belief function and a~general plausibility function generated by two different mass functions.
First, notice that for every $v^+$, \eqref{eq:prop:bel<pl} is equivalent to
\begin{equation}
 \sum_{X \subseteq |p|^+} \mass_\bel(X) \leq 1 - \sum_{X \subseteq |\neg p|^+}\mass_\pl(X) .
 \label{eq:bel<pl:general:belief:functions}
\end{equation}
Indeed, from Theorem~\ref{theo:totallymonotone:charactrisation}  and Lemma~\ref{lem:pl:associated:mass}, we have
\begin{align*}
 \bel(|p|^+) = \sum_{X \subseteq |p|^+} \mass_\bel(X) & \leq \pl(|p|^+) = 1 - \bel_\pl(|\neg p|^+) 
 = 1 - \sum_{X \subseteq |\neg p|^+}\mass_\pl(X) .
\end{align*}
In addition, since $\bel$ and $\pl$ are respectively belief and plausibility functions, then we get the constraint
\begin{align}
 \bel(|p|^+) =\sum_{X \subseteq |p|^+} \mass_\bel(X) & \leq \sum_{X \not \subseteq |p|^-} \mass_\pl(X)=\pl(|p|^+),
 \label{eq:bel<pl:belief:functions}
\end{align}
because
\begin{align*}
 \bel(|p|^+) = \sum_{X \subseteq |p|^+} \mass_\bel(X) & \leq \pl(|p|^+) = 1 - \sum_{X \subseteq |\neg p|^+}\mass_\pl(X) \\
 & = \sum_{X \in \P(S)} \mass_\pl(X) - \sum_{X \subseteq |\neg p|^+}\mass_\pl(X)
 \\
 & = \sum_{X \not \subseteq |\neg p|^+} \mass_\pl(X) = \sum_{X \not \subseteq |p|^-} \mass_\pl(X).
\end{align*}
We consider the following two-dimensional interpretation for belief and plausibility of $\phi$ respectively: $\belmod\phi=(\bel^+(\phi),\bel^-(\phi))$ and 
$\Pl\phi=(\pl^+(\phi),\pl^-(\phi))$.

\paragraph{Interpretation of the mass functions $\mass_\bel$ and $\mass_\pl$}
Here, $\belmod(\phi)=(x,y)$ is interpreted as follows: $x=\bel^+(\phi)$ is how much the agent is persuaded that $\phi$ is true based on the evidence, and $y=\bel^-(\phi)$ is how much the agent is persuaded that $\phi$ is false based on the evidence. Persuasion relies on a~wide range of evidence: reliable scientific evidence, but also an emotional reaction to an argument. Therefore, one could ignore the contradictoriness of some evidence to strengthen one's opinion and one can ignore scientific evidence due to some bias. 
Hence, when computing $\mass_\bel$, the agent uses a~weak standard for saying that a~piece of evidence supports some statement and is influenced by its own bias: highly contradictory evidence can be considered to support a~statement and reliable evidence can be ignored.

However, the agent uses a~different standard of evidence to decide that a~statement is not plausible. We have $\pl(|p|^+)=\sum_{X \not \subseteq |\neg p|^+} \mass_\pl(X)$, therefore, $p$ is considered plausible if there is very little strong evidence supporting $\neg p$. We interpret this as follows: the agent considers $p$ plausible if they are not convinced that $\neg p$ is the case. Conviction is built on ‘reliable’ evidence. The meaning of ‘reliable’ will depend on the context: in a~court, it could be the kind of evidence accepted by the court, in science it could be detailed proofs that have been reviewed by experts\ldots One can for instance believe that a~mathematical statement is false because of some personal intuition based on experience, even if there is a~verified proof of the statement which makes it not very plausible (but not impossible) that it is false. Indeed, with time, some proofs are found to be false.

We propose the following interpretation of the mass functions.
$\mass_\bel$ is computed by asking the question ‘does the evidence persuade the agent?’, while $\mass_\pl$ is computed by asking the question ‘is the evidence considered convincing by some given authority?’ In order to get a~better intuition on what is going on, let us have a~look at some examples.
\begin{figure}[h!]
 \centering
 \xymatrix{
&& s_0 \ : 
& s_1 \ : \ p 
& s_2 \ : \ \neg p
& s_3 \ : \ p, \neg p
}
\caption{Canonical model over $\Prop=\{p\}$.}
\label{fig:ex:4:point:model}
\end{figure}
\begin{example}[Strong belief in $p \wedge \neg p$] Consider the set of variables $\Prop=\{p\}$. The canonical model is in Figure~\ref{fig:ex:4:point:model}.
Assume that $\bel(| p\wedge\neg p| ^+)=1$. Therefore, 
$$\sum_{X \subseteq |p\wedge\neg p|^+}\mass_\bel(X)=\sum_{X \subseteq \{s_3\}} \mass_\bel(X)=\mass_\bel(\varnothing)+\mass_\bel(\{s_3\}) = 1.$$ 
Since $\bel$ is a~belief function on $\P(S)$, we have  $\mass_\bel(\varnothing)= 0$, therefore $\mass_\bel(\{s_3\}) = 1$. This means that based on the available evidence, the agent is persuaded that $p\wedge \neg p$ is the case. Notice that 
\begin{align*}
 \pl(|p\wedge \neg p|^+) 
 & = \sum_{X \not \subseteq |p \wedge \neg p|^-} \mass_\pl(X)
 = \sum_{X \not \subseteq |\neg (p \wedge \neg p)|^+} \mass_\pl(X)\\
 & = \sum_{X \not \subseteq |\neg p \vee p|^+} \mass_\pl(X)
 = \sum_{X \not \subseteq |\neg p|^+ \cup |p|^+} \mass_\pl(X)\\
 & = \sum_{X \not \subseteq |p|^- \cup |p|^+} \mass_\pl(X)
\end{align*}
Therefore, the condition that $\bel \leq \pl$ implies that
\begin{align}
 1 = \sum_{X \not \subseteq |p|^- \cup |p|^+} \mass_\pl(X) = \sum_{X \not \subseteq \{s_1,s_2,s_3\}} \mass_\pl(X) = \mass_\pl(S).
\end{align}
Therefore, evidence that is strongly persuasive considering $p \wedge \neg p$ is inconclusive regarding the plausibility of either $p$ or $\neg p$.
\end{example}

\begin{example}[Weak belief in either $p$ or $\neg p$]
We still consider the previous model. 
Assume now that $\bel(|p \vee \neg p|^+)=0$. Therefore $\sum_{X \subseteq \{s_1,s_2,s_3\}}\mass_\bel(X)=0$ and $\mass_\bel(\{s_0,s_1,s_2,s_3\})=1$.
The condition that $\bel \leq \pl$ does not constrain the value of $\mass_\pl$ in any way. Therefore, the available evidence might be convincing from the point of view of a~given authority, however, it did not persuade the agent.
\end{example}

\section[Two-layered logics]{Two-layered logics for uncertainty measures}\label{sec:logics}
When reasoning about the uncertainty measures defined on sets of events, we can formalise the measure itself as a modal operator $\mathsf{M}$. The behaviour of $\mathsf{M}$ can be defined in two ways. The first option is to treat it as a usual modality as done in, e.g.~\cite{DautovicDoderOgnjanovic2021}; the second one is to use a ‘two-layered’ approach and apply $\mathsf{M}$ \emph{only to the formulas describing events} and then reason with these formulas in another, so-called, \emph{outer-layer}, logic (cf., e.g.,~\cite{FaginHalpernMegiddo1990,GodoHajekEsteva1995,CintulaNoguera2014,BaldiCintulaNoguera2020}). The main \emph{syntactic} difference between these two approaches is that in the two-layered treatment of measure modalities, \emph{they do not nest}.

At first glance, this seems as too much of a restriction. It is, however, justified. First, the decidability of two-layered logics is often easy to establish. Furthermore, the decision procedures allow for the extraction of the complexity evaluations of the logics and are, in addition, quite straightforward (cf., e.g.,~\cite{HajekTulipani2001} for the two-layered systems based on fuzzy logics and~\cite{FaginHalpernMegiddo1990} for those dealing with linear inequalities) as one can utilise those for the outer-layer logics. On the other hand, the straightforward decision procedures for the modal logics that \emph{do allow} for the nesting of $\mathsf{M}$ seem more difficult to obtain. E.g.,~\cite{DautovicDoderOgnjanovic2021} shows that the logic $\mathrm{CKL}$ is decidable using filtration, however, this result is not easy to apply if one wants to establish the complexity of $\mathrm{CKL}$-satisfiability or validity.

Second, nested modalities are difficult to interpret in the natural language. While $\mathsf{M}p$ can be understood as ‘$p$ is probable’, ‘the agent believes that $p$ is the case’, etc.\ depending on $\mathsf{M}$, and its value can be straightforwardly derived from the measure of the subset of the sample space where $p$ is true, the interpretation of formulas such as $\mathsf{M}(p\wedge\mathsf{M}q)$ is considerably less intuitive. Thus, in this paper, we are formalising paraconsistent reasoning about uncertainty using two-layered logics.

The two-layered formalisms are also divided into two groups depending on how they formalise statements such as ‘probability of $\phi$ is twice as high as the probability of $\chi$’. There are two options. The first one is less formal and more intuitive. We allow numerical reasoning with inequalities directly on the outer layer. This is the way it was originally done in~\cite{FaginHalpernMegiddo1990}. Another approach, more ‘puristic’ from a~logical point of view, would be to use an outer-layer logic that can express addition and subtraction, e.g., \L{}ukasiewicz logic. This approach originated in~\cite{GodoHajekEsteva1995} and has been investigated further in an abstract algebraic manner, e.g., in~\cite{CintulaNoguera2014,BaldiCintulaNoguera2020}. Usually, the classical propositional logic is employed to describe the events, with an exception of a~$\BD$-based logic for belief functions considered in~\cite{Zhou2013}. We will present the two-layered logics that are based on both of the approaches mentioned above.

Before we proceed further, let us give a short note on the interpretation of atomic modal formulas. Throughout the section, we will use the following atomic modal formulas with $\phi\in\LBD$: $\mathtt{w}^\pm(\phi)$, $\mathtt{b}^\pm(\phi)$, $\Prob\phi$, $\belmod\phi$, and $\Pl\phi$. Their values range over $[0,1]$ and are interpreted as the degrees of an agent's certainty in $\phi$ given by the corresponding measure. E.g., $\mathtt{w}^-(\phi)\geqslant\frac{2}{3}$ means that the probability measure of $|\phi|^-$ is at least $\frac{2}{3}$ (cf.~Definition~\ref{def:FDEweightformulas} for further details); $v^\mathscr{M}_1(\Pl\phi)=\frac{1}{4}$ stands for ‘the plausibility of $|\phi|^+$ is equal to $\frac{1}{4}$’ (cf.~Definition~\ref{def:BelNLuk}), etc.
\subsection{Two-layered logics with inequalities}\label{ssec:2layerineq}
In this section, we present two-layered logics axiomatising reasoning with non-standard belief and plausibility functions that follow the approach of~\cite{FaginHalpernMegiddo1990}. Namely, we employ $\LBD$-formulas on the inner layer and linear inequalities on the outer one.
\subsubsection{Logic for non-standard probabilities}
\begin{definition}[BD weight formulas and their semantics]\label{def:FDEweightformulas} a~\emph{primitive weight formula} (PWF) is an expression of the form \[\sum\limits_{i=1}^{n}a_i\cdot\mathtt{w}^\pm(\phi_i)\geqslant c\]
with $a_i,c\in\mathbb{Z}$, and $\phi_i\in\LBD$. The left hand side of a~PWF is called a~\emph{weight term}.
A \emph{weight formula} is a~Boolean combination of primitive weight formulas. 

Let $\mathscr{M}=\langle W,v^+,v^-,\mu\rangle$ be a~probabilistic $\BD$ model, $\mass$, the mass function associated to $\mu$, and $\alpha$, a~weight formula. The satisfaction relation $\mathscr{M}\models\alpha$ is defined as follows\footnote{Observe that weights correspond to sums of masses of states where formulas are verified or falsified, i.e., to their \emph{probabilities}.}.
\begin{align*}
\mathscr{M}\models\sum\limits_{i=1}^{n}a_i\cdot\mathtt{w}^\pm(\phi)\geqslant c&\text{ iff }\sum\limits_{i=1}^{n}a_i\cdot\sum\limits_{w\vDash^\pm\phi}\mathtt{m}(w)\geqslant c\\
\mathscr{M}\models{\sim}\alpha&\text{ iff }\mathscr{M}\not\models\alpha\\
\mathscr{M}\models\alpha\wedge\alpha'&\text{ iff }\mathscr{M}\models\alpha\text{ and }\mathscr{M}\models\alpha'
\end{align*}
Other connectives: $\vee$, $\supset$, etc.\ can be defined in a~usual manner.

A weight formula is called \emph{valid} iff it is satisfied on every model. We say that a~set of weight formulas $\Xi$ \emph{entails} $\alpha$ ($\Xi\vDash_{\mathtt{m}\mathsf{BD}}\alpha$) iff there is no model $\mathscr{M}$ s.t.\ $\mathscr{M}\models\Xi$ but $\mathscr{M}\not\models\alpha$.
\end{definition}

Below, we give a~calculus that proves weight formulas.
\begin{definition}[Axioms for $\BD$ weight formulas --- calculus $\mathtt{w}\BD$]\label{def:FDEweightcalculus}
\begin{enumerate}
\item[]
\item[$\mathsf{CPL}$] $\alpha$ s.t.\ $\mathsf{CPL}\vdash\alpha$
\item[MP] $\dfrac{\alpha\quad\alpha\rightarrow\beta}{\beta}$
\item[$\mathsf{ineq}$] All instances of valid formulas about linear inequalities.
\item[$W1$] $\mathtt{w}^\pm(\phi)\geqslant0$, $\mathtt{w}^\pm(\phi)\leqslant1$
\item[$W2$] $\mathtt{w}^\mp(\phi)=\mathtt{w}^\pm(\neg\phi)$
\item[$W3$] $\mathtt{w}^+\left(\bigvee\limits_{1\leq i\leq k}\phi_i\right)=\sum\limits_{\scriptsize{\begin{matrix}J\subseteq\{1,\ldots,k\}\\J\neq\varnothing\end{matrix}}}(-1)^{|J|+1}\mathtt{w}^+\left(\bigwedge\limits_{j\in J}\phi_j\right)$
\item[$W4$] $\mathtt{w}^+(\phi)\leqslant\mathtt{w}^+(\phi')$ and $\mathtt{w}^-(\phi)\geqslant\mathtt{w}^-(\phi')$ for all $\phi,\phi'\in\LBD$ s.t.\ $\phi\vdash_\mathsf{BD}\phi'$.
\end{enumerate}
\end{definition}
\begin{theorem}[Completeness]\label{theorem:wFDEcompleteness}
$\mathtt{w}\BD\vdash\alpha$ iff $\alpha$ is valid.
\end{theorem}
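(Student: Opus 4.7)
The plan is to prove soundness by routine semantic verification, and completeness by adapting the Fagin--Halpern--Megiddo linear-programming argument from \cite{FaginHalpernMegiddo1990}, using the $X$-full DNFs of Definition~\ref{def:XfDNF:BD} as a bridge between syntactic weight terms and atomic state masses in the canonical $\BD$ model of Definition~\ref{DEF:CanModel}. Soundness reduces to checking each axiom against Definition~\ref{def:FDEweightformulas}: $\mathsf{ineq}$ and $\mathsf{CPL}$ are inherited from classical reasoning on weight formulas; $W1$ reflects $\mu\in[0,1]$; $W2$ follows from $|\neg\phi|^+=|\phi|^-$; $W3$ is the $k$-ary inclusion--exclusion identity, proved by induction on $k$ from axiom~(iii) of Definition~\ref{DEF:NSprob}; and $W4$ follows from set-theoretic monotonicity of $\mu$ together with the observation that $\phi\vdash_\BD\phi'$ implies $|\phi|^+\subseteq|\phi'|^+$ and $|\phi'|^-\subseteq|\phi|^-$.

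For completeness I argue contrapositively. Suppose $\mathtt{w}\BD\not\vdash\alpha$; by $\mathsf{CPL}$ and MP, rewrite $\sim\alpha$ in disjunctive normal form, so at least one disjunct $\Gamma$ is a consistent conjunction of PWFs $t_j\geqslant c_j$ and negations $\sim(t_j\geqslant c_j)$, the latter being the strict inequalities $t_j<c_j$. Use $W2$ to replace every $\mathtt{w}^-(\phi)$ by $\mathtt{w}^+(\neg\phi)$; let $\Phi$ be the resulting set of formulas and $X$ a finite $\neg$-closed literal set containing $\bigcup_{\phi\in\Phi}\LIT(\phi)$. Following Fagin--Halpern--Megiddo, for each $s\subseteq X$ introduce a formal variable $y_s$ intended to denote $\mathtt{w}^+(\bigwedge_{l\in s}l)$, with the convention $y_\varnothing:=1$. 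By Definition~\ref{def:XfDNF:BD} each $\phi\in\Phi$ is $\BD$-equivalent to $\fDNF_X(\phi)=\bigvee_{\mathsf{cl}\vdash_\BD\phi}\mathsf{cl}$, so $W4$ together with $W3$ rewrites $\mathtt{w}^+(\phi)$ as an explicit signed linear combination $L_\phi(y)$ of the $y_s$. Collecting these rewrites turns $\Gamma$ into a finite mixed weak/strict linear system $\Sigma$ in the variables $(y_s)_{s\subseteq X}$, to which one appends the side constraints $0\leqslant y_s\leqslant 1$, $y_\varnothing=1$, and the M\"{o}bius-nonnegativity inequalities $m_s:=\sum_{s'\supseteq s}(-1)^{|s'\setminus s|}y_{s'}\geqslant 0$ for each $s\subseteq X$; each of the latter is derivable from $W3$ and $W4$ by applying inclusion--exclusion to the formulas $\bigwedge_{l'\in s\cup\{l\}}l'$ as $l$ ranges over $X\setminus s$.

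By the Motzkin transposition theorem for mixed weak/strict linear systems, either $\Sigma$ is feasible or some nonnegative linear combination of its inequalities produces an arithmetic contradiction. In the feasible case, setting $\mathtt{m}(\{s\}):=m_s$ on the canonical state space $\mathcal{P}(X)$ defines a probability mass function whose induced $\mu$ satisfies $\mathtt{w}^+_\mu(\bigwedge_{l\in s}l)=y_s$ for every $s$, and hence $\mathtt{w}^+_\mu(\phi)=L_\phi(y)$ for every $\phi\in\Phi$; the resulting probabilistic $\BD$ model therefore witnesses $\Gamma$ and so provides the required countermodel to $\alpha$. In the infeasible case, the LP-refutation is a purely arithmetic certificate that lifts to a $\mathtt{w}\BD$-derivation: $\mathsf{ineq}$ discharges the arithmetic step, $W1$ together with the M\"{o}bius-nonnegativity derivations discharge the boundary constraints, and the $W3$ and $W4$ rewritings discharge the $y_s$-expansions $\mathtt{w}^+(\phi)=L_\phi(y)$; this derives $\bot$ from $\Gamma$, contradicting its consistency. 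The main technical obstacle is precisely this final lifting: one must verify that every Motzkin-style refutation certificate over the $(y_s)$-variables replays as a genuine $\mathtt{w}\BD$-proof, which requires a uniform (in $s$) implementation inside $\mathtt{w}\BD$ of the M\"{o}bius-nonnegativity derivations and careful treatment of the auxiliary constant $y_\varnothing=1$, which does not literally correspond to any $\mathtt{w}^+$-term when $\BD$ lacks a top constant.
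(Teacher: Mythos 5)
Your proposal is correct and follows essentially the same route as the paper's proof: contraposition, reduction to a conjunction of PWFs, elimination of $\mathtt{w}^-$ via $W2$, expansion into $X$-full DNFs so that $W3$ and $W4$ turn the conjunction into a linear system over clause-weights, and a canonical-model construction on $\mathcal{P}(\LIT(\alpha))$ assigning each state its M\"{o}bius mass. The one substantive divergence is that you invoke Motzkin transposition and flag the ``lifting'' of an LP-refutation into the calculus as the main obstacle, which is a non-issue here because $\mathsf{ineq}$ is the schema of \emph{all} valid linear-inequality formulas (so infeasibility of the augmented system immediately makes its negation an axiom instance, as the paper observes); conversely, your explicit derivation of the M\"{o}bius-nonnegativity constraints from $W3$ and $W4$ is a welcome detail that the paper delegates to the cited proof of~\cite{KleinMajerRad2021}.
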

\begin{proof}
The soundness part can be obtained by verifying that all axioms hold on all probabilistic BD models. Indeed, we reason classically about the mass, and $W1$--$W4$ are just the axioms for non-standard probabilities.

For the completeness part, we reason by contraposition. Assume that $\mathtt{w}\BD\nvdash{\sim}\alpha$. We show that $\alpha$ is satisfiable. Since we reason with the weight formulas using $\mathsf{CPL}$, we can w.l.o.g.\ assume that $\alpha$ is in $\DNF$. Moreover, a~$\DNF$ is satisfiable iff at least one of its clauses is. Thus, we can consider only weight formulas of the form $\alpha=\bigwedge\limits_{i=1}^{n}\pi_i$ with $\pi_i$'s being either PWFs or their negations. Notice however that a~negation of a~PWF is provably equivalent via $\mathsf{ineq}$ to a~PWF
\begin{align}
{\sim}\left(\sum\limits_{i=1}^{n}a_i\cdot\mathtt{w}^+(\phi_i)\geqslant c\right)\dashv\vdash_{\mathtt{w}\mathsf{BD}}\sum\limits_{i=1}^{n}-a_i\cdot\mathtt{w}^+(\phi_i)>-c\label{eq:-toneg}
\end{align}

Thus, we assume w.l.o.g.\ that each $\pi_i$ is a~PWF. Furthermore, using $W2$ we can get rid of $\mathtt{w}^-$ and work with formulas containing $\mathtt{w}^+$ only.

Now, using $W3$, $W4$ and the fact that each $\phi\in\LBD$ has a~unique representation as an $\fDNF$ over a~given set of literals (cf.~Definition~\ref{def:XfDNF:BD}) up to permutation of clauses and variables we conduct the following provably equivalent transformations.

First, take one $\pi_k$ ($1\leqslant k\leqslant n$)
\begin{align}
\pi_k=a_1\cdot\mathtt{w}^+(\phi_1)+\ldots+a_s\cdot\mathtt{w}^+(\phi_s)\geqslant c_k\label{eq:pi_k}
\end{align}
and put $\mathsf{fDNF}_{\LIT(\alpha)}(\phi_j)$'s instead of $\phi_j$'s. We denote the transformed formula with $\pi^\alpha_k$:
\begin{align}
\pi^\alpha_k=a_1\cdot\mathtt{w}^+(\fDNF_{\LIT(\alpha)}(\phi_1))+\ldots+a_s\cdot\mathtt{w}^+(\mathsf{fDNF}_{\LIT(\alpha)}(\phi_s))\geqslant c_k\label{eq:pi_k^alpha}
\end{align}
Now, we use $W3$ and $\mathsf{ineq}$ to remove terms of the form $\mathtt{w}^+(\psi\vee\psi')$. Namely, let
\[\mathsf{fDNF}_{\LIT(\alpha)}(\phi_j)=\bigvee\limits_{t=1}^{u}\mathsf{cl}_t\]
with $\mathsf{cl}_t$ being a~conjunction of literals. Then, by $W3$, $\mathsf{ineq}$, and propositional reasoning, we obtain
\begin{align}
a_j\cdot\mathtt{w}^+(\mathsf{fDNF}_{\LIT(\alpha)}(\phi_j))=\sum\limits_{\scriptsize{\begin{matrix}J\subseteq\{1,\ldots,u\}\\J\neq\varnothing\end{matrix}}}(-1)^{|J|+1}\cdot a_j\cdot\mathtt{w}^+\left(\bigwedge\limits_{t\in J}\mathsf{cl}_t\right)\label{eq:exclinclfDNF}
\end{align}
Using $W4$, we remove repeating literals from $a_j\cdot\mathtt{w}^+\left(\bigwedge\limits_{t\in J}\mathsf{cl}_t\right)$'s. After that, we also add the following formulas as conjuncts (here, $\bigwedge\limits_{t\in J}\mathsf{cl}'_t$'s are $\bigwedge\limits_{t\in J}\mathsf{cl}_t$'s without repeating literals).
\begin{align}
\left[\sum\limits_{\scriptsize{\begin{matrix}J\subseteq\{1,\ldots,u\}\\J\neq\varnothing\end{matrix}}}(-1)^{|J|+1}\cdot\mathtt{w}^+\left(\bigwedge\limits_{t\in J}\mathsf{cl}'_t\right)\leqslant1\right]\wedge\left[\sum\limits_{\scriptsize{\begin{matrix}J\subseteq\{1,\ldots,u\}\\J\neq\varnothing\end{matrix}}}(-1)^{|J|+1}\cdot\mathtt{w}^+\left(\bigwedge\limits_{t\in J}\mathsf{cl}'_t\right)\geqslant0\right]\label{eq:01constraintexclincl}
\end{align}
Observe that \eqref{eq:01constraintexclincl} is provable from~\eqref{eq:exclinclfDNF} by $\mathsf{ineq}$. We denote the resulting formula with $(\pi^\alpha_k)^\wedge$.

Then, we set
\[\alpha^\wedge=\bigwedge\limits^{n}_{k=1}(\pi^\alpha_k)^\wedge\]
Finally, for each $\bigwedge\limits_{t\in J}\mathsf{cl}'_t$ and $\bigwedge\limits_{t'\in J'}\mathsf{cl}'_{t'}$ present in $\alpha^\wedge$ s.t.\ $\bigwedge\limits_{t\in J}\mathsf{cl}'_t\vdash_{\mathsf{FDE}}\bigwedge\limits_{t'\in J'}\mathsf{cl}'_{t'}$, we add
\begin{align}
\left[\mathtt{w}^+\left(\bigwedge\limits_{t\in J}\mathsf{cl}'_t\right)\leqslant\mathtt{w}^+\left(\bigwedge\limits_{t'\in J'}\mathsf{cl}'_{t'}\right)\right]\wedge\left[\mathtt{w}^-\left(\bigwedge\limits_{t\in J}\mathsf{cl}'_t\right)\geqslant\mathtt{w}^-\left(\bigwedge\limits_{t'\in J'}\mathsf{cl}'_{t'}\right)\right]\label{eq:ordercontstraintexclincl}
\end{align}
as new conjuncts (again, observe that they are axioms). We call the resulting formula $(\alpha^\wedge)^+$. It is clear that $\mathtt{w}\mathsf{FDE}\vdash\alpha\leftrightarrow(\alpha^\wedge)^+$ since all our transformations preserved provable equivalence.

Now, observe that $(\alpha^\wedge)^+$ is satisfiable iff the system of inequalities obtained from its conjuncts by replacing $w^\pm(\psi)$'s with $x^\pm_\psi$'s has a~solution (but if it does not, then ${\sim}(\alpha^\wedge)^+\in\mathsf{ineq}$, whence ${\sim}\alpha\in\mathsf{ineq}$, contrary to our initial assumption). Indeed, $\bigwedge\limits_{t\in J}\mathsf{cl}'_t$'s correspond to states in a~model, and the constraints on their masses are given in~\eqref{eq:01constraintexclincl} and~\eqref{eq:ordercontstraintexclincl}.

Now, we construct a~model from the solution to the system of inequalities setting $W=\mathcal{P}(\LIT(\alpha))$. For this, we need to translate weights into the mass assignments. First, we order $\bigwedge\limits_{t\in J}\mathsf{cl}'_t$'s w.r.t.\ $\vdash_{\mathsf{FDE}}$. Note that by the construction described in~\eqref{eq:pi_k^alpha} and~\eqref{eq:ordercontstraintexclincl}, there is the least element w.r.t.\ this order, namely, the conjunction of all literals generated by the $\mathsf{Var}(\alpha)$ (in fact, every $\mathsf{fDNF}$ contains such clause). Observe, further, that due to~\eqref{eq:ordercontstraintexclincl} it has the smallest positive weight.

Following the proof of~\cite[Theorem 4]{KleinMajerRad2021}, we define masses of states inductively, starting from $\LIT(\alpha)$. We set $\mathtt{m}(\LIT(\alpha))=\mathtt{w}^+(\LIT(\alpha))$. As for the other states obtained from $\bigwedge\limits_{t\in J}\mathsf{cl}'_t$'s, we define their masses as follows. 
\begin{align}
\mathtt{m}\left(\bigwedge\limits_{t\in J}\mathsf{cl}'_t
\right)=\mathtt{w}^+\left(\bigwedge\limits_{t\in J}\mathsf{cl}'_t
\right)-\sum\limits_{\bigwedge\limits_{t'\in J'}\mathsf{cl}'_{t'}\not\dashv~\vdash_{\mathsf{FDE}}\bigwedge\limits_{t\in J}\mathsf{cl}'_t}\mathtt{m}\left(\bigwedge\limits_{t'\in J'}\mathsf{cl}'_{t'}\right)\label{eq:massesfromweights}
\end{align}
If the sum of masses so acquired is still smaller than $1$, the remainder goes to the state that corresponds to the empty set of literals. All other states get mass zero. The measure can be then reconstructed from $\mass$.
\end{proof}
\subsubsection{Logic for general belief functions}
\begin{definition}[BD belief formulas and their semantics]\label{def:FDEbeliefformulas}
A \emph{primitive belief formula} (PBF) is an expression of the following form
\[\sum\limits_{i=1}^{n}a_i\cdot\mathtt{b}^\pm(\phi_i)\geqslant c\]
with $a_i,c\in\mathbb{Z}$, and $\phi_i\in\LBD$. The left hand side of a~PBF is called a~\emph{belief term}. A~\emph{belief formula} is a~Boolean combination of PBFs.

Let now $\mathscr{M}=\langle\mathcal{A},\bel_1,\bel_2\rangle$ be a~De Morgan algebra with a~pair belief functions such that $\bel_i(x)=\bel_j(\neg x)$ ($i,j\in\{1,2\}$, $i\neq j$) for all $x\in\mathcal{A}$ on it, and $\alpha$ be a~belief formula. We define the satisfaction relation $\mathscr{M}\models\alpha$ as follows.
\begin{align*}
\mathscr{M}\models\sum\limits_{i=1}^{n}a_i\cdot\mathtt{b}^+(\phi_i)\geqslant c&\text{ iff }\sum\limits_{i=1}^{n}a_i\cdot\bel_1(v(\phi_i))\geqslant c\\
\mathscr{M}\models\sum\limits_{i=1}^{n}a_i\cdot\mathtt{b}^-(\phi_i)\geqslant c&\text{ iff }\sum\limits_{i=1}^{n}a_i\cdot\bel_2(v(\phi_i))\geqslant c\\
\mathscr{M}\models{\sim}\alpha&\text{ iff }\mathscr{M}\not\models\alpha\\
\mathscr{M}\models\alpha\wedge\alpha'&\text{ iff }\mathscr{M}\models\alpha\text{ and }\mathscr{M}\models\alpha'
\end{align*}
$\alpha$ is called valid iff it is satisfied on every $\mathscr{M}$.
\end{definition}
\begin{definition}[Axioms for $\BD$ belief formulas --- calculus $\mathtt{b}\BD$]\label{def:FDEbeliefcalculus}
\begin{enumerate}
\item[]
\item[$\mathsf{CPL}$] $\alpha$ s.t.\ $\mathsf{CPL}\vdash\alpha$
\item[MP] $\dfrac{\alpha\quad\alpha\rightarrow\beta}{\beta}$
\item[$\mathsf{ineq}$] All instances of valid formulas about linear inequalities.
\item[$B1$] $\mathtt{b}^\pm(\phi)\geqslant0$, $\mathtt{b}^\pm(\phi)\leqslant1$
\item[$B2$] $\mathtt{b}^\mp(\phi)=\mathtt{b}^\pm(\neg\phi)$
\item[$B3$] $\mathtt{b}^+\left(\bigvee\limits_{1\leq i\leq k}\phi_i\right)\geqslant\sum\limits_{\scriptsize{\begin{matrix}J\subseteq\{1,\ldots,k\}\\J\neq\varnothing\end{matrix}}}(-1)^{|J|+1}\mathtt{b}^+\left(\bigwedge\limits_{j\in J}\phi_j\right)$
\item[$B4$] $\mathtt{b}^+(\phi)\leqslant\mathtt{b}^+(\phi')$ and $\mathtt{b}^-(\phi)\geqslant\mathtt{b}^-(\phi')$ for all $\phi,\phi'\in\LBD$ s.t.\ $\phi\vdash_\mathsf{BD}\phi'$.
\end{enumerate}
\end{definition}
\begin{definition}\label{def:canonicalPBF}
Let $\pi=\left[\sum\limits_{i=1}^{n}a_i\cdot\mathtt{b}^\pm(\phi_i)\geqslant c\right]$ be a~PBF and let $X\supseteq\LIT(\pi)$. We call $\pi$ \emph{$X$-canonical} iff $\phi_i$ is not equivalent to $\phi_j$ for any $i,j\in\{1,\ldots,n\}$ s.t.\ $i\neq j$, all occurring $\phi_i$'s are in fDNF w.r.t.\ $X$, and all occurrences of $\mathtt{b}^\pm$ are positive.
\end{definition}
\begin{proposition}\label{prop:bFDEcanonisation}
For any PBF $\pi$ and for any $X\supseteq\LIT(\pi)$ there exists a~canonical PBF $\pi^\mathsf{c}$ s.t.\ $\mathtt{b}\BD\vdash\pi\leftrightarrow\pi^\mathsf{c}$.
\end{proposition}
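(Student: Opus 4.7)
The plan is to verify each of the three conditions in Definition~\ref{def:canonicalPBF} in turn by a~provably-equivalent rewriting of $\pi$: first eliminate the negative modality $\mathtt{b}^-$, then replace each argument of $\mathtt{b}^+$ by its $X$-full DNF, and finally merge summands whose arguments are $\BD$-equivalent. Throughout, equalities between weight terms are handled by $\mathsf{ineq}$, while equalities between $\mathtt{b}^+$-values of $\BD$-equivalent arguments are obtained from two applications of $B4$, once in each direction.

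For step one, axiom $B2$ gives $\mathtt{b}^-(\phi)=\mathtt{b}^+(\neg\phi)$, so I~would replace every summand $a_i\cdot\mathtt{b}^-(\phi_i)$ by $a_i\cdot\mathtt{b}^+(\neg\phi_i)$ to obtain a~provably equivalent PBF $\pi_1$ in which only $\mathtt{b}^+$ appears. Because $X$ is finite and $\neg$-closed (inherited from the hypothesis of Definition~\ref{def:XfDNF:BD}), the literals of each $\neg\phi_i$ still lie in $X$, so step two can be performed using the same parameter $X$.

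For step two, for any $\psi$ with $\LIT(\psi)\subseteq X$ one has $\psi\dashv\vdash_\BD\fDNF_X(\psi)$ by construction of the fDNF. Applying $B4$ in each direction yields $\mathtt{b}^+(\psi)\leq\mathtt{b}^+(\fDNF_X(\psi))$ and the reverse inequality, so $\mathsf{ineq}$ delivers the equality $\mathtt{b}^+(\psi)=\mathtt{b}^+(\fDNF_X(\psi))$. Substituting throughout $\pi_1$ gives a~provably equivalent PBF $\pi_2$ all of whose modal arguments are already in $\fDNF_X$.

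For step three, whenever two distinct summands $a_i\cdot\mathtt{b}^+(\fDNF_X(\psi_i))$ and $a_j\cdot\mathtt{b}^+(\fDNF_X(\psi_j))$ have $\BD$-equivalent arguments, the same two-sided use of $B4$ identifies their $\mathtt{b}^+$-values, and then $\mathsf{ineq}$ lets me merge them into $(a_i+a_j)\cdot\mathtt{b}^+(\fDNF_X(\psi_i))$, deleting the term if $a_i+a_j=0$. Iterating this merger produces the desired canonical $\pi^\mathsf{c}$, provably equivalent to $\pi$ by transitivity. The only mild obstacle is bookkeeping in step one: one needs to use $\neg$-closure of $X$ to ensure the literals of the newly introduced $\neg\phi_i$'s still sit inside $X$, so that step two applies uniformly and does not require enlarging $X$ on the fly.
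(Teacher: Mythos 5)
Your proof is correct and takes essentially the same route as the paper, whose entire argument is the one-line observation that $\mathsf{ineq}$, $B2$, and $B4$ suffice to canonise a PBF; you have simply spelled out the three rewriting stages (eliminating $\mathtt{b}^-$ via $B2$, passing to $\fDNF_X$ arguments via two-sided $B4$ plus $\mathsf{ineq}$, and merging equivalent summands) that the paper leaves implicit.
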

\begin{proof}
Observe that we can use $\mathsf{ineq}$, $B2$, and $B4$ to make a~PBF canonical.
\end{proof}
\begin{theorem}\label{theorem:bFDEcompleteness}
$\mathtt{b}\BD\vdash\alpha$ iff $\alpha$ is valid.
\end{theorem}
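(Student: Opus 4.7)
My plan is to mirror the proof of Theorem~\ref{theorem:wFDEcompleteness}, with the adjustments forced by the fact that $B3$ is an inequality rather than an equality. Soundness will be immediate by inspection against Definitions~\ref{def:generalbelieffunction} and~\ref{def:FDEbeliefformulas}: $B1$ records the range of belief functions, $B2$ encodes the assumed duality $\bel_i(x)=\bel_j(\neg x)$, $B3$ is weak total monotonicity, and $B4$ is monotonicity together with its translation along the negation. For completeness I argue by contraposition: assuming $\mathtt{b}\BD\nvdash{\sim}\alpha$, I build a model satisfying $\alpha$. Using $\mathsf{CPL}$ I reduce $\alpha$ to a single conjunction of PBFs (negated PBFs being $\mathsf{ineq}$-equivalent to PBFs, cf.~\eqref{eq:-toneg}) and eliminate $\mathtt{b}^-$ via $B2$. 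Fixing a $\neg$-closed $X\supseteq\LIT(\alpha)$, I apply Proposition~\ref{prop:bFDEcanonisation} to put each conjunct in $X$-canonical form, so every formula occurring under $\mathtt{b}^+$ is an $\fDNF_X$.

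I then introduce a fresh real variable $x_\psi$ for every $\bigwedge$-$X$-clause and for every disjunction of such clauses that arises in the transformed $\alpha$ and form a finite linear system $S_\alpha$ by conjoining the translated $\alpha$ with: (i)~$0\le x_\psi\le 1$ from $B1$; (ii)~the $B3$-instance $x_{\bigvee_{i}\mathsf{cl}_i}\ge\sum_{\varnothing\ne J}(-1)^{|J|+1}x_{\bigwedge_{j\in J}\mathsf{cl}_j}$ for every relevant family of clauses (after collapsing repeated literals as in~\eqref{eq:01constraintexclincl}); and (iii)~$x_\psi\le x_{\psi'}$ whenever $\psi\vdash_\BD\psi'$, from $B4$. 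Every added conjunct is a derivable belief formula, so if $S_\alpha$ had no solution, $\mathsf{ineq}$ would derive ${\sim}\alpha$, contradicting the hypothesis.

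A solution of $S_\alpha$ defines a map $\bel^+$ on the finite distributive sublattice $\mathcal{L}_0$ of $\LTBD$ generated by the relevant clauses; by (ii) and (iii) this $\bel^+$ is monotone and weakly totally monotone on $\mathcal{L}_0$, hence a general belief function in the sense of Definition~\ref{def:generalbelieffunction}. Theorem~\ref{theo:totallymonotone:charactrisation} supplies a general mass function on $\mathcal{L}_0$, Lemma~\ref{lem:Zhou:3.7} extends $\bel^+$ to a belief function on a suitable Boolean envelope, and Theorem~\ref{th:complBelAxioms} then yields a canonical model $\mathscr{M}_c=\langle\mathcal{P}(\LIT),v_c^+,v_c^-\rangle$ together with a belief function $\bel$ on $\mathcal{P}(\mathcal{P}(\LIT))$ such that $\bel^+(\phi)=\bel(|\phi|^+)$ for the formulas involved in $\alpha$. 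Setting $\bel_1(|\phi|^+)=\bel(|\phi|^+)$ and $\bel_2(|\phi|^+)=\bel(|\phi|^-)=\bel(|\neg\phi|^+)$ produces the pair of negation-dual general belief functions required by Definition~\ref{def:FDEbeliefformulas}, and the resulting structure satisfies $\alpha$ by construction.

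The main obstacle is the passage from a mere solution of $S_\alpha$ to a genuine general belief function and thence to a canonical model. In the probability case the equality in $W3$ let one invert the inclusion--exclusion formula and read off the mass of each canonical state directly, as in~\eqref{eq:massesfromweights}; here $B3$ is only an inequality, so no such inversion is available. This forces $S_\alpha$ to include the $B3$-instances for \emph{every} finite family of clauses in $\mathcal{L}_0$, not only those literally appearing in $\alpha$, because only then is the M\"obius transform of any solution guaranteed to be non-negative (by Theorem~\ref{theo:totallymonotone:charactrisation}) and hence to be a genuine general mass function usable by the model construction of Theorem~\ref{th:complBelAxioms}.
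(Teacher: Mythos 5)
Your proposal is correct and follows essentially the same route as the paper's proof: reduce $\alpha$ to a conjunction of $\LIT(\alpha)$-canonical PBFs, conjoin all instances of $B1$, $B3$ and $B4$ for the finitely many formulas over $\mathsf{Var}(\alpha)$ up to $\BD$-equivalence (your key observation that the inequality in $B3$ forces including \emph{all} such instances is exactly the paper's step), and read a general belief function off a solution of the resulting linear system. The only divergence is at the very end: the paper stops at the finite Lindenbaum algebra, which is already a model in the sense of Definition~\ref{def:FDEbeliefformulas}, so your additional passage through Theorem~\ref{theo:totallymonotone:charactrisation}, Lemma~\ref{lem:Zhou:3.7} and Theorem~\ref{th:complBelAxioms} to a state-based canonical model is sound but unnecessary.
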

\begin{proof}
To verify soundness, we just check that all axioms are valid. For the completeness part, we reason by contraposition. Assume that $\mathtt{b}\BD\nvdash{\sim}\alpha$. We show that $\alpha$ is satisfiable.

We assume w.l.o.g.\ that $\alpha$ is in DNF. Whence, we can reduce $\alpha$ to a~conjunction of PBFs (recall that because of~\eqref{eq:-toneg} a~negation of PBF is a~PBF as well). Furthermore, using proposition~\ref{prop:bFDEcanonisation}, we can further assume that all PBFs are $\LIT(\alpha)$-canonical. Thus we set $\alpha\coloneqq\bigwedge\limits_{i=1}^{n}\pi^\mathsf{c}_i$ with
\begin{align}
\pi^\mathsf{c}_i&\coloneqq\sum\limits_{j=1}^{m}a_j\cdot\mathtt{b}^\pm(\mathsf{fDNF}_{\LIT(\alpha)}(\phi_j))\geqslant c_i\label{eq:picanonical}
\end{align}
We add the following conjuncts to $\alpha$.
\begin{enumerate}
\item\label{item:inclusion+} $\mathtt{b}^+(\mathsf{fDNF}_{\LIT(\alpha)}(\phi))\!\geqslant\!\sum\limits_{\scriptsize{\begin{matrix}J\subseteq\{1,\ldots,r\}\\J\neq\varnothing\end{matrix}}}(-1)^{|J|+1}\mathtt{b}^+\left(\bigwedge\limits_{k\in K}\mathsf{cl}_k\right)$ for every $\phi$ over $\mathsf{Var}(\alpha)$ up to~$\dashv\vdash_\BD$\footnote{Recall that $\BD$ is tabular (locally finite), whence there are only finitely many pairwise non-equi-provable formulas over a~given finite set of variables.}.
\item\label{item:order} $\mathtt{b}^+(\phi)\leqslant\mathtt{b}^+(\phi')$ for each $\phi$ and $\phi'$ added to $\alpha$ on step~\ref{item:inclusion+} s.t.\ $\phi\vdash_\BD\phi'$.
\item\label{item:0-1} $0\leqslant\mathtt{b}^+(\phi)\leqslant1$ for each $\phi$ added to $\alpha$ on the previous steps.
\end{enumerate}
Observe that these conjuncts are axioms. Hence, the new formula (we dub it with $\alpha^+$) is provably equivalent to $\alpha$. Furthermore, it is clear that $\alpha^+$ is just a~system of linear equations which is satisfiable (otherwise, ${\sim}\alpha^+$ is an instance of $\mathsf{ineq}$).

It remains to construct a~model. For this, we take the Lindenbaum algebra generated by $\mathsf{Var}(\alpha)$, and define $\mathtt{b}^+([\phi])$ using a~solution for $\alpha^+$. Note that because of~\ref{item:order}, $\mathtt{b}^+$ is defined for every element. Then define $\mathtt{b}^-([\phi])$ as $\mathtt{b}^+([\neg\phi])$. Clearly, $\mathtt{b}^+$ and $\mathtt{b}^-$ are belief functions on the Lindenbaum algebra because items~\ref{item:inclusion+},~\ref{item:order}, and~\ref{item:0-1} are exactly the belief functions axioms.
\end{proof}

\subsection{\L{}ukasiewicz logic and its two-dimensional expansions}\label{ssec:Luk}
In this part of Section~\ref{sec:logics} we will develop two-layered modal logics to reason about probabilities, and belief and plausibility functions. Before we do so, we concentrate on two \emph{outer logics} of the two-layered framework, introduce their syntax, semantics, and prove their finite strong standard completeness.

As a~starting point, we have chosen to use \L{}ukasiewicz infinitely-valued logic (cf.~\cite{Hajek1998,CintulaHajekNoguera2015HandbookofMFL1}), which we introduce briefly together with its standard semantics on the real interval $[0,1]$. The choice is motivated by the fact that in the language of \L{}ukasiewicz logic one can express the order and arithmetical operations, and consequently formulate the probability or belief function axioms. For the sake of translations between the linear inequalities-based formalism and the \L{}ukasiewicz logic-based formalism, we consider \L{}ukasiewicz logic extended with globalisation (Baaz delta) operator $\triangle$.
\begin{definition}[Language and standard semantics of $\Luk$ and $\Luk\triangle$]\label{def:Luk_standard_algebra}
The logic $\Luk\triangle$ has the following language $\Langluka$ ($\Luk$ refers to the $\triangle$-free fragment):
$$
\alpha\coloneqq \ p\mid \alpha\rightarrow\beta\mid {\sim}\alpha\mid \triangle\alpha,
$$
with $p\in\Prop_{\Luk}$, and with additional connectives definable as 
\begin{align*}
\alpha\vee\beta &\coloneqq ((\alpha\rightarrow\beta)\rightarrow\beta)  & \alpha\wedge\beta &\coloneqq (\alpha\rightarrow\beta) \& \alpha \\
\alpha\oplus\beta &\coloneqq {\sim}\alpha\rightarrow\beta & \alpha\ominus\beta &\coloneqq {\sim}(\alpha\rightarrow\beta)\\
\alpha\&\beta &\coloneqq {\sim}(\alpha\rightarrow{\sim}\beta) & \bot &\coloneqq {\sim} (p\rightarrow p)
\end{align*}
We interpret the language on the standard MV algebra $[0,1]_\Luk = ([0,1],\rightarrow_\Luk,{\sim_\Luk},\triangle_\Luk)$, with the following operations (we list some of the defined connectives as well for convenience)
\begin{align*}
  a~\&_{\Luk} b &\coloneqq \max \{0, a+b-1 \}  &  a~\rightarrow_{\Luk} b &\coloneqq \min \{ 1, 1 - a+ b \} \\
  a~\oplus_{\Luk} b &\coloneqq \min \{1, a~+ b\}  &  a~\ominus_{\Luk} b &\coloneqq \max \{0, a~- b\} \\
  a~\wedge b &\coloneqq \min \{ a,b\} & a~\vee b &\coloneqq \max \{ a,b\}  \\
\triangle_{\Luk} a~&\coloneqq \begin{cases}
1,\text{ if }a~=1\\
0 \ \ \text{else}
\end{cases} & {\sim_{\Luk}}a &\coloneqq 1 - a
\end{align*}
The logic $\Luk$ ($\Luk\triangle$) is semantically given as the $1$-preserving  consequence relation over $[0,1]_\Luk$ in the corresponding language\footnote{This consequence is \emph{infinitary} \cite{Hajek1998}. Therefore one in general only obtains strong completeness for its finitary restriction.}. 
\end{definition}
The logic $\Luk$ can be axiomatised by the \L{}ukasiewicz axioms in the language $\{\rightarrow,{\sim}\}$, or equivalently as an extension of the basic fuzzy logic BL with the involution axiom \cite{Hajek1998}. $\Luk\triangle$ is axiomatised by extending $\Luk$ with additional $\triangle$-axioms and $\triangle$-necessitation rule. Both the logics $\Luk$ and $\Luk\triangle$ are known to be \emph{finitely} strongly standard complete w.r.t their standard semantics.
The axiomatisations, together with a~proof of their finite strong standard completeness (FSSC), can be found e.g. in \cite{Hajek1998} or \cite{CintulaHajekNoguera2015HandbookofMFL1}. 

We will expand the language of $\Luk$, and extend the logics $\Luk$, $\Luk\triangle$ in the following sections, to encompass a~two-dimensional nature of positive / negative information reflected in the uncertainty measures developed in Subsections~\ref{ssec:probabilities} and~\ref{ssec:two:dimension:belief}. The axiomatisations of $\Luk,\Luk\triangle$ will therefore appear as part of our later definitions, and we will
comment on and refer to the particular places in the literature throughout the following section.

To encompass a~two-dimensional nature of the reasoning semantically, we will use two different expansions of the \emph{twist product} algebra $[0,1]^{\Join}$, i.e. twist product\footnote{In the context of Nelson's paraconsistent logics such product construction has been called twist product of algebras \cite{Vakarelov1977}, or twist structures~\cite[Chapter~8]{Odintsov2008}.} of the lattice $([0,1],\min,\max)$, as the algebra of values of the outer layer.
\begin{definition}[{Twist product $[0,1]^{\Join}$}]\label{def:twistproduct}
The twist product algebra $[0,1]^{\Join}$ with the truth (vertical) order $\leq$ and the information (horizontal) order $\leq_i$ is defined as $[0,1]^{\Join} \coloneqq ([0,1]\times [0,1]^{op},\wedge,\vee,\neg)$, where

\begin{minipage}{.5\textwidth}
        \begin{align*}
      (a_1,a_2)\wedge (b_1,b_2) &= (a_1\wedge b_1,a_2\vee b_2) \\
      (a_1,a_2)\vee (b_1,b_2) &= (a_1\vee b_1,a_2\wedge b_2)\\
      \neg(a_1,a_2) &= (a_2,a_1)\\
      (a_1,a_2)\leq (b_1,b_2) &\mbox{ iff } a_1\leq_{[0,1]} b_1 \mbox{ and } b_2\leq_{[0,1]} a_2\\
      (a_1,a_2)\leq_i (b_1,b_2) &\mbox{ iff } a_1\leq_{[0,1]} b_1 \mbox{ and } a_2\leq_{[0,1]} b_2
      \end{align*}
    \end{minipage}%
    \begin{minipage}{0.5\textwidth}
        \centering
        \begin{tikzpicture}[-,>=stealth,shorten >=0.5pt,auto,node distance=1.2cm,thin,
	main node/.style={circle,draw,font=\sffamily\normalsize},
	]
	\node[main node][label=left:{$(0,0)$}] (1) {};
	\node[main node][label={$(1,0)$}] (2) [above right of=1] {};
	\node[main node][label=below:{$(0,1)$}] (3) [below right of=1] {};
	\node[main node][label=right:{$(1,1)$}] (4) [above right of=3] {};
	\path[every node/.style={font=\sffamily\small}]
	(1) edge (2)
	edge (3)
	(2) edge (4)
	(3) edge (4);
	\path[dotted]
	(2) edge (3)
	(1) edge (4);
\end{tikzpicture}
\end{minipage}
\end{definition}
We will use different expansions of the twist product $[0,1]^{\Join}$ with \L{}ukasiewicz-derived connectives as the algebras (or to be precise, matrices) for the outer logic. There are some design choices to make, most importantly the following: (i) the choice of the set of designated values, and (ii) the choice how to interpret the additional connectives (mainly, the implication) in their second coordinate, i.e. choosing how they are to be negated with $\neg$, in other words, what type of information constitutes grounds for them to be refuted. 

As for (i), we will use the following two natural choices: first, we can use the singleton $\{(1,0)\}$, i.e. the top of the underlying truth order as the only designated value. This entails that only formulas whose truth is absolutely supported \emph{and} their falsity absolutely refuted are going to be valid. This in general yields a~logic whose $(\wedge,\vee,\neg)$ fragment coincides with the Exactly-true logic $\ETL$~\cite{PietzRiveccio2013}. Another choice is to put the filter $(1,1)^{\uparrow} = \{a\mid a_1 = 1\}$ as the set of designated values. This entails that all formulas whose truth is absolutely supported are valid, so, the concept of validity only employs the first coordinate of their semantical value. This in general yields a~logic whose $(\wedge,\vee,\neg)$ fragment coincides with the logic $\BD$.

As for (ii), there are (at least) two natural ways of negating implication:
(a) a~'de Morgan' way, dualising implication with a~\emph{co-implication} (which in case of \L{}ukasiewicz logic is $\ominus_\Luk$) as
        $$ 
        \neg(a\rightarrow b) \coloneqq (\neg b \ominus\neg a),
        $$
and (b) a~'Nelson's way, combining positive and negative semantical values with a~suitable conjunction as
        $$ 
        \neg(a\rightarrowtriangle b) \coloneqq (a\ \&\ \neg b).
        $$
The former mimics one of the options on how to negate implications listed in \cite{Wansing2008},
the latter is inspired directly by how implication is negated in Nelson's paraconsistent logic $\N 4$ \cite{Nelson1949}, and it yields an implication whose associated equivalence connective is not congruential, and it is therefore referred to as a~weak implication (and we will use a~different symbol to denote it). 

Logics of both kinds have been considered in \cite{BilkovaFrittellaKozhemiachenko2021TABLEAUX}, namely the two logics 
$\Luk^2_{(1,0)}(\rightarrow)$
and $\Luk^2_{(1,1)^\uparrow}(\rightarrowtriangle)$ (the lower index pointing to the set of designated values). There, constraint tableaux calculi were provided to decide their sets of tautologies and both logics were shown to be coNP-complete. The logic $\Luk^2_{(1,0)}(\rightarrow)$ has previously been introduced and proven complete in \cite{BilkovaFrittellaMajerNazari2020} under the name $\Luk_\neg$.

In this paper, we employ the logic $\Luk^2_{(1,0)}(\rightarrow)$, extended with a~$\triangle$ operator, to reason about probabilities or belief functions over the logic $\BD$. We will refer to this logic as a~Belnapian extension of $\Luk$, and describe it in detail in the following Subsection~\ref{ssec:belnapluk}. Next, we will employ the logic $\Luk^2_{(1,1)^\uparrow}(\rightarrowtriangle)$ to reason about belief and plausibility as described in the Subsection~\ref{sssec:bel-pl-modalities:(bel+,pl-)}. We will refer to this logic as a~Nelson-style extension of $\Luk$, and describe it in detail in the Subsection~\ref{ssec:Nelsonluk}.

\subsubsection{A Belnapian extension of \L{}ukasiewicz logic}\label{ssec:belnapluk}

In this paper, we first opt to use the logic $\Luk^2_{(1,0)}(\rightarrow)$ of $\cite{BilkovaFrittellaKozhemiachenko2021TABLEAUX}$, expanded with a~globalisation $\triangle$ operator. We will skip mentioning the language in the index throughout this subsection for simplicity of notation, and use $\Luk^2$ to refer to the logic.

\begin{definition}[Language and semantics of $\Luk^2$]\label{def:language_semantics_luksquare}

The logic $\Luk^2$ has the following language $\Langluk$:
$$
\alpha\coloneqq \ p\mid \alpha\rightarrow\beta\mid {\sim}\alpha\mid \triangle\alpha\mid \neg\alpha,
$$
with $p\in\Prop_{\Luk}$, and with additional connectives definable as 
\begin{align*}
\alpha\vee\beta &\coloneqq ((\alpha\rightarrow\beta)\rightarrow\beta)  & \alpha\wedge\beta &\coloneqq (\alpha\rightarrow\beta) \& \alpha \\
\alpha\oplus\beta &\coloneqq {\sim}\alpha\rightarrow\beta & \alpha\ominus\beta &\coloneqq {\sim}(\alpha\rightarrow\beta)\\
\alpha\&\beta &\coloneqq {\sim}(\alpha\rightarrow{\sim}\beta) & \bot &\coloneqq {\sim} (p\rightarrow p)
\end{align*}
To interpret the language, we expand the twist product 
$[0,1]^{\Join}$ 
with the following operations derived from the standard MV algebra (we list the semantics for some of the derived connectives as well for convenience):
\begin{align*}
(a_1,a_2)\rightarrow (b_1,b_2) &= (a_1\rightarrow_{\Luk}b_1,b_2\ominus_{\Luk}b_1) & 
(a_1,a_2)\ominus (b_1,b_2) &= (a_1\ominus_{\Luk}b_1,b_2\rightarrow_{\Luk}b_1) \\
(a_1,a_2)\&(b_1,b_2) &= (a_1\&_{\Luk} b_1, a_2\oplus_{\Luk} b_2) & (a_1,a_2)\oplus(b_1,b_2) &= (a_1\oplus_{\Luk} b_1, a_2\&_{\Luk} b_2) \\
{\sim}(a_1,a_2) &= (\sim_{\Luk}a_1,\sim_{\Luk}a_2) & \triangle(a_1,a_2) &= (\triangle_{\Luk} a_1, {\sim_{\Luk}}\triangle_{\Luk}{\sim_{\Luk}} a_2)\\
\neg(a_1,a_2) &= (a_2,a_1) 
\end{align*}
\begin{minipage}{0.3\textwidth}
\begin{tikzpicture}[-,>=stealth,shorten >=0.5pt,auto,node distance=1.2cm,thin,
	main node/.style={circle,draw,font=\sffamily\normalsize},main node2/.style={circle,fill=black,draw,font=\sffamily\normalsize}
	]
	\node[main node][label=left:{$(0,0)$}] (1) {};
	\node[main node2][label={$(1,0)$}] (2) [above right of=1] {};
	\node[main node][label=below:{$(0,1)$}] (3) [below right of=1] {};
	\node[main node][label=right:{$(1,1)$}] (4) [above right of=3] {};
	\path[every node/.style={font=\sffamily\small}]
	(1) edge (2)
	edge (3)
	(2) edge (4)
	(3) edge (4);
	\path[dotted]
	(2) edge (3)
	(1) edge (4);
	
	\end{tikzpicture}
\end{minipage}
\begin{minipage}{0.7\textwidth}
We put the singleton $\{(1,0)\}$ to be the set of designated values and denote the resulting matrix $([0,1]^{\Join}_\Luk,\{(1,0)\})$. Observe, that we can understand the $\neg$-negation geometrically as the symmetry along the horizontal line, while the ${\sim}$-negation ${\sim}(x,y) = (1-y,1-x)$ geometrically denotes the symmetry along the middle point $(0.5,0.5)$.
\end{minipage}
The semantical consequence of $\Luk^2$ is defined as $\{(1,0)\}$ preservation over all valuations $v:\Prop_\Luk \to [0,1]^{\Join}_\Luk$ (extended to the whole language $\Langluk$ as expected):
$$
\Gamma \vDash_{[0,1]^{\Join}_\Luk} \alpha \ \ \mbox{ iff } \ \ \forall v (v[\Gamma]\subseteq \{(1,0)\} \Rightarrow v(\alpha) = (1,0)).
$$
\end{definition}
\begin{convention}
Given a valuation $v$ s.t.\ $v(\phi)=(x,y)$, we will further write $v_1(\phi)=x$ and $v_2(\phi)=y$ when dealing with the coordinates of $v$.
\end{convention}
\begin{remark}[]\label{}
The matrix $([0,1]^{\Join}_\Luk,\{(1,0)\})$ can be equivalently seen as the product MV algebra $[0,1]_\Luk \times [0,1]_\Luk^{op}$, where $[0,1]_\Luk$ is the standard MV algebra and $[0,1]_\Luk^{op}$ is its dual: the operations and order of the standard MV algebra $([0,1],\wedge,\vee,\rightarrow_\Luk,\&_\Luk,\ominus_\Luk,\oplus_\Luk, {\sim_\Luk})$, are turned upside down and dualized in the dual algebra as $([0,1]^{op},\vee,\wedge,\ominus_\Luk,\oplus_\Luk,\rightarrow_\Luk,\&_\Luk,{\sim_\Luk})$. The natural dual of the globalisation Baaz delta operator $\triangle_\Luk$ ("definitely true") is then the operator ${\sim_\Luk}\triangle_\Luk{\sim_\Luk}$, which results in $0$ if its argument is $0$, and in $1$ otherwise.
\end{remark}

\begin{remark}[The two-dimensional delta operators]\label{rem:delta(1,0)detecting}
Observe that $\triangle(a_1,a_2)$ can only result in the four values $(1,0), (1,1), (0,0), (0,1)$ in $[0,1]^{\Join}_\Luk$: $\triangle (a_1,a_2) = (1,0)$ if and only if $((a_1,a_2) = (1,0))$, $\triangle (a_1,a_2) = (1,1)$ if and only if $a_1= 1,a_2\neq 0$, $\triangle (a_1,a_2) = (0,0)$ if and only if $a_1\neq 1,a_2 = 0$, and $\triangle (a_1,a_2) = (0,1)$ else. Defining the following operator $$
\triangletop\alpha \coloneqq \triangle\alpha\wedge {\sim}\neg\triangle\alpha,
$$ 
we obtain the $(1,0)$-detecting operator in $[0,1]^{\Join}_\Luk$: it has value $(1,0)$ iff the argument has the value $(1,0)$, and is $(0,1)$ everywhere else.

It is well known that the \L{}ukasiewicz language expanded with $\triangle$ can define the G\"{o}del implication (cf.~\cite[CH.I,~2.2.1]{BehounekCintulaHajek2011MFL1}) as $\alpha\rightarrow_G \beta \coloneqq \triangle(\alpha\rightarrow\beta)\vee\beta$. Similarly, co-implication can be defined as $\beta\Yleft_{G}\alpha \coloneqq \beta \wedge {\sim}\triangle(\beta\rightarrow\alpha)$. Thus in $[0,1]^{\Join}_\Luk$, 
\begin{align*}
    \triangle(a\rightarrow b)\vee b &= (\triangle(a_1\rightarrow_\Luk b_1)\vee b_1, {\sim}\triangle{\sim}(b_2\ominus_\Luk a_2)\wedge b_2)\\
    &= (\triangle(a_1\rightarrow_\Luk b_1)\vee b_1, {\sim}\triangle(b_2\rightarrow a_2)\wedge b_2)\\
    &= (a_1\rightarrow_G b_1, b_2\Yleft_{G} a_2)
\end{align*}
defines the implication (and consequently also other connectives) of the two-dimensional bi-G\"{o}del logic $\mathsf{G}_{(1,0)}^2(\rightarrow)$ introduced in \cite{BilkovaFrittellaKozhemiachenko2021TABLEAUX}
to reason about comparative uncertainty.

It is worth observing, that due to $\triangletop$, we can express both the order and the strict order of the $[0,1]^{\Join}_\Luk$: First, $a\rightarrow b$ is designated if and only if $a\leq b$, thus $\triangletop(a\rightarrow b)$ is $(1,0)$ if $a\leq b$, and it is $(0,1)$ otherwise. Next, the formula $\triangletop(a\rightarrow b)\wedge{\sim}\triangletop(b\rightarrow a)$ is $(1,0)$ if $a< b$, and it is $(0,1)$ otherwise.
\end{remark}
\begin{definition}[Axiomatisation of $\Luk^2$] \label{def:ax_luk_rightarrow}
The logic $\Luk^2$ is axiomatised by the following axioms and rules:
\begin{center}
\begin{tabular}{llll}
(w) & $\alpha \rightarrow (\beta\rightarrow\alpha)$ & ($\triangle$1) & $\triangle\alpha\vee {\sim}\triangle\alpha$ \\
 (sf) & $(\alpha\rightarrow\beta) \rightarrow ((\beta\rightarrow\gamma)\rightarrow(\alpha\rightarrow\gamma))$ & ($\triangle$2) & $\triangle\alpha \rightarrow \alpha$ \\
(waj) & $((\alpha\rightarrow\beta)\rightarrow\beta) \rightarrow ((\beta\rightarrow\alpha)\rightarrow\alpha)$ & ($\triangle$3) & $\triangle\alpha \rightarrow \triangle\triangle\alpha$ \\
 (co) & $({\sim}\beta\rightarrow{\sim}\alpha) \rightarrow (\alpha\rightarrow\beta)$ & ($\triangle$4) & $\triangle(\alpha\vee\beta) \rightarrow \triangle\alpha\vee\triangle\beta$ \\
 (dn$\neg$) & $\neg\neg\alpha \leftrightarrow \alpha$ & ($\triangle$5) & $\triangle(\alpha\rightarrow\beta) \rightarrow \triangle\alpha\rightarrow \triangle\beta$ \\
 ($\neg{\sim}$) & $\neg{\sim}\alpha \leftrightarrow {\sim}\neg\alpha$ & ($\neg\triangle$) & $\neg\triangle\alpha \leftrightarrow {\sim}\triangle{\sim}\neg\alpha$\\
(K${\sim}\neg$) & $({\sim}\neg\alpha\rightarrow{\sim}\neg\beta) \leftrightarrow {\sim}\neg(\alpha\rightarrow\beta)$ & (Nec) & $\alpha \vdash \triangle\alpha$ \\
 ($\vee$) & $\alpha\vee\beta \leftrightarrow ((\alpha\rightarrow\beta)\rightarrow\beta)$
& (Conf) & $\alpha \vdash {\sim}\neg\alpha$\\
 (MP) & $\alpha, \alpha\rightarrow\beta \vdash \beta $ 
\end{tabular}
\end{center}
\end{definition}
The axioms of weakening, suffixing, Wajsberg's axiom, and (converse) contraposition (w, sf, waj, co) together with the rule (MP) completely axiomatise \L{}ukasiewicz logic in the language $\{\rightarrow,{\sim}\}$ (cf. \cite{Hajek1998}). Axiom ($\vee$) is the definition of disjunction, which we need to formulate two of the the $\triangle$ axioms. 

The delta axioms ($\triangle$1--5) and the (Nec) rule above, except the additional ($\neg\triangle$) axiom, present the usual complete axiomatisation of Baaz Delta operator over \L{}ukasiewicz logic (cf.~\cite[Def.~2.4.5]{Hajek1998} or \cite[CH I,2.2.1]{BehounekCintulaHajek2011MFL1}). 

First, observe that the $\neg$ negations can provably be pushed to the atomic formulas, and we can thus consider formulas, up to provable equivalence, in a~\emph{negation normal form (NNF)}, i.e.\ formulas built using $\{\rightarrow, \sim,\triangle\}$ from \emph{literals} of the form $p, \neg p$:
\begin{lemma}[NNF]\label{lem:nnf_luk_rightarrow}
For each formula $\alpha$ in $\Langluk$ there is a~formula in $\neg$-negation normal form $\alpha^\star$ such that
$$
\vdash_{\Luk^2} \alpha \leftrightarrow \alpha^\star.
$$
\end{lemma}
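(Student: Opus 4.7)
The plan is to proceed by induction on the number $|\alpha|$ of connectives in $\alpha$. The base case, where $\alpha$ is a propositional variable $p$, is immediate with $\alpha^\star = p$. For the inductive step, we split on the outermost connective of $\alpha$. If $\alpha$ has the form $\sim\beta$, $\triangle\beta$, or $\beta\rightarrow\gamma$, then we apply the induction hypothesis to the immediate subformulas and assemble $\alpha^\star$ using the congruence of $\sim$, $\triangle$, and $\rightarrow$ with respect to provable equivalence; congruence of $\sim$ and $\rightarrow$ is standard in \L{}ukasiewicz logic, while congruence of $\triangle$ follows from axiom $(\triangle 5)$ together with the rule (Nec).

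The substantive case is $\alpha=\neg\beta$, which we handle by a sub-analysis of the outer connective of $\beta$. When $\beta=p$ is atomic, $\alpha$ is already a literal and is in NNF. When $\beta=\neg\gamma$, axiom (dn$\neg$) gives $\alpha\leftrightarrow\gamma$, and since $|\gamma|<|\alpha|$, the induction hypothesis supplies $\gamma^\star$. When $\beta=\sim\gamma$, axiom $(\neg\sim)$ gives $\alpha\leftrightarrow\sim\neg\gamma$, and the induction hypothesis applied to $\neg\gamma$ (of size $|\gamma|+1<|\gamma|+2=|\alpha|$) yields a NNF that we wrap in $\sim$. When $\beta=\triangle\gamma$, axiom $(\neg\triangle)$ analogously reduces the problem to computing the NNF of $\neg\gamma$. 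Finally, when $\beta=\gamma\rightarrow\delta$, axiom (K${\sim}\neg$) gives $\sim\neg(\gamma\rightarrow\delta)\leftrightarrow(\sim\neg\gamma\rightarrow\sim\neg\delta)$; applying $\sim$ to both sides and using involutivity $\sim\sim\varphi\leftrightarrow\varphi$ (which is derivable from (co) and (waj)) we obtain $\alpha\leftrightarrow\sim(\sim\neg\gamma\rightarrow\sim\neg\delta)$, and the induction hypothesis applies to both $\neg\gamma$ and $\neg\delta$, which are strictly smaller than $\alpha$.

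The only real obstacle is bookkeeping to confirm termination. With complexity measured as the total count of connective occurrences, each of the rewrite steps above exposes a $\neg$ applied to a proper subformula of $\beta$, so every recursive invocation operates on a formula of strictly smaller size, and the induction is well-founded. Notably, we do not need a separate congruence principle for $\neg$ itself: the four axioms (dn$\neg$), $(\neg\sim)$, $(\neg\triangle)$, (K${\sim}\neg$) allow us to dispose of every $\neg$ that sits in front of a non-literal before invoking the induction hypothesis, so only the congruences of $\sim$, $\triangle$, and $\rightarrow$ are ever used on the outside.
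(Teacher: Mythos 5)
Your proof is correct and follows essentially the same route as the paper's own (much terser) argument: both push $\neg$ inward using the axioms (dn$\neg$), $(\neg{\sim})$, $(\neg\triangle)$, and (K${\sim}\neg$) combined with ${\sim}$-involutivity, and rely on congruence of ${\sim}$, $\triangle$, $\rightarrow$ to assemble the result inductively. Your explicit size measure and termination check merely spell out what the paper leaves implicit in the phrase ``a procedure can therefore be defined.''
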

\begin{proof}
 We have the axiom ($\neg\triangle$), and we have $\neg{\sim}\alpha\leftrightarrow {\sim}\neg\alpha$ and $\neg (\alpha\rightarrow\beta)\leftrightarrow {\sim}{\sim}\neg (\alpha\rightarrow\beta)\leftrightarrow {\sim}({\sim}\neg\alpha\rightarrow {\sim}\neg\beta)$ provable. A~procedure can therefore be defined which turns each $\alpha$ into $\alpha^\star$ in nnf, so that we can prove, by induction, that $({\sim}\alpha)^\star\leftrightarrow {\sim}\alpha^\star$, $(\triangle\alpha) ^\star\leftrightarrow \triangle\alpha^\star$, and $ (\alpha\rightarrow\beta)^\star\leftrightarrow{\sim}({\sim}\alpha^\star\rightarrow {\sim}\beta^\star)$.
\end{proof}
\begin{remark}[The conflation]\label{rem:luksquare_conflation}
The composition of the two negations ${\sim}\neg$ works semantically as follows: ${\sim}\neg(a_1,a_2) = (1-a_2,1-a_1)$, and we can geometrically understand it on the $[0,1]^{\Join}_\Luk$ algebra as the symmetry along the vertical line. Such operation is often referred to as \emph{conflation} (cf., e.g.~\cite{Fitting1994,OmoriSano2015}), and it is a~form of an involutive information negation (indeed, we have provably ${\sim}\neg{\sim}\neg\alpha \leftrightarrow \alpha$). 

One can easily prove in the calculus above that conflation distributes with all the \L{}ukasiewicz connectives $\{\rightarrow, {\sim}, \triangle, \&, \wedge, \vee, \oplus, \ominus\}$ (it suffices to use $(K{\sim}\neg,\neg{\sim})$ and $(\neg\triangle)$ axioms, together with the definitions of the derived connectives).
\end{remark}
\begin{remark}[The $\triangle$ and $\triangletop$ operators]\label{rem:luksquare_deltas}
Observe that the $\triangle$ operator has some useful additional properties (we list only those we will need later: proofs of most of them in $\Luk$ can be found e.g., in~\cite[Lemma 2.4.11]{Hajek1998}, and they can also similarly be proven in the calculus above):
\begin{align*}
    &\vdash_{\Luk^2}\triangle\alpha \leftrightarrow \triangle(\alpha\& \alpha) \leftrightarrow \triangle\alpha \& \triangle\alpha\\
    &\vdash_{\Luk^2}\triangle(\alpha\& \beta) \leftrightarrow \triangle\alpha \& \triangle\beta \leftrightarrow \triangle(\alpha\wedge \beta)\\
    &\vdash_{\Luk^2}(\triangle\alpha\rightarrow(\triangle\alpha \rightarrow\beta)) \rightarrow (\triangle\alpha\rightarrow\beta)
\end{align*}
The last formula is a~form of the contraction principle for $\triangle$-formulas, and its provability follows immediately from the $(\triangle 1)$ and $(\vee)$ axioms.

Observing that $\triangletop\alpha\coloneqq\triangle\alpha\wedge{\sim}\neg\triangle\alpha$ and thus, by the above principles and conflation distribution over $\triangle$, provably $\triangletop\alpha\leftrightarrow\triangle(\alpha\wedge{\sim}\neg\alpha)$, one can consequently prove all the above principles for $\triangletop$-formulas as well. Namely, the contraction principle $(\triangletop\alpha\rightarrow(\triangletop\alpha\rightarrow\beta)) \rightarrow (\triangletop\alpha\rightarrow\beta)$ is provable. 
Moreover, the  $(\triangle 1-2, \triangle 4-5)$ axioms and the (Nec) rule with $\triangle$ replaced by $\triangletop$ remain both sound and derivable. It is worth noting that it is \emph{not} the case for the remaining $(\triangle 3)$ axiom. As an additional principle we can prove that $\triangletop$-formulas are fixpoints of conflation: ${\sim}\neg\triangletop\alpha \leftrightarrow \triangletop\alpha$.
\end{remark}

Applying the insights remarked on above, we can prove that the logic $\Luk^2$ has the following $\triangletop$-deduction theorem (cf.~\cite[Th.~2.4.14]{Hajek1998} or \cite[Ch.I, Th.~2.2.1]{BehounekCintulaHajek2011MFL1}): 
\begin{lemma}[DT$\triangletop$]\label{lemma:DT__luk_rightarrow}
$$\Gamma,\alpha\vdash_{\Luk^2} \beta\ \  \mbox{iff}\ \  \Gamma\vdash_{\Luk^2} \triangletop\alpha\rightarrow\beta.$$
\end{lemma}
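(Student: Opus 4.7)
The plan is to mimic the standard proof of the $\triangle$-deduction theorem for \L{}ukasiewicz logic (cf.~\cite[Th.~2.4.14]{Hajek1998} or \cite[Ch.~I, Th.~2.2.1]{BehounekCintulaHajek2011MFL1}), with $\triangletop$ taking the role of $\triangle$. The point is that, as already recorded in Remark~\ref{rem:luksquare_deltas}, the operator $\triangletop$ enjoys all the features of $\triangle$ we need: a necessitation rule $\alpha \vdash \triangletop\alpha$, the axiom $\triangletop\alpha \rightarrow \alpha$, a contraction principle $(\triangletop\alpha \rightarrow (\triangletop\alpha \rightarrow \gamma)) \rightarrow (\triangletop\alpha \rightarrow \gamma)$, and the distribution law $\triangletop(\gamma \rightarrow \delta) \rightarrow (\triangletop\gamma \rightarrow \triangletop\delta)$ (i.e.\ $(\triangle 5)$ for $\triangletop$). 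Additionally, $\triangletop\alpha$ is a fixpoint of the conflation ${\sim}\neg$, and one can verify $\triangletop\alpha \rightarrow \triangle\triangletop\alpha$ using conflation-$\triangle$ commutation together with $(\triangle 3)$.

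The right-to-left direction is immediate. From the hypothesis $\alpha$ we get $\triangle\alpha$ by (Nec) and ${\sim}\neg\alpha$ by (Conf); applying (Nec) to the latter and using conflation-$\triangle$ commutation yields ${\sim}\neg\triangle\alpha$, whence $\triangletop\alpha$ by conjunction; an application of (MP) to $\triangletop\alpha \rightarrow \beta$ then delivers~$\beta$.

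The left-to-right direction proceeds by induction on the length of a proof of $\beta$ from $\Gamma \cup \{\alpha\}$. For axioms and members of $\Gamma$, weakening gives $\triangletop\alpha \rightarrow \gamma$; for $\gamma = \alpha$ itself we use the derivable $\triangletop\alpha \rightarrow \alpha$. The (MP) step is handled exactly as in the classical Hájek-style proof: from the induction hypotheses $\triangletop\alpha \rightarrow \delta$ and $\triangletop\alpha \rightarrow (\delta \rightarrow \gamma)$ one obtains $\triangletop\alpha \rightarrow (\triangletop\alpha \rightarrow \gamma)$ by pure \L{}ukasiewicz manipulations, and the contraction principle for $\triangletop$ collapses this to $\triangletop\alpha \rightarrow \gamma$.

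The main obstacle is handling the two non-(MP) rules, which is where the fact that $\triangle$ is replaced by the weaker $\triangletop$ becomes visible. For (Nec): from $\triangletop\alpha \rightarrow \delta$ apply (Nec) to get $\triangle(\triangletop\alpha \rightarrow \delta)$, then $(\triangle 5)$ gives $\triangle\triangletop\alpha \rightarrow \triangle\delta$, and precomposing with the provable $\triangletop\alpha \rightarrow \triangle\triangletop\alpha$ yields $\triangletop\alpha \rightarrow \triangle\delta$. For (Conf): from $\triangletop\alpha \rightarrow \delta$, applying (Conf) gives ${\sim}\neg(\triangletop\alpha \rightarrow \delta)$, which by $(K{\sim}\neg)$ rewrites as ${\sim}\neg\triangletop\alpha \rightarrow {\sim}\neg\delta$, and the fixpoint identity ${\sim}\neg\triangletop\alpha \leftrightarrow \triangletop\alpha$ finally produces $\triangletop\alpha \rightarrow {\sim}\neg\delta$, completing the induction.
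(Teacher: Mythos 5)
Your proof is correct and follows essentially the same route as the paper's: induction on proof length with the (MP) case closed by the $\triangletop$-contraction principle, the (Conf) case by $(K{\sim}\neg)$ and the fixpoint identity ${\sim}\neg\triangletop\alpha\leftrightarrow\triangletop\alpha$, and the (Nec) case by a $\triangle$-distribution argument. The only (harmless) differences are that you spell out the easy right-to-left direction, and in the (Nec) case you route through $\triangletop\alpha\rightarrow\triangle\triangletop\alpha$ and $(\triangle 5)$ where the paper instead invokes the derived $\triangletop$-necessitation together with $\triangletop\triangletop\alpha\leftrightarrow\triangletop\alpha$.
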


\begin{proof}
The proof of the theorem is rather standard, by induction on the length $n$ of the proof of $\Gamma,\alpha\vdash_{\Luk^2} \beta$.
\begin{description}
\item[$n=1$:] In this case $\beta$ is either an instance of an axiom, or an assumption form $\Gamma$, or $\alpha = \beta$. In the first two cases, the result follows by the (w) axiom. The remaining case uses that $\triangletop\alpha\rightarrow\alpha$ is provable.
\item[$n>1$:] We assume that for all shorter proofs the results holds, and let $\gamma_1$, \ldots, $\gamma_n, \beta$ be a~proof of $\beta$ from $\Gamma\cup\{\alpha\}$. We distinguish three cases: 
\begin{description}
\item[(Conf)] Let $\beta\coloneqq{\sim}\neg\gamma_n$ be obtained by the conflation rule. By the induction hypothesis, $\triangletop\alpha\rightarrow\gamma_n$ is provable. By an application of (Conf), (K${\sim}\neg$) and (MP) we obtain ${\sim}\neg\triangletop\alpha\rightarrow{\sim}\neg\gamma_n$. By ${\sim}\neg\triangletop\alpha \leftrightarrow \triangletop\alpha$ (see Remark~\ref{rem:luksquare_deltas}) it follows that 
$\triangletop\alpha \rightarrow {\sim}\neg\gamma_n$ as desired.
\item[(Nec)] Let $\beta\coloneqq\triangle\gamma_n$ be obtained by the necessitation rule. By the induction hypothesis, $\triangletop\alpha\rightarrow\gamma_n$ is provable. By an application of (Nec), using provability of $\triangletop(\triangletop\alpha\rightarrow\gamma_n)\rightarrow(\triangletop\triangletop\alpha\rightarrow\triangletop\gamma_n)$, and (MP) we obtain $\triangletop\triangletop\alpha\rightarrow\triangletop\gamma_n$. Using provability of $\triangletop\triangletop\alpha\leftrightarrow\triangletop\alpha$ (see Remark~\ref{rem:luksquare_deltas}) we conclude $\triangletop\alpha\rightarrow\triangletop\gamma_n$ as desired.
\item[(MP)] Let $m<n$ and let $\gamma_m,\gamma_n = \gamma_m\rightarrow\beta / \beta$ be an instance of modus ponens. By the induction hypothesis we have $\triangletop\alpha\rightarrow\gamma_m$ and $\triangletop\alpha\rightarrow(\gamma_m\rightarrow\beta)$ provable. From the latter we have also $\gamma_m\rightarrow(\triangletop\alpha\rightarrow\beta)$ provable. Thus, by (suf) and (MP) we obtain $\triangletop\alpha\rightarrow(\triangletop\alpha\rightarrow\beta)$. By the contraction principle of Remark~\ref{rem:luksquare_deltas} we finally obtain $\triangletop\alpha\rightarrow\beta$ as desired.
\end{description}
\end{description}
\end{proof}
We proceed to prove finite strong standard completeness (FSSC for short) of $\Luk^2$, i.e. strong completeness of the finite consequence of $\Luk^2$ w.r.t.\ the matrix $([0,1]^{\Join}_\Luk,\{(1,0)\})$.
We do so by reducing the provability in $\Luk^2$ to that in $\Luk\triangle$, which itself is known to be finite strong standard complete, i.e. finite strong complete w.r.t.\ the standard MV algebra $[0,1]_\Luk$ (see e.g. \cite[Th. 3.2.14]{Hajek1998}): 
\begin{lemma}[Reducing $\Luk^2$ to $\Luk\triangle$]\label{lem:luk_rightarrow_luk_delta}
For any \emph{finite} set of formulas $\Gamma,\alpha$ in a~NNF,
\begin{center}
\begin{tabular}{c c}
 $\Gamma\vdash_{\Luk^2}\alpha \ $ iff \ & for some finite $\Sigma:\ \boxdot\Gamma,\Sigma\vdash_{\Luk\triangle}\alpha,$
\end{tabular}
\end{center}
where ${\sim}\neg\Gamma\coloneqq\{{\sim}\neg\gamma\mid\gamma\in\Gamma\}$ and $\boxdot\Gamma \coloneqq {\sim}\neg\Gamma\cup\Gamma$, and $\Sigma$ contains instances of $\neg$-axioms. 
\end{lemma}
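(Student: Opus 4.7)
Since every formula in $\Gamma \cup \{\alpha\}$ is in NNF, $\neg$ occurs only as a prefix of propositional variables. I identify NNF $\Luk^2$-formulas with $\Luk\triangle$-formulas over an enriched propositional alphabet, treating each literal $\neg p$ as a fresh atom $p^-$. The $\neg$-axioms of $\Luk^2$ --- namely (dn$\neg$), ($\neg{\sim}$), (K${\sim}\neg$), and ($\neg\triangle$) --- then give rise to a syntactic substitution $\sigma$ over this enriched alphabet with $\sigma(p) = {\sim} p^-$, $\sigma(p^-) = {\sim} p$, and $\sigma$ commuting with $\rightarrow, {\sim}, \triangle$. Repeated application of these axioms shows that $\sigma(\phi)$ is provably equivalent in $\Luk^2$ to the NNF of ${\sim}\neg\phi$ for every NNF formula $\phi$. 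Moreover $\sigma\circ\sigma$ is the identity on NNF formulas, and consequently $\sigma$ fixes $\boxdot\Gamma$ setwise.

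For the forward direction, I argue by induction on the length of a $\Luk^2$-proof of $\Gamma \vdash_{\Luk^2} \alpha$. Applications of the $\Luk$-axioms, the $\triangle$-axioms, (MP), and (Nec) translate verbatim into $\Luk\triangle$-derivations under the identification $\neg p \leftrightarrow p^-$. Whenever an instance of a $\neg$-axiom is used in the proof, I collect its normalized form into $\Sigma$. The one genuinely new case is the conflation rule: if ${\sim}\neg\beta$ is obtained from $\beta$, the induction hypothesis gives $\boxdot\Gamma, \Sigma_0 \vdash_{\Luk\triangle} \beta$. Since $\Luk\triangle$-derivability is invariant under substitution, applying $\sigma$ yields $\sigma(\boxdot\Gamma), \sigma(\Sigma_0) \vdash_{\Luk\triangle} \sigma(\beta)$. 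Involutivity of $\sigma$ forces $\sigma(\boxdot\Gamma) = \boxdot\Gamma$, while $\sigma(\beta)$ is exactly the NNF of ${\sim}\neg\beta$. Setting $\Sigma := \Sigma_0 \cup \sigma(\Sigma_0)$ (still finite, still consisting of $\neg$-axiom instances) completes the inductive step.

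For the backward direction, any $\Luk\triangle$-derivation from $\boxdot\Gamma \cup \Sigma$ lifts directly to a $\Luk^2$-derivation: every $\Luk\triangle$-axiom is already a $\Luk^2$-axiom, each element of $\Sigma$ is by construction an instance of a $\Luk^2$ $\neg$-axiom, and each element of ${\sim}\neg\Gamma$ is derivable from the corresponding $\gamma \in \Gamma$ by a single application of (Conf) followed by $\neg$-axiom-based normalization to NNF. I expect the main obstacle to be the preliminary step of setting up $\sigma$ and verifying that it genuinely realizes ${\sim}\neg$ on NNF-formulas --- that is, checking carefully that the four $\neg$-axioms suffice to push $\neg$ down to atoms so that $\sigma$ becomes an involution both syntactically and with respect to $\Luk^2$-provable equivalence; once this is in place, the two inductions (forward and backward) are essentially bookkeeping.
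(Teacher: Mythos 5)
Your proposal is correct and follows essentially the same route as the paper: both hinge on the observation that the conflation ${\sim}\neg$ distributes over the $\Luk\triangle$-connectives and acts as an involution fixing $\boxdot\Gamma$ (up to provable equivalence), which is exactly how the paper eliminates the rule (Conf) before reading the residual, Conf-free proof as a $\Luk\triangle$-derivation from $\neg$-axiom instances. The differences are presentational only --- you realise conflation as an explicit substitution $\sigma$ over fresh atoms $p^-$ (note that $\sigma\circ\sigma$ is the identity only modulo the provable ${\sim}{\sim}p\leftrightarrow p$, and that fully normalising renders your $\Sigma$ essentially vacuous), whereas the paper keeps formulas $\neg\beta$ as $\Luk\triangle$-atoms and lets $\Sigma$ carry their defining equivalences, a choice it later exploits in the completeness proof.
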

\begin{proof}
The right-left direction is almost trivial: $\Luk\triangle$ is a~subsystem of $\Luk^2$, and all the axioms in $\Sigma$ are provable in $\Luk^2$, and, thanks to the ${\sim}\neg$-rule, $\Sigma\vdash_{\Luk^2}\boxdot \gamma$ for each $\gamma\in\Sigma$.

For the other direction, we proceed in a~few steps. First, we denote by $\vdash_{\Luk^{2-}}$ provability in $\Luk^2$ \emph{without} the ${\sim}\neg$-rule. By routine induction on proofs (and using that ${\sim}\neg$ distributes from/to implications, negations, and $\triangle$), we can see that
\begin{center}
\begin{tabular}{c c}
 $\Gamma\vdash_{\Luk^2}\alpha \ $ iff & \ $\boxdot\Gamma\vdash_{\Luk^{2-}}\alpha.$
\end{tabular}
\end{center}
Then we can list all the instances of $\neg$-axioms in the proof above in $\Sigma$, and obtain:
\begin{center}
\begin{tabular}{c c}
 $\boxdot\Gamma\vdash_{\Luk^{2-}}\alpha \ $ iff & \ $ \boxdot\Gamma,\Sigma\vdash_{\Luk\triangle}\alpha.$
\end{tabular}
\end{center}
First, note that we can include in $\Sigma$ all instances of $\neg$-axioms for all subformulas of $\Gamma,\alpha$ as well and still keep the Lemma valid. This will come in handy in the following completeness proof. Second, we stress that in the final proof $\boxdot\Sigma,\Sigma\vdash_{\Luk\triangle}\alpha$ in $\Luk\triangle$, we still use the language of $\Luk^2$, where formulas starting with $\neg$ are seen from the point of view of $\Luk\triangle$ as atomic.
\end{proof}
Lemma~\ref{lem:luk_rightarrow_luk_delta} provides a~translation of provability in $\Luk^2$ to pro\-va\-bility in $\Luk\triangle$, and allows us in particular to observe that the extension of $\Luk\triangle$ by $\neg$ is conservative. Now, using finite strong standard completeness of $\Luk\triangle$, we can see that $\Luk^2$ is finitely strongly standard complete:
\begin{lemma}[FSSC]\label{lemma:luk_rightarrow_completeness} For a~\emph{finite} set of formulas $\Gamma$,
\begin{center}
\begin{tabular}{c c}
 $\Gamma\vdash_{\Luk^2}\alpha$\ \ iff &\ \ $\Gamma\vDash_{[0,1]^{\Join}_\Luk}\alpha.$
\end{tabular}
\end{center}
\end{lemma}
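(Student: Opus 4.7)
The plan is to prove soundness by direct verification and completeness by reducing to the finite strong standard completeness (FSSC) of $\Luk\triangle$ via Lemma~\ref{lem:luk_rightarrow_luk_delta}. For soundness, one checks that each axiom is designated at $(1,0)$ in $[0,1]^{\Join}_\Luk$ under any valuation---the \L{}ukasiewicz axioms (w, sf, waj, co) and the ($\triangle$1--5) axioms hold in the first coordinate by FSSC of $\Luk\triangle$, while the ($\mathrm{dn}\neg$), ($\neg{\sim}$), ($\mathrm{K}{\sim}\neg$), and ($\neg\triangle$) axioms are read off from the twist-product definitions of $\neg$, ${\sim}$, $\rightarrow$ and $\triangle$; one further verifies that (MP), (Nec) and (Conf) preserve designation $\{(1,0)\}$.

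For completeness I argue by contraposition. Suppose $\Gamma\nvdash_{\Luk^2}\alpha$. By Lemma~\ref{lem:nnf_luk_rightarrow} we may pass to negation normal forms without loss of generality, so $\Gamma\cup\{\alpha\}$ consists of NNF formulas over the extended alphabet $\Prop_\Luk\cup\{\neg p\mid p\in\Prop_\Luk\}$ (treating each $\neg p$ as a fresh atom). By Lemma~\ref{lem:luk_rightarrow_luk_delta} there is no finite $\Sigma$ of $\neg$-axiom instances for which $\boxdot\Gamma,\Sigma\vdash_{\Luk\triangle}\alpha$; in particular, taking $\Sigma$ to include such instances for every subformula of $\Gamma\cup\{\alpha\}$ (as noted in the remark after Lemma~\ref{lem:luk_rightarrow_luk_delta}), the non-derivability persists. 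Applying FSSC of $\Luk\triangle$ w.r.t.\ $[0,1]_\Luk$ on the extended alphabet, I obtain a $\Luk\triangle$-valuation $\bar v$ with $\bar v(\beta)=1$ for every $\beta\in\boxdot\Gamma\cup\Sigma$ and $\bar v(\alpha)<1$.

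I now build a $\Luk^2$-countermodel $v:\Prop_\Luk\to[0,1]^{\Join}_\Luk$ by setting $v(p)\coloneqq(\bar v(p),\bar v(\neg p))$. The key claim, proved by induction on NNF-formulas $\beta$, is that $v_1(\beta)=\bar v(\beta)$: atomically this holds by construction; for $\rightarrow$, ${\sim}$ and $\triangle$ the first coordinate in $[0,1]^{\Join}_\Luk$ is computed precisely by the corresponding operation of $\Luk\triangle$ applied to first coordinates, and since in NNF the connective $\neg$ occurs only on propositional atoms, no case of $\neg$ applied to a complex formula has to be treated. From $\bar v(\alpha)<1$ I then conclude $v_1(\alpha)<1$, so $v(\alpha)\neq(1,0)$. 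Finally, for each $\gamma\in\Gamma$, the condition $\bar v(\gamma)=1$ gives $v_1(\gamma)=1$, while $\bar v({\sim}\neg\gamma)=1$ yields $v_1({\sim}\neg\gamma)=1-v_2(\gamma)=1$, hence $v_2(\gamma)=0$; thus $v(\gamma)=(1,0)$, so $v$ is a $\Luk^2$-countermodel, proving $\Gamma\nvDash_{[0,1]^{\Join}_\Luk}\alpha$.

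The main point where care is needed is aligning the $\Luk\triangle$-interpretation of NNFs over the extended alphabet with the first-coordinate projection of the $\Luk^2$-semantics---this is what forces the detour through NNF and the inclusion in $\Sigma$ of the $\neg$-axiom instances for all subformulas, after which the inductive translation of valuations becomes routine. Finiteness of $\Gamma$ is essential, since both Lemma~\ref{lem:luk_rightarrow_luk_delta} and FSSC of $\Luk\triangle$ only work in the finitary regime.
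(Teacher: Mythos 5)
Your overall strategy is the same as the paper's: soundness by direct verification, completeness by passing to NNF, reducing to $\Luk\triangle$ via Lemma~\ref{lem:luk_rightarrow_luk_delta}, and reading off a two-dimensional countermodel from $v(p)\coloneqq(\bar v(p),\bar v(\neg p))$. There is, however, one step that does not follow as written. Your inductive claim is $v_1(\beta)=\bar v(\beta)$ \emph{for $\beta$ in NNF}, and for such $\beta$ it indeed holds (the first coordinates of $\rightarrow,{\sim},\triangle$ in $[0,1]^{\Join}_\Luk$ are computed by the corresponding $\Luk\triangle$ operations on first coordinates, and $\neg$ only hits atoms); note that for this restricted claim $\Sigma$ plays no role at all. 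But you then apply the identification to ${\sim}\neg\gamma$ for $\gamma\in\Gamma$, writing that $\bar v({\sim}\neg\gamma)=1$ yields $v_1({\sim}\neg\gamma)=1-v_2(\gamma)=1$. The formula ${\sim}\neg\gamma$ is \emph{not} in NNF ($\neg$ sits on a complex formula), so it is outside the scope of your induction: $\Luk\triangle$ evaluates it as ${\sim}$ applied to the \emph{atom} $\neg\gamma$, i.e.\ $\bar v({\sim}\neg\gamma)=1-\bar v(\neg\gamma)$, and nothing you have proved connects the atomic value $\bar v(\neg\gamma)$ with the second coordinate $v_2(\gamma)$ of your twist-product valuation.

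This is exactly the point where $\Sigma$ must actually be deployed rather than merely mentioned. The gap is repairable with material you already have on the table: $\Sigma$ forces $\bar v(\neg\gamma)=\bar v(\gamma^\neg)$ (its instances prove $\neg\gamma\leftrightarrow\gamma^\neg$ by means of $\Luk\triangle$), your first-coordinate induction applies to the NNF formula $\gamma^\neg$ to give $v_1(\gamma^\neg)=\bar v(\gamma^\neg)$, and soundness together with Lemma~\ref{lem:nnf_luk_rightarrow} gives $v(\neg\gamma)=v(\gamma^\neg)$, whence
$v_2(\gamma)=v_1(\neg\gamma)=v_1(\gamma^\neg)=\bar v(\neg\gamma)=1-\bar v({\sim}\neg\gamma)=0$.
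The paper sidesteps this by proving the stronger two-coordinate statement $v'(\beta)=(\bar v(\beta),\bar v(\beta^\neg))$ for all relevant subformulas, with $\Sigma$ entering the inductive step; either fix is fine, but as submitted the derivation of $v_2(\gamma)=0$ is not justified.
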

\begin{proof}
The left-right direction expresses soundness and consists of checking that all the axioms are valid and all the rules sound. We only do some cases:

First the ${\sim}\neg$-rule: assume that $v$ is given and $v(\alpha)=(1,0)$. Then 
$$
v({\sim}\neg\alpha) = {\sim}\neg (1,0) = {\sim}(0,1) = (1,0).
$$
Next, for any $v$, 
\begin{align*}
   v({\sim}\neg(\alpha\rightarrow\beta)) &= {\sim}\neg(v(\alpha)\rightarrow v(\beta))\\
   &= {\sim}\neg(v_1(\alpha)\rightarrow_{\Luk}v_1(\beta), {\sim_{\Luk}}(v_2(\beta)\rightarrow_{\Luk} v_2(\alpha)))\\
   &= ((v_2(\beta)\rightarrow_{\Luk} v_2(\alpha)), {\sim_{\Luk}}(v_1(\alpha)\rightarrow_{\Luk}v_1(\beta))),
\end{align*}
and
\begin{align*}
    v({\sim}\neg\alpha\rightarrow{\sim}\neg\beta) &= {\sim}\neg v(\alpha)\rightarrow {\sim}\neg v(\beta) \\
    &= ({\sim_{\Luk}}v_2(\alpha),{\sim_{\Luk}}v_1(\alpha))\rightarrow({\sim_{\Luk}}v_2(\beta),{\sim_{\Luk}}v_1(\beta))\\
    &= ({\sim_{\Luk}}v_2(\alpha)\rightarrow_{\Luk}{\sim_{\Luk}}v_2(\beta),v_1(\alpha) \&_{\Luk} {\sim_{\Luk}}v_1(\beta))\\
    &= ((v_2(\beta)\rightarrow_{\Luk} v_2(\alpha)), {\sim_{\Luk}}(v_1(\alpha)\rightarrow_{\Luk}v_1(\beta))).
\end{align*}
Next, for any $v$, 
$$
v(\neg{\sim}\alpha) = \neg{\sim}v(\alpha) = ({\sim_{\Luk}}v_2(\alpha),{\sim_{\Luk}}v_1(\alpha)) = {\sim}\neg v(\alpha) = v({\sim}\neg\alpha).
$$
Last, for any $v$, 
\begin{align*}
  v(\neg\triangle\alpha) &= \neg (v(\triangle\alpha)) = \neg(\triangle_{\Luk} v_1(\alpha), {\sim_{\Luk}}\triangle_{\Luk}{\sim_{\Luk}} v_2(\alpha)) \\
  &= ({\sim_{\Luk}}\triangle_{\Luk}{\sim_{\Luk}} v_2(\alpha),\triangle_{\Luk} v_1(\alpha)) \\
  &= {\sim}(\triangle_{\Luk}{\sim_{\Luk}} v_2(\alpha),{\sim_{\Luk}}\triangle_{\Luk} v_1(\alpha))\\
  &= {\sim}\triangle({\sim_{\Luk}}v_2(\alpha),{\sim_{\Luk}}v_1(\alpha))\\
  & = {\sim}\triangle{\sim}\neg\alpha.
\end{align*}
We leave the rest of the proof of soundness for the reader.

For the right-left direction, let us assume that $\Gamma\nvdash_{\Luk^2}\alpha$. Then for some finite $\Sigma$ containing instances of $\neg$-axioms (in particular those for subformulas of $\Sigma,\alpha$), we have $\boxdot\Sigma,\Sigma\nvdash_{\Luk\triangle}\alpha.$ Because $\Luk\triangle$ is finitely standard complete, there is an evaluation $v:\mathrm{At} \to [0,1]_{\Luk}$ sending all formulas in $\boxdot\Sigma,\Sigma$ to $1$, while $v(\alpha)< 1$. Here, $\mathrm{At}$ contains literals from $\Sigma,\alpha$ of the form $p,\neg p$, and atoms and formulas of the form $\neg\sigma$ from $\boxdot\Sigma,\Sigma$. We define $v': \Prop_\Luk \to [0,1]^{\Join}_\Luk$ by 
$$
v'(p) = (v(p), v(\neg p)).
$$
Let us denote $\beta^\neg$ the NNF of $\neg\beta$ (i.e. $(\neg\beta)^\star$).
We can then prove, by routine induction, that for each formula $\beta$ a~subformula of $\Sigma,\alpha$, we have 
$$
v'(\beta) = (v(\beta),v(\beta^\neg)).
$$ 
To do so, we use the fact that $v[\Sigma]\subseteq\{1\}$, and $\Sigma$ contains all instances of $\neg$-axioms for all subformulas of $\Sigma,\alpha$, and, subsequently, $\Sigma$ ensures that $v(\neg\beta) = v(\beta^\neg)$.

We now immediately see that $v'(\alpha) < (1,0)$, because $v(\alpha) < 1$. 
To prove that indeed $v'[\Gamma]\subseteq\{(1,0)\}$, we use the fact that $v[\boxdot\Gamma]\subseteq\{1\}$: as for all $\gamma\in\Gamma$, $v({\sim}\neg\gamma) = 1$, $v(\neg\gamma) = v(\gamma^\neg) = 0$. 
For the latter, we again need to use the fact that $v[\Sigma]\subseteq\{1\}$, and $\Sigma$ contains all instances of $\neg$-axioms for all subformulas of $\Gamma$, as they prove, by means of $\Luk\triangle$, that $\neg\gamma\leftrightarrow\gamma^\neg$, and $v$ has to respect that. 
Now we conclude, that for all $\gamma\in\Gamma$, $v'(\gamma) = (v(\gamma),v(\gamma^\neg)) = (1,0)$. 
\end{proof}
\begin{remark}[Changing the designated values]\label{rem:luksquarewith(1,1)}
Changing the set of designated values in $\Luk^2$ to the filter $(1,1)^\uparrow$ yields the logic $\Luk^2_{(1,1)^\uparrow}(\rightarrow)$ of \cite{BilkovaFrittellaKozhemiachenko2021TABLEAUX}, expanded with $\triangle$. It can be axiomatised by removing the conflation rule (Conf) from the axiomatisation of $\Luk^2$ provided in Definition~\ref{def:ax_luk_rightarrow}. Its $(\wedge,\vee,\neg)$ fragment coincides with $\BD$, while the  $(\wedge,\vee,\neg)$ fragment of $\Luk^2$ coincides with $\ETL$. We will not use this logic in this paper, however, it can be proved finitely strongly complete by similar manner as $\Luk^2$.
\end{remark}
\begin{example}[What $\Luk^2$ can express]\label{ex:expressivity_luksquare}
Recall first that, semantically, $\neg$ is the symmetry along the horizontal, $\sim$ is the symmetry along the middle point, and the conflation ${\sim}\neg$ is the symmetry along the vertical line.
Let now $v_1(p)$ quantify the evidence for $p$ and $v_2(p)$ quantify the evidence against $p$. As discussed in Section~\ref{sssec:bel-pl-modalities:(bel+,bel-)}, given any statement $p$, we can determine whether the evidence we have about it is classical, incomplete, or contradictory:

The formula $\neg p\leftrightarrow{\sim} p$, or equivalently $p \leftrightarrow {\sim}\neg p$, defines the vertical line: for each valuation $v$, $v(\neg p\leftrightarrow{\sim} p) = (1,0)$ if and only if $v_1(p) = 1 - v_2(p)$. In a~sense, $p\leftrightarrow{\sim}\neg p$ measures how close our information about $p$ is to the non-controversial classical case. Moreover, from the semantics of $\triangletop$, it is clear that we can test whether the information is classical by
\begin{align}
v(\triangletop(p\leftrightarrow{\sim}\neg p))&=
\begin{cases}
(1,0)&\text{ if }v_1(p)=1-v_2(p)\\
(0,1)&\text{ otherwise}
\end{cases}\label{equ:triangleequivalenceclassicality}
\end{align}
\eqref{equ:triangleequivalenceclassicality} is a~useful tool. Assume, we have some finite set of statements $\{\alpha_1,\ldots,\alpha_n\}$. Then, using finite strong completeness of $\Luk^2$, we can check whether those entail classicality of $p$ by establishing the following inference
\[\alpha_1,\ldots,\alpha_n\vdash_{\Luk^2}\triangletop(p\leftrightarrow{\sim}\neg p).\]
Unfortunately, $p\rightarrow{\sim}\neg p$ proves ${\sim}\neg p\rightarrow p$, and thus we cannot really distinguish the left-hand triangle (incompleteness) from the right-hand triangle (contradictoriness) using these implications. However, 
if $p$ is \emph{not} classical, we can establish whether it is contradictory or incomplete for example as follows:
\begin{align}
v_1(\triangle(p\rightarrow{\sim}\neg p))&=
\begin{cases}
1&\text{ iff }$p$\text{ is incomplete}\\
0&\text{ iff }$p$\text{ is contradictory}
\end{cases}
\label{equ:triangleimplicationtest}
\end{align}
\end{example}

\subsubsection{A Nelson-style extension of \L{}ukasiewicz logic}\label{ssec:Nelsonluk}
For part of reasoning about belief functions and plausibilities, we will employ the logic  $\Luk^2_{(1,1)^\uparrow}(\rightarrowtriangle)$ of \cite{BilkovaFrittellaKozhemiachenko2021TABLEAUX}. The interpretation of implication in this logic takes a~direct inspiration from Nelson's paraconsistent logic.
We will again skip mentioning the language in the index throughout this subsection and use simply $\N\Luk$ to refer to the logic $\Luk^2_{(1,1)^\uparrow}(\rightarrowtriangle)$. 
\begin{definition}[Language and semantics of $\N\Luk$]\label{def:language_semantics_luksnelsonquare}
The logic $\N\Luk$ has the following language~$\LanglukN$:
$$
\alpha\coloneqq \ p\mid \alpha\wedge\beta\mid \alpha\rightarrowtriangle\beta\mid {\sim}\alpha\mid \neg\alpha,
$$
with $p\in\Prop_{\Luk}$. We call the $\rightarrowtriangle$ connective the weak implication. Additional connectives are definable as follows (where $\Rightarrow, \Leftrightarrow$ are called strong implication and strong equivalence):
\begin{align*}
\alpha\&\beta &\coloneqq {\sim}(\alpha\rightarrowtriangle{\sim}\beta) & \alpha\Rightarrow\beta &\coloneqq (\alpha\rightarrowtriangle\beta)\wedge (\neg\beta\rightarrowtriangle\neg\alpha)\\
\alpha\leftrightarrowtriangle\beta &\coloneqq (\alpha\rightarrowtriangle\beta)\wedge(\beta\rightarrowtriangle\alpha) & 
\alpha\Leftrightarrow\beta &\coloneqq (\alpha\leftrightarrowtriangle\beta)\wedge (\neg\alpha\leftrightarrowtriangle\neg\beta)\\
\alpha\oplus\beta &\coloneqq {\sim}\alpha\rightarrowtriangle\beta & \alpha\ominus\beta &\coloneqq {\sim}(\alpha\rightarrowtriangle\beta)\\
\alpha\vee\beta &\coloneqq \neg (\neg\alpha\wedge\neg\beta) & \bot &\coloneqq {\sim}(p\rightarrow p)
\end{align*}
To interpret the language, we expand the bilattice $[0,1]^{\Join}$ with the following operations, derived again from the standard MV algebra (we spell out the semantics of some of the defined connectives for convenience):
\begin{align*}
(a_1,a_2)\rightarrowtriangle (b_1,b_2) &= (a_1\rightarrow_{\Luk}b_1,a_1 \&_\Luk b_2 )
 & 
(a_1,a_2)\&(b_1,b_2) &= (a_1\&_{\Luk} b_1, a_1 \rightarrow_{\Luk}{\sim}b_1)\\
{\sim}(a_1,a_2) &= ({\sim}_{\Luk}a_1,a_1) & 
\neg(a_1,a_2) &= (a_2,a_1) \\
(a_1,a_2)\oplus (b_1,b_2) &= (a_1\oplus_{\Luk}a_2,{\sim_{\Luk}}a_1 \&_{\Luk} b_2 ) & (a_1,a_2)\ominus (b_1,b_2) &= (a_1\ominus_{\Luk}a_2, a_1\rightarrow_{\Luk}b_1)
\end{align*}
\begin{minipage}{0.3\textwidth}
\begin{tikzpicture}[-,>=stealth,shorten >=0.5pt,auto,node distance=1.2cm,thin,
	main node/.style={circle,draw,font=\sffamily\normalsize},main node2/.style={circle,fill=black,draw,font=\sffamily\normalsize}
	]
	
	\node[main node][label=left:{$(0,0)$}] (1) {};
	\node[main node2][label={$(1,0)$}] (2) [above right of=1] {};
	\node[main node][label=below:{$(0,1)$}] (3) [below right of=1] {};
	\node[main node2][label=right:{$(1,1)$}] (4) [above right of=3] {};
	\path[every node/.style={font=\sffamily\small}]
	(1) edge (2)
	edge (3)
	(2) edge (4)
	(3) edge (4);
	\path[thick]
	(2) edge (4);
	\path[dotted]
	(2) edge (3)
	(1) edge (4);
	\end{tikzpicture}
\end{minipage}
\begin{minipage}{0.7\textwidth}
We put the filter $(1,1)^\uparrow \coloneqq \{(x,y)\mid x=1\}$ to be the set of designated values, and denote the resulting matrix $([0,1]^{\Join}_{\N\Luk},(1,1)^\uparrow)$. Observe, that ${\sim}$ now behaves differently than in the logic $\Luk^2$: namely, its value is always on the classical vertical line. The semantical consequence of $\N\Luk$ is defined as $(1,1)^\uparrow$ preservation over all valuation in this algebra (where $v(\alpha) = (v_1(\alpha),v_2(\alpha))$):
$$
\Gamma \vDash_{[0,1]^{\Join}_{\N\Luk}} \alpha \ \ \mbox{ iff } \ \ \forall v (v_1[\Gamma]\subseteq \{1\} \Rightarrow v_1(\alpha) = 1).
$$
\end{minipage}
\end{definition}
\begin{remark}[Strong and weak implication]\label{rem:lukN_weak_implication}
The connective $\rightarrowtriangle$ is called the weak implication. If $v(\alpha\rightarrowtriangle\beta)$ is designated, then we only know that $v_1(\alpha)\leq v_1(\beta)$.
The derived weak equivalence $\alpha\leftrightarrowtriangle\beta \coloneqq  (\alpha\rightarrowtriangle\beta)\wedge(\beta\rightarrowtriangle\alpha)$ is not congruential, because its provability does not entail equality of the semantical values, only of their first coordinates. It is therefore useful to define the strong counterparts of the two connectives. In particular,     
the strong equivalence $\alpha\Leftrightarrow\beta \coloneqq(\alpha\leftrightarrowtriangle\beta)\wedge (\neg\alpha\leftrightarrowtriangle\neg\beta)$ is congruential. 

Observe that, thanks to the peculiar nature of the logic, some of the usual definitions of \L{}ukasiewicz connectives only work as weak equivalences when lifted to $\N\Luk$: namely, $\alpha\wedge\beta \leftrightarrowtriangle (\alpha\rightarrowtriangle\beta)\&\alpha$ and $\alpha\vee\beta \leftrightarrowtriangle ((\alpha\rightarrowtriangle\beta)\rightarrowtriangle\beta)$ are valid, but it is no longer the case when both sides of the equivalences are $\neg$-negated. 
\end{remark}
\begin{definition}\label{def:ax_luk_rightarrowtriangle}
The logic $\N\Luk$ is axiomatised by the following axioms and rules:
\begin{center}
\begin{tabular}{llll}
(w) & $\alpha \rightarrowtriangle (\beta\rightarrowtriangle\alpha)$ & ($\vee$) & $((\alpha\rightarrowtriangle\beta)\rightarrowtriangle\beta) \leftrightarrowtriangle \alpha\vee\beta$ \\
 (sf) & $(\alpha\rightarrowtriangle\beta) \rightarrowtriangle ((\beta\rightarrowtriangle\gamma)\rightarrowtriangle(\alpha\rightarrowtriangle\gamma))$ & ($\&$) & $\alpha\&\beta \Leftrightarrow {\sim}(\alpha\rightarrowtriangle{\sim}\beta)$ \\
(waj) & $((\alpha\rightarrowtriangle\beta)\rightarrowtriangle\beta) \rightarrowtriangle ((\beta\rightarrowtriangle\alpha)\rightarrowtriangle\alpha)$ & ($\wedge$) & $\alpha\wedge\beta \leftrightarrowtriangle (\alpha\rightarrowtriangle\beta)\&\alpha$ \\
 (co) & $({\sim}\beta\rightarrowtriangle{\sim}\alpha) \rightarrowtriangle (\alpha\rightarrowtriangle\beta)$ & ($\neg\wedge$) & $\neg(\alpha\wedge\beta) \Leftrightarrow \neg\alpha\vee\neg\beta$ \\
 (dn$\neg$) & $\neg\neg\alpha \Leftrightarrow \alpha$ & ($\neg\vee$) & $\neg(\alpha\vee\beta) \Leftrightarrow \neg\alpha\wedge\neg\beta$ \\
 ($\neg{\rightarrowtriangle}$) & $\neg(\alpha\rightarrowtriangle\beta) \leftrightarrowtriangle (\alpha \& \neg\beta)$ & ($\neg{\sim}$) & $\neg{\sim}\alpha \leftrightarrowtriangle \alpha$ \\
 (MP) & $\alpha, \alpha\rightarrowtriangle\beta \vdash \beta $ & &
\end{tabular}
\end{center}
\end{definition}
The axioms (w, sf, waj, co, $\vee$, $\wedge$) with the rule (MP) axiomatise \L{}ukasiewicz logic in terms of weak implication, ($\&$) is the (strong) definition of the fusion. The remaining are $\neg$-axioms, analogous to those of the Nelson's paraconsistent logic $\N 4$.

We can, up to weak equivalence, consider formulas in the $\neg$-normal form:
\begin{lemma}[NNF]
For each formula $\alpha$ in $\LanglukN$ there is a~formula in $\neg$-negation normal form $\alpha^\star$ such that
$$
\vdash_{\N\Luk} \alpha \leftrightarrowtriangle \alpha^\star.
$$
\end{lemma}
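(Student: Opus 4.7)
I plan to proceed by strong induction on a complexity measure $c(\alpha)$, simultaneously defining a translation $(\cdot)^\star$ and verifying $\vdash_{\N\Luk} \alpha \leftrightarrowtriangle \alpha^\star$. Set $c(p) = c(\neg p) = 1$, $c({\sim}\alpha) = c(\alpha)+1$, $c(\alpha \circ \beta) = c(\alpha)+c(\beta)+1$ for binary $\circ$, and $c(\neg\alpha) = 2c(\alpha)$ when $\alpha$ is not a literal. The non-$\neg$ outer connectives commute with $(\cdot)^\star$, i.e.\ $(\alpha \wedge \beta)^\star \coloneqq \alpha^\star \wedge \beta^\star$, $(\alpha \rightarrowtriangle \beta)^\star \coloneqq \alpha^\star \rightarrowtriangle \beta^\star$, and $({\sim}\alpha)^\star \coloneqq {\sim}\alpha^\star$. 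When $\neg$ is the outer connective, dispatch on the argument using the axioms of Definition~\ref{def:ax_luk_rightarrowtriangle}: $(\neg\neg\alpha)^\star \coloneqq \alpha^\star$ via (dn$\neg$), $(\neg(\alpha\wedge\beta))^\star \coloneqq (\neg\alpha)^\star \vee (\neg\beta)^\star$ via ($\neg\wedge$), $(\neg(\alpha\rightarrowtriangle\beta))^\star \coloneqq \alpha^\star \& (\neg\beta)^\star$ via ($\neg{\rightarrowtriangle}$), and $(\neg{\sim}\alpha)^\star \coloneqq \alpha^\star$ via ($\neg{\sim}$). A direct check confirms that each clause strictly decreases $c$; for instance $c(\neg(\alpha \rightarrowtriangle \beta)) = 2c(\alpha)+2c(\beta)+2$ whereas the target has complexity at most $c(\alpha)+2c(\beta)+1$.

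For each inductive case I combine the relevant negation-pushing axiom with the inductive hypothesis via the congruence of the outer connective under weak equivalence. The required congruences of $\wedge, \vee, \rightarrowtriangle, \&, {\sim}$ with respect to $\leftrightarrowtriangle$ are routine derivations from (w), (sf), (waj), (co), and the defining axioms ($\vee$), ($\wedge$), ($\&$), since these constitute a weak-implication axiomatisation of \L{}ukasiewicz logic. The strong-equivalence axioms (dn$\neg$), ($\neg\wedge$) are handled directly because $\Leftrightarrow$ entails $\leftrightarrowtriangle$; the weak-equivalence axioms ($\neg{\rightarrowtriangle}$), ($\neg{\sim}$) pose no extra difficulty because once one of them has fired, the outer context contains only non-$\neg$ connectives, all of which are congruential under $\leftrightarrowtriangle$.

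The main obstacle is that $\neg$ is \emph{not} congruential under $\leftrightarrowtriangle$: semantically its first coordinate is the second coordinate of its argument, so a merely weakly equivalent replacement need not preserve $\leftrightarrowtriangle$ once placed under $\neg$. The clausal definition above is arranged precisely so that $\neg$ is never applied to a previously translated non-literal subformula: whenever $\neg$ is the outer connective we first eliminate it using an axiom and only then recurse. The doubling factor in $c(\neg\alpha)$ reflects this strategy and ensures both termination of $(\cdot)^\star$ and soundness of the induction.
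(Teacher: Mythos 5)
Your proof is correct and takes essentially the approach the paper intends: the paper states this lemma without proof (the analogous Lemma~\ref{lem:nnf_luk_rightarrow} for $\Luk^2$ is only sketched), and the intended argument is exactly the recursive negation-pushing translation you describe, justified by the $\neg$-axioms of Definition~\ref{def:ax_luk_rightarrowtriangle} together with congruence of the remaining connectives under $\leftrightarrowtriangle$. Your explicit treatment of the failure of congruentiality of $\neg$ under weak equivalence --- always eliminating the outermost $\neg$ by an axiom before recursing, with a measure guaranteeing termination --- is the one genuinely delicate point, and you handle it correctly (modulo the trivial patch that $c(\neg\alpha)=2c(\alpha)$ should apply whenever $\neg\alpha$ is not itself a literal, so that $\neg\neg p$ is covered).
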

As (MP) is the only rule, the logic possesses a~local deduction theorem (LDT) in terms of the weak implication, as in $\Luk$ (cf. \cite[Ch.I,Th.~1.2.10]{BehounekCintulaHajek2011MFL1}).

We proceed to prove the finite strong standard completeness (FSSC) of the logic $\N\Luk$. We do so by means of a~reduction to \L{}ukasiewicz logic, which itself is well-known to be finite strong standard complete, i.e. finite strong complete w.r.t.\ the standard MV algebra $[0,1]_\Luk$ \cite[Th. 3.2.14]{Hajek1998}.
\begin{lemma}[Reduction of $\N\Luk$ to $\Luk$]\label{lem:Nluk_to_luk}
W.l.o.g.\ we assume that $\Gamma,\alpha$ consist of formulas in $\neg$-$\NNF$.

$\Gamma \vdash_{\N\Luk} \alpha$ iff $\Gamma,\Sigma \vdash_{\Luk} \alpha$ for $\Sigma$ a~finite set consisting of instances of $\neg$-axioms.
\end{lemma}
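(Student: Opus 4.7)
The plan is to mirror the proof strategy of Lemma~\ref{lem:luk_rightarrow_luk_delta}, but to exploit the fact that the reduction here is strictly simpler: the only inference rule of $\N\Luk$ (Definition~\ref{def:ax_luk_rightarrowtriangle}) is modus ponens, which coincides with the sole rule of $\Luk$. There is therefore no need for a conflation-style step analogous to the (Conf) rule handled in the $\Luk^2$ case, and in particular no $\boxdot\Gamma$-strengthening of the hypotheses is required.

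For the right-to-left direction, I would first observe that the axioms (w), (sf), (waj), (co) together with the derived-connective axioms $(\vee)$, $(\wedge)$, $(\&)$, and the rule (MP), constitute an axiomatisation of \L{}ukasiewicz logic in which the role of implication is played by $\rightarrowtriangle$. Every instance of a $\neg$-axiom included in $\Sigma$, namely (dn$\neg$), ($\neg{\rightarrowtriangle}$), ($\neg{\sim}$), ($\neg\wedge$), ($\neg\vee$), is by definition provable in $\N\Luk$. Hence any $\Luk$-proof of $\alpha$ from $\Gamma\cup\Sigma$ transfers verbatim to an $\N\Luk$-proof from $\Gamma$.

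For the left-to-right direction, I would argue by induction on the length of an $\N\Luk$-proof of $\Gamma\vdash_{\N\Luk}\alpha$. At each step, the formula is either (i) an instance of one of the \L{}ukasiewicz-style axioms $\{$(w), (sf), (waj), (co), $(\vee), (\wedge), (\&)\}$, in which case the step is already a $\Luk$-step in the $\rightarrowtriangle$-language; (ii) an instance of one of the $\neg$-axioms, in which case we collect this instance into the finite set $\Sigma$; or (iii) obtained by (MP), which is a $\Luk$-rule. Re-reading every subformula whose principal connective is $\neg$ as a fresh propositional variable, the resulting proof is a legitimate $\Luk$-proof from $\Gamma\cup\Sigma$, because the only $\N\Luk$-specific content of the argument, i.e.\ the $\neg$-axioms, has been externalised into $\Sigma$.

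The main obstacle will be the bookkeeping required to make the ``$\neg$-headed subformulas as atoms'' reading coherent across the whole proof. As in the proof of Lemma~\ref{lem:luk_rightarrow_luk_delta}, it will be prudent to close $\Sigma$ under $\neg$-axiom instances for all subformulas of $\Gamma,\alpha$, so that any intermediate $\neg$-formula encountered can be translated back into the NNF fragment within $\Luk$. Since $\Gamma,\alpha$ are assumed in $\neg$-NNF and $\N\Luk$ validates an NNF lemma (pushing $\neg$ to the literals via the $\neg$-axioms), this closure makes the induction go through uniformly, and no further structural rule of $\N\Luk$ needs to be simulated in $\Luk$.
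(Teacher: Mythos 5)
Your proposal is correct and follows essentially the same route as the paper: since the only rule of $\N\Luk$ is (MP), an $\N\Luk$-proof is literally a $\Luk$-proof once the finitely many $\neg$-axiom instances it uses are moved into $\Sigma$ (with $\neg$-headed formulas read as atoms), and conversely every member of $\Sigma$ is an $\N\Luk$-theorem. The paper's own proof is just a terser statement of this observation, including your remark that one may harmlessly enlarge $\Sigma$ with $\neg$-axiom instances for all subformulas of $\Gamma,\alpha$.
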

\begin{proof}
This is almost immediate: $\N\Luk$ is an extension of $\Luk$, so any proof in  $\N\Luk$ can be understood as a~proof in $\Luk$, with additional assumptions consisting of all the (finitely many) instances of $\neg$-axioms occurring in the proof.
Observe, that we can equivalently include (finitely many) additional instances of $\neg$-axioms for subformulas of $\Gamma,\alpha$ in $\Sigma$, and the claim remains true. 
\end{proof}
\begin{lemma}[FSSC] \label{lem:FSSC_Luk_weak}
for $\Gamma,\alpha$ a~finite set of formulas in $\LanglukN$,
$$\Gamma \vdash_{\N\Luk} \alpha\ \ \mbox{iff}\ \ \Gamma\vDash_{[0,1]^{\Join}_{\N\Luk}}\alpha.$$
\end{lemma}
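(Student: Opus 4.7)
The plan mirrors that of Lemma~\ref{lemma:luk_rightarrow_completeness} for $\Luk^2$, exploiting the reduction to $\Luk$ provided in Lemma~\ref{lem:Nluk_to_luk}. In both directions the argument is routine; the only subtlety is a careful matching, via NNF-rewriting and the $\neg$-axioms, of the semantic action of $\neg$ (which mixes the two coordinates in the twist product) with its syntactic treatment inside $\Luk$.

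For \emph{soundness}, I verify that each axiom has first coordinate $1$ under every valuation $v:\Prop_\Luk\to[0,1]^{\Join}_{\N\Luk}$. The \L{}ukasiewicz fragment (w, sf, waj, co, $\vee$, $\wedge$, $\&$) is immediate from soundness of $\Luk$ because the primitive connectives $\rightarrowtriangle,{\sim},\wedge$ act on the first coordinate exactly as $\rightarrow_\Luk,{\sim}_\Luk,\wedge$; since designation in $(1,1)^\uparrow$ only depends on the first coordinate, (MP) clearly preserves validity. The $\neg$-axioms ((dn$\neg$), ($\neg\wedge$), ($\neg\vee$), ($\neg{\sim}$), ($\neg\rightarrowtriangle$)) are verified by direct unfolding of the twist-product definitions; for instance, for ($\neg\rightarrowtriangle$) one checks that $\neg(a\rightarrowtriangle b)$ and $a\,\&\,\neg b$ both have first coordinate $a_1\,\&_\Luk\, b_2$.

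For \emph{completeness}, I argue by contraposition. Assume $\Gamma\nvdash_{\N\Luk}\alpha$; after passing to NNF, Lemma~\ref{lem:Nluk_to_luk} supplies a finite set $\Sigma$ of $\neg$-axiom instances, covering every subformula of $\Gamma\cup\{\alpha\}$, such that $\Gamma,\Sigma\nvdash_\Luk\alpha$. FSSC of $\Luk$ yields a valuation $v$ on the atoms of the $\Luk$-translation (literals $p$, $\neg p$, and quasi-atoms $\neg\psi$ for non-literal $\psi$) satisfying $v[\Gamma\cup\Sigma]\subseteq\{1\}$ and $v(\alpha)<1$. I then define $v':\Prop_\Luk\to[0,1]^{\Join}_{\N\Luk}$ by $v'(p)\coloneqq(v(p),v(\neg p))$. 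The key claim, proved by induction on complexity, is that for every subformula $\beta$ of $\Gamma\cup\{\alpha\}$,
\[
v'(\beta)=(v(\beta),\,v(\beta^\neg)),
\]
where $\beta^\neg$ denotes the NNF of $\neg\beta$. The atomic and ${\sim}$-cases are immediate. For $\beta=\gamma\wedge\delta$ one uses the instance of ($\neg\wedge$) in $\Sigma$ to identify the second coordinate with $v((\gamma\wedge\delta)^\neg)$. For $\beta=\gamma\rightarrowtriangle\delta$ one computes from the matrix that $v'(\beta)=(v(\gamma)\rightarrow_\Luk v(\delta),\,v(\gamma)\,\&_\Luk\, v(\delta^\neg))$, and uses the instance of ($\neg\rightarrowtriangle$) in $\Sigma$ to identify the second coordinate with $v((\gamma\rightarrowtriangle\delta)^\neg)$. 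Granting the claim, $v'_1(\gamma)=v(\gamma)=1$ for every $\gamma\in\Gamma$, while $v'_1(\alpha)=v(\alpha)<1$, so $v'$ refutes $\Gamma\vDash_{[0,1]^{\Join}_{\N\Luk}}\alpha$, as required.

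The main obstacle is the $\rightarrowtriangle$-case of the inductive claim: one must arrange that $\Sigma$ contains the $\neg$-axiom instances for \emph{every} subformula of $\Gamma\cup\{\alpha\}$, so that the syntactic NNF-rewriting of $\neg(\gamma\rightarrowtriangle\delta)$ into $\gamma\,\&\,\delta^\neg$ is mirrored by equalities that $v$ is forced to validate. This is exactly the strengthened form of Lemma~\ref{lem:Nluk_to_luk} recorded at the end of its proof, and no additional ingredient is needed.
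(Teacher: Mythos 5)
Your proposal is correct and follows essentially the same route as the paper's proof: reduction to $\Luk$ via Lemma~\ref{lem:Nluk_to_luk} with $\Sigma$ enriched to cover all subformulas of $\Gamma\cup\{\alpha\}$, FSSC of $\Luk$, the two-dimensional valuation $v'(p)=(v(p),v(\neg p))$, and the inductive claim $v'(\beta)=(v(\beta),v(\beta^\neg))$ secured by the $\neg$-axiom instances forced to value $1$. The only difference is that you spell out the routine soundness check, which the paper's proof of this lemma leaves implicit.
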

\begin{proof}
W.l.o.g.\ we assume that $\Gamma,\alpha$ consist of formulas in $\neg$-negation normal form. Assume that $\Gamma \vdash_{\N\Luk} \alpha$. By the Lemma~\ref{lem:Nluk_to_luk}, it is the case if and only if $\Gamma,\Sigma \vdash_{\Luk} \alpha$ for $\Sigma $ a~finite set consisting of instances of $\neg$-axioms. We will add the following (finitely many) formulas to $\Sigma$: (i) instances of $\neg$-axioms for subformulas of $\Gamma,\alpha$, (ii) for each subformula $\beta$ of $\Gamma,\alpha$ of the form listed on the left, we include an equivalence listed on the right ($\beta^\neg$ denotes the NNF of $\neg\beta$, i.e. $(\neg\beta)^\star$):
\begin{align*}
\beta_1 &\wedge \beta_2 & ((\beta_1)^\neg \vee (\beta_2)^\neg) &\leftrightarrowtriangle (\beta_1 \wedge \beta_2)^\neg \\
\beta_1 &\vee \beta_2 & ((\beta_1)^\neg \wedge (\beta_2)^\neg) &\leftrightarrowtriangle (\beta_1 \vee \beta_2)^\neg \\
\beta_1 &\rightarrowtriangle \beta_2 & (\beta_1 \& (\beta_2)^\neg) &\leftrightarrowtriangle (\beta_1 \rightarrowtriangle \beta_2)^\neg \\
\beta_1 &\& \beta_2 & (\beta_1 \rightarrowtriangle {\sim}\beta_2) &\leftrightarrowtriangle (\beta_1 \& \beta_2)^\neg \\
&{\sim}\beta  & \beta &\leftrightarrowtriangle ({\sim}\beta)^\neg
\end{align*}
We call the resulting finite set $\Sigma'$. First we observe that all the equivalences listed are provable from $\Sigma$ by means of the logic $\Luk$ and $\neg$-axioms instances for subformulas of $\Gamma,\alpha$ (we opt for enlisting them explicitly for convenience). In particular, we have:
$$
\Gamma,\Sigma \vdash_{\Luk} \alpha\ \ \mbox{iff}\ \ \Gamma,\Sigma' \vdash_{\Luk} \alpha.
$$
Assume now that $\Gamma,\Sigma' \nvdash_{\Luk} \alpha$. By the FSSC of $\Luk$, we know there is an evaluation $v: \Phi \to [0,1]$, where $\Phi$ consists of (i) literals from $\Gamma,\Sigma',\alpha$, and (ii) $\neg$-subformulas of $\Sigma'$, so that:
$v(\Gamma\cup\Sigma') \subseteq \{1\}$ while $v(\alpha)<1$. 
We define an evaluation $v': \Prop_\Luk \to [0,1]^{\Join}_{\N\Luk}$ by 
$$
v'(p)\coloneqq (v(p),v(\neg p)).
$$
Next, we can easily prove by induction, for each subformula $\beta$ of $\Gamma,\alpha$, that the extended valuation satisfies
$$
v'(\beta)\coloneqq (v(\beta),v(\beta^\neg)).
$$
The proof essentially uses the fact that formulas contained in $\Sigma '$ are $v$-evaluated by $1$. 

To conclude, we observe that because $v(\alpha)<1$, by the above we have $v'_1(\alpha)<1$. Similarly, because for each $\gamma\in \Gamma$ $v(\gamma)=1$, we have that $v'_1(\gamma)=1$, making all formulas from $\Gamma$ designated and $\alpha$ not designated, as desired.
\end{proof}
\begin{example}[What we can express in $\N\Luk$]\label{ex:expressivity_luknelson}
Recall first that for each $v$, $v({\sim}p)$ is always on the vertical, classical line. Observe that, in contrast with $\Luk^2$, we can now directly express incompleteness and contradictoriness: indeed,  $v({\sim}p\rightarrowtriangle\neg p)$ is designated if and only if $v(p)$ lays within the right triangle, $v(\neg p\rightarrowtriangle{\sim}p)$ is designated if and only if $v(p)$ lays within the left triangle, and $v({\sim}p\leftrightarrowtriangle\neg p)$ is designated if and only if $v(p)$ lays on the vertical line.
In $\N\Luk$, we can thus define ‘$p$ is not incomplete’ as ${\sim}p\rightarrowtriangle\neg p$, and ‘$p$~is not contradictory’ as $\neg p\rightarrowtriangle{\sim}p$, and '$p$ is classical' as ${\sim}p\leftrightarrowtriangle\neg p$.

Moreover, $v((p\rightarrowtriangle q)\wedge(\neg p\rightarrowtriangle \neg q))$ is designated if and only if both $v_1(p)\leq v_1(q)$ and $v_2(p)\leq v_2(q)$, and therefore the formula captures the \emph{information order} (the left-right horizontal order) on the $[0,1]^{\Join}$.
\end{example}

\subsection{Two-layered logic for probabilities}\label{ssec:logicforprobabilities}

In this subsection, we introduce a~two-layered logic $\Prob^{\Luk^2}_\triangle$ designed to capture reasoning about non-standard probabilities of Subsection~\ref{ssec:probabilities}. The \emph{inner layer} consists of the language, semantics and axiomatisation of the logic $\BD$ as described in Subsection~\ref{ssec:FDE}. The \emph{outer layer} consists of the language, semantics and axiomatisation of the logic $\Luk^2$ as described in Subsection~\ref{ssec:belnapluk}. The two layers are connected with a~single modality $\Prob$ (which can be read as ‘probably’). An application of the modality to an inner formula yields an atomic modal outer formula, which is to be interpreted using a~probability function over the underlying probabilistic $\BD$ model, outputting a~value in the outer algebra $[0,1]^{\Join}_\Luk$. Other applications of the modality are forbidden in the syntax, in particular, modalities do not nest. 
\begin{definition}[Language and semantics of $\Prob^{\Luk^2}_\triangle$]\label{def:BDBLuk2Deltalanguage}
The two-layered language of the logic $\Prob^{\Luk^2}_\triangle$ is of the form $ (\LBD,\{\Prob\},\Langluk)$, with $\Prob$ being a~unary modality, and it is defined as follows: 
\begin{description}
\item[inner formulas:] formulas of $\LBD$
\item[atomic modal formulas:] Let $\phi\in\LBD$, then $\Prob\phi$ is a~modal atomic formula.
\item[outer formulas:] Let $\Prob\phi$ range over modal atomic formulas, the set of outer formulas is defined via the following grammar, with the connectives of $\Langluk$:
\[\alpha\coloneqq \Prob\phi\mid\neg\alpha\mid\alpha\rightarrow\alpha\mid{\sim}\alpha\mid\triangle\alpha.\]
Other connectives --- $\bot, \wedge, \vee, \&$, $\ominus$ and $\oplus$ --- can be defined as in $\Langluk$ of Definition~\ref{def:language_semantics_luksquare}.
\end{description}
The semantics of $\Langluk$ is given over a~$\BD$\ probabilistic model $\mathscr{M} = (W,v^+,v^-,\pr)$ (cf.~Section~\ref{ssec:probabilities}), and the outer algebra $[0,1]^{\Join}_{\Luk}$, as follows:
\begin{description}
\item[inner formulas:] for each $\phi\in\LBD$ the model provides sets $|\phi|^+ = \{w\mid w \vDash^+\phi\}$ and $|\phi|^- = \{w\mid w\vDash^-\phi \}$, as described in Definition~\ref{def:BDframesemantics}.
\item[atomic modal formulas:] for each atomic modal formula, $v^{\mathscr{M}}(\Prob\phi)$ in $[0,1]^{\Join}_{\Luk}$ is defined as
$$
v^{\mathscr{M}}(\Prob\phi) = (v^{\mathscr{M}}_1(\Prob\phi),v^{\mathscr{M}}_2(\Prob\phi)) \coloneqq(\pr(|\phi|^+),\pr(|\phi|^-)) = \left(\sum\limits_{w\vDash^+\phi}\mathtt{m}_{\pr}(w),\sum\limits_{w\vDash^-\phi}\mathtt{m}_{\pr}(w)\right).
$$
\item[outer formulas:] The map $v^{\mathscr{M}}$ is extended to the outer formulas in an expected manner, following the Definition~\ref{def:language_semantics_luksquare} of the interpretation of $\Langluk$ connectives in the algebra $[0,1]^{\Join}_{\Luk}$.
\end{description}
The outer formula $\alpha$ is said to be valid in $\mathscr{M}$, if $v^{\mathscr{M}}(\alpha)$ is designated (i.e. $v^{\mathscr{M}}(\alpha) = (1,0)$). It is said to be valid, if it is valid in all $\BD$\ probabilistic models. 
The consequence relation for $\Gamma,\alpha$ a~set of outer formulas is defined as
$$
\Gamma \vDash_{\Prob^{\Luk^2}_\triangle} \alpha \ \ \mbox{iff}\ \ \forall\mathscr{M} (v^{\mathscr{M}}[\Gamma] \subseteq (1,0) \Rightarrow v^{\mathscr{M}}(\alpha) = (1,0)).
$$
\end{definition}

\begin{definition}[Axiomatisation of $\Prob^{\Luk^2}_\triangle$]\label{def:logicBD_P_Luk}
The two-layered axiomatisation of logic $\Prob^{\Luk^2}_\triangle = (\BD,M_p,\Luk^2)$ consists of the following parts:
\begin{description}
\item[inner logic:] the axiomatisation of $\BD$ in the language $\LBD$ of Definition~\ref{def:BDBD*}
\item[modal axioms:] $M_p$, where $\f,\p\in\LBD$: 
\begin{align*}
 \{ \Prob\f &\rightarrow \Prob\psi\mid \f\vdash_{\BD}\psi \}\\
 \Prob\neg\f &\leftrightarrow \neg \Prob\f\\
 \Prob(\f\vee\p) &\leftrightarrow(\Prob\f \ominus\Prob(\f\wedge\psi))\oplus \Prob\psi
\end{align*}
\item[outer logic:] the axiomatisation of $\Luk^2$ of Definition~\ref{def:ax_luk_rightarrow}, instantiated in outer formulas. 
\end{description}
A \emph{proof} of a~formula $\alpha$ from a~set of assumptions $\Gamma$ in the logic $\Prob^{\Luk^2}_\triangle$ (in short $\Gamma\vdash_{\Prob^{\Luk^2}_\triangle}\alpha$), where $\Gamma\cup\{\alpha\}$ is a~finite set of \emph{outer formulas}, is a~finite tree which can be parsed as follows: the smallest proofs consist of either (i) a~finite tree labelled by $\BD$ formulas, proving $\f\vdash_{\BD}\psi$, followed by the instance $\Prob\f\rightarrow \Prob\psi$ of the first modal axiom, or (ii) a~node labelled by an instance of either of the remaining two modal axioms or an instance of an axiom of the outer logic in the outer language, or (iii) an assumption $\beta\in\Gamma$ in the outer language. More complex proofs are then formed inductively in the standard manner, by means of syntactically correct applications of the rules of the outer logic to smaller proofs. The root of the tree is to be labelled by $\alpha$.
\end{definition}

\begin{theorem}[FSSC of $\Prob^{\Luk^2}_\triangle$]\label{th:FSSC_probLuk2} 
For each $\Gamma,\alpha$ a~finite set of outer formulas, 
 $$
 \Gamma\vdash_{\Prob^{\Luk^2}_\triangle}\alpha \ \ \mbox{iff} \ \ \Gamma\vDash_{\Prob^{\Luk^2}_\triangle}\alpha.
 $$
\end{theorem}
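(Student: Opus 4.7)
The plan is to follow the standard recipe for finite strong standard completeness of two-layered modal logics, combining the FSSC of the outer logic $\Luk^2$ (Lemma~\ref{lemma:luk_rightarrow_completeness}) with the representation theorem for non-standard probabilities (Theorem~\ref{th:completeness_probablities}). Soundness is routine: the inner axioms are sound by Definition~\ref{def:BDframesemantics}, the outer axioms by Lemma~\ref{lemma:luk_rightarrow_completeness}, and the three modal axioms can be verified by direct computation in $[0,1]^{\Join}_\Luk$ using Definition~\ref{DEF:NSprob} (monotonicity handles $\Prob\f\rightarrow\Prob\p$ for $\f\vdash_\BD\p$; the identity $\pr(|\neg\f|^+)=\pr(|\f|^-)$ handles $\Prob\neg\f\leftrightarrow\neg\Prob\f$; and the inclusion--exclusion axiom follows by unfolding $\ominus$ and $\oplus$ coordinate-wise).

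For completeness, I reason contrapositively. Assume $\Gamma\nvdash_{\Prob^{\Luk^2}_\triangle}\alpha$ with $\Gamma$ finite. Let $V$ be the (finite) set of propositional variables occurring under $\Prob$ in $\Gamma\cup\{\alpha\}$. Since $\BD$ is locally finite, there are only finitely many $\BD$-formulas over $V$ up to $\dashv\vdash_\BD$; fix a finite set $\Phi$ of representatives closed under $\neg,\wedge,\vee$. Let $\Sigma$ be the finite set of all instances of the three modal axioms whose inner formulas are drawn from $\Phi$. Treating each atomic modal formula $\Prob\f$ as a fresh propositional variable of the outer logic, one observes that $\Gamma\cup\Sigma\nvdash_{\Luk^2}\alpha$: any purported proof would translate directly to a proof in $\Prob^{\Luk^2}_\triangle$, contradicting the assumption. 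By FSSC of $\Luk^2$ there exists a valuation $v\colon\Prop_\Luk\to[0,1]^{\Join}_\Luk$ with $v[\Gamma\cup\Sigma]\subseteq\{(1,0)\}$ and $v(\alpha)\neq(1,0)$.

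The heart of the argument is to show that the assignment $\pr(\f)\coloneqq v_1(\Prob\f)$, with $\pr(\neg\f)\coloneqq v_2(\Prob\f)$, is a non-standard probability (Definition~\ref{DEF:NSprob}) on the finite Lindenbaum algebra over $V$. Values lie in $[0,1]$ by construction, monotonicity is literally the content of the first modal axiom, and the $\neg$-axiom secures coherence of the positive/negative pair together with $\pr^-=\pr^+\circ\neg$. For inclusion--exclusion, unfolding the outer axiom $\Prob(\f\vee\p)\leftrightarrow(\Prob\f\ominus\Prob(\f\wedge\p))\oplus\Prob\p$ in $[0,1]^{\Join}_\Luk$ yields on the first coordinate
\[\pr^+(\f\vee\p)=\min(1,\pr^+(\f)-\pr^+(\f\wedge\p)+\pr^+(\p))\]
and a dual equation on the second coordinate. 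Instantiating the same axiom at $\neg\f,\neg\p$ (available because $\Phi$ is $\neg$-closed) and using $\pr^+\circ\neg=\pr^-$ yields further equations forcing the $\min$ and $\max$ truncations to be inactive, so genuine inclusion--exclusion holds. I expect this interplay between the two axiom instantiations to be the main technical obstacle.

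With $\pr$ a non-standard probability on the Lindenbaum algebra over $V$, Theorem~\ref{th:completeness_probablities} delivers a probabilistic $\BD$ model $\mathscr{M}$ realising it, i.e.\ $\pr_\mu(\f)=\pr(\f)$. Then $v^{\mathscr{M}}(\Prob\f)=(\pr(\f),\pr(\neg\f))=v(\Prob\f)$ for each $\f$ occurring under $\Prob$ in $\Gamma\cup\{\alpha\}$, so $v^{\mathscr{M}}$ agrees with $v$ on every outer subformula of $\Gamma\cup\{\alpha\}$. In particular $v^{\mathscr{M}}[\Gamma]\subseteq\{(1,0)\}$ while $v^{\mathscr{M}}(\alpha)\neq(1,0)$, proving $\Gamma\not\vDash_{\Prob^{\Luk^2}_\triangle}\alpha$.
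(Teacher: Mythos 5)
Your proposal is correct and follows essentially the same route as the paper's proof: soundness by direct verification of the modal axioms in $[0,1]^{\Join}_\Luk$, and completeness by contraposition via a finite set $\Sigma$ of modal-axiom instances (using local finiteness of $\BD$), FSSC of $\Luk^2$ to obtain a refuting valuation, reading off a non-standard probability from that valuation, and invoking Theorem~\ref{th:completeness_probablities} to realise it on the canonical model. The one place you go beyond the paper --- which simply calls this step ``routine'' --- is in observing that the truncations in $\ominus$ and $\oplus$ could a priori be active and in deactivating them by playing the first coordinate of the axiom instance at $(\f,\p)$ against the second coordinate of the instance at $(\neg\f,\neg\p)$ via the $\neg$-axiom; this is a genuine subtlety and your resolution of it is sound.
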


\begin{proof}
The soundness and completeness proof essentially relies on the two-layered nature of the logic. We will use the following facts: (i) the inner logic $\BD$ is strongly sound and complete w.r.t.\ its double-valuation semantics, and it is locally finite, (ii) the outer logic $\Luk^2$ is finitely strongly sound and complete w.r.t.\ the outer algebra $[0,1]^{\Join}_{\Luk}$, and (iii) the modal axioms are a~sound and complete axiomatisation of probablities over $\BD$ probabilistic models.

\medskip\par\noindent
\textit{Soundness}: Given a~proof $\Gamma\vdash_{\Prob^{\Luk^2}_\triangle}\alpha$ and any $\BD$ probabilistic model $\mathscr{M} = (S, v^+, v^-, \pr)$, we reason inductively: (i) we know that any proof of $\f\vdash_{\BD}\psi$ is sound, therefore it entails that $|\f|^+\subseteq |\psi|^+$ and $|\psi|^-\subseteq |\f|^-$ in any $\BD$ model, and thus in $\mathscr{M}$. This entails that $\pr(|\f|^+)\leq\pr(|\psi|^+)$ and $\pr(|\psi|^-)\leq\pr(|\f|^-)$, and the following instance $\Prob\f\rightarrow\Prob\psi$ of the first modal axiom is justified, as it results in the value $(1,0)$ in the outer algebra. The rest of the proof follows from the soundness of $\Luk^2$, provided the rest of the modal axioms are sound. Assume an instance $\Prob\neg\f\leftrightarrow\neg\Prob\f$ of the second modal axiom: its left-hand side $\Prob\neg\f$ amounts to $(\pr(|\neg\f|^+),\pr(|\neg\f|^-))=(\pr(|\f|^-),\pr(|\f|^+))$ which is the semantics of $\neg \Prob\f$ as desired. 

Assume now an instance
$\Prob(\f\vee\p) \leftrightarrow(\Prob\f \ominus \Prob(\f\wedge\psi))\oplus \Prob\psi$ of the third modal axiom: Its left-hand side $\Prob(\f\vee\p)$ amounts to $$
(\pr(|\f|^+\cup|\psi|^+),\pr(|\f|^-\cap|\psi|^-)).
$$ 
The first coordinate we know to satisfy 
$$
\pr(|\f|^+\cup|\psi|^+) = \pr(|\f|^+) + \pr(|\psi|^+) - \pr(|\f|^+\cap |\psi|^+)
$$ 
directly by the axiom (iii) of Definition~\ref{DEF:NSprob}, while the second coordinate satisfies 
$$ 
\pr(|\f|^-\cap|\psi|^-) = \pr(|\f|^-) + \pr(|\psi|^-) - \pr(|\f|^-\cup |\psi|^-).
$$  
The first coordinate immediately matches 
$$
v_1((\Prob\f \ominus \Prob(\f\wedge\psi))\oplus \Prob\psi) = (v_1(\Prob\f) - v_1(\Prob(\f\wedge\psi))) + v_1(\Prob\psi),
$$
because $(|\f|^+\cap|\psi|^+)\leq\pr(|\f|^+)$ by monotonicity and therefore the truncated $\ominus_{\Luk}$ semantics outputs $v_1(\Prob\f)-v_1(\Prob(\f\wedge\psi))$ and does not trivialise to $0$. The second coordinate
$$
v_2((\Prob\f \ominus \Prob(\f\wedge\psi))\oplus \Prob\psi) = (v_2(\Prob(\f\wedge\psi)) \rightarrow_{\Luk} v_2(\Prob\f)) \&_{\Luk} v_2(\Prob\psi)
$$
and because now by monotonicity $\pr(|\f\wedge\psi|^-)=\pr(|\f|^-\cup |\psi|^-) \geq \pr(|\f|^-)$, the implication does not trivialise to $1$ and this computes exactly as required.

\medskip\par\noindent
\textit{Completeness}: Assume that $\Gamma\nvdash_{\Prob^{\Luk^2}_\triangle}\alpha$. We will show that there is $\BD$ probabilistic model validating $\Gamma$ while refuting $\alpha$. We will use the canonical model $\mathscr{M}_c = (\P(\LIT),v_c^+,v_c^-)$ of Definition~\ref{DEF:CanModel} to do so. 
Consider the finite set of $\BD$ subformulas of modal atoms of $\Gamma,\Phi$ to be given by
$$
\Sf_{\Gamma,\alpha} \coloneqq \{\f\mid\exists\f':\Prob\f'\mbox{ a modal atom of }\Gamma\cup\{\alpha\}\mbox{ and }\f\mbox{ a subformula of }\f'\}.
$$
Because $\BD$ is locally finite, also the set 
$$
\Phi_{\Gamma,\alpha} \coloneqq \{\psi\mid \exists \f\in\Sf_{\Gamma,\alpha} \ \mbox{ and }\ \f\dashv\vdash_{\BD}\psi\}
$$
is finite. Now define the finite set $\Sigma$ of outer formulas to contain all instances of the modal axioms for formulas from $\Phi_{\Gamma,\alpha}$.

Given that $\Gamma\nvdash_{\Prob^{\Luk^2}_\triangle}\alpha$, it is obviously also the case that $\Gamma,\Sigma\nvdash_{\Prob^{\Luk^2}_\triangle}\alpha$.
Then we know, by FSSC of $\Luk^2$, that there is a~refuting valuation 
$$
v: \{\Prob\f\mid \f\in \Phi_{\Gamma,\alpha}\} \to [0,1]^{\Join}_\Luk,
$$
i.e. $v[\Gamma\cup\Sigma]\subseteq \{(1,0)\}$ while $v(\alpha) < (1,0)$. Because $v[\Sigma]\subseteq \{(1,0)\}$, and $\Sigma$ in fact fully describes behaviour of a~probability on the $\BD$ formula algebra generated by propositional atoms from the set $\Phi_{\Gamma,\alpha}$, we define the probability of a~$\BD$ formula $\phi\in\Phi_{\Gamma,\alpha} $ to be
$$
(\pr^+(\f), \pr^-(\f)) = v(\Prob\f).
$$
It is then a~routine to check that this assignment indeed is a~probability on the respective formula algebra. Therefore, by Theorem~\ref{th:completeness_probablities}, we know that this map can be extended to act as a~probability on the powerset of the canonical $\BD$ model 
$(\P(\LIT),v^+,v^-,\pr' )$ with $\LIT$ being the finite set of literals from the set $\Phi_{\Gamma,\alpha}$, i.e. $\pr^+(\f) = \pr'(|\f|^+)$ and $\pr^-(\f) = \pr'(|\f|^-)$ for each formula from the set $\Phi_{\Gamma,\alpha}$.
This concludes the proof.
\end{proof}
\begin{example}\label{ex:probLuk2}
Assume for example that ${\sim}\Prob(\f\wedge\neg\f)$, i.e. that the positive probability of the contradiction is $0$. Observe that then positive probability of $(\f\vee\neg\f)$ is $1$:
$$
{\sim}\Prob(\f\wedge\neg\f)\vdash_{\Prob^{\Luk^2}_\triangle}{\sim}\neg{\sim}\Prob(\f\wedge\neg\f)
\vdash_{\Prob^{\Luk^2}_\triangle}{\sim}{\sim}\neg\Prob(\f\wedge\neg\f)
\vdash_{\Prob^{\Luk^2}_\triangle}\neg\Prob(\f\wedge\neg\f)\vdash_{\Prob^{\Luk^2}_\triangle}\Prob(\f\vee\neg\f).
$$
From the third axiom we then obtain
$$
\vdash_{\Prob^{\Luk^2}_\triangle} (\Prob\f \ominus \Prob(\f\wedge\neg\f))\oplus \Prob\neg\f
$$
which is equivalent (and therefore provably so) to
$$
\vdash_{\Prob^{\Luk^2}_\triangle} (\Prob\f \rightarrow \Prob(\f\wedge\neg\f))\rightarrow \Prob\neg\f.
$$
It is easy to prove that
${\sim}\Prob(\f\wedge\neg\f)\vdash_{\Prob^{\Luk^2}_\triangle}(\Prob\f \rightarrow\Prob(\f\wedge\neg\f))\leftrightarrow{\sim}\Prob\f$, and from that we obtain
$$
{\sim}\Prob(\f\wedge\neg\f)\vdash_{\Prob^{\Luk^2}_\triangle} {\sim}\Prob\f\rightarrow\neg \Prob\f \vdash_{\Prob^{\Luk^2}_\triangle} {\sim}{\neg}\Prob\f\rightarrow \Prob\f.
$$
As ${\sim}{\neg}\Prob\f\rightarrow \Prob\f$ and $\Prob\f\rightarrow {\sim}{\neg}\Prob\f$ are provable from each other (cf. Example~\ref{ex:expressivity_luksquare}), we can see that assuming that ${\sim}\Prob(\f\wedge\neg\f)$ entails that $\Prob\f$ is classical. On the other hand, assuming $\Prob\f$ is classical, we can prove that 
${\sim}\Prob(\f\wedge\neg\f)\leftrightarrow \Prob(\f\vee\neg\f)$.
\end{example}
\subsection[Logics for belief and plausibility]{Two-layered logics for belief and plausibility functions}\label{ssec:logicforbelieffunctions}
In this subsection, we introduce two logics formalising two different two-dimensional interpretations of belief and plausibility. The first option is to treat the belief modality $\belmod$ as a ‘Belnapian belief’ described in Section~\ref{sssec:bel-pl-modalities:(bel+,bel-)}. This way, \emph{the negative support} of $\belmod\phi$ is equal to $\bel(|\phi|^-)$ (i.e., the belief in $\phi$'s negative extension). The \emph{plausibility of $\phi$} can be defined in terms of its belief (cf.~Remark~\ref{rem:nonnelsonianplausibility} for the details).

The second option is to interpret belief and plausibility as two independent measures. In this approach, $\belmod\phi$'s negative support to be equal to the \emph{plausibility of $\neg\phi$} as presented in Section~\ref{sssec:bel-pl-modalities:(bel+,pl-)}. Dually, the negative support of $\Pl\phi$ \emph{is the belief in $\neg\phi$}.

These two approaches require two different logics. $\Luk^2$ is more suitable to formalise the Belnapian belief since the validity takes into account both positive and negative supports and since the belief modality should act in a symmetric manner. On the other hand, $\N\Luk$ is better suited to formalise the reasoning with independent $\bel$ and $\pl$ since it can separate positive and negative supports of a given statement (recall Example~\ref{ex:expressivity_luknelson}).
\subsubsection{Two-layered logic for belief functions}\label{sssec:logicforbelieffunctions}
\begin{definition}[Language and semantics of $\Bell^{\Luk^2}_\triangle$]\label{def:BelLuksquare}
The two-layered language of the logic $\Bell^{\Luk^2}_\triangle$ is of the form $(\LBD,\{\belmod\},\Langluk)$, with a~unary modality $\belmod$, and is defined as follows:
\begin{description}
\item[inner formulas:] formulas of $\LBD$
\item[atomic modal formulas:] Let $\phi\in\LBD$, then $\belmod\phi$ is a~modal atomic formula.
\item[outer formulas:] Let $\belmod\phi$ range over modal atomic formulas, the set of outer formulas is defined via the following grammar, with the connectives of $\Langluk$:
\[\alpha\coloneqq \belmod\phi\mid\neg\alpha\mid\alpha\rightarrow\alpha\mid{\sim}\alpha\mid\triangle\alpha.\]
Other connectives: $\bot$, $\wedge$, $\vee$, $\&$, $\ominus$, and $\oplus$, can be defined as in $\Langluk$ of Definition~\ref{def:language_semantics_luksquare}.
\end{description}
The semantics of the two-layered language is given over a~$\DS$ model $\mathscr{M} = (W,v^+,v^-,\bel)$, and the outer algebra $[0,1]^{\Join}_{\Luk}$, as follows:
\begin{description}
\item[inner formulas:] for each $\phi\in\LBD$ the model provides sets $|\phi|^+ = \{w\mid w \vDash^+\phi\}$ and $|\phi|^- = \{w\mid w\vDash^-\phi \}$, as described in Definition~\ref{def:BDframesemantics}.
\item[atomic modal formulas:] for each atomic modal formula $\belmod\phi$, $v^{\mathscr{M}}(\belmod\phi)$ in $[0,1]^{\Join}_{\Luk}$ is defined as
$$
v^{\mathscr{M}}(\belmod\phi) = (v^{\mathscr{M}}_1(\belmod\phi),v^{\mathscr{M}}_2(\belmod\phi)) \coloneqq (\bel(|\phi|^+),\bel(|\phi|^-)) = \left(\sum\limits_{X\subseteq |\phi|^+ }\!\!\!\!\!\mathtt{m}_{\bel}(X),\sum\limits_{X\subseteq |\phi|^-}\!\!\!\!\!\mathtt{m}_{\bel}(X)\right). 
$$
\item[outer formulas:] The map $v^{\mathscr{M}}$ is extended to the outer formulas in an expected manner, following the Definition~\ref{def:language_semantics_luksquare} of the interpretation of $\Langluk$ connectives in the algebra $[0,1]^{\Join}_{\Luk}$.
\end{description}
The outer formula $\alpha$ is said to be valid in $\mathscr{M}$, if $v^{\mathscr{M}}(\alpha)$ is designated (i.e. $v^{\mathscr{M}}(\alpha) = (1,0)$). It is said to be valid if it is valid in all $\DS$ models. 
The consequence relation for $\Gamma,\alpha$ a~set of outer formulas is defined as
$$
\Gamma \vDash_{\Bell^{\Luk^2}_\triangle} \alpha \ \ \mbox{iff}\ \ \forall\mathscr{M} (v^{\mathscr{M}}[\Gamma] \subseteq (1,0) \Rightarrow v^{\mathscr{M}}(\alpha) = (1,0)).
$$
\end{definition}
\begin{remark}\label{rem:nonnelsonianplausibility}
Note that we can also introduce a definable modality $\Pl$ into $\Bell^{\Luk^2}_\triangle$. Indeed, since we can interpret $1-\bel(|\neg\phi|^+)$ as the value of the \emph{plausibility} of $\phi$ (recall Lemma~\ref{lem:bel:pl:1-bel}), we have that the following equivalence is valid:
\[\Pl\phi\leftrightarrow{\sim}\belmod\neg\phi\]
\end{remark}
\begin{definition}[Belief function axioms]\label{def:bfces_Luk_rightarrow}
We define a~sequence of outer formulas $\gamma_n$
 in propositional letters of the inner language $p_1,\ldots,p_n$ inductively as follows:
 \begin{align*}
 \gamma_1 & \coloneqq\belmod p_1 \\
 \gamma_{n+1} & \coloneqq \gamma_n \oplus (\belmod p_{n+1} \ominus \gamma_n[\belmod\psi:\belmod(\psi\wedge p_{n+1})\mid\belmod\psi\mbox{ atoms of }\gamma_n])
 \end{align*}
where $\gamma_n[\belmod\psi:\belmod(\psi\wedge p_{n+1})\mid\belmod\psi\mbox{ modal atoms of }\gamma_n])$ is the result of replacing each modal atom $\belmod\psi$ in $\gamma_n$ with the modal atom $\belmod(\psi\wedge p_{n+1})$ (semantically, it is a~relativisation of the corresponding belief function to the sets $|p_{n+1}|^{+-}$).

The $n$-th belief function axiom (i.e. the $n$-monotonicity) is expressed by substitution instances (substituting inner formulas for the atomic letters $p_1,\ldots,p_n$) of 
$$
\alpha_n \coloneqq \gamma_n \rightarrow\belmod\left(\bigvee_{i=1}^n p_n\right).
$$
\end{definition}
\begin{lemma}[Soundness of the axioms]\label{lem:bfces_Luk_rightarrow} The modal axioms $\alpha_n$ are sound w.r.t.\ $\DS$ models.
\end{lemma}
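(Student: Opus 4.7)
The plan is to verify validity of $\alpha_n = \gamma_n \to \belmod(\bigvee_{i \leq n} p_i)$ by computing $v^{\mathscr{M}}(\alpha_n)$ directly in the outer algebra $[0,1]^{\Join}_\Luk$, using the inductive structure of $\gamma_n$. Fix an arbitrary $\DS$ model $\mathscr{M} = (S, \P(S), \bel, v^+, v^-)$ and abbreviate $\psi_J \coloneqq \bigwedge_{j \in J} p_j$ and $b^\pm_{J} \coloneqq \bel(|\psi_J|^\pm)$ for $\varnothing \neq J \subseteq \{1, \ldots, n\}$.

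The first step is to prove by induction on $n$ that the two coordinates of $v^\mathscr{M}(\gamma_n)$ admit the following closed forms:
\[
v^\mathscr{M}_1(\gamma_n) = \min\!\Big(1,\ \sum_{J\neq\varnothing} (-1)^{|J|+1}\, b^+_{J}\Big),
\qquad
v^\mathscr{M}_2(\gamma_n) = \max\!\Big(0,\ \sum_{J\neq\varnothing} (-1)^{|J|+1}\, b^-_{J}\Big).
\]
Unfolding the semantics of $\oplus$ and $\ominus$ in $[0,1]^{\Join}_\Luk$, whose second-coordinate operations are the \L{}ukasiewicz-duals of the first, the induction reduces to bookkeeping; its only subtle point is to verify that the intermediate $\ominus_\Luk$-truncations triggered during the recursive unfolding of $\gamma_{n+1}$ do not clip. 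This follows from monotonicity of $\bel$ combined with the fact that every partial alternating inclusion-exclusion sum on a (weakly) totally monotone function is non-negative, as one sees transparently by expanding $\bel$ via its Möbius mass $\mass_\bel$ of Theorem~\ref{theo:totallymonotone:charactrisation} and inspecting the coefficient of each $\mass_\bel(X)$.

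With the closed forms in hand, validity of $\alpha_n$, i.e., $v^\mathscr{M}(\alpha_n) = (1,0)$, reduces to the two inequalities $v^\mathscr{M}_1(\gamma_n) \leq \bel(|\bigvee_{i\leq n} p_i|^+)$ and $\bel(|\bigvee_{i\leq n} p_i|^-) \leq v^\mathscr{M}_2(\gamma_n)$. The former is an immediate application of the $n$-monotonicity inequality~(\ref{eq:bel:k:inequality}) from Definition~\ref{def:generalbelieffunction} to the sets $|p_i|^+$, with the outer $\min$-truncation harmless because, should the untruncated sum exceed $1$, then $\bel(|\bigvee_{i\leq n} p_i|^+) = 1$ by monotonicity. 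The latter is the analogous statement for the sets $|p_i|^-$; since $|\bigvee_{i\leq n} p_i|^- = \bigcap_{i\leq n} |p_i|^-$ is a meet rather than a join in $\P(S)$, one cannot apply~(\ref{eq:bel:k:inequality}) directly. Instead, the plan is to use the Möbius decomposition once more to write both $v^\mathscr{M}_2(\gamma_n)$ and $\bel(\bigcap_{i\leq n} |p_i|^-)$ as weighted sums $\sum_X \mass_\bel(X)\cdot c(X)$ indexed by $X \in \P(S)$, and to verify the inequality coefficient-wise via an elementary inclusion-exclusion identity for the indicator functions of sub-collections of $\{|p_i|^- : i \leq n\}$.

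The main obstacle is precisely this second-coordinate inequality: because the $\BD$-framework decouples the positive- and negative-support sets $|p_i|^\pm$ as independent subsets of $S$, the duality $\pl(|p_i|^+) = 1 - \bel(|p_i|^-)$ available on a Boolean algebra of events is not at our disposal, and there is no direct translation of $n$-monotonicity into a meet-version. Some care is also needed to ensure that the closed forms of Step~1 remain correct in regimes where the untruncated sums would become negative; this is ultimately absorbed by the outer $\max(0,\cdot)$ but must be tracked through the inductive step.
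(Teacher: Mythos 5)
Your first-coordinate argument is essentially the paper's own: your closed form $\min\big(1,\sum_{J\neq\varnothing}(-1)^{|J|+1}b^+_J\big)$ is the paper's term $\mathtt{t}_n$, the non-clipping of the intermediate $\ominus_\Luk$'s is secured exactly as the paper secures it (the relativised partial sum is dominated by $\bel(|p_{n+1}|^+)$, by $n$-monotonicity applied to the sets $|p_i|^+\cap|p_{n+1}|^+$ plus monotonicity), and the outer $\min$ in fact never clips since the whole sum is already bounded by $\bel(|\bigvee_i p_i|^+)\leq 1$. That half is fine.

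The genuine gap is the second coordinate, and the M\"obius/coefficient-wise plan cannot close it. Your closed form there is a faithful reading of the semantics: since the substitution in $\gamma_{n+1}$ is $\belmod\psi\mapsto\belmod(\psi\wedge p_{n+1})$ and $|\psi\wedge p_{n+1}|^-=|\psi|^-\cup|p_{n+1}|^-$, your untruncated sum is $\sum_{J\neq\varnothing}(-1)^{|J|+1}\bel\big(\bigcup_{j\in J}|p_j|^-\big)$, taken over \emph{unions} of the negative extensions. The inequality you then need, $\bel\big(\bigcap_i|p_i|^-\big)\leq\sum_{J\neq\varnothing}(-1)^{|J|+1}\bel\big(\bigcup_{j\in J}|p_j|^-\big)$, is the $n$-alternating (plausibility-type) inequality for $\bel$, which fails for general belief functions. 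Coefficient-wise, the weight of $\mass_\bel(X)$ on the right-hand side is $\sum\{(-1)^{|J|+1}: X\subseteq\bigcup_{j\in J}|p_j|^-\}$; the predicate ``$X\subseteq\bigcup_{j\in J}|p_j|^-$'' does not factor through the individual indicators, and for $n=2$ this weight equals $-1$ whenever $X\subseteq|p_1|^-\cup|p_2|^-$ but $X\not\subseteq|p_1|^-$ and $X\not\subseteq|p_2|^-$, so the comparison with the left-hand coefficient (which is $0$ there) fails. Concretely: $S=\{s_1,s_2,s_3\}$, $\mass_\bel(\{s_1\})=\mass_\bel(S)=\tfrac{1}{2}$, $|p_1|^-=\{s_1,s_2\}$, $|p_2|^-=\{s_1,s_3\}$ gives $v_2(\gamma_2)=\max(0,\tfrac{1}{2}+\tfrac{1}{2}-1)=0$ while $\bel(|p_1\vee p_2|^-)=\bel(\{s_1\})=\tfrac{1}{2}$, so the outer $\max(0,\cdot)$ truncation does not rescue the step either. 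Note that the paper's own proof evaluates the relativised atoms in the second coordinate as $\bel(|\psi\vee p_{n+1}|^-)=\bel(|\psi|^-\cap|p_{n+1}|^-)$, i.e.\ its terms $\mathtt{s}_n$ are alternating sums over \emph{intersections} of the $|p_j|^-$, for which the desired bound does follow by a ``subset-of-some versus subset-of-all'' mass count; but that evaluation does not match the $\wedge$-substitution of Definition~\ref{def:bfces_Luk_rightarrow}. Your faithful unfolding of the semantics has exposed this mismatch, and Step~2 cannot be repaired without first resolving it.
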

\begin{proof}
We need to prove that the axioms introduced in Definition~\ref{def:bfces_Luk_rightarrow} are sound, i.e. that the formulas $\gamma_n$ indeed express semantically the appropriate sums from Definition~\ref{def:generalbelieffunction}. Because of the two-dimensional nature of the semantics of the modality 
$$
v^{\mathscr{M}}(\belmod\phi) = (v^{\mathscr{M}}_1(\belmod\phi),v^{\mathscr{M}}_2(\belmod\phi)) \coloneqq (\bel(|\phi|^+),\bel(|\phi|^-)),
$$
we need to consider both $\bel^+$ as a~general belief function on the Lindenbaum algebra $\LBD$, and $\bel^-$ as a~general belief function on its dual $\LBD^{op}$. 

Recall that $\bel^+$ on $\LBD$ is \emph{$n$-monotone} if, for every $a_1,\ldots,a_n$, it holds that
\begin{equation*}
\bel^+\left(\bigvee\limits^n_{i=1}a_i\right)\geq\sum\limits_{\scriptsize{\begin{matrix}J\subseteq\{1,\ldots,n\}\\J\neq\varnothing\end{matrix}}}(-1)^{|J|+1}\cdot \bel^+\left(\bigwedge\limits_{j\in J}a_j\right).
\end{equation*}
What the sum expresses is that we always add ($+$) beliefs of conjunctions of an odd size, while we subtract ($-$) beliefs of conjunctions of an even size. To write down exactly the same sum as a~$\Luk^2$ formula is however more problematic than in the probability case because the sums and subtractions of $\Luk^2$ are truncated to stay within the margins of $[0,1]$ and we have to be careful to reflect this in the way we arrange the elements of the sum so that we never lose information. We first need to see that the axioms can indeed be re-defined recursively as follows ($\mathtt{t}_n$ now denote terms in the language of linear inequalities):
\begin{align*}
 \mathtt{t}_1 & \coloneqq \bel^+ (a_1) \\
 \mathtt{t}_{n+1} & \coloneqq \mathtt{t}_n + (\bel^+ (a_{n+1}) - \mathtt{t}_n[\bel^+ (\psi): \bel^+(\psi\wedge a_{n+1})\mid \bel^+(\psi)\ \mbox{atoms of}\ \mathtt{t}_n])
\end{align*}
The $n$-th axiom is now written as (the trivial case of $n=1$ is included for convenience):
$$
\mathtt{t}_n \leq \bel^+\left(\bigvee_{i=1}^n a_n\right).
$$
The intuition behind this definition comes from the powerset algebras and it is geometrical. In each inductive step, we look at what happens if we add one more set (element $a_{n+1}$) to the union: we keep the sum we had so far ($\mathtt{t}_n$), add $\bel(a_{n+1})$, and subtract the sum we had \emph{relativised} to $\bel(a_{n+1})$ (replacing beliefs of the sets in the original sum by beliefs of their intersection with the new set $a_{n+1}$). That the two definitions are equivalent can be proven by a~routine (but tedious) induction. The whole point of this definition is that in the above,
$$
\bel^+ (a_{n+1}) \geq \mathtt{t}_n[\bel^+ (\psi): \bel^+(\psi\wedge a_{n+1})\mid \bel^+(\psi)\ \mbox{atoms of}\ \mathtt{t}_n],
$$ 
and therefore 
$$ 
0 \leq \bel^+ (a_{n+1}) - \mathtt{t}_n[\bel^+ (\psi): \bel^+(\psi\wedge a_{n+1})\mid \bel^+(\psi)\ \mbox{atoms of}\ \mathtt{t}_n] \leq 1
.
$$
Dually, recall that as $\bel^-$ is a~general belief function on $\LBD^{op}$, then for every $a_1,\ldots,a_n$ it holds that
\begin{equation*}
\bel^-\left(\bigwedge\limits^n_{i=1}a_i\right)\leq\sum\limits_{\scriptsize{\begin{matrix}J\subseteq\{1,\ldots,n\}\\J\neq\varnothing\end{matrix}}}(-1)^{|J|+1}\cdot \bel^-\left(\bigvee\limits_{j\in J}a_j\right).
\end{equation*}
These axioms can be re-defined recursively as follows (this is tailored to be expressed in the \L{}ukasiewicz language):
\begin{align*}
 \mathtt{s}_1 & \coloneqq \bel^- (a_1) \\
 \mathtt{s}_{n+1} & \coloneqq (\mathtt{s}_n + 1 -\mathtt{s}_n[\bel^- (\psi): \bel^-(\psi\vee a_{n+1})\mid \bel^-(\psi)\ \mbox{atoms of}\ \mathtt{s}_n])
 -
 (1 - \bel^- (a_{n+1})).
\end{align*}
The $n$-th axiom is now written as:
$$
\mathtt{s}_n \geq \bel^-\left(\bigwedge_{i=1}^n a_n\right).
$$
The main reason we did it this particular way is that now the following sum stays within margins of $[0,1]$ (and therefore directly corresponds to a~$\Luk$ formula):
$$
0 \leq \mathtt{s}_n + 1 -\mathtt{s}_n[\bel^- (\psi): \bel^-(\psi\vee a_{n+1})\mid \bel^-(\psi)\ \mbox{atoms of}\ \mathtt{s}_n] \leq 1.
$$
It remains to show that for each $n$ and each $\DS$ model $(S,v^+,v^-,\bel)$ we will have
$$
v_1(\gamma_n) \leq \bel\left|\bigvee_{i=1}^n p_n\right|^+\mbox{ and}\quad v_2(\gamma_n) \geq \bel \left|\bigvee_{i=1}^n p_n\right|^-.
$$
The case $n=1$ is trivial, we, therefore, go directly for the recursion step. For the first coordinate,
\begin{align*}
  v_1(\gamma_{n+1}) &= v_1(\gamma_n) \oplus_{\Luk} (\bel(|p_{n+1}|^+) \ominus_{\Luk} v_1(\gamma_n[\belmod\psi: \belmod(\psi\wedge p_{n+1})]))\\
  &= \mathtt{t}_{n}(|p_1|^+,\ldots,|p_n|^+) + (\bel(|p_{n+1}|^+) - \mathtt{t}_{n}[\bel (|\psi|^+): \bel(|\psi\wedge p_{n+1}|^+)])\\
  &= \mathtt{t}_{n} + (\bel(|p_{n+1}|^+) - \mathtt{t}_{n}[\bel (|\psi|^+): \bel(|\psi|^+\cap |p_{n+1}|^+)])\\
  &\leq \bel \bigcup_{i=1}^n |p_n|^+ = \bel\left|\bigvee_{i=1}^n p_n\right|^+.
\end{align*}
For the second coordinate,
\begin{align*}
  v_2(\gamma_{n+1}) &=
              v_2(\gamma_n)~\&_{\Luk}~(v_2(\gamma_n[\belmod\psi: \belmod(\psi\wedge p_{n+1})]) \rightarrow_{\Luk} \bel(|p_{n+1}|^-)) \\
              &= (v_2(\gamma_n)\oplus_{\Luk} {\sim_{\Luk}}v_2(\gamma_n[\belmod\psi: \belmod(\psi\wedge p_{n+1})])) \ominus_{\Luk} {\sim_{\Luk}}\bel(|p_{n+1}|^-) \\
   &= (\mathtt{s}_n(|p_1|^-,\ldots,|p_n|^-) + 1 -\mathtt{s}_n[\bel(|\psi|^-): \bel(|\psi\vee p_{n+1}|^-)]) - (1 - \bel(|p_{n+1}|^-))\\
  &=  (\mathtt{s}_n + 1 -\mathtt{s}_n[\bel(|\psi|^-): \bel(|\psi|^-\cap |p_{n+1}|^-)]) - (1 - \bel(|p_{n+1}|^-))\\
  &\geq \bel\bigcap_{i=1}^n\left|p_n\right|^- = \bel\left|\bigvee_{i=1}^n p_n\right|^-
\end{align*}
\end{proof}
\begin{definition}[Axiomatisation of $\Bell^{\Luk^2}_\triangle$]\label{def:logicBD_B_Luk}
The two-layered axiomatisation of logic $\Bell^{\Luk^2}_\triangle = (\BD,M_b, \Luk^2)$ consists of 
\begin{description}
\item[inner logic:] the axiomatisation of $\BD$ of Definition~\ref{def:BDBD*}
\item[modal axioms:] $M_b$: 
\begin{align*}
 \{ \belmod\f &\rightarrow \belmod\psi\mid \f\vdash_{\BD}\psi \}\\
 \belmod\neg\f &\leftrightarrow \neg \belmod\f\\
 \{ \alpha_n \ &|\ n\in\mathbb{N}\}\mbox{ (substitution instances of)}
\end{align*}
\item[outer logic:] the axiomatisation of $\Luk^2$ of Definition~\ref{def:ax_luk_rightarrow}
\end{description}
A proof of a~formula $\alpha$ from a~set of assumptions $\Gamma$ in the logic $\Bell^{\Luk^2}_\triangle$ (in short $\Gamma\vdash_{\Bell^{\Luk^2}_\triangle}\alpha$), where $\Gamma,\alpha$ is a~finite set of outer formulas, is a~finite tree which can be parsed as follows: the smallest proofs consist of either (i) a~finite tree labeled by $\BD$ formulas, proving $\f\vdash_{\BD}\psi$, followed by the instance $\belmod\f\rightarrow \belmod\psi$ of the first modal axiom, or (ii) a~node labelled by an instance of the remaining two modal axioms or an instance of an axiom of the outer logic in the outer language, or (iii) an assumption $\beta\in\Gamma$ in the outer language. More complex proofs are then formed inductively in the standard manner, by means of syntactically correct applications of the rules of the outer logic to smaller proofs. The root of the tree is to be labelled by $\alpha$. 
\end{definition}
\begin{theorem}[FSSC of $\Bell^{\Luk^2}_\triangle$]\label{th:FSSC_belLuk2} For each $\Gamma,\alpha$ a~finite set of outer formulas, 
 $$
 \Gamma\vdash_{\Bell^{\Luk^2}_\triangle}\alpha \ \ \mbox{iff} \ \ \Gamma\vDash_{\Bell^{\Luk^2}_\triangle}\alpha.
 $$
\end{theorem}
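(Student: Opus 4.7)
The plan is to proceed in direct analogy with the proof of Theorem~\ref{th:FSSC_probLuk2}, exploiting the two-layered structure of $\Bell^{\Luk^2}_\triangle$: the inner logic $\BD$ is strongly sound, complete and locally finite; the outer logic $\Luk^2$ is finitely strongly standard complete by Lemma~\ref{lemma:luk_rightarrow_completeness}; and the modal axioms $M_b$ constitute a sound and complete axiomatisation of (general) belief functions on the Lindenbaum algebra of $\BD$ by Theorem~\ref{th:complBelAxioms}.

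For soundness, I would verify each modal axiom separately. The monotonicity axiom $\belmod\f\rightarrow\belmod\p$ for $\f\vdash_{\BD}\p$ is sound because $|\f|^+\subseteq|\p|^+$ and $|\p|^-\subseteq|\f|^-$ hold in every $\BD$ model, so monotonicity of $\bel$ gives $\bel(|\f|^+)\leq\bel(|\p|^+)$ in the first coordinate and the reverse inequality in the second coordinate, which is exactly what designation of the $\Luk^2$ implication requires. The negation axiom $\belmod\neg\f\leftrightarrow\neg\belmod\f$ is immediate from $|\neg\f|^\pm=|\f|^\mp$ together with the semantics of $\neg$ in $[0,1]^{\Join}_{\Luk}$. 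Soundness of the $n$-monotonicity axioms $\alpha_n$ is already established by Lemma~\ref{lem:bfces_Luk_rightarrow}. The propagation of soundness through the outer layer is then handled by soundness of $\Luk^2$ and the tree-structured notion of proof from Definition~\ref{def:logicBD_B_Luk}.

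For completeness, I would argue by contraposition. Assume $\Gamma\nvdash_{\Bell^{\Luk^2}_\triangle}\alpha$. Define $\Sf_{\Gamma,\alpha}$ exactly as in the proof of Theorem~\ref{th:FSSC_probLuk2}, and take $\Phi_{\Gamma,\alpha}$ to contain all $\BD$-formulas (up to $\dashv\vdash_\BD$) in the variables of $\Sf_{\Gamma,\alpha}$; by local finiteness of $\BD$ this set is finite. Let $\Sigma$ be the finite collection of all modal-axiom instances with inner formulas drawn from $\Phi_{\Gamma,\alpha}$, including instances of $\alpha_n$ for all $n$ bounded by $|\Phi_{\Gamma,\alpha}|$. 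Then $\Gamma,\Sigma\nvdash_{\Bell^{\Luk^2}_\triangle}\alpha$, and by Lemma~\ref{lemma:luk_rightarrow_completeness} (FSSC of $\Luk^2$) there is a valuation $v:\{\belmod\f:\f\in\Phi_{\Gamma,\alpha}\}\to[0,1]^{\Join}_\Luk$ with $v[\Gamma\cup\Sigma]\subseteq\{(1,0)\}$ while $v(\alpha)<(1,0)$. Define $\bel^+(\f)\coloneqq v_1(\belmod\f)$ and $\bel^-(\f)\coloneqq v_2(\belmod\f)$ on the finite sublattice generated by $\Phi_{\Gamma,\alpha}$; the fact that $v$ validates every instance in $\Sigma$ guarantees that $\bel^+$ and $\bel^-$ satisfy monotonicity, $\bel^-(\f)=\bel^+(\neg\f)$, and all $n$-monotonicity inequalities from Definition~\ref{def:generalbelieffunction}, hence that $\bel^+$ is a general belief function on the relevant Lindenbaum algebra.

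To finish, I would invoke Theorem~\ref{th:complBelAxioms} to extend $\bel^+$ to a belief function $\bel'$ on $\mathcal{P}(\mathcal{P}(\LIT))$ of the canonical model $\mathscr{M}_c=\langle\mathcal{P}(\LIT),v^+_c,v^-_c\rangle$ so that $\bel^+(\f)=\bel'(|\f|^+)$, and verify that automatically $\bel^-(\f)=\bel'(|\f|^-)$ via the negation axiom and the fact that $|\neg\f|^+=|\f|^-$ in the canonical model. The resulting $\DS$ model then satisfies $v^\mathscr{M}(\belmod\f)=v(\belmod\f)$ for $\f\in\Phi_{\Gamma,\alpha}$, so it validates $\Gamma$ and refutes $\alpha$. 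I expect the main technical obstacle to be the bookkeeping needed to ensure the finite set $\Sigma$ includes sufficiently many instances of the infinite axiom schema $\{\alpha_n\}$ to force genuine $n$-monotonicity of the extracted $\bel^+$; the key observation making this work is that local finiteness of $\BD$ bounds both the arity $n$ and the number of distinct substitution instances that can meaningfully appear among the formulas in $\Phi_{\Gamma,\alpha}$.
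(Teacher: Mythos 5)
Your proposal is correct and follows essentially the same route as the paper's proof: soundness by checking each modal axiom (with Lemma~\ref{lem:bfces_Luk_rightarrow} covering the $\alpha_n$), and completeness by contraposition via the finite sets $\Sf_{\Gamma,\alpha}$, $\Phi_{\Gamma,\alpha}$, the finite axiom package $\Sigma$, FSSC of $\Luk^2$, and an appeal to Theorem~\ref{th:complBelAxioms} to extend the extracted belief function to the canonical model. Your explicit bound on the arity $n$ of the $\alpha_n$ instances needed in $\Sigma$ is a welcome clarification of a point the paper leaves implicit, but it does not change the argument.
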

\begin{proof}
The soundness and completeness proof essentially relies on the two-layered nature of the logic. We will use the following facts: (i) the inner logic $\BD$ is strongly sound and complete w.r.t.\ its double-valuation semantics, and it is locally finite, (ii) the outer logic $\Luk^2$ is finitely strongly sound and complete w.r.t.\ the outer algebra $[0,1]^{\Join}_{\Luk}$, and (iii) the modal axioms are a~sound and complete axiomatisation of belief functions over $\DS$  models.

\medskip\par\noindent
\textit{Soundness}: Given a~proof $\Gamma\vdash_{\Bell^{\Luk^2}_\triangle}\alpha$ and any $\DS$ model $\mathscr{M} = (S, v^+, v^-, \bel)$, we reason inductively: (i) we know that any proof of $\f\vdash_{\BD}\psi$ is sound, therefore it entails that $|\f|^+\subseteq |\psi|^+$ and $|\psi|^-\subseteq |\f|^-$ in any $\BD$ model, and thus in $\mathscr{M}$. This entails that $\bel^+\f \leq \bel^+\psi$ and $\bel^-\psi \leq \bel^-\f$, and  the following instance $\belmod\f\rightarrow \belmod\psi$ of the first modal axiom is justified, as it results in the value $(1,0)$ in the outer algebra. The rest of the proof follows from the soundness of $\Luk^2$, providing the rest of the modal axioms are sound. Assume an instance $\belmod\neg\f \leftrightarrow \neg \belmod\f$ of the second modal axiom: Its left-hand side $\belmod\neg\f$ amounts to $(\bel|\neg\f|^+,\bel|\neg\f|^-) = (\bel|\f|^-,\bel|\f|^+)$ which is the semantics of $\neg\belmod\f$ as desired. The soundness of the axioms $\alpha_n$ has been demonstrated in Lemma~\ref{lem:bfces_Luk_rightarrow}.

\medskip\par\noindent
\textit{Completeness}: Assume that $\Gamma\nvdash_{\Bell^{\Luk^2}_\triangle}\alpha$. We will show that there is $\DS$ model validating $\Gamma$ while refuting $\alpha$. We will again use the canonical model $\mathscr{M}_c = (\P(\LIT),v_c^+,v_c^-)$ of Definition~\ref{DEF:CanModel} to do so. 
Consider the finite set of $\BD$ subformulas of modal atoms of $\Gamma,\Phi$ to be given by
$$
\Sf_{\Gamma,\alpha} \coloneqq \{\f\mid \exists \f'\ \belmod\f'\mbox{ a modal atom of }\Gamma\cup\{\alpha\}\mbox{ and }\f\mbox{ a subformula of }\f' \}.
$$
Because $\BD$ is locally finite, also the set 
$$
\Phi_{\Gamma,\alpha} \coloneqq \{\psi\mid \exists \f\in\Sf_{\Gamma,\alpha} \ \mbox{ and }\ \f\dashv\vdash_{\BD} \psi\}
$$
is finite. Now define the finite set $\Sigma$ of outer formulas to contain all instances of the modal axioms for formulas from $\Phi_{\Gamma,\alpha}$.

Given that $\Gamma\nvdash_{\Bell^{\Luk^2}_\triangle}\alpha$, it is obviously also the case that $\Gamma,\Sigma\nvdash_{\Bell^{\Luk^2}_\triangle}\alpha$.
Then we know, by FSSC of $\Luk^2$, that there is a~refuting valuation 
$$
v: \{\belmod\f\mid \f\in \Phi_{\Gamma,\alpha}\} \to [0,1]^{\Join}_\Luk,
$$
i.e. $v[\Gamma\cup\Sigma]\subseteq \{(1,0)\}$ while $v(\alpha) < (1,0)$. Because $v[\Sigma]\subseteq \{(1,0)\}$, and $\Sigma$ in fact fully describes behaviour of a~belief function on the $\BD$ formula algebra generated by propositional atoms from the set $\Phi_{\Gamma,\alpha}$, we define the belief function of a~$\BD$ formula $\phi\in\Phi_{\Gamma,\alpha} $ to be
$$
(\bel^+(\f), \bel^-(\f)) = v(\belmod\f).
$$
It is then a~routine to check that this assignment indeed is a~belief function on the respective formula algebra (or its dual). Therefore, by Theorem~\ref{th:complBelAxioms}, we know that this map can be extended to a~belief function $\bel'$ on the powerset of the canonical $\BD$ model 
$(\P(\LIT),v^+,v^-,\bel' )$ with $\LIT$ being the finite set of literals from the set $\Phi_{\Gamma,\alpha}$, i.e. $\bel^+(\f) = \bel'(|\f|^+)$ and $\bel^-(\f) = \bel'(|\f|^-)$ for each formula from the set $\Phi_{\Gamma,\alpha}$.
This concludes the proof.
\end{proof}
\begin{example}\label{ex:bel_luksquare}
Assume that $\belmod\f$ is classical, i.e. that $\belmod\f\leftrightarrow {\sim}\neg \belmod\f$. Then, starting from the $\alpha_2$ axiom, and $\belmod(\f\vee\neg\f)\leftrightarrow\neg \belmod(\f\wedge\neg\f)$, we obtain, consequently using the conflation rule and distribution, the classicality, contraposition, $\vee$-definition, and finally $\belmod(\f\vee\neg\f)\vee \belmod\neg\f \leftrightarrow\belmod(\f\vee\neg\f)$:
\begin{align*}
&\vdash_{\Bell^{\Luk^2}_\triangle} ((\belmod\f\rightarrow \belmod(\f\wedge\neg\f))\rightarrow \belmod\neg\f) \rightarrow \neg \belmod(\f\wedge\neg\f)\\
&\vdash_{\Bell^{\Luk^2}_\triangle} {\sim}\neg((\belmod\f\rightarrow \belmod(\f\wedge\neg\f))\rightarrow \belmod\neg\f) \rightarrow \neg \belmod(\f\wedge\neg\f)\\
&\vdash_{\Bell^{\Luk^2}_\triangle} ((\belmod\f\rightarrow {\sim}\neg \belmod(\f\wedge\neg\f))\rightarrow {\sim}\belmod\f) \rightarrow {\sim} \belmod(\f\wedge\neg\f)\\
&\vdash_{\Bell^{\Luk^2}_\triangle} (( \neg \belmod(\f\wedge\neg\f)\rightarrow {\sim}\belmod\f )\rightarrow {\sim}\belmod\f) \rightarrow {\sim} \belmod(\f\wedge\neg\f)\\
&\vdash_{\Bell^{\Luk^2}_\triangle} (\neg \belmod(\f\wedge\neg\f)\vee {\sim}\belmod\f) \rightarrow {\sim} \belmod(\f\wedge\neg\f)\\
&\vdash_{\Bell^{\Luk^2}_\triangle} (\belmod(\f\vee\neg\f)\vee \neg \belmod\f) \rightarrow {\sim} \belmod(\f\wedge\neg\f)\\
&\vdash_{\Bell^{\Luk^2}_\triangle}\belmod(\f\vee\neg\f)\rightarrow{\sim}\belmod(\f\wedge\neg\f) 
\end{align*}
The converse implication is provable as well. (Also note that the same proof with $\belmod$ replaced by $\Prob$ would work for probabilities, thus extending the Example~\ref{ex:probLuk2}).
\end{example}
\subsubsection{Two-layered logic for belief and plausibility functions}\label{sssec:logicforbelieffunctionsandplausibilities}

\begin{definition}[Language and semantics of $\Bell^{\N\Luk}$]\label{def:BelNLuk}
The two-layered language of the logic $\Bell^{\N\Luk}$ is of the form $(\LBD,\{\belmod,\Pl\},\LanglukN)$, with unary modalities $\belmod$ and $\Pl$, and it is defined as follows: 
\begin{description}
\item[inner formulas:] formulas of $\LBD$
\item[atomic modal formulas:] Let $\phi\in\LBD$, then $\belmod\phi$ and $\Pl\phi$ are modal atomic formulas.
\item[outer formulas:] Let $\belmod\phi$ and $\Pl\phi$ range over modal atomic formulas, the set of outer formulas is defined via the following grammar, with the connectives of $\LanglukN$:
\[\alpha\coloneqq \belmod\phi\mid\Pl\phi\mid\neg\alpha\mid\alpha\wedge\alpha\mid\alpha\rightarrowtriangle\alpha\mid{\sim}\alpha.\]
Other connectives: $\bot$, $\vee$, $\&$, $\ominus$ and $\oplus$, can be defined as in $\LanglukN$ of Definition~\ref{def:language_semantics_luksnelsonquare}.
\end{description}
The semantics of the two-layered language is given over a~$\DS_\pl$ model $\mathscr{M} = (W,v^+,v^-,\bel,\pl)$, and the outer algebra $[0,1]^{\Join}_{\N\Luk}$, as follows:
\begin{description}
\item[inner formulas:] for each $\phi\in\LBD$ the model provides sets $|\phi|^+ = \{w\mid w \vDash^+\phi\}$ and $|\phi|^- = \{w\mid w\vDash^-\phi \}$, as described in Definition~\ref{def:BDframesemantics}.
\item[atomic modal formulas:] for atomic modal formulas $\belmod\phi$ and $\Pl\phi$, $v^{\mathscr{M}}(\belmod\phi)$ and $v^\mathscr{M}(\Pl\phi)$ in $[0,1]^{\Join}_{\N\Luk}$ are defined as follows:
$$
v^{\mathscr{M}}(\belmod\phi) = (v^{\mathscr{M}}_1(\belmod\phi),v^{\mathscr{M}}_2(\belmod\phi)) \coloneqq (\bel(|\phi|^+),\pl(|\phi|^-));
$$
$$
v^{\mathscr{M}}(\Pl\phi) = (v^{\mathscr{M}}_1(\Pl\phi),v^{\mathscr{M}}_2(\Pl\phi)) \coloneqq (\pl(|\phi|^+),\bel(|\phi|^-)). 
$$
\item[outer formulas:] The map $v^{\mathscr{M}}$ is extended to the outer formulas in an expected manner, following the Definition~\ref{def:language_semantics_luksquare} of the interpretation of $\LanglukN$ connectives in the algebra $[0,1]^{\Join}_{\N\Luk}$.
\end{description}
The outer formula $\alpha$ is said to be valid in $\mathscr{M}$, if $v^{\mathscr{M}}(\alpha)$ is designated (i.e. $v_1^{\mathscr{M}}(\alpha) = 1$). It is said to be valid if it is valid in all $\DS_\pl$ models. 
The consequence relation for $\Gamma,\alpha$ a~set of outer formulas is defined as
$$
\Gamma \vDash_{\Bell^{\N\Luk}} \alpha \ \ \mbox{iff}\ \ \forall\mathscr{M} (v_1^{\mathscr{M}}[\Gamma] \subseteq \{1\} \Rightarrow v_1^{\mathscr{M}}(\alpha) = 1).
$$
\end{definition}
\begin{definition}[Belief and plausibility functions axioms]\label{def:bfces_Luk_rightarrowtriangle}
We define sequences of outer formulas $\gamma_n$, and $\sigma_n$ in propositional letters of the inner language $p_1,\ldots,p_n$ inductively as follows:
 \begin{align*}
 \gamma_1 & \coloneqq\belmod p_1 \\
 \gamma_{n+1} & \coloneqq \gamma_n \oplus (\belmod p_{n+1} \ominus \gamma_n[\belmod\psi:\belmod(\psi\wedge p_{n+1})\mid\belmod\psi\mbox{ atoms of }\gamma_n])\\
 \sigma_1 &\coloneqq\Pl p_1 \\
 \sigma_{n+1} & \coloneqq (\sigma_n \oplus {\sim}\sigma_n[\Pl\psi:\Pl(\psi\vee p_{n+1})\mid\Pl\psi\mbox{ atoms of }\sigma_n])\ominus {\sim}\Pl p_{n+1} 
 \end{align*} 
where $\gamma_n[\belmod\psi:\belmod(\psi\wedge p_{n+1})\mid\belmod\psi\mbox{ atoms of }\gamma_n])$ is the result of replacing each atom $\belmod\psi$ in $\gamma_n$ with the atom $\belmod(\psi\wedge p_{n+1})$, and $\sigma_n[\Pl\psi:\Pl(\psi\vee p_{n+1})\mid\Pl\psi\mbox{ atoms of }\sigma_n]$ is the result of replacing each atom $\Pl\psi$ in $\sigma_n$ with the atom $\Pl(\psi\vee p_{n+1})$.

The $n$-th belief function axiom (i.e. the $n$-monotonicity) is expressed by substitution instances (substituting inner formulas for the atomic letters $p_1,\ldots,p_n$) of 
$$
\alpha_n \coloneqq \gamma_n \rightarrowtriangle \belmod\left(\bigvee_{i=1}^n p_n\right).
$$
The $n$-th plausibility function axiom is expressed by substitution instances (substituting inner formulas for the atomic letters $p_1,\ldots,p_n$) of 
$$
\beta_n \coloneqq\Pl\left(\bigwedge_{i=1}^n p_n\right)\rightarrowtriangle \sigma_n .
$$
\end{definition}
\begin{lemma}[Soundness of the axioms]\label{lem:bfces_Luk_rightarrowtriangle}
The axioms $\alpha_n$ and $\beta_n$ are sound w.r.t.\ $\DS_\pl$ models.
\end{lemma}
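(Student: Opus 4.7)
The plan is to exploit the two-layered structure: since the designated set of $\N\Luk$ is the filter $(1,1)^\uparrow$, it suffices to verify that in every $\DS_\pl$ model the \emph{first coordinate} of $v^\mathscr{M}(\alpha_n)$ and $v^\mathscr{M}(\beta_n)$ equals $1$. Because the first coordinate of $\rightarrowtriangle$ behaves exactly as $\rightarrow_\Luk$ (see Definition~\ref{def:language_semantics_luksnelsonquare}), and because the first coordinates of ${\sim},\wedge,\vee,\&,\oplus,\ominus$ reduce to the standard \L{}ukasiewicz operations, the whole problem reduces to purely \L{}ukasiewicz arithmetic on $[0,1]$ applied to the values $v_1(\belmod\phi)=\bel(|\phi|^+)$ and $v_1(\Pl\phi)=\pl(|\phi|^+)$.

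For $\alpha_n$ the work is almost free: the recursion for $v_1(\gamma_n)$ in $\N\Luk$ is literally the same term-construction as the recursion for $v_1(\gamma_n)$ in $\Bell^{\Luk^2}_\triangle$ analysed in Lemma~\ref{lem:bfces_Luk_rightarrow}. I would therefore reuse that computation verbatim to obtain by induction on $n$ that
\[
v_1(\gamma_n)\;=\;\mathtt{t}_n(|p_1|^+,\dots,|p_n|^+)\;\leq\;\bel\Bigl(\bigl|\bigvee_{i=1}^n p_i\bigr|^+\Bigr),
\]
using the $k$-monotonicity of $\bel^+$ as a~general belief function on $\LTBD$ (Definition~\ref{def:generalbelieffunction}). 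This gives $v_1(\alpha_n)=v_1(\gamma_n)\rightarrow_\Luk v_1(\belmod(\bigvee p_i))=1$.

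For $\beta_n$ the argument is dual. I would introduce a~linear-arithmetic term
\[
\mathtt{s}_1 \;:=\; \pl(a_1), \qquad
\mathtt{s}_{n+1}\;:=\;\mathtt{s}_n+\pl(a_{n+1})-\mathtt{s}_n[\pl(\psi):\pl(\psi\vee a_{n+1})],
\]
and prove by induction on $n$ three facts: (i) $\mathtt{s}_n$ coincides with the inclusion--exclusion sum $\sum_{\varnothing\neq J\subseteq\{1,\dots,n\}}(-1)^{|J|+1}\pl(\bigvee_{j\in J}a_j)$ appearing on the right-hand side of the plausibility $k$-inequality~\eqref{eq:pl:k:inequality}; (ii) the intermediate sums all lie in $[0,1]$, so the truncating operations $\oplus_\Luk,\ominus_\Luk$ in the recursion for $v_1(\sigma_n)$ never activate, whence $v_1(\sigma_n)=\mathtt{s}_n(|p_1|^+,\dots,|p_n|^+)$; (iii) consequently $v_1(\sigma_n)\geq\pl(|\bigwedge_{i=1}^n p_i|^+)$ by Definition~\ref{def:generalplausibilityfunction}. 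Claim (i) is a~routine splitting of the index set $J$ according to whether $n{+}1\in J$, mirroring what was done for belief in Lemma~\ref{lem:bfces_Luk_rightarrow}. Claim (iii) is exactly the plausibility $k$-inequality. Once (i)--(iii) are established, $v_1(\beta_n)=\pl(|\bigwedge p_i|^+)\rightarrow_\Luk v_1(\sigma_n)=1$.

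The main obstacle is clause~(ii): unlike in the belief case, where $\mathtt{t}_n$ was bounded above by a~belief value and hence automatically kept inside $[0,1]$, for plausibility one must separately check that $\mathtt{s}_n\leq 1$ is preserved along the recursion, and that the ``relativisation'' step produces a~value $\mathtt{s}_n[\pl(\psi):\pl(\psi\vee a_{n+1})]$ large enough to prevent $\oplus_\Luk$ from saturating at $1$ and small enough to prevent $\ominus_\Luk$ from collapsing to $0$. Concretely, I would verify in the step that $\mathtt{s}_n\leq\mathtt{s}_n[\pl(\psi):\pl(\psi\vee a_{n+1})]$ (by monotonicity of $\pl$ applied term-by-term) and that $\pl(a_{n+1})\geq\mathtt{s}_n[\pl(\psi):\pl(\psi\vee a_{n+1})]-\mathtt{s}_n$, the latter being equivalent, via (i), to the $(n{+}1)$-instance of the plausibility $k$-inequality applied to $a_1,\dots,a_{n+1}$ and thus already available from the hypothesis that $\pl$ is a~general plausibility function. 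With these two inequalities in hand, both \L{}ukasiewicz truncations are inert, (ii) holds, and the soundness of $\beta_n$ follows.
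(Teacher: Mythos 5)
Your proposal takes the same route as the paper's proof: since the designated set of $\N\Luk$ is the filter $(1,1)^\uparrow$ and $\rightarrowtriangle$ only constrains first coordinates, one reduces everything to first coordinates, reuses the $\mathtt{t}_n$-computation of Lemma~\ref{lem:bfces_Luk_rightarrow} for $\gamma_n$, and treats $\sigma_n$ via analogues of the terms $\mathtt{s}_n$ (the paper dispatches this last step by noting that the $n$-th axiom for $\bel^-$ on $\LTBD^{op}$ and for $\pl^+$ on $\LTBD$ have the same shape). You are in fact more explicit than the paper about the two conditions needed to keep the \L{}ukasiewicz truncations inert, which is the genuinely delicate point of the whole argument.

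One sub-justification does not stand as written: the inequality $\mathtt{s}_n\leq\mathtt{s}_n[\pl(\psi):\pl(\psi\vee a_{n+1})]$ does \emph{not} follow from ``monotonicity of $\pl$ applied term-by-term''. Since $\mathtt{s}_n$ is an alternating sum, replacing every $\pl(\bigvee_{j\in J}a_j)$ by the larger value $\pl(\bigvee_{j\in J}a_j\vee a_{n+1})$ increases the negatively-signed summands as well, so the net effect is not controlled termwise. The claim is nevertheless true, but it needs the mass function: working on the finite subalgebra of $\P(S)$ generated by the relevant extensions and writing $\pl(A)=\sum_{Y\cap A\neq\varnothing}\mass_\pl(Y)$ (via Lemma~\ref{lem:pl:associated:mass}), the inclusion--exclusion sum collapses to
\begin{equation*}
\mathtt{s}_n=\sum\{\mass_\pl(Y)\mid Y\cap|p_j|^+\neq\varnothing\ \text{for every}\ j\leq n\},
\end{equation*}
and the relativised term is the same sum with each $|p_j|^+$ replaced by $|p_j|^+\cup|p_{n+1}|^+$, which only enlarges the index set. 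Your second inequality, $\pl(a_{n+1})\geq\mathtt{s}_n[\pl(\psi):\pl(\psi\vee a_{n+1})]-\mathtt{s}_n$, is correctly reduced to the $(n{+}1)$-instance of~\eqref{eq:pl:k:inequality}. With the first inequality repaired along these lines, the argument is complete.
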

\begin{proof}
We need to prove that the axioms introduced in Definition~\ref{def:bfces_Luk_rightarrowtriangle} are sound, i.e. that the formulas $\gamma_n,\sigma_n$ indeed express semantically the appropriate sums from Definition~\ref{def:generalbelieffunction} and Definition~\ref{def:generalplausibilityfunction}. First note that the validity of the axioms $\alpha_n,\beta_n$ means that, for each $\DS_\pl$ model $\mathscr{M}$, $v^{\mathscr{M}}_1(\alpha_n) =1$ and $v^{\mathscr{M}}_1(\beta_n) =1$. Thus we only care about the first coordinates. Because the weak implication still expresses the order of the first coordinates ($(a_1,a_2)\rightarrowtriangle (b_1,b_2)$ is designated if and only if $a_1\leq b_1$), we only need to check that the first coordinates of formulas $\gamma_n,\sigma_n$ express semantically the appropriate sums from Definition~\ref{def:generalbelieffunction} and Definition~\ref{def:generalplausibilityfunction}. But this can be done exactly as in proof of Lemma~\ref{lem:bfces_Luk_rightarrow}, using the terms $\mathtt{t}_n$ for $\gamma_n$, and straightforward analogues of terms $\mathtt{s}_n$ for $\sigma_n$. This follows from the fact, that the $n$-th axiom of the general belief function $\bel^-$ on $\LBD^{op}$ and the $n$-th axiom of the general plausibility function $\pl^+$ on $\LBD$ are of the same shape (not to confuse the two notions however, notice that while $\bel^-$ is antitone on $\LBD$, $\pl^+$ is monotone on $\LBD$).
\end{proof}
\begin{definition}[Axiomatisation of $\Bell^{\N\Luk} $]\label{def:logicBD_MNB_Luk}
The two-layered axiomatisation of logic $\Bell^{\N\Luk} = (\BD,M^\mathbb{N}_b,\Luk^2_{(1,1)^\uparrow}(\rightarrowtriangle))$
\begin{description}
\item[inner logic:] the axiomatisation of $\BD$ of Definition~\ref{def:BDBD*}
\item[modal axioms:] $M_b^\mathbb{N}$: 
\begin{align*}
 \{ \belmod\f &\Rightarrow \belmod\psi,\Pl\f \Rightarrow\Pl\psi\mid \f\vdash_{\BD}\psi \}\\
 \belmod\neg\f &\Leftrightarrow\neg\Pl\f\\
 \{ \alpha_n, \beta_n&\mid n\in\mathbb{N}\}\mbox{ (substitution instances of)}
\end{align*}
\item[outer logic:] the axiomatisation of $\N\Luk$ of Definition~\ref{def:ax_luk_rightarrowtriangle}
\end{description}
A proof of a~formula $\alpha$ from a~set of assumptions $\Gamma$ in the logic $\Bell^{\N\Luk}$ (in short $\Gamma\vdash_{\Bell^{\N\Luk}}\alpha$), where $\Gamma,\alpha$ is a~finite set of outer formulas, is a~finite tree which can be parsed as follows: the smallest proofs consist of either (i) a~finite tree labeled by $\BD$ formulas, proving $\f\vdash_{\BD}\psi$, followed by the instance $\belmod\f\Rightarrow \belmod\psi$ or by the instance $\Pl\f\Rightarrow\Pl\psi$ of the first modal axiom, or (ii) a~node labelled by an instance of the remaining two modal axioms or an instance of an axiom of the outer logic in the outer language, or (iii) an assumption $\beta\in\Gamma$ in the outer language. More complex proofs are then formed inductively in the standard manner, by means of syntactically correct applications of the rules of the outer logic to smaller proofs. The root of the tree is to be labelled by $\alpha$. 
\end{definition}
\begin{theorem}[FSSC of $\Bell^{\N\Luk}$] For each $\Gamma,\alpha$ a~finite set of outer formulas, 
 $$
 \Gamma\vdash_{\Bell^{\N\Luk}}\alpha \ \ \mbox{iff} \ \ \Gamma\vDash_{\Bell^{\N\Luk}}\alpha.
 $$
\end{theorem}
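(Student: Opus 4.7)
The plan is to mirror the two-layered completeness argument of Theorem~\ref{th:FSSC_belLuk2}, adapted to the presence of two independent modalities $\belmod$ and $\Pl$. We rely on (i) local finiteness and strong completeness of $\BD$, (ii) finite strong standard completeness of the outer logic $\N\Luk$ (Lemma~\ref{lem:FSSC_Luk_weak}), and (iii) the representation theorems for belief and plausibility functions over De~Morgan algebras (Theorems~\ref{th:complBelAxioms} and~\ref{th:complPlAxioms}).

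For soundness, take any $\DS_\pl$ model $\mathscr{M}=(S,\P(S),\bel,\pl,v^+,v^-)$. The monotonicity axioms for $\belmod$ and $\Pl$ follow from $\BD$-soundness together with the monotonicity of general belief and plausibility functions, applied on both the positive extensions (in $\LTBD$) and the negative extensions (in $\LTBD^{op}$). The duality axiom $\belmod\neg\f \Leftrightarrow \neg\Pl\f$ is immediate, since
\[
v^{\mathscr{M}}(\belmod\neg\f) = (\bel|\f|^-,\pl|\f|^+) = \neg(\pl|\f|^+,\bel|\f|^-) = v^{\mathscr{M}}(\neg\Pl\f).
\]
Soundness of the schemata $\alpha_n$ and $\beta_n$ is Lemma~\ref{lem:bfces_Luk_rightarrowtriangle}, and the outer rules are sound by Lemma~\ref{lem:FSSC_Luk_weak}.

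For completeness, we argue by contraposition. Assume $\Gamma \nvdash_{\Bell^{\N\Luk}} \alpha$. Let $\Sf_{\Gamma,\alpha}$ be the finite set of $\BD$-subformulas of modal atoms in $\Gamma\cup\{\alpha\}$, and let $\Phi_{\Gamma,\alpha}$ be its closure under $\dashv\vdash_\BD$ and under $\neg$, which is finite by local finiteness of $\BD$. Let $\Sigma$ be the finite set of all instances of the modal axioms (monotonicity, duality, and $\alpha_n,\beta_n$) whose inner formulas come from $\Phi_{\Gamma,\alpha}$. Then $\Gamma,\Sigma \nvdash_{\Bell^{\N\Luk}} \alpha$, and by Lemma~\ref{lem:FSSC_Luk_weak} there is a valuation $v:\{\belmod\f,\Pl\f\mid \f\in\Phi_{\Gamma,\alpha}\}\to [0,1]^{\Join}_{\N\Luk}$ with $v_1[\Gamma\cup\Sigma]\subseteq\{1\}$ while $v_1(\alpha)<1$.

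Define $\bel^+(\f):= v_1(\belmod\f)$ and $\pl^+(\f):= v_1(\Pl\f)$ on $\Phi_{\Gamma,\alpha}$. Validity of the monotonicity and $\alpha_n$ instances in $\Sigma$ ensures that $\bel^+$ is a general belief function on the finite $\BD$-formula algebra generated by $\Phi_{\Gamma,\alpha}$; dually, the $\beta_n$ instances yield that $\pl^+$ is a general plausibility function. Apply Theorem~\ref{th:complBelAxioms} to $\bel^+$ and Theorem~\ref{th:complPlAxioms} to $\pl^+$ to obtain a belief function $\bel'$ and a plausibility function $\pl'$ on $\P(\P(\LIT))$ over the canonical model $\mathscr{M}_c$ of Definition~\ref{DEF:CanModel}, which together form a $\DS_\pl$ model $\mathscr{M}$. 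The duality instances in $\Sigma$ give $v_2(\belmod\f) = v_1(\Pl\neg\f) = \pl^+(\neg\f) = \pl'(|\f|^-)$ and symmetrically $v_2(\Pl\f) = \bel'(|\f|^-)$, so that $v^{\mathscr{M}}(\belmod\f) = v(\belmod\f)$ and $v^{\mathscr{M}}(\Pl\f) = v(\Pl\f)$ for every $\f\in\Phi_{\Gamma,\alpha}$. Consequently $v^{\mathscr{M}}_1(\alpha)=v_1(\alpha)<1$, refuting $\alpha$ in $\mathscr{M}$. The main obstacle is ensuring coherence between the two separately extended functions $\bel'$ and $\pl'$ on the negative coordinates of both modalities; closing $\Phi_{\Gamma,\alpha}$ under $\neg$ and including the corresponding duality-axiom instances in $\Sigma$ is precisely what forces the valuation produced by the outer logic to align with the semantics of the constructed $\DS_\pl$ model.
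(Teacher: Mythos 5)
Your proposal is correct and follows essentially the same route as the paper's proof: soundness via Lemma~\ref{lem:bfces_Luk_rightarrowtriangle} and the semantic computation of the duality axiom, and completeness via local finiteness of $\BD$, the finite set $\Sigma$ of modal-axiom instances, FSSC of $\N\Luk$ (Lemma~\ref{lem:FSSC_Luk_weak}), and the representation Theorems~\ref{th:complBelAxioms} and~\ref{th:complPlAxioms} applied over the canonical model. Your explicit closure of $\Phi_{\Gamma,\alpha}$ under $\neg$ and the derivation of the second coordinates from the duality instances is a slightly more careful spelling-out of the coherence step that the paper handles by directly reading off all four functions $\bel^{\pm},\pl^{\pm}$ from the refuting valuation, but it is the same argument.
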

\begin{proof}
The soundness and completeness proof is almost the same as before. We will now use the following facts: (i) the inner logic $\BD$ is strongly sound and complete w.r.t.\ its double-valuation semantics, and it is locally finite, (ii) the outer logic $\N\Luk$ is finitely strongly sound and complete w.r.t.\ the outer algebra $[0,1]^{\Join}_{\N\Luk}$, and (iii) the modal axioms are a~sound and complete axiomatisation of belief and plausibility functions over $\DS_\pl$  models.

\medskip\par\noindent
\textit{Soundness}: Given a~proof $\Gamma\vdash_{\Bel^{N\Luk}}\alpha$ and any $\DS_\pl$ model $\mathscr{M} = (S, v^+, v^-, \bel,\pl)$, we reason inductively: (i) we know that any proof of $\f\vdash_{\BD}\psi$ is sound, therefore it entails that $|\f|^+\subseteq |\psi|^+$ and $|\psi|^-\subseteq |\f|^-$ in any $\BD$ model, and thus in $\mathscr{M}$. This entails that $\bel(|\f|^+)\leq\bel(|\psi|^+)$ and $\bel(|\psi|^+)\leq\bel(|\f|^+)$, and $\pl(|\f|^+)\leq\pl(|\psi|^+)$ and $\pl(|\psi|^-)\leq\pl(|\f|^-)$, and either of the following instances $\belmod\f\Rightarrow \belmod\psi$ and $\Pl\f\Rightarrow\Pl\psi$ of the first modal axiom is justified, as it results in a~value $(1,x)$ in the outer algebra. The rest of the proof follows from the soundness of $\N\Luk$, providing the rest of the modal axioms are sound. Assume an instance $\belmod\neg\f \Leftrightarrow \neg\Pl\f$ of the second modal axiom: its left-hand side $\belmod\neg\f$ amounts to $(\bel(|\neg\f|^+),\pl(|\neg\f|^-)) = (\bel(|\f|^-),\pl(|\f|^+))$ which matches exactly the semantics of $\neg\Pl\f$ as desired. The soundness of the axioms $\alpha_n$, and $\beta_n$ has been demonstrated in Lemma~\ref{lem:bfces_Luk_rightarrowtriangle}.

\medskip\par\noindent
\textit{Completeness}: Assume that $\Gamma\nvdash_{\Bel^{\N\Luk}}\alpha$. We will show that there is $\DS_\pl$ model validating $\Gamma$ while refuting $\alpha$. We will again use the canonical model $\mathscr{M}_c = (\P(\LIT),v_c^+,v_c^-)$ of Definition~\ref{DEF:CanModel} to do so. Consider the finite set of $\BD$ subformulas of modal atoms of $\Gamma,\Phi$ to be given by
$$
\Sf_{\Gamma,\alpha} \coloneqq \{\f\mid\exists\mathsf{M}\!\in\!\{\belmod,\Pl\}~\exists \f':\mathsf{M}\f'\mbox{ a modal atom of }\Gamma\cup\{\alpha\}\mbox{ and }\f \mbox{ a subformula of }\f' \}.
$$
Because $\BD$ is locally finite, also the set 
$$
\Phi_{\Gamma,\alpha} \coloneqq \{\psi\mid \exists \f\in\Sf_{\Gamma,\alpha} \ \mbox{ and }\ \f\dashv\vdash_{\BD} \psi)\}
$$
is finite. Now define the finite set $\Sigma$ of outer formulas to contain all instances of the modal axioms for formulas from $\Phi_{\Gamma,\alpha}$.

Given that $\Gamma\nvdash_{\Bel^{\N\Luk}}\alpha$, it is obviously also the case that $\Gamma,\Sigma\nvdash_{\Bel^{\N\Luk}}\alpha$.
Then we know, by FSSC of $\N\Luk$, that there is a~refuting valuation 
$$
v: \{\belmod\f\mid \f\in \Phi_{\Gamma,\alpha}\} \to [0,1]^{\Join}_{\N\Luk},
$$
i.e. $v_1[\Gamma\cup\Sigma]\subseteq \{1\}$ while $v_1(\alpha)<1$. Because $v_1[\Sigma]\subseteq \{1\}$, and $\Sigma$ in fact fully describes behaviour of a~belief or plausibility function on the $\BD$ formula algebra generated by propositional atoms from the set $\Phi_{\Gamma,\alpha}$, we can define the belief functions $\bel^+,\bel^-$ and the plausibility functions $\pl^+,\pl^-$ of a~$\BD$ formula $\f\in\Phi_{\Gamma,\alpha}$ to be
\begin{align*}
\bel^+(\f) = v_1(\belmod\f)&&\bel^-(\f) = v_2(\Pl\f)&&\pl^+(\f)=v_1(\Pl\f)&&\pl^-(\f) = v_2(\belmod\f).
\end{align*}

It is then a~routine to check that this assignment indeed results in belief and plausibility functions on the respective formula algebra (or its dual). 
 
Therefore, by Theorem~\ref{th:complBelAxioms} and Theorem~\ref{th:complPlAxioms}, we know that $\bel$ can be extended to a~belief function $\bel'$, and $\pl$ to $\pl'$, on the powerset of the canonical $\BD$ model 
$(\P(\LIT),v^+,v^-,\bel',\pl')$ with $\LIT$ being the finite set of literals from the set $\Phi_{\Gamma,\alpha}$, i.e. $\bel^+(\f) = \bel'(|\f|^+)$, $\bel^-(\f) = \bel'(|\f|^-)$, $\pl^+(\f) = \pl'(|\f|^+)$ and $\pl^-(\f) = \pl'(|\f|^-)$ for each formula from the set $\Phi_{\Gamma,\alpha}$.
This concludes the proof.
\end{proof}
\begin{remark}[Belief below plausibility]\label{rem:addingaxiomB_Pl}
When dealing with $\DS_\pl$ models in the context of Subsection~\ref{sssec:bel-pl-modalities:(bel+,bel-):bel<pl} where $\bel(X) \leq \pl(X)$ for any $X\subseteq \P(S)$, we simply extend the logic $\Bell^{\N\Luk}$ with additional axioms 
$$
\belmod\f \rightarrowtriangle\Pl \f.
$$
Note that $\belmod\f \rightarrowtriangle\Pl\f$ proves $\neg\Pl\f \rightarrowtriangle \neg\belmod\f$ because of the $\belmod\neg$-axiom. The additional axioms are sound, whenever we ensure that $\bel(|\f|^+)\leq \pl(|\f|^+)$ in the underlying $\DS_\pl$ models.
\end{remark}

\bigskip\par\noindent
We conclude this whole subsection with some general remarks concerning the two-layered \L{}ukasiewicz-based logics presented above.
\begin{remark}[Adding $\bot$ and $\top$ to the inner logic]\label{rem:addingconstantsBD}
Adding the constants $\bot$ and $\top$ to $\BD$ results in the logic $\BD^*$ (see Subsection~\ref{ssec:FDE} for its axiomatisation). Using $\BD^*$ as the inner logic, however, affects how the modal part of the logic is to be completely axiomatised because we now have formulas whose measure is always $0$ (or $1$ respectively). This is simply achieved by extending the modal part with the modal axiom $\Prob\top$ (respectively, $\belmod\top$ $\Pl\top$). One can think of it as a~way to avoid a~necessitation modal rule, which would only make sense to add if we allowed inner formulas as assumptions in the consequence relation of the two-layered logics (see Remark~\ref{rem:mixedconsequence}).
\end{remark}
\begin{remark}\label{rem:normalityetc}
As one can see from Remark~\ref{rem:addingconstantsBD}, $\Prob$, $\belmod$, and $\Pl$ are not \emph{normal} modalities (unless the inner logic includes constants) in the sense that no formula of the form $\mathsf{M}\phi$ with $\phi\in\LBD$ and $\mathsf{M}\in\{\Prob,\belmod,\Pl\}$ is valid. Note, however, that all three modalities are \emph{regular}: if $\phi\vdash\chi$ is $\BD$- or $\BD^*$-valid, then $\mathsf{M}\phi\rightsquigarrow\mathsf{M}\chi$ is valid as well for $\rightsquigarrow\in\{\rightarrow,\rightarrowtriangle,\Rightarrow\}$.
\end{remark}
\begin{remark}[Allowing inner assumptions]\label{rem:mixedconsequence}
If we wish to allow inner formulas as assumptions in the consequence relation of the two-layered logics (in line with the abstract approach of \cite{CintulaNoguera2014,BaldiCintulaNoguera2020}), we would consider a~mixed-consequence relation of the form
$$
\Phi,\Gamma\vdash\alpha,
$$
where $\Phi$ is a~finite set of inner formulas, and $\Gamma,\alpha$ is a~finite set of outer formulas. The semantics is now given as follows: for each model $\mathscr{M} = (S,v^+,v^-,f)$ validating $\Phi$ (in the sense of $\BD$ semantics), if the $f$-derived valuation $v^\mathscr{M}$ validates $\Gamma$ in the outer algebra, it also validates~$\alpha$. For the axiomatisation, we need to update it with the following (obviously sound) \emph{modal rules}, i.e. rules whose assumptions are inner formulas and whose conclusion is an outer formula (replacing now redundant monotonicity modal axioms):
\begin{align*}
   \f  &\lhd \belmod\f \\
   \{ \Phi &\lhd \belmod\f\rightarrow \belmod\psi\mid \Phi,\f\vdash_{\BD} \psi \}
\end{align*}
and similarly for the other modalities. The notion of a~two-layered proof needs to be adapted as well in an expected manner: upper parts of a~proof tree may consist of a~$\BD$ proof followed by an application of a~modal rule. The completeness proof becomes more subtle now as well: basically, to refute $\Phi,\Gamma\vdash\alpha$, we need to construct an appropriate measure on the canonical $\BD$ model for the finite set $\LIT$ of literals from $\Phi,\Gamma\vdash\alpha$, relativized to $\Phi$ (i.e., $(\P(\LIT),v^+_c,v^-_c)$, relativized to the set $|\bigwedge\Phi|^+$). This takes into account that we need to measure $\BD$ formulas up to equi-provability from $\Phi$\footnote{It is worth mentioning that the relation $\f\equiv_\Phi \psi$ given as $\Phi,\f\vdash_{\BD}\psi$ and $\Phi,\psi\vdash_{\BD}\f$ is in general not a~congruence w.r.t.\ $\neg$. The logic $\BD$ is only weakly Fregean.}. 
\end{remark}
\subsection{Translations}
Here, we follow~\cite{BaldiCintulaNoguera2020} and provide a~translation of the two-layered logics with inequalities given in section~\ref{ssec:2layerineq} into $\Prob^{\Luk^2}_\triangle$ and $\Bell^{\Luk^2}_\triangle$ and back. In fact, the original translations work just fine.
\subsubsection{Translations to $\Prob^{\Luk^2}_\triangle$}
\begin{definition}[Functional counterparts]\label{def:f-counterpart}
Let $l_1$, \ldots, $l_n$ be literals, and $\alpha(l_1,\ldots,l_n)$ be an $\Luk^2_\triangletop$ formula in NNF. A~\emph{functional counterpart} of $\alpha$ (denoted $f_\alpha$) is a~function $f_\alpha:\mathbb{R}^n\rightarrow\mathbb{R}$ s.t.\ for any $v$,
\begin{align*}
v_1(\alpha)&=f_\alpha(v_1(l_1),\ldots,v_1(l_n))\\
v_2(\alpha)&=f_\alpha(v_2(l_1),\ldots,v_2(l_n))
\end{align*}
\end{definition}
\begin{definition}[From weight formulas to \L{}ukasiewicz two-layer formulas]\label{def:weighttoLuk2}
Let $$\underbrace{\sum\limits_{i=1}^{n}a_i\cdot\mathtt{w}^+(\phi_i)}_{t}\geqslant c$$ be a~PWF that does not contain $\mathtt{w}^-$. Let, further, for any $f:\mathbb{R}^n\rightarrow\mathbb{R}$, $f^\sharp=\min(1,\max(f,0))$.

Now, for
$$f(x_1,\ldots,x_n)=\sum\limits_{i=1}^{n}a_i\cdot\mathtt{w}^\pm(\phi_i)-c+1$$
let $\beta(p_1,\ldots,p_n)$ be an $\Luk$-formula s.t.\ $f_\beta=f^\sharp$. Thus, we can define the $^\bullet$-translation as follows\footnote{Recall that we manipulate with weight formulas classically.}.
\begin{align*}
(t\geqslant c)^\bullet&=\triangletop\beta(\Prob\phi_1,\ldots,\Prob\phi_n)\\
\bot^\bullet&=\bot\\
\alpha\supset\alpha'&=\alpha^\bullet\rightarrow_\Luk\alpha'^\bullet
\end{align*}
\end{definition}
\begin{remark}
Observe that we can actually avoid using $\neg$ if we use $\sim$ or $\bot$ as a~primitive $\Luk^2$-connective. This will give us an embedding into $\mathsf{Pr_{lin}}$ from~\cite{BaldiCintulaNoguera2020}. Note, furthermore, that because of $\triangletop$, all translated formulas have either value $(1,0)$, or $(0,1)$. Again, we do not actually use two-dimensionality. Furthermore, it is evident that all $\Luk^2_{(x,y)}$ entailments are the same and that $\neg$ and $\sim$ are identical if all atomic propositions are prenexed by $\triangletop$.
\end{remark}
\begin{proposition}\label{prop:weighttoLuk2}
Let $\Xi\cup\{\alpha\}$ be a~set of weight formulas. Then, $\Xi\vDash_{\mathtt{m}\mathsf{BD}}\alpha$ iff $\Xi^\bullet\vDash_{\Luk^2}\alpha^\bullet$.
\end{proposition}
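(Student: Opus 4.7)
The plan is to establish the biconditional by reducing both directions to an atomic-level correspondence: that a primitive weight formula $t\geq c$ is satisfied by a probabilistic $\BD$ model $\mathscr{M}$ iff its translation $\triangletop\beta(\Prob\phi_1,\ldots,\Prob\phi_n)$ takes value $(1,0)$ under the derived outer valuation $v^{\mathscr{M}}$. Once this is proved, the extension to Boolean combinations is essentially free because $\triangletop$ forces every translated atom to take a value in the two-element set $\{(1,0),(0,1)\}$, on which the connectives $\bot$ and $\rightarrow$ of $\Luk^2$ behave exactly like classical falsum and implication. Hence the $^\bullet$-translation commutes with $\bot$ and $\supset$ as defined.

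For the atomic-level step, I would first invoke McNaughton's theorem (together with closure of the \L{}ukasiewicz formula set under clipping) to justify the existence of the formula $\beta$ whose functional counterpart is $f^\sharp$: since $f(x_1,\ldots,x_n)=\sum a_i x_i - c + 1$ is affine with integer coefficients, $f^\sharp = \min(1,\max(0,f))$ is a McNaughton function on $[0,1]^n$, hence representable by some $\Luk$-formula $\beta$. Applied coordinatewise in $\Luk^2$, this yields $v_1(\beta)=f^\sharp(v_1(\Prob\phi_1),\ldots,v_1(\Prob\phi_n))$, so that $v_1(\beta)=1$ iff $\sum a_i\,\pr(|\phi_i|^+)\geq c$, which is exactly what the weight formula asserts. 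Applying $\triangletop$ then detects precisely this situation, delivering $(1,0)$ when the inequality holds and $(0,1)$ otherwise.

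To close the proposition, I would use the two-layered completeness machinery. For the ``only if'' direction, assume $\Xi\vDash_{\mathtt{m}\BD}\alpha$ and let $v$ be an outer valuation (in $\Prob^{\Luk^2}_\triangle$) designating $\Xi^\bullet$; by the completeness theorem for $\Prob^{\Luk^2}_\triangle$ (Theorem~\ref{th:FSSC_probLuk2}) combined with the completeness of non-standard probabilities (Theorem~\ref{th:completeness_probablities}), $v$ is realised by some probabilistic $\BD$ model $\mathscr{M}$ with $v(\Prob\phi)=(\pr(|\phi|^+),\pr(|\phi|^-))$; by the atomic correspondence and Boolean-compositionality just established, $\mathscr{M}\models\Xi$, hence $\mathscr{M}\models\alpha$, hence $v(\alpha^\bullet)=(1,0)$. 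The ``if'' direction is the dual: any probabilistic $\BD$ model satisfying $\Xi$ induces via its derived outer valuation a designator of $\Xi^\bullet$, so by assumption a designator of $\alpha^\bullet$, and thus satisfies $\alpha$.

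The main obstacle is the fact that several binary $\Luk^2$ connectives (notably $\rightarrow$, $\wedge$, $\&$) do not act uniformly on the two coordinates, so the straightforward coordinatewise reading of Definition~\ref{def:f-counterpart} requires $\beta$ to be constructed from operations that \emph{do} act uniformly (for instance $\sim$ and suitable combinations of $\oplus$, $\ominus$ arranged so that both coordinates reduce to the same McNaughton function). Verifying that the McNaughton representation can indeed be carried out inside this restricted fragment, and checking that the second-coordinate evaluation agrees with $f^\sharp$ whenever the first coordinate equals $1$ (so that $\triangletop\beta=(1,0)$ holds precisely when the weight formula does), is where the real work lies; everything else is routine propagation through Boolean connectives and an application of the completeness theorems already proved in Sections~\ref{ssec:probabilities} and~\ref{ssec:logicforprobabilities}.
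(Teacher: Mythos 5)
Your overall architecture --- an atomic-level correspondence for primitive weight formulas, followed by free propagation through the Boolean connectives because $\triangletop$ collapses every translated atom into $\{(1,0),(0,1)\}$ --- is the natural one, and the paper itself supplies no proof to compare against (it only asserts that the translations of \cite{BaldiCintulaNoguera2020} carry over). But the atomic step, as you state it, does not go through, and the worry you postpone to the last paragraph (``checking that the second-coordinate evaluation agrees with $f^\sharp$ whenever the first coordinate equals $1$'') is not a verification that can be completed: it fails. The operator $\triangletop$ detects the value $(1,0)$, so $\triangletop\beta(\Prob\phi_1,\ldots,\Prob\phi_n)$ is designated iff $v_1(\beta)=1$ \emph{and} $v_2(\beta)=0$. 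The weight formula $\sum_i a_i\cdot\mathtt{w}^+(\phi_i)\geqslant c$ constrains only the positive probabilities $\pr(|\phi_i|^+)$, i.e.\ only the first coordinates of the atoms $\Prob\phi_i$, whereas $v_2(\beta)$ is computed from the independent quantities $\pr(|\phi_i|^-)$. Concretely, for $\mathtt{w}^+(p)\geqslant 1$ one gets $\beta=\Prob p$, and $\triangletop\Prob p$ is designated only when $\pr(|p|^+)=1$ \emph{and} $\pr(|p|^-)=0$; a model with $\pr(|p|^+)=1$ and $\pr(|p|^-)=\tfrac{1}{2}$ satisfies the weight formula but not its translation. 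This already breaks the biconditional: taking $\Xi=\{\mathtt{w}^+(p)\geqslant 1\}$ and $\alpha=[(-1)\cdot\mathtt{w}^+(\neg p)\geqslant 0]$, both $\Xi^\bullet$ and $\alpha^\bullet$ are designated exactly when $v(\Prob p)=(1,0)$, so $\Xi^\bullet\vDash\alpha^\bullet$, while $\Xi\nvDash_{\mathtt{m}\mathsf{BD}}\alpha$.

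Moreover, no cleverer choice of $\beta$ can repair this inside $\Luk^2$ with designated value $(1,0)$: by Remark~\ref{rem:luksquare_conflation} the conflation ${\sim}\neg$ commutes with all connectives (including $\neg$ and $\triangle$) and fixes $(1,0)$, hence it is an automorphism of the matrix $([0,1]^{\Join}_\Luk,\{(1,0)\})$, so the designatedness of \emph{any} outer formula is invariant under replacing each $v(\Prob\phi_i)=(a,b)$ by $(1-b,1-a)$ --- and the condition $\sum_i a_i\cdot\pr(|\phi_i|^+)\geqslant c$ is not invariant under that symmetry. So the obstacle you flag as ``where the real work lies'' is precisely where the construction breaks; a correct proof has to change something, e.g.\ work with the $(1,1)^\uparrow$-designated variant of the logic and use $\triangle$, whose designatedness tests only the first coordinate, or otherwise restore conflation-symmetry at the atomic level. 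A secondary point: your appeal to the completeness theorems in the ``only if'' direction is either unnecessary (if $\vDash_{\Luk^2}$ ranges over model-induced valuations, the whole argument is purely semantic once the atomic correspondence holds) or insufficient (an arbitrary $\Luk^2$ valuation designating $\Xi^\bullet$ need not satisfy the modal axioms, so FSSC of $\Prob^{\Luk^2}_\triangle$ does not realise it by a probabilistic $\BD$ model).
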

We define our translation on NNFs of $\Luk^2$ formulas. Note also that $\neg\triangletop\phi\leftrightarrow_\Luk{\sim}\triangletop\phi$. Thus, we can ignore the Belnap--Dunn negation.
\begin{definition}[From \L{}ukasiewicz two-layer formulas to weight formulas]\label{def:Luk2toweight}
Let $\alpha$ be an NNF of an $\Luk^2$ probabilistic formula and let $\alpha^-$ be the result of replacing each $\Prob\phi_i$ with a~$p_i$. Since $f_{\alpha^-}=\max\limits_{k\in K}\min\limits_{j\in J_k}t_{k,j}$ for some $K$ and $J$ where for every $k\in K$ and $j_k\in J$ there is a~linear function $f_{k,j}$ with integer coefficients and
\[t_{k,j}=f^\sharp_{k,j}\text{ or }t_{k,j}=1-\triangletop(1-f^\sharp_{f,j})\]
it is possible to define the $^\circ$-translation of $\alpha$ as follows.
\[\alpha^\circ=\bigvee\limits_{k\in K}\bigwedge\limits_{j\in J_k}\alpha_{k,j}\]
Here for $f_{k,j}=\sum\limits_{i=1}^{n}a_i\cdot x_i+c$, we have
\begin{align*}
\alpha_{k,j}&=
\begin{cases}
\sum\limits_{i=1}^{n}a_i\cdot\mathtt{w}^+(\phi_i)\geqslant1-c&\text{ if }t_{k,j}=f^\sharp_{k,j}\\
\sum\limits_{i=1}^{n}a_i\cdot\mathtt{w}^+(\phi_i)<-c&\text{ otherwise}
\end{cases}
\end{align*}
\end{definition}
\begin{proposition}\label{prop:Luk2toweight}
Let $\Xi\cup\{\alpha\}$ be a~set of $\Luk^2$ two-layer formulas. Then, $\Xi\!\vDash_{\Luk^2}\!\alpha$ iff $\Xi^\circ\!\vDash_{\mathtt{m}\mathsf{BD}}\!\alpha^\circ$.
\end{proposition}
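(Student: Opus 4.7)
My plan is to adapt the standard semantic strategy used in Baldi, Cintula, and Noguera (2020) for the analogous classical translation, exploiting finite strong standard completeness on both sides together with the McNaughton representation that is implicit in Definition~\ref{def:Luk2toweight}. The overall argument reduces the iff to a single correspondence between individual probabilistic $\BD$ models and then transfers through the two completeness theorems already established.

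First, I would appeal to the FSSC of $\Prob^{\Luk^2}_\triangle$ (Theorem~\ref{th:FSSC_probLuk2}) and to Theorem~\ref{theorem:wFDEcompleteness}: each side of the iff is equivalent to the non-existence of a probabilistic $\BD$ model $\mathscr{M}=(W,v^+,v^-,\mu)$ refuting the given consequence, where ``refuting'' is interpreted through $v^\mathscr{M}$ on the one side and through $\pr_\mu$ on the other. Since the underlying class of models is the same in both cases, it is enough to establish the following key claim: for every $\Luk^2$ probabilistic formula $\beta$ in NNF and every probabilistic $\BD$ model $\mathscr{M}$, we have $v^\mathscr{M}(\beta)=(1,0)$ iff $\mathscr{M}\models\beta^\circ$. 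Given this, the iff of the proposition follows by applying the claim both to each $\xi\in\Xi$ (to show $v^\mathscr{M}[\Xi]\subseteq\{(1,0)\}$ iff $\mathscr{M}\models\Xi^\circ$) and to $\alpha$.

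To prove the key claim I would unpack the McNaughton-style normal form of Definition~\ref{def:Luk2toweight}. Replacing each modal atom $\Prob\phi_i$ in $\beta$ by a fresh variable $p_i$ yields a $\Luk\triangle$-formula $\beta^-$ whose standard interpretation is the piecewise-linear function $f_{\beta^-}=\max_{k\in K}\min_{j\in J_k}t_{k,j}$ with $t_{k,j}$ of one of the two shapes $f^\sharp_{k,j}$ or $1-\triangletop(1-f^\sharp_{k,j})$. A direct calculation then shows that $t_{k,j}(v_1(\Prob\phi_1),\dots,v_1(\Prob\phi_n))=1$ iff $\sum_i a_i\cdot\mu(|\phi_i|^+)\geq 1-c$ in the first case and iff $\sum_i a_i\cdot\mu(|\phi_i|^+)$ lies strictly on the prescribed side of $-c$ in the second case, matching the two shapes of $\alpha_{k,j}$. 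The $\bigvee_k\bigwedge_j$ structure of $\alpha^\circ$ now directly reflects the $\max$--$\min$ structure of $f_{\beta^-}$, so $\mathscr{M}\models\beta^\circ$ iff $v_1^\mathscr{M}(\beta)=1$. The matching on the second coordinate uses the remark preceding Definition~\ref{def:Luk2toweight}: after NNF every surviving negation is absorbed into the $\BD$-arguments via $\Prob\neg\phi\leftrightarrow\neg\Prob\phi$ and axiom $W2$ gives $\mathtt{w}^-(\phi)=\mathtt{w}^+(\neg\phi)$, so tracking only $\mathtt{w}^+$ suffices; moreover the $\triangletop$-structure of the atoms produced by the translation forces values to lie in $\{(1,0),(0,1)\}$, whence $v_1^\mathscr{M}(\beta)=1$ is equivalent to $v^\mathscr{M}(\beta)=(1,0)$.

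The hardest part is not any single step but the careful bookkeeping in the second paragraph above: verifying that the McNaughton-type decomposition really produces the two shapes of $t_{k,j}$ listed (and not more), and checking that in the $1-\triangletop(1-f^\sharp_{k,j})$ case the strict vs.\ weak inequality is placed correctly with the right sign of $c$. Once this case analysis is in place, the rest is a routine verification that the $\bigvee_k\bigwedge_j$ disjunctive form of $\alpha^\circ$ and the Boolean reasoning available in $\mathtt{w}\BD$ faithfully encode the $\max$--$\min$ structure of the McNaughton function computed by $\beta$.
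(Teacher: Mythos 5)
The paper itself offers no proof of Proposition~\ref{prop:Luk2toweight}: it only remarks that the ``original translations'' of~\cite{BaldiCintulaNoguera2020} ``work just fine''. So your overall strategy --- reduce the statement to a model-by-model correspondence between $v^{\mathscr{M}}(\beta)$ and satisfaction of $\beta^\circ$ in the same probabilistic $\BD$ model, and verify that correspondence by unpacking the $\max$--$\min$ McNaughton form of $\beta^-$ --- is exactly the intended route. (The detour through the two completeness theorems is unnecessary, since both consequence relations are already defined semantically over the same class of models, but it is harmless.)

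There is, however, a genuine gap in the backward direction of your key claim, i.e.\ in the step from $\mathscr{M}\models\beta^\circ$ to $v^{\mathscr{M}}(\beta)=(1,0)$. The translation $\beta^\circ$ encodes only the condition $v_1^{\mathscr{M}}(\beta)=1$: all of its weight terms speak about $\mathtt{w}^+(\phi_i)=\mu(|\phi_i|^+)=v_1^{\mathscr{M}}(\Prob\phi_i)$ and say nothing about the second coordinates $\mu(|\phi_i|^-)$. Your justification for upgrading $v_1^{\mathscr{M}}(\beta)=1$ to $v^{\mathscr{M}}(\beta)=(1,0)$ --- that the $\triangletop$-structure of the atoms produced by the translation forces values into $\{(1,0),(0,1)\}$ --- is a property of the \emph{image of the $^\bullet$-translation} of Definition~\ref{def:weighttoLuk2}, not of the arbitrary $\Luk^2$ two-layer formulas that are the \emph{input} of $^\circ$, whose atoms are bare $\Prob\phi_i$. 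Concretely, for $\beta=\Prob p$ one gets $\beta^\circ=[\mathtt{w}^+(p)\geqslant 1]$, which is satisfied in a one-state model with $w\vDash^+p$, $w\vDash^-p$ and $\mu(\{w\})=1$, although there $v^{\mathscr{M}}(\Prob p)=(1,1)\neq(1,0)$. Both directions of the proposition need this backward implication (for the premises in one direction, for the conclusion in the other), so the argument does not go through as written: for instance $\Prob p\vDash_{\Luk^2}{\sim}\neg\Prob p$ holds because conflation fixes $(1,0)$, while $[\mathtt{w}^+(p)\geqslant1]\nvDash_{\mathtt{m}\mathsf{BD}}[\mathtt{w}^+(\neg p)\leqslant0]$ in the model above. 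To close the gap you must also track the second coordinate --- e.g.\ by additionally translating the conflation ${\sim}\neg\beta$ (whose first coordinate is $1-v_2^{\mathscr{M}}(\beta)$) and conjoining the result, in the spirit of the $\boxdot\Gamma$ device of Lemma~\ref{lem:luk_rightarrow_luk_delta} --- or argue explicitly for a weaker designated set. A related point to watch: a single functional counterpart in the sense of Definition~\ref{def:f-counterpart} computing \emph{both} coordinates does not exist for general NNF formulas, because the second coordinates of $\rightarrow$, $\wedge$ and $\triangle$ in $[0,1]^{\Join}_\Luk$ are computed by the dual operations.
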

\subsubsection{Translations to $\Bell^{\Luk^2}_\triangle$}
\begin{definition}[Language and semantics of $\langle\mathsf{BD},B,\LsquaretopDelta\rangle$ for beliefs]\label{def:bBDBLuk2Deltalanguage}
The language is as follows. Let $\phi\in\LBD$. The set of (upper-layer) formulas is defined via the following grammar.
\[\alpha\coloneqq \Bel\phi\mid\neg\alpha\mid\alpha\rightarrow_\Luk\alpha\mid\triangletop\alpha\]
$\bot$ is defined as $\neg(\beta\rightarrow_\Luk\beta)$. All other connectives --- $\sim$, $\wedge$, $\vee$, $\odot$, and $\oplus$ --- can be defined as usual.

The semantics is given over De Morgan algebras with belief functions: $v_1(\Bel\phi)=\bel(v(\phi))$; $v_2(\Bel\phi)=\bel(v(\neg\phi))$. The connectives are defined in the expected fashion.
\end{definition}
\begin{definition}[From belief formulas to \L{}ukasiewicz modal formulas]\label{def:belieftoLuk2}
Let $$\underbrace{\sum\limits_{i=1}^{n}a_i\cdot\mathtt{b}^+(\phi_i)}_{t}\geqslant c$$ be a~PBF that does not contain $\mathtt{b}^-$. Let, further, for any $f:\mathbb{R}^n\rightarrow\mathbb{R}$, $f^\sharp=\min(1,\max(f,0))$.

Now, for
$$f(x_1,\ldots,x_n)=\sum\limits_{i=1}^{n}a_i\cdot\mathtt{w}^\pm(\phi_i)-c+1$$
let $\beta(p_1,\ldots,p_n)$ be an $\Luk$-formula s.t.\ $f_\beta=f^\sharp$ (recall definition~\ref{def:f-counterpart}). Thus, we can define the $^\bullet$-translation as follows\footnote{Recall that we manipulate with weight formulas classically.}.
\begin{align*}
(t\geqslant c)^\bullet&=\triangletop\beta(\belmod\phi_1,\ldots,\belmod\phi_n)\\
\bot^\bullet&=\bot\\
\alpha\supset\alpha'&=\alpha^\bullet\rightarrow_\Luk\alpha'^\bullet
\end{align*}
\end{definition}
\begin{proposition}\label{prop:belieftoLuk2}
Let $\Xi\cup\{\alpha\}$ be a~set of belief formulas. Then, $\Xi\vDash_{\mathtt{b}\mathsf{BD}}\alpha$ iff $\Xi^\bullet\vDash_{\Luk^2}\alpha^\bullet$.
\end{proposition}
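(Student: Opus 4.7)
The plan is to adapt the proof strategy of Proposition~\ref{prop:weighttoLuk2} to belief functions, exploiting the structural parallel between the calculi $\mathtt{w}\BD$ and $\mathtt{b}\BD$ and between probabilistic and $\DS$ models. I first normalise belief formulas: by $\mathsf{ineq}$ (so that the negation of a PBF is provably equivalent to a PBF, analogously to identity~\eqref{eq:-toneg}) and classical propositional reasoning, I rewrite every $\gamma\in\Xi\cup\{\alpha\}$ as a DNF whose clauses are conjunctions of PBFs. Axiom $B2$ of Definition~\ref{def:FDEbeliefcalculus} then reduces these further to PBFs containing only $\mathtt{b}^+$. Since a DNF is satisfiable iff one of its clauses is, it suffices to verify the equivalence on conjunctions of $\mathtt{b}^+$-PBFs.

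Next, for a single $\mathtt{b}^+$-PBF $\pi=[\sum a_i\cdot\mathtt{b}^+(\phi_i)\geq c]$, the translation gives $\pi^\bullet=\triangletop\beta(\belmod\phi_1,\ldots,\belmod\phi_n)$, where $\beta$ is an $\Luk$-formula whose functional counterpart equals $f^\sharp$ for $f(x_1,\ldots,x_n)=\sum a_ix_i-c+1$. Since $\beta$ is built purely from \L{}ukasiewicz connectives, its $\Luk^2$-semantics acts coordinate-wise in $[0,1]^{\Join}_\Luk$ in the sense of Definition~\ref{def:f-counterpart}, so that on any $\DS$ model $\mathscr{M}$ one computes $v^\mathscr{M}_1(\beta(\belmod\phi_1,\ldots))=f^\sharp(\bel(|\phi_1|^+),\ldots,\bel(|\phi_n|^+))$, which equals $1$ precisely when $\sum a_i\bel(|\phi_i|^+)\geq c$. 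Combined with the $(1,0)$-detecting behaviour of $\triangletop$ (Remark~\ref{rem:delta(1,0)detecting}), $\pi^\bullet$ takes a classical value in $\{(1,0),(0,1)\}$ encoding the original inequality.

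The final step lifts the atomic equivalence to entailment preservation. A $\mathtt{b}\BD$ model $\langle\mathcal{A},\bel_1,\bel_2\rangle$ refuting $\Xi\vDash\alpha$ extends by Theorem~\ref{th:complBelAxioms} to a canonical $\DS$ model $(\mathcal{P}(\LIT),v^+_c,v^-_c,\bel')$ with $\bel'(|\phi|^+)=\bel_1(\phi)$ and $\bel'(|\phi|^-)=\bel_1(\neg\phi)=\bel_2(\phi)$; conversely, Lemma~\ref{lem:bel+:pl+} yields a $\mathtt{b}\BD$ model from any $\DS$ model. Composing this correspondence with the atomic equivalence and FSSC of $\Luk^2$ (Lemma~\ref{lemma:luk_rightarrow_completeness}) gives the entailment equivalence by contraposition. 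The main obstacle is the matching of the second coordinate of $\beta(\belmod\phi_1,\ldots)$: the $\triangletop$-prefix demands that $v_2(\beta(\belmod\phi_1,\ldots))=0$, which translates via $B2$ to the PBF $\sum a_i\mathtt{b}^+(\neg\phi_i)\leq c-1$. Handling this uniformly requires closing $\Xi\cup\{\alpha\}$ under negation of inner literals and verifying that the canonical-model construction of Theorem~\ref{theorem:bFDEcompleteness} builds a $\mathtt{b}\BD$ countermodel in which \emph{both} coordinates of each translated atom are pinned down, using precisely the identity $\bel(|\phi|^-)=\bel(|\neg\phi|^+)$.
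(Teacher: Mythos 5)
The paper states Proposition~\ref{prop:belieftoLuk2} without proof (the translation section defers to the literature), so your argument can only be judged on its own terms. Your architecture --- normalise to conjunctions of $\mathtt{b}^+$-PBFs via $\mathsf{ineq}$ and $B2$, establish an atomic correspondence through functional counterparts and $\triangletop$, then lift it through Theorem~\ref{th:complBelAxioms}, Lemma~\ref{lem:bel+:pl+} and FSSC --- is the natural adaptation of the probabilistic case, and you are right to single out the second coordinate of the translated atoms as the crux. But your treatment of that crux does not close the gap.

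First, the claim that a $\neg$-free $\Luk$-formula $\beta$ ``acts coordinate-wise in the sense of Definition~\ref{def:f-counterpart}'' is false: in $[0,1]^{\Join}_{\Luk}$ the second coordinate of $\&$ is $\oplus_{\Luk}$, that of $\rightarrow$ is an $\ominus_{\Luk}$, etc., so the second coordinate of $\beta$ is computed by the \emph{De Morgan dual} of its McNaughton function, $v_2(\beta)=1-f_\beta(1-v_2(l_1),\ldots,1-v_2(l_n))$, not by $f_\beta$ itself. Hence the side condition that $\triangletop$ forces is $\sum a_i\,\mathtt{b}^+(\neg\phi_i)\leq\sum a_i-c$ rather than your $\sum a_i\,\mathtt{b}^+(\neg\phi_i)\leq c-1$; on your reading even the translation of the valid PBF $\mathtt{b}^+(p)\geq 0$ would demand $\mathtt{b}^+(\neg p)\leq -1$ and thus never be designated. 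Second, and more seriously, in either form this side condition is a genuine extra constraint on the values $\bel(|\neg\phi_i|^+)$ that the PBF does not impose, so $\pi^\bullet$ is strictly stronger than $\pi$, and ``closing $\Xi\cup\{\alpha\}$ under negation of inner literals'' changes which entailment is being proved rather than establishing the stated one. Concretely, take $\Xi=\{\mathtt{b}^+(p)\geq 1\}$ and $\alpha=[-\mathtt{b}^+(\neg p)\geq 0]$: a $\mathtt{b}\BD$-model with $\bel_1(p)=1$ and $\bel_1(\neg p)=\tfrac{1}{2}$ (e.g.\ mass $\tfrac{1}{2}$ on a state supporting only $p$ and $\tfrac{1}{2}$ on a state supporting both $p$ and $\neg p$) witnesses $\Xi\not\vDash_{\mathtt{b}\BD}\alpha$, whereas $\Xi^\bullet=\{\triangletop\belmod p\}$ forces $v(\belmod p)=(1,0)$, hence $v({\sim}\belmod\neg p)=(1,0)$ and $\Xi^\bullet\vDash_{\Luk^2}\alpha^\bullet$. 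Your proof must either explain why such mismatches are excluded (on the literal reading of Definitions~\ref{def:f-counterpart} and~\ref{def:belieftoLuk2} they are not) or repair the translation of the atomic step before the lifting argument can go through.
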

\begin{definition}[From \L{}ukasiewicz two-layer formulas to belief formulas]\label{def:Luk2tobelief}
Let $\alpha$ be an NNF of an $\Luk^2$ two-layer formula and let $\alpha^-$ be the result of replacing each $\Bel\phi_i$ with a~$p_i$. Since $f_{\alpha^-}=\max\limits_{k\in K}\min\limits_{j\in J_k}t_{k,j}$ for some $K$ and $J$ where for every $k\in K$ and $j_k\in J$ there is a~linear function $f_{k,j}$ with integer coefficients and
\[t_{k,j}=f^\sharp_{k,j}\text{ or }t_{k,j}=1-\triangletop(1-f^\sharp_{f,j})\]
it is possible to define the $^\circ$-translation of $\alpha$ as follows
\[\alpha^\circ=\bigvee\limits_{k\in K}\bigwedge\limits_{j\in J_k}\alpha_{k,j}\]
Here for $f_{k,j}=\sum\limits_{i=1}^{n}a_i\cdot x_i+c$, we have
\begin{align*}
\alpha_{k,j}&=
\begin{cases}
\sum\limits_{i=1}^{n}a_i\cdot\mathtt{b}^+(\phi_i)\geqslant1-c&\text{ if }t_{k,j}=f^\sharp_{k,j}\\
\sum\limits_{i=1}^{n}a_i\cdot\mathtt{b}^+(\phi_i)<-c&\text{ otherwise}
\end{cases}
\end{align*}
\end{definition}
\begin{proposition}\label{prop:Luk2tobelief}
Let $\Xi\cup\{\alpha\}$ be a~set of $\Luk^2$ two-layer formulas. Then, $\Xi\!\vDash_{\Luk^2}\!\alpha$ iff $\Xi^\circ\!\vDash_{\mathtt{b}\mathsf{BD}}\!\alpha^\circ$.
\end{proposition}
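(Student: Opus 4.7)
The plan is to adapt the proof of Proposition~\ref{prop:Luk2toweight} to the belief setting, exploiting the close syntactic parallel between the two translations: $\alpha^\circ$ has exactly the same shape in both cases, with the weight atoms $\mathtt{w}^+(\phi)$ simply replaced by belief atoms $\mathtt{b}^+(\phi)$, and the semantic interface being provided by the fact that both probabilities and belief functions are $[0,1]$-valued monotone assignments on the Lindenbaum algebra of $\BD$ that are compatible with the modal axioms of either of the two two-layered logics involved.

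First, I would recall that every $\Luk^2$ formula in NNF has a functional counterpart $f_\alpha$ (Definition~\ref{def:f-counterpart}) which is continuous and piecewise linear with integer coefficients in each coordinate, so that $f_{\alpha^-}$ admits a max--min normal form $\max_{k}\min_{j}t_{k,j}$ where each $t_{k,j}$ is either a truncated linear form $f^\sharp_{k,j}$ or a $\triangletop$-decorated one $1-\triangletop(1-f^\sharp_{k,j})$. The two shapes respectively encode non-strict and strict inequalities, and the $^\circ$-translation mirrors this decomposition literally, sending $\max$ to $\vee$, $\min$ to $\wedge$, and each $t_{k,j}$ to the corresponding PBF. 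Before doing so, I would first push $\neg$-negations inside modalities via $\neg\belmod\phi \leftrightarrow \belmod\neg\phi$ (the second modal axiom of $M_b$ in Definition~\ref{def:logicBD_B_Luk}), so that all modal atoms occurring in the NNF are of the form $\belmod\phi$ and only $\mathtt{b}^+$ atoms arise after translation.

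Next, I would establish the central semantic correspondence. Given a $\DS$-model $\mathscr{M}=(S,v^+,v^-,\bel)$ one defines the pair $\bel_1(\phi)\coloneqq\bel(|\phi|^+)$, $\bel_2(\phi)\coloneqq\bel(|\phi|^-)$ on the Lindenbaum algebra generated by the inner formulas occurring in $\alpha$; this yields a $\mathtt{b}\BD$-model $\mathscr{M}^\circ$ in the sense of Definition~\ref{def:FDEbeliefformulas}. By induction over the max--min decomposition, one then shows that $v^{\mathscr{M}}(\alpha)=(1,0)$ if and only if $\mathscr{M}^\circ\models\alpha^\circ$. The base cases hinge on the two shapes of $t_{k,j}$: the non-strict/strict distinction in $\alpha^\circ$ exactly captures whether the truncated linear form attains the value $1$ on the nose or lies strictly below, which is precisely what $\triangletop$ distills on the $\Luk^2$ side. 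Soundness of the translation (the ‘only if’ direction of the proposition) follows by contraposition. For the converse, I would take any $\mathtt{b}\BD$-model refuting $\alpha^\circ$ and, following the canonical construction used in the proof of Theorem~\ref{th:FSSC_belLuk2} (relying on Theorem~\ref{th:complBelAxioms}), realise its two belief functions as $\bel(|\cdot|^+)$ and $\bel(|\cdot|^-)$ for a single belief function on the powerset of a canonical $\BD$-model, yielding a $\DS$-model refuting $\alpha$.

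The main obstacle I anticipate is keeping the bookkeeping straight for the $\triangletop$-decorated summands, since $\triangletop$ in $\Luk^2$ returns crisp values in $\{(1,0),(0,1)\}$ while still interacting non-trivially with both coordinates of the surrounding valuation; this has to be matched precisely against the strict PBF $\sum_i a_i\cdot\mathtt{b}^+(\phi_i)<-c$ on the belief-formulas side, and requires a careful treatment of the boundary where truncation kicks in. Once the atomic correspondence is verified, the induction through $\max$ and $\min$ is routine, and the two completeness results (Theorem~\ref{theorem:bFDEcompleteness} for $\mathtt{b}\BD$ and Lemma~\ref{lemma:luk_rightarrow_completeness} for $\Luk^2$) ensure the transfer of entailment in both directions.
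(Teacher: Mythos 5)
The paper itself offers no proof of this proposition (nor of Propositions~\ref{prop:weighttoLuk2}--\ref{prop:Luk2tobelief}); it only asserts that the translations of Baldi--Cintula--Noguera ``work just fine''. Your overall architecture --- the max--min decomposition of the functional counterpart, an atomic correspondence between the truncated linear terms $t_{k,j}$ and the (strict) PBFs, the passage between $\mathtt{b}\BD$-models and $\DS$-style models via Theorem~\ref{th:complBelAxioms}, and the two completeness theorems to transfer entailment --- is exactly the intended route, so at that level the proposal is faithful to what the paper has in mind.

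There is, however, a concrete gap, and you have located the obstacle in the wrong place. The difficulty is not the bookkeeping of $\triangletop$-summands at truncation boundaries, but the second coordinate. Your central lemma asserts that $v^{\mathscr{M}}(\alpha)=(1,0)$ iff $\mathscr{M}^\circ\models\alpha^\circ$; the right-to-left direction fails, because $\alpha^\circ$ is a Boolean combination of PBFs in $\mathtt{b}^+$-atoms only and hence constrains only the first coordinates $v_1(\Bel\phi_i)=\bel(v(\phi_i))$, whereas $(1,0)$-designatedness in $\Luk^2$ additionally demands $v_2(\alpha)=0$. Concretely, for $\alpha=\Bel p$ the translation gives $\alpha^\circ=[\mathtt{b}^+(p)\geqslant 1]$, which is satisfied by the vacuous belief function $\bel\equiv 1$ (mass $1$ on the bottom clause) even though there $v(\Bel p)=(1,1)\neq(1,0)$; taking $\Xi=\{\Bel p\}$ and $\alpha={\sim}\neg\Bel p$ then separates the two sides of the claimed equivalence, so the biconditional cannot be recovered by more careful case analysis at the truncation points. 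To close the gap you must either restrict attention to formulas whose modal atoms are $\triangletop$-guarded (the crisp fragment alluded to in the remark after Definition~\ref{def:weighttoLuk2}, where the value of every subformula is $(1,0)$ or $(0,1)$ and $v_2$ is determined by $v_1$), or supplement $\alpha^\circ$ with a translation of the condition $v_2(\alpha)=0$ --- which is expressible, since by axiom $B2$ one has $v_2(\Bel\phi)=\bel(v(\neg\phi))=\mathtt{b}^+(\neg\phi)$, so the second-coordinate constraint is again a Boolean combination of (strict) PBFs over the negated inner formulas. Once the second coordinate is accounted for in one of these ways, the remainder of your plan --- the induction through $\max$ and $\min$ and the model construction via Theorem~\ref{th:complBelAxioms} --- does go through.
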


\section{Conclusion}
Let us recapitulate the main results of our paper. We developed the axiomatics of non-standard belief and plausibility functions (Section~\ref{sec:uncertainty}) and proved their completeness (Theorems~\ref{th:complBelAxioms} and~\ref{th:complPlAxioms}). We then proposed two different kinds of formalising reasoning with non-standard probabilities and belief functions (Section~\ref{sec:logics}). In both cases, we show how to reason with inconsistent and incomplete information in a~non-trivial manner. We established completeness of these logics: Theorems~\ref{th:FSSC_probLuk2} and~\ref{th:FSSC_belLuk2} give completeness of two-layered logics while Theorems~\ref{theorem:wFDEcompleteness} and~\ref{theorem:bFDEcompleteness} provide completeness of calculi of inequalities \emph{\`{a} la} Fagin, Halpern, and Megiddo~\cite{FaginHalpernMegiddo1990}. Finally, we defined faithful translations in the manner of~\cite{BaldiCintulaNoguera2020} between these two frameworks in Propositions~\ref{prop:weighttoLuk2},~\ref{prop:Luk2toweight},~\ref{prop:belieftoLuk2}, and~\ref{prop:Luk2tobelief}.

Still, several important questions remain unanswered. First, in the case of classical probability theory, belief and plausibility functions are not the only uncertainty measures more general than probabilities proper. In particular, probabilities can be generalised by lower and upper probabilities (cf., e.g.~\cite[\S2.3]{Halpern2017} for more details). It would be instructive to define their $\BD$ counterparts and investigate their properties. Second, Bayes' and Jeffrey's conditionings of non-standard probabilities are given in~\cite{KleinMajerRad2021}. It is reasonable to construct conditioning of non-standard belief and plausibility functions. Furthermore, to the best of our knowledge, there is no work done on the conditioning of non-standard probabilities on arbitrary lattices.

On the logic side, we plan to investigate  the proof theory of our two-layered logics. We are particularly interested in providing structural proof theory in the form of display or sequent calculi and in expanding on the tableaux framework presented in~\cite{BilkovaFrittellaKozhemiachenko2021TABLEAUX}. The tableaux will also allow us to establish the decidability and complexity bounds of our two-layered logics.
\bibliographystyle{plain}
\addcontentsline{toc}{section}{References}
\bibliography{references}

\newpage
\appendix
\section[Proofs of Section~2.3]{Proofs of Section~\ref{sssec:belief:plausibility:mass}}
\label{app:belief:plausibility:mass}

The following Lemma is used to prove Theorem~\ref{theo:totallymonotone:charactrisation}
\begin{lemma} \emph{\cite [Lemma 2.6]{Zhou2013}} \label{lemma:zhou:inc_exc}
Let $\mu:{\mathcal{P}}(X)\to \mathbb{R}$ be a~function such that:
$\mu(\varnothing)=0$ and $\mu(A\cup B)=\mu(A)+\mu(B)-\mu(A\cap B)$ for every $A,B\in\P(X)$. Then, we have:
\begin{equation}
    \label{eq:inclusion-exclusion}
    \mu\left( \bigcup\limits_{i=1}^{n}R_i \right)=\sum\limits_{J\subseteq \{1,\ldots,n\}, J\ne \varnothing} (-1)^{|J|+1} \mu\left( \bigcap\limits_{j\in J} R_i \right).
\end{equation}
\end{lemma}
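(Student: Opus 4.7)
The plan is to prove the inclusion--exclusion identity by induction on $n$, using the modularity hypothesis $\mu(A\cup B)=\mu(A)+\mu(B)-\mu(A\cap B)$ as the key algebraic ingredient (together with $\mu(\varnothing)=0$, which we do not in fact need for $n\geq 1$, but which anchors the base case if one wished to start at $n=0$).

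For the base case $n=1$ the identity reads $\mu(R_1)=\mu(R_1)$, which is trivial, and for $n=2$ the identity is exactly the modularity assumption. For the inductive step, I would assume the formula holds for $n$ and prove it for $n+1$. I would first rewrite
\[
\bigcup_{i=1}^{n+1}R_i \;=\; \Bigl(\bigcup_{i=1}^{n}R_i\Bigr)\cup R_{n+1},
\]
apply modularity to get
\[
\mu\!\Bigl(\bigcup_{i=1}^{n+1}R_i\Bigr) = \mu\!\Bigl(\bigcup_{i=1}^{n}R_i\Bigr) + \mu(R_{n+1}) - \mu\!\Bigl(\bigcup_{i=1}^{n}(R_i\cap R_{n+1})\Bigr),
\]
using the distributivity of $\cap$ over $\cup$ in the third term. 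Then I would apply the induction hypothesis to each of the two unions of $n$ sets.

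The main bookkeeping step is to reorganise the resulting sum over nonempty $J\subseteq\{1,\dots,n\}$ (from the first application of the hypothesis) and over nonempty $J\subseteq\{1,\dots,n\}$ with each $R_j$ replaced by $R_j\cap R_{n+1}$ (from the second application) into a single sum over nonempty $J'\subseteq\{1,\dots,n+1\}$. Concretely, I partition such $J'$ according to whether $n+1\in J'$: subsets $J'\subseteq\{1,\dots,n\}$ are supplied by the first application of the hypothesis, the singleton $\{n+1\}$ is supplied by the term $\mu(R_{n+1})$, and subsets $J'=J\cup\{n+1\}$ with $\varnothing\neq J\subseteq\{1,\dots,n\}$ arise from the second application after absorbing the overall minus sign into the parity: a sign $(-1)^{|J|+1}$ becomes $(-1)^{|J|+2}=(-1)^{|J'|+1}$. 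Matching signs across these three groups of terms yields exactly
\[
\sum_{\varnothing\neq J'\subseteq\{1,\dots,n+1\}}(-1)^{|J'|+1}\,\mu\!\Bigl(\bigcap_{j\in J'}R_j\Bigr),
\]
as required.

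The only delicate part is the sign accounting in the second group of terms and ensuring the singleton $J'=\{n+1\}$ is picked up exactly once; everything else is a direct application of the induction hypothesis and of the modularity law. This is a standard induction, so no essentially new ideas are required beyond those already present in the hypothesis of the lemma.
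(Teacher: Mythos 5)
Your induction is correct, and the sign bookkeeping in the step from $n$ to $n+1$ checks out: splitting off $R_{n+1}$, using modularity, distributing $\cap$ over the union, and applying the hypothesis to $\bigcup_{i=1}^{n}(R_i\cap R_{n+1})$ with the observation that $\bigcap_{j\in J}(R_j\cap R_{n+1})=\bigcap_{j\in J\cup\{n+1\}}R_j$ and $-(-1)^{|J|+1}=(-1)^{|J\cup\{n+1\}|+1}$ accounts for exactly the nonempty $J'\subseteq\{1,\dots,n+1\}$ containing $n+1$ other than $\{n+1\}$ itself, which is contributed by the lone $\mu(R_{n+1})$ term. Note, however, that the paper does not prove this lemma at all: it is imported verbatim as~\cite[Lemma~2.6]{Zhou2013}, so there is no in-paper argument to compare against. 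Your self-contained proof is the standard one and is a strict improvement in self-containedness; your remark that $\mu(\varnothing)=0$ is not actually needed for $n\geq 1$ is also accurate (it only matters if one wants the $n=0$ instance, where both sides are an empty union and an empty sum). One cosmetic point: the displayed identity in the statement has a typo ($R_i$ should read $R_j$ inside the intersection), which your proof silently and correctly repairs.
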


\paragraph{Theorem~\ref{theo:totallymonotone:charactrisation}}
Let $\mathcal{L}$ be a~finite lattice and  $\bel:\mainL\rightarrow [0,1]$ be a~monotone function. Let further, $\mass_\bel : \mainL\rightarrow [0,1]$ be the M\"obius transform of $\bel$. Then, $\bel$ is a general belief function iff $\mass_\bel$ is a general mass function. 

If $\mathcal{L}$ is a~finite bounded lattice, then, $\bel$ is a  belief function iff $\mass_\bel$ is a mass function.

In addition, if $\bel$ is a (general) belief function, we have for every $x\in\mathcal{L}$, 
\begin{equation}
\bel(x)=\sum\limits_{y \leq x}\mass_\bel(y).
\end{equation}
We call $\mass_\bel$ the (general) mass function associated to $\bel$.

\begin{proof}
    The statement ``$\bel$ is a belief function iff $\mass_\bel$ is a  mass function'' 
    follows from \cite[Theorem~2.8]{Zhou2013} that states that $f$ is weakly totally monotone iff $g$ is non-negative. It is immediate to prove that $g$ is indeed a mass function.

    For the statement ``$\bel$ is a general belief function iff $\mass_\bel$ is a general mass function''.
 The proof is similar to the proof of 
Theorem~\ref{theo:totallymonotone:charactrisation}.
 The differences are the following: $\bot$ and $\top$ are not in the signature of the lattice $\mainL$, we do not require the lattice to be distributive, and $f$ is not required to be a~belief function, therefore $g$ is not necessarily a~mass function.
 
 Let $a \in \mainL$. First, we  show that $g(a)\geq 0$.
 Notice that, since $f(a)=\sum_{b\leq a}g(b)$, we have $g(a)=f(a)-\sum_{b<a}g(b)$. Let $A=\{x\in \mainL \mid x<a\}$. Recall that $A= \bigcup_{b<a}\downarrow b$, where $\downarrow b=\{x\in \mainL \mid x\leq b\}$.
 Let $\mu : \P(\mainL) \rightarrow [0,1]$ be such that $\mu(A)\coloneqq \sum_{x\in A}g(x)$. Notice that $\mu$ is additive. 
 The following chain of equalities holds: 
 \begin{align*}
 \sum_{b< a}g(b)& = \sum_{x\in A}g(x)\\
 & =\mu(A) \tag{definition of $\mu$}\\
 &=\sum\limits_{J\subseteq A, J\ne \varnothing} (-1)^{|J|+1} \mu\left( \bigcap_{b\in J}\downarrow b\right)\tag{$A= \bigcup_{b<a}\downarrow b$ and $\mu$ satisfies \eqref{eq:inclusion-exclusion}} \\
 &=\sum\limits_{J\subseteq A, J\ne \varnothing} (-1)^{|J|+1} \left( \sum_{x\in \bigcap_{b\in J}\downarrow b} g(x) \right)\tag{definition of $\mu$ } \\ 
 &=\sum\limits_{J\subseteq A, J\ne \varnothing} (-1)^{|J|+1} \left( \sum_{x\in \downarrow{\bigwedge \{x\mid x\in J\}}}g(x) \right)\tag{$\downarrow\! a~\cap \downarrow\! b=\downarrow\! (a\wedge b)$ } \\ 
 &=\sum\limits_{J\subseteq A, J\ne \varnothing} (-1)^{|J|+1} f\left(\bigwedge\limits_{x\in J}x\right)\tag{definition of $f$ and $\downarrow$}
 \end{align*}
Notice that, if there is no element smaller than $a$, we have $f(a)=g(a)$ and  $g(a)\geq 0$ (because $f$ is positive). Otherwise, the following chain of inequalities follows from the equality above and because $f$ is monotone and weakly totally monotone.
 \begin{equation*}
 f(a)\geq f\left(\bigvee_{b<a}b\right)\geq\sum\limits_{J\subseteq A, J\ne \varnothing} (-1)^{|J|+1} f\left(\bigwedge\limits_{x\in J}x\right)= \sum_{b<a}\mass(b).
\end{equation*}
 Therefore, $g(a)=f(a)-\sum_{b<a}g(b)\geq 0$ as required.
 
 Since $\mathcal{L}$ is a~finite lattice, it has a~unique maximal element $t$. Since $g$ is the M\"obius transform of $f$, we have $\sum_{x\in\mathcal{L}}g(x)=\sum_{x\leq t}g(x)=f(t)$. Therefore, since $g$ is non-negative,  $0\leq \sum_{x\in\mathcal{L}}g(x)\leq 1$ and $g : \mathcal{L}\rightarrow [0,1]$. that is, $g$ is a~general mass function as required.
 \end{proof}

\paragraph{Lemma~\ref{lem:pl:associated:mass}} 
Let $\mathcal{L}$ be a~De Morgan algebra and $\pl : \mathcal{L} \rightarrow [0,1]$ be a~general plausibility function.
Then, the function $\bel_\pl : \mathcal{L} \rightarrow [0,1]$ such that 
$\bel_\pl(x)=1-\pl(\neg x)$ is a~general belief function. We denote $\mass_\pl$ the mass function associated to $\bel_\pl$ and we call $\mass_\pl$ the mass function associated to $\pl$. Then
\begin{equation*}
 \pl(x)=1 - \sum_{y \leq \neg x} \mass_{{\pl}}(y).
\end{equation*} If $\pl$ is a~plausibility function, then $\bel_\pl$ is a~belief function.

\begin{proof}
Consider the function $\bel_\pl(x)=1-\pl(\neg x)$. Since $0\leq \pl(\neg x)\leq 1$, then $0\leq\bel_\pl(x)\leq 1$. Therefore, $\bel_\pl$ is well-defined.
Notice that, since $\neg$ is order-reversing and $\pl$ is order-preserving, $\bel_\pl$ is order-preserving. 
For every $a_1, \dots, a_k\in \mathcal{L}$, we have 
\begin{align*}
\mathtt{pl}\left(\neg \bigvee\limits_{1\leq i\leq k}a_i\right) &=
\mathtt{pl}\left(\bigwedge\limits_{1\leq i\leq k}\neg a_i\right)
\tag{$\neg$ is a~De Morgan negation}
\\
 &\leq
\sum\limits_{\scriptsize{\begin{matrix}J\subseteq\{1,\ldots,k\}\\J\neq\varnothing\end{matrix}}}\!\!\!\!\!(-1)^{|J|+1}\cdot\mathtt{pl}\left(\bigvee\limits_{j\in J}\neg a_j\right)
\tag{$\pl$ is a~general plausibility function}
\\
& =
\sum\limits_{\scriptsize{\begin{matrix}J\subseteq\{1,\ldots,k\}\\J\neq\varnothing\end{matrix}}}(-1)^{|J|+1}\cdot\mathtt{pl}\left(\neg\bigwedge\limits_{j\in J} a_j\right)
\tag{$\neg$ is a~De Morgan negation}
\end{align*}
In addition, we have\footnote{Recall that $\left(\begin{matrix} n \\ k \end{matrix} \right)$ denotes the binomial coefficient, that is, the number of ways to choose an (unordered) subset of $k$ elements from a~fixed set of $n$ elements.}
\begin{align*}
 \sum\limits_{\scriptsize{\begin{matrix}J\subseteq\{1,\ldots,n\}\\J\neq\varnothing\end{matrix}}}(-1)^{|J|+1} 
 & = (-1) \cdot \sum\limits_{\scriptsize{\begin{matrix}J\subseteq\{1,\ldots,n\}\\J\neq\varnothing\end{matrix}}}(-1)^{|J|} 
  = (-1) \cdot \sum_{1\leq k \leq n}(-1)^{k} \left(\begin{matrix} n \\ k \end{matrix} \right)
 \\
 & = (-1) \cdot \left( \sum_{0\leq k \leq n}(-1)^{k} \left(\begin{matrix} n \\ k \end{matrix} \right) -1 \right)
 \tag{since $\sum_{0\leq k \leq n}(-1)^{k} \left(\begin{matrix} n \\ k \end{matrix} \right) =0$}
 \\
 & = 1
\end{align*}
Therefore,
\begin{align*}
\bel_\pl\left( \bigvee\limits_{1\leq i\leq k}a_i\right)
 &=
1 - \mathtt{pl}\left(\neg \bigvee\limits_{1\leq i\leq k}a_i\right)
\\
& \geq
\left(\sum\limits_{\scriptsize{\begin{matrix}J\subseteq\{1,\ldots,n\}\\J\neq\varnothing\end{matrix}}}(-1)^{|J|+1}\right) - \left(\sum\limits_{\scriptsize{\begin{matrix}J\subseteq\{1,\ldots,k\}\\J\neq\varnothing\end{matrix}}}(-1)^{|J|+1}\cdot\mathtt{pl}\left(\neg\bigwedge\limits_{j\in J} a_j\right)\right)
\\
& = 
\sum\limits_{\scriptsize{\begin{matrix}J\subseteq\{1,\ldots,n\}\\J\neq\varnothing\end{matrix}}}
\left((-1)^{|J|+1} - (-1)^{|J|+1}\cdot\mathtt{pl}\left(\neg\bigwedge\limits_{j\in J} a_j\right) \right)
\\
& = 
\sum\limits_{\scriptsize{\begin{matrix}J\subseteq\{1,\ldots,n\}\\J\neq\varnothing\end{matrix}}}
(-1)^{|J|+1} \cdot \left(1 - \mathtt{pl}\left(\neg\bigwedge\limits_{j\in J} a_j\right) \right)
\\
& = 
\sum\limits_{\scriptsize{\begin{matrix}J\subseteq\{1,\ldots,n\}\\J\neq\varnothing\end{matrix}}}
(-1)^{|J|+1} \cdot \bel_\pl\left(\bigwedge\limits_{j\in J} a_j \right).
\end{align*}
Therefore $\bel_\pl$ is order-preserving and $k$-monotone for every $k \geq 1$. Hence, $\bel_\pl$ is a~general belief function. In addition, if $\pl$ is a~plausibility function, then $\bel(\bot)=1 - \pl (\top)=0$ and $\bel(\top)=1 - \pl(\bot)=1$. Therefore, $\bel$ is a~belief function. Let $\mass_\pl$ be the mass function associated to $\bel_\pl$, then we have $\pl(x) = 1 - \bel(\neg x) = 1 - \sum_{y \leq \neg x} \mass_\pl(y)$.
\end{proof}

\paragraph{Lemma
\ref{lem:bel:pl:1-bel}}
Let $\mathcal{L}$ be a~De Morgan algebra and $\bel : \mathcal{L} \rightarrow [0,1]$ be a~general belief function.
Then, the function $\pl : \mathcal{L} \rightarrow [0,1]$ such that 
$\pl(x)=1-\bel(\neg x)$ is a~general plausibility function. 
 If $\bel$ is a~belief function, then $\pl$ is a~plausibility function.

\begin{proof}
The proof is similar to the proof of lemma~\ref{lem:pl:associated:mass}. We only detail the proof that $\pl$ satisfies equation \eqref{eq:pl:k:inequality} for every $k \geq 1$. Let $a_1, \dots, a_k\in \mathcal{L}$.
Recall that 
\begin{align*}
\mathtt{bel}\left(\bigvee\limits_{1\leq i\leq n}a_i\right)\geq\sum\limits_{\scriptsize{\begin{matrix}J\subseteq\{1,\ldots,n\}\\J\neq\varnothing\end{matrix}}}(-1)^{|J|+1}\cdot \mathtt{bel}\left(\bigwedge\limits_{j\in J}a_j\right).
\end{align*}
Therefore, we have the following chain of inequalities.
\begin{align*}
\mathtt{bel}\left(\bigvee\limits_{1\leq i\leq n}\neg a_i\right)&\geq
\sum\limits_{\scriptsize{\begin{matrix}J\subseteq\{1,\ldots,n\}\\J\neq\varnothing\end{matrix}}}(-1)^{|J|+1}\cdot \mathtt{bel}\left(\bigwedge\limits_{j\in J}\neg a_j\right)
\\
-\mathtt{bel}\left(\neg \bigwedge\limits_{1\leq i\leq n} a_i\right)
&\leq - \sum\limits_{\scriptsize{\begin{matrix}J\subseteq\{1,\ldots,n\}\\J\neq\varnothing\end{matrix}}}(-1)^{|J|+1}\cdot \mathtt{bel}\left(\neg \bigvee\limits_{j\in J} a_j\right).
\end{align*}
Since, $\neg$ is a~De Morgan negation and multiplication by $(-1)$ reverses the inequality, we have
\begin{align*}
-\mathtt{bel}\left(\neg \bigwedge\limits_{1\leq i\leq n} a_i\right)
&\leq \sum\limits_{\scriptsize{\begin{matrix}J\subseteq\{1,\ldots,n\}\\J\neq\varnothing\end{matrix}}}(-1)^{|J|+1}\cdot \left(- \mathtt{bel}\left(\neg \bigvee\limits_{j\in J} a_j\right)\right)
\\
\mathtt{pl}\left( \bigwedge\limits_{1\leq i\leq n} a_i\right) -1
&\leq \sum\limits_{\scriptsize{\begin{matrix}J\subseteq\{1,\ldots,n\}\\J\neq\varnothing\end{matrix}}}(-1)^{|J|+1}
\left(\mathtt{pl}\left( \bigvee\limits_{j\in J} a_j\right) -1
\right)
\tag{$-\bel(\neg x)=\pl(x)-1$}
\\
\mathtt{pl}\left( \bigwedge\limits_{1\leq i\leq n} a_i\right) -1
&\leq 
\left(
\sum\limits_{\scriptsize{\begin{matrix}J\subseteq\{1,\ldots,n\}\\J\neq\varnothing\end{matrix}}}(-1)^{|J|+1}\cdot
\mathtt{pl} 
\left(\bigvee\limits_{j\in J} a_j\right)\right) - 
\left(
\sum\limits_{\scriptsize{\begin{matrix}J\subseteq\{1,\ldots,n\}\\J\neq\varnothing\end{matrix}}}(-1)^{|J|+1}\right)
\\
\mathtt{pl}\left( \bigwedge\limits_{1\leq i\leq n} a_i\right) -1
&\leq 
\left(
\sum\limits_{\scriptsize{\begin{matrix}J\subseteq\{1,\ldots,n\}\\J\neq\varnothing\end{matrix}}}(-1)^{|J|+1}\cdot
\mathtt{pl} 
\left(\bigvee\limits_{j\in J} a_j\right)\right) + 
\left(
\sum\limits_{\scriptsize{\begin{matrix}J\subseteq\{1,\ldots,n\}\\J\neq\varnothing\end{matrix}}}(-1)^{|J|}\right)
\\
\mathtt{pl}\left( \bigwedge\limits_{1\leq i\leq n} a_i\right) -1
&\leq 
\left(
\sum\limits_{\scriptsize{\begin{matrix}J\subseteq\{1,\ldots,n\}\\J\neq\varnothing\end{matrix}}}(-1)^{|J|+1}\cdot \mathtt{pl} 
\left(\bigvee\limits_{j\in J} a_j\right)\right) + 
\left(
\sum_{1 \leq k \leq n}(-1)^{k} \left(\begin{matrix} n \\ k \end{matrix} \right)\right).
\end{align*}
Since, because $
\sum_{0\leq k \leq n}(-1)^{k} \left(\begin{matrix} n \\ k \end{matrix} \right) =0 = 1 + \sum_{1\leq k \leq n}(-1)^{k} \left(\begin{matrix} n \\ k \end{matrix} \right)$, we get
\begin{align*}
\mathtt{pl}\left( \bigwedge\limits_{1\leq i\leq n} a_i\right) -1
&\leq 
\left(
\sum\limits_{\scriptsize{\begin{matrix}J\subseteq\{1,\ldots,n\}\\J\neq\varnothing\end{matrix}}}(-1)^{|J|+1}\cdot \mathtt{pl} 
\left(\bigvee\limits_{j\in J} a_j\right)\right) -1
\\
\mathtt{pl}\left(\bigwedge\limits_{1\leq i\leq k}a_i\right)
& \leq
\sum\limits_{\scriptsize{\begin{matrix}J\subseteq\{1,\ldots,k\}\\J\neq\varnothing\end{matrix}}}(-1)^{|J|+1}\cdot\mathtt{pl}\left(\bigvee\limits_{j\in J}a_j\right),
\end{align*}
as required.
\end{proof}

\section[Proofs of Section~3]{Proofs of Section~\ref{sec:uncertainty}}
\label{app:proof:sec3}
\paragraph{Lemma~\ref{lem:correspondece:lattice:language}}  There is a~ one-one correspondence between the functions on $\LBD$ satisfying   the properties (i)--(iii) of Definition~\ref{DEF:NSprob} and the functions on the Lindenbaum algebra $\LatBD$ with the same properties. 
\begin{proof}
First, we need to show that if $g$ is a~function on $\LBD$ satisfying the properties 
\begin{enumerate}
\item $ 0\leq g(\f) \leq 1$ (normalisation)
\item if $\varphi \vdash_{\BD} \psi$, then $g(\varphi) \leq g(\psi)$ (monotonicity),
\item $g(\f\lor \p) = g(\f) + g(\p) - g(\varphi\wedge \p)$ (inclusion/exclusion),
\end{enumerate}
then there is a~corresponding function $g'$ on the Lindenbaum algebra $\LTBD$ with the same properties.  

Let $g: \LBD \rightarrow \mathbb{R}$. We define $g': \LTBD \rightarrow \mathbb{R}$ as follows: $g'([\f]):=g(\f)$ where $[\f]$ represents the equivalence class. Notice that the monotonicity of $g$ implies that $g(\f)=g(\psi)$ for any two equivalent formulas $\f$ and $\psi$. Therefore, $g'$ is well-defined. $g'$
satisfies the three properties above because: (i) $0\leq g'([\f])\leq 1$ since $0\leq g(\f)\leq 1$; (ii) if $[\f]\leq [\psi]$, then $\f\vdash_{\BD} \psi$, so $g(\f)\leq g(\psi)$, hence $g'([\f])\leq g'([\psi])$; (iii) we have 
 \begin{align*}
g'([\f]\vee[\psi])&=g'([\f\vee\psi])\\
&=g(\f\vee\psi)\tag{by definition of $g'$}\\
&=g(\f)+g(\psi)-g(\f\wedge \psi)\tag{by assumption} \\
&= g'([\f])+g'([\psi])-g'([\f\wedge\psi])\tag{by definition of $g'$} \\
&= g'([\f])+g'([\psi])-g'([\f]\wedge[\psi]) 
 \end{align*}
 
The proof of the converse is similar.
\end{proof}

\paragraph{Lemma~\ref{lem:bel+:pl+}}
Let $\mathscr{M}=
(S,\P(S), \bel, \pl, v^+, v^-)$ be a~$\DS_\pl$ model. $\bel^+$ (resp., $\pl^+$) is a~general belief (resp., plausibility) function on the Lindenbaum algebra. $\bel^-$ (resp., $\pl^-$) is a~general belief (resp., plausibility) function on the dual of the Lindenbaum algebra $\LTBD^{op}$.
\begin{proof}
$\bel$ and $| \cdot|^+$ are monotone maps, therefore $\bel^+$ is monotone.
In addition, for every $\varphi_1, \dots, \varphi_n \in \LTBD$, we have
\begin{align*}
 \bel^+\left( \bigvee\limits_{1\leq i\leq k}\varphi_i \right)
 & = \bel\left(\left| \bigvee\limits_{1\leq i\leq k}\varphi_i\right|^+ \right)\\
& = \bel\left(\bigcup\limits_{1\leq i\leq k} | \varphi_i|^+ \right)
 \\
 & \geq\sum\limits_{\scriptsize{\begin{matrix}J\subseteq\{1,\ldots,k\}\\J\neq\varnothing\end{matrix}}}(-1)^{|J|+1}\cdot \bel\left(\bigcap\limits_{j\in J} |\varphi_j|^+\right)
 \\
 & =\sum\limits_{\scriptsize{\begin{matrix}J\subseteq\{1,\ldots,k\}\\J\neq\varnothing\end{matrix}}}(-1)^{|J|+1}\cdot \bel\left(\left|\bigwedge\limits_{j\in J}\varphi_j\right|^+\right)
 \\
 & =\sum\limits_{\scriptsize{\begin{matrix}J\subseteq\{1,\ldots,k\}\\J\neq\varnothing\end{matrix}}}(-1)^{|J|+1}\cdot \bel^+\left(\bigwedge\limits_{j\in J}\varphi_j\right).
\end{align*}
The proof for $\pl^+$ is similar. The proofs for $\bel^-$ and $\pl^-$ are dual.
\end{proof}
\end{document}